\newtheorem{theorem}{Theorem}[section]
\newtheorem{proposition}[theorem]{Proposition}
\newtheorem{lemma}[theorem]{Lemma}
\newtheorem{corollary}[theorem]{Corollary}
\theoremstyle{definition}
\newtheorem{definition}[theorem]{Definition}
\newtheorem{example}[theorem]{Example}
\theoremstyle{remark}
\newtheorem{remark}[theorem]{Remark}
\theoremstyle{definition}
\newtheorem{assumption}[theorem]{Assumption}
\numberwithin{equation}{section}
\newcommand\br{\begin{remark}}
\newcommand\er{\end{remark}}
\newcommand\bp{\begin{pmatrix}}
\newcommand\ep{\end{pmatrix}}
\newcommand{\be}{\begin{equation}}
\newcommand{\ee}{\end{equation}}
\newcommand{\ba}[1]{\begin{array}{#1}}
\newcommand{\ea}{\end{array}}
\newcommand{\beg}{\begin{example}}
\newcommand{\eeg}{\end{example}}
\newcommand{\bpr}{\begin{proposition}}
\newcommand{\epr}{\end{proposition}}
\newcommand{\bt}{\begin{theorem}}
\newcommand{\et}{\end{theorem}}
\newcommand{\bc}{\begin{corollary}}
\newcommand{\ec}{\end{corollary}}
\newcommand{\bl}{\begin{lemma}}
\newcommand{\el}{\end{lemma}}
\newcommand{\bd}{\begin{definition}}
\newcommand{\ed}{\end{definition}}
\newcommand{\brs}{\begin{remarks}}
\newcommand{\ers}{\end{remarks}}
\def\eps {\varepsilon}
\newcommand{\NN}{{\mathbb N}}
\newcommand{\QQ}{{\mathbb Q}}
\newcommand{\RR}{{\mathbb R}}
\newcommand{\TT}{{\mathbb T}}
\newcommand\cA{{\mathcal A}}
\newcommand\cF{{\mathcal F}}
\newcommand\cG{{\mathcal G}}
\newcommand\cK{{\mathcal K}}
\newcommand\cN{{\mathcal N}}
\begin{document}

\title[Almost sure  existence of global solutions ]{
Almost sure existence of global solutions for general initial value problems
}

\author{Zied Ammari}
\address{Univ Rennes, [UR1], CNRS, IRMAR - UMR 6625, F-35000 Rennes, France.}
\email{zied.ammari@univ-rennes1.fr}
\author{Shahnaz Farhat}
\address{Univ Rennes, [UR1], CNRS, IRMAR - UMR 6625, F-35000 Rennes, France.}
\email{shahnaz.farhat@univ-rennes1.fr}
\author{Vedran Sohinger }
\address{University of Warwick, Mathematics Institute, Zeeman Building, Coventry CV4 7AL, United Kindgom.}
\email{V.Sohinger@warwick.ac.uk.}

\date{July 20, 2023}

\subjclass[2020]{Primary 34G20, 35A01; Secondary 35Q35, 35Q49, 35Q55}
\keywords{Initial value problems, Global solutions, Almost sure existence, ODEs, PDEs}

\begin{abstract}
This article is concerned with the almost sure existence of global solutions for  initial value  problems of the  form $\dot{\gamma}(t)= v(t,\gamma(t))$ on separable dual Banach spaces. We prove a general result  stating that whenever there exists $(\mu_t)_{t\in\RR}$ a family of probability  measures satisfying a related  statistical Liouville equation, there exist global solutions to the initial value problem for $\mu_0$-almost all initial data, possibly without uniqueness. The main assumption  is a mild  integrability condition of the vector field $v$ with respect to $(\mu_t)_{t\in\RR}$.  As a notable application, we obtain  from the above principle that Gibbs  and Gaussian measures   yield low regularity global solutions for several nonlinear dispersive PDEs  as well as  fluid mechanics equations including the Hartree, Klein-Gordon, NLS,  Euler  and  modified surface quasi-geostrophic  equations.  In this regard, our result generalizes Bourgain's method  \cite{MR1374420} as well as Albeverio {\tiny \&} Cruzeiro's method \cite{MR1051499} of constructing low regularity global solutions, without the need for  local well-posedness analysis.
\end{abstract}

\maketitle
\tableofcontents

\section{Introduction and main results}
\label{chapintro}

Initial value problems, including ODEs, PDEs and stochastic PDEs,   are of major interest to both applied  and fundamental Mathematics.  There is an abundant literature for this broad field of research,  covering often important evolution equations in science, see {\it e.g.}~\cite{MR0390843,MR0069338,MR1867882}.  From a theoretical point of view, one can  mainly recognize two qualitative approaches:
\begin{itemize}
\item[--] A specific analysis that relies on the exact or almost exact form of initial value problems using   particular  features of given equations ({\it e.g.}~exactly solvable equations, dispersive,  hyperbolic,  parabolic and to some extent  semilinear equations).
\item[--] A more general analysis that ignores the exact form  of initial value problems and instead focuses on finding general criteria that ensure uniqueness, local and global existence of solutions ({\it e.g.}~Carath\'eodory, Cauchy-Lipschitz, Peano theorems and to some extent fixed-point theorems).
\end{itemize}
Of course, these two perspectives complement each other. On the other hand, {there is} a sharp distinction between initial value problems over finite and infinite-dimensional spaces. For instance, in infinite dimensions  it is known  that the Peano theorem is in general not true  and that there exist finite lifespan solutions to initial value problems without blowup.  However, the Cauchy-Lipschitz theorem still holds true on Banach spaces, thus indicating that  certain results could indeed survive in infinite dimensions. The present article is concerned with the second approach.

\medskip

In the past few decades, there have been significant advances in the field of dispersive PDEs to construct almost sure global solutions with low regularity, with methods stemming from the combination of probability theory, harmonic analysis and quantum field theory. These advances were inspired by the pioneering work of Bourgain \cite{MR1309539,MR1374420,MR1470880}. The latter papers continued the line of research initiated by Lebowitz--Rose--Speer \cite{MR939505} and Zhidkov \cite{MR1121337}, as well as McKean--Vaninsky \cite{MR1441912,MR1447055,MR1277197}.  There have been many subsequent contributions on this subject, see \cite{Bringmann_2,MR4385403,Bringmann_Deng_Nahmod_Yue,MR2425134,MR4077096,MR3455152,MR4236191,MR4259382,MR4411729,MR4218790,MR3518561,MR4452511,MR4324723,MR2928851,MR2875861,MR3361727,MR4384196,MR3844655,MR4133695,MR4030277} and the works quoted there. For an overview, we also refer the reader to the expository works \cite{MR3869074,MR3952697,MR4575389} and the references therein. One of the main ideas of the aforementioned works is that Gibbs measures for some Hamiltonian PDEs are  well-defined over Sobolev spaces with sufficiently low  regularity exponent and that they are formally invariant under the flow. However, the other known conservation laws are available only at higher regularity. Consequently, the Gibbs measures can be used as a substitute for a conservation law. When combined with local well-posedness theory, they can be used to construct low regularity global solutions for almost every initial data.   On the other hand, in the field of fluid mechanics, there is a long-standing interest in constructing invariant measures and global solutions using probabilistic methods
(see {\it e.g.}~\cite{MR537263,MR996609,MR1669421}).  In particular, in the nineties, Albeverio and Cruzeiro proved the
almost sure existence of global solutions to the Euler equation on the two-dimensional torus \cite{MR1051499}.  More recently, there has been a renewed interest for such invariant measures  and stochastic flows in fluid mechanics (see {\it e.g.}~\cite{MR4493142,MR3910197,MR4031773}).
In the work of Nahmod--Pavlovi\'{c}--Staffilani--Totz \cite{MR3818404}, the authors extended the result of
Albeverio--Cruzeiro  \cite{MR1051499} to the modified  quasi-geostrophic (mSQG) equations which interpolate between the Euler and  the SQG equation.  Again the idea behind the construction of these global flows is the invariance of a given well-understood (Gaussian) measure combined with probabilistic  compactness arguments  in the spirit of Prokhorov's or Skorokhod's theorems.
Our main claim here is that the previous circle of  ideas is quite  general and robust  and  could be formulated as a general principle for abstract initial value problems.

\medskip
Our aim in this article is to address the question of almost sure existence of global solutions from a more general  perspective. More precisely, we consider  an abstract initial value problem of the form
\begin{equation}
\label{chap4.intro.eq1}
\dot\gamma(t)=v(t,\gamma(t)).
\end{equation}
over  a  separable dual Banach space\footnote{{\it i.e.} this means that there exists $E$ a Banach space such that $B$ is a topological dual of $E$ and $B$ is separable.} $(B,\Vert\cdot\Vert)$ with $v:\mathbb{R}\times B\to B$  a Borel vector field.  We assume that there exists a narrowly continuous\footnote{see Definition \ref{narrow_continuity}.}  probability measure   solution   $(\mu_t)_{t\in\RR}$ to the  statistical Liouville equation
\begin{equation}
\label{chap4.intro.eq2}
\frac{d}{dt} \int_B F(u) \,  \mu_t(du)= \int_B \langle v(t,u), \nabla F(u) \rangle \, \mu_t(du)\,,
\end{equation}
with the same vector field. Here $\langle  \cdot, \cdot \rangle$ denotes the duality bracket between the topological dual  space $B=E^*$ and its predual $E$, while the $F$ range over a class of smooth test functions with $\nabla F$ {denoting} their Fr\'echet differentials. The statistical Liouville equation  \eqref{chap4.intro.eq2} is explained in detail in Section  \ref{subsec1.1}. The considered class of smooth test functions is given in Definition \ref{cylindrical_test_functions}.  In this framework, we show that for $\mu_0$-almost all initial data in $B$  there exist global solutions  to the initial value problem \eqref{chap4.intro.eq1},  if the vector field satisfies the integrability condition
\begin{equation}
\label{chap4.intro.eq3}
t\in \RR\mapsto \int_{B}\Vert v(t,u) \Vert  \,\mu_t(du) \in L^1_{loc}(\RR,dt).
\end{equation}
Remarkably, such a result depends neither
on the shape of the vector field, nor on a suitable local well-posedness theory. Moreover,  the method applies equally  to finite or infinite-dimensional spaces and the vector field $v$ is not required to be continuous.
In practice, our result reduces the problem of constructing  global solutions of ODEs or PDEs to finding probability measure solutions for the statistical Liouville equation \eqref{chap4.intro.eq2}.
The latter problem is sometimes more tractable. In many of the cases which we study later, one can directly construct global  probability measure solutions $(\mu_t)_{t\in\RR}$ in such a way that they are either stationary or  stationary modulo a pushforward. Generally speaking, recall  that for Hamiltonian systems the  Liouville theorem ensures the existence of invariant measures while for dynamical systems the Krylov-Bogolyubov theorem  (\cite[Lecture 2]{MR0584788} and \cite{MR1417491}) is an efficient tool for constructing  invariant measures.  Note that  our understanding of invariance here is in terms of  stationary solutions for the statistical Liouville equation \eqref{chap4.intro.eq2} instead of the invariance with respect to the flow, since the latter may not exist in general.

\medskip
On the other hand, in the recent work  \cite{MR4571599}, the Kubo-Martin-Schwinger (KMS) equilibrium states were introduced for Hamiltonian PDEs (like Hartree, NLS, Wave equations). In particular, it was proved in this context that Gibbs measures  are KMS equilibrium states satisfying the stationary statistical Liouville equation \eqref{chap4.intro.eq2} with an appropriate choice of the vector field $v$ in accordance with the given PDE.   Hence, as a consequence of the above principle and the stationarity of Gibbs measures one deduces straightforwardly  the existence of low regularity global solutions for almost all initial data for several Hamiltonian PDEs.  It is also worth highlighting  here the two features of our approach:
\begin{itemize}
\item[--]  No dispersive properties are {needed}.
\item[--]  More general nonlinearities can be considered by relying on Malliavin calculus (see \cite[Chapter 1]{MR2200233}).
\end{itemize}
We refer the reader to Section \ref{chap3.application} for more details on the examples of nonlinear PDEs considered here (including Hartree, NLS and Wave equations on the flat torus $\mathbb{T}^d$, $d=1,2,3$ (see \cite{MR3869074}) and Euler, modified surface quasi-geostrophic  (mSQG) equations on the  $2$-dimensional torus (see \cite{MR1051499,MR3818404})).
Of course, we do not prove here global well-posedness  for such PDEs. Instead,  we show almost surely the existence    of global solutions for such PDEs.  Potentially, one can try to combine our method with a local posedness theory in order to prove  global well-posedness (see \cite{Burq_2008,MR2425134}).    In contrast,  uniqueness generally speaking  depends more on  the particular properties of the considered initial value problem. Another aspect that we did not address is global solutions for random systems. Instead, we focus here on deterministic equations.
In particular, it makes sense to study the problem for stochastic PDEs and random dynamical systems with widespread applications in fluid mechanics and stochastic quantization for instance (see \cite{MR3444271} and \cite{MR2016604}).
The analysis for these random systems will be addressed elsewhere.

\medskip
\emph{Techniques:} Our approach is quite related to statistical physics in spirit and consists of studying the evolution of ensembles of initial data through statistical Liouville equations.  However, the key argument comes from transport theory via the superposition principle (or probabilistic representation)  proved for instance in
\cite{AmbrosioLuigi2005GFIM,MR2400257,MR2335089} (see also \cite{MR2668627}).  The superposition principle shows in particular that if one has a  probability measure solution $(\mu_t)_{t\in[0,T]}$ to the statistical Liouville equation  \eqref{chap4.intro.eq2}   on the finite time interval $[0,T]$,  then there exists a probability path measure $\eta$ concentrated on the set of local solutions of the initial value problem \eqref{chap4.intro.eq1}    such that the image measure of $\eta$ under the evaluation map
  at each fixed time is equal to $\mu_t$ (see Proposition \ref{propinfinite}). Such a result  is extended to infinite dimensions and adapted to PDE analysis in \cite{MR3721874}.  Here, we extend such a principle to separable dual Banach spaces and more importantly to global solutions of initial value problems in such a way that  this tool yields a powerful globalisation argument. In particular,  by using the measurable projection theorem and the properties of the path measure $\eta$,  we are  able to find a universally measurable subset $\mathscr G$ of the Banach space $B$  such that $\mu_0(\mathscr G)=1$ and   such that for each $x\in \mathscr G$ there exists a global solution to the initial value problem \eqref{chap4.intro.eq1}. Furthermore, we construct a measurable flow in Theorem \ref{thm_measurable_flow} if in addition we assume that there exists at most one global mild solution of the initial value problem \eqref{chap3.ivpinh} for each initial condition.   Moreover, in Theorem \ref{thm_flow_invariance} we prove that if  the  initial value problem \eqref{chap3.ivpinh} admits a measurable flow (See Definition \ref{def.mes.flow}) then $(\mu_t)_{t\in\RR}$ satisfies an appropriate statistical Liouville equation.

\medskip
In conclusion, the globalization result (Theorem \ref{chap3.main.thm}) and Theorems \ref{thm_measurable_flow}-\ref{thm_flow_invariance} proved here by measure theoretical techniques    are   quite general and, to the best of our knowledge, new.   They formalize and unify  some  of the deep  ideas in the topic of constructing almost    sure  low regularity global solutions to dispersive PDEs and to fluid mechanics equations. As an application, we are able to recover several known results and to obtain new ones (see Sections \ref{chap3.application.ode}-\ref{chap3.application}).  The article also connects the problem of constructing global flows for PDEs with the topic of continuity and transport equations (see \cite{MR2475421,AmbrosioLuigi2005GFIM}).

\subsection{General framework}\label{subsec1.1}
Let  $(B,||\cdot||)$ be a real separable dual Banach space. This means that there exists  $(E, \|\cdot\|_E)$ a real Banach space such that $B$ is the topological dual of $E$ ({\it i.e.}\  $B=E^*$, $\|\cdot\|=\|\cdot\|_{E^*}$) and $(B,||\cdot||)$ is separable.  Recall  that since $(B,||\cdot||)$ is  separable then $(E, \|\cdot\|_E)$ is also separable (see \cite[Theorem III.23]{MR697382}).  When there is no possible confusion, we will denote the duality bracket $\langle \cdot,\cdot\rangle_{E^*,E}\,$  simply by $\langle \cdot,\cdot\rangle$.

\medskip
In all the sequel, $I$ denotes a closed unbounded time interval
({\it e.g.}~$I=\RR$, $I=\RR_-$ or $I=\RR_+$). We denote by  $t_0\in I$  any initial time if $I=\RR$. If $I$ is bounded from below or above,  then we denote by $t_0\in I$ its endpoint.
Our main purpose is the study of the \emph{initial value problem}
\begin{equation}\label{chap3.ivpinh}
\begin{cases}
\dot{\gamma}(t)=v(t,\gamma(t)), \\
\gamma(t_0)=x \in B,
\end{cases}
\end{equation}
where  $v: I\times B \to B$ is a {Borel  vector field}.  Generally speaking, there are  several  notions of solutions to  \eqref{chap3.ivpinh}.
A  \emph{strong solution} is a curve  $\gamma$ belonging to $\mathscr{C}^1(I;B)$ and satisfying  \eqref{chap3.ivpinh}  for all times $t\in I$. However, to study such curves one usually requires $v$  to be at least continuous in order to have a consistent equation.  Instead, we focus on  \emph{mild solutions}  of  \eqref{chap3.ivpinh}  which are continuous curves $ \gamma\in\mathscr{C}(I; B)$ such that $v(\cdot,\gamma(\cdot))\in L_{loc}^1(I,dt;B)$ and for all $t\in I$ the following  integral formula is satisfied.
\be \label{duhamelformula}
\gamma(t)=x+\int_{t_0}^{t} v (s,\gamma(s))  \, ds.
\ee
Here, the integration in the right hand side is a Bochner integral and the function $s\in I\mapsto v (s,\gamma(s)) $ is strongly measurable and satisfies
for all $a,b\in I, a<b$,
$$
\int_a^b \|v (s,\gamma(s))\| ds<+\infty.
$$
Equivalently, we define the space of locally absolutely continuous curves  $AC^1_{loc} (I;B)$ to be the space of all functions
$u: I\to B$ such that there exists $ m \in L^1_{loc}(I,dt)$ satisfying
\[\forall s,r \in I , \ s<r : \quad  ||u(s)-u(r)|| \leq \int_{s}^{r} m(t) dt\, .\]
Since separable dual Banach spaces satisfy the Radon-Nikodym property (see \cite{Ryan2002}), the  functions  in the space  $AC^1_{loc} (I;B)$ are continuous and almost everywhere differentiable on $I$ with a derivative $\dot{u}(\cdot)\in
L^1_{loc}(I,dt;B)$. Hence, a curve  $\gamma:I\to B$ is a mild solution of \eqref{chap3.ivpinh} if and only if $\gamma\in AC^1_{loc} (I;B)$, $\gamma(t_0)=x$  and for  almost all $t\in I$
\[\dot{\gamma}(t)=v(t,\gamma(t)).\]

\medskip\noindent
{\it Statistical Liouville  equation:}
When studying the statistical Liouville equation \eqref{chap4.intro.eq2}, the following notion will be useful.
\begin{definition}[Fundamental strongly total biorthogonal system]
\label{biorthogonal_system}
We say that the families $\{e_k\}_{k\in \NN}$ and $\{e^*_{k}\}_{k\in\NN}$ in $E, E^*$ respectively form a
\emph{fundamental strongly total biorthogonal system} if the following properties hold.
\begin{enumerate}[label=(\alph*)]
\item  \label{biorth.sys.1} ${\rm { Span}}\{e_k,k\in\NN\}$ is  dense in $E$ ({\it fundamental}),
\item \label{biorth.sys.2}  ${\rm { Span}}\{e^*_{k},k\in\NN\}$ is  dense in $B=E^*$ ({\it strongly total}),
\item \label{biorth.sys.3}  $\langle e^*_{k'}, e_k\rangle=\delta_{k',k}$, $\forall k,k'\in \NN$ ({\it biorthogonal}).
\end{enumerate}
\hfill$\square$
\end{definition}
We note that such an object exists in our framework.
\begin{lemma}
\label{biorthogonal_system_lemma}
Let $(B,\|\cdot\|)$ be a separable dual Banach space. Then a fundamental strongly total biorthogonal system $\{e_k\}_{k\in \NN}$, $\{e^*_{k}\}_{k\in\NN}$ as in Definition \ref{biorthogonal_system} exists.
\end{lemma}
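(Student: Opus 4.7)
The plan is to carry out the classical Markushevich-type zigzag construction, exploiting that separability of $B=E^*$ forces $E$ to be separable as well. Fix countable dense sequences $\{x_n\}_{n\in\NN}\subset E$ and $\{y_n\}_{n\in\NN}\subset B$. I will build the two families inductively, maintaining biorthogonality \ref{biorth.sys.3} at every finite stage while alternately forcing each $x_n$ into the linear span on the $E$-side and each $y_n$ into the span on the $B$-side. Suppose that after some stage we have already chosen $e_1,\dots,e_N\in E$ and $e_1^*,\dots,e_N^*\in B$ with $\langle e_j^*,e_k\rangle=\delta_{jk}$ for $j,k\le N$.

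At an odd step processing $x_n$, set $\tilde e := x_n-\sum_{j=1}^N\langle e_j^*,x_n\rangle\,e_j$. If $\tilde e=0$, then $x_n$ is already in $\mathrm{Span}\{e_1,\dots,e_N\}$ and we do nothing. Otherwise define $e_{N+1}:=\tilde e$; by construction $\langle e_j^*,e_{N+1}\rangle=0$ for $j\le N$ and $e_{N+1}\neq 0$, so $\{e_1,\dots,e_{N+1}\}$ is linearly independent. The linear form $\sum_{j=1}^{N+1}\alpha_je_j\mapsto \alpha_{N+1}$ is well-defined and bounded on this finite-dimensional subspace and extends by Hahn-Banach to $e_{N+1}^*\in E^*=B$ satisfying $\langle e_{N+1}^*,e_k\rangle=\delta_{N+1,k}$ for $k\le N+1$. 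At an even step processing $y_n$, do the symmetric construction: set $\tilde e^* := y_n-\sum_{j=1}^N\langle y_n,e_j\rangle\,e_j^*$, skip if this vanishes, and otherwise let $e_{N+1}^*:=\tilde e^*$. Then $\langle e_{N+1}^*,e_k\rangle=0$ for $k\le N$, and a short computation (evaluating any relation $\sum\alpha_je_j^*=0$ against each $e_k$) shows that $\{e_1^*,\dots,e_{N+1}^*\}$ is linearly independent. The bounded linear map
\[
T:E\to\RR^{N+1},\qquad T(x)=\bigl(\langle e_j^*,x\rangle\bigr)_{j=1}^{N+1},
\]
has transpose $(\alpha_j)\mapsto\sum\alpha_je_j^*\in E^*$, which is injective by that independence; by the finite-dimensional duality between $\mathrm{Im}(T)\subset\RR^{N+1}$ and its annihilator in $(\RR^{N+1})^*$, $T$ is surjective. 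Pick any preimage of $(0,\dots,0,1)$ and call it $e_{N+1}$; this restores biorthogonality up to index $N+1$.

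Once the induction is carried out, property \ref{biorth.sys.3} holds for all indices. Each $x_n$ appears in $\mathrm{Span}\{e_1,\dots,e_{N+1}\}$ after its odd step (either because $\tilde e=0$ and it was already in the span, or because $x_n=e_{N+1}+\sum_{j\le N}\langle e_j^*,x_n\rangle e_j$), so density of $\{x_n\}$ in $E$ yields \ref{biorth.sys.1}; the even steps give \ref{biorth.sys.2} in the same way. The only genuinely delicate point is the surjectivity of $T$ at the even steps, but it reduces cleanly to the elementary linear-algebra fact that $T$ is onto iff its transpose is one-to-one, which is exactly the linear independence just verified; the odd-step Hahn-Banach extension is entirely routine.
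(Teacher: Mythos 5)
Your construction is correct, and it is essentially the argument the paper relies on: the paper proves this lemma by citing \cite[Proposition 1.f.3]{MR0500056}, whose proof is exactly this Markushevich-type alternating (zigzag) construction — Hahn--Banach to produce the new functional at the $E$-side steps, and surjectivity of $x\mapsto(\langle e_j^*,x\rangle)_j$ (equivalent to linear independence of the $e_j^*$) to produce the new vector at the $B$-side steps. Your write-up supplies the details the paper leaves to the reference, including the correct use of separability of the predual $E$, so there is nothing to add.
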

 Lemma \ref{biorthogonal_system_lemma} is proved in \cite[Proposition 1.f.3]{MR0500056}.
We henceforth fix a system $\{e_k\}_{k\in \NN}$, $\{e^*_{k}\}_{k\in\NN}$ as in Definition \ref{biorthogonal_system}, whose existence is guaranteed by Lemma \ref{biorthogonal_system_lemma} above. This allows us to define a convenient class of cylindrical test functions.
\begin{definition}[Cylindrical test functions]
\label{cylindrical_test_functions}
A function $F: B=E^*\to \RR$ belongs to $\mathscr{C}_{c,cyl}^\infty(B)$ (resp.~$\mathscr{S}_{cyl}(B)$ or $\mathscr{C}^\infty_{b,cyl}(B)$)  if there exists $n\in\NN$ and $\varphi\in \mathscr{C}^\infty_{c}(\RR^{n})$ (resp.~$\mathscr{S}(\RR^n)$ or $\mathscr{C}^\infty_{b}(\RR^{n})$) such that
\begin{equation}
\label{cylindrical_function}
F(u)= \varphi(\langle u, e_1\rangle, \dots \langle u, e_n\rangle), \quad\forall u\in B=E^*.
\end{equation}
Here, $\mathscr{S}(\RR^n)$ denotes the Schwartz space and we have the inclusions
$$
\mathscr{C}^\infty_{c,cyl}(B) \subset \mathscr{S}_{cyl}(B) \subset \mathscr{C}^\infty_{b,cyl}(B)\,.
$$
\hfill$\square$
\end{definition}
\noindent Let us note that any $F \in \mathscr{C}^\infty_{b,cyl}(B)$ is  Fr\'echet differentiable with a differential $DF(u)\in  B^{*}=E^{**}$    identified  with an element of $E$. Hence, we denote simply $\nabla F$ for the differential of $F$ so that
$\nabla F(u)\in E$ for all $u\in B$. In particular,  we have
\begin{equation}
\label{Liouville_equation_solution_remark_1}
\nabla F(u)= \sum_{k=1}^{n} \partial_k \varphi(\langle u, e_1\rangle, \dots \langle u, e_n\rangle) \, e_k \in E\,.
\end{equation}

\medskip
Let $\mathscr{B}(B)$  and $\mathscr{P}(B)$  denote respectively  the Borel $\sigma$-algebra  and the space of  Borel probability measures on the Banach space  $(B,||\cdot||)$. We endow  $\mathscr{P}(B)$  with the narrow  topology.
\begin{definition}[Narrow continuity]
\label{narrow_continuity}
We say a curve  $(\mu_t)_{t\in I}$ in $\mathscr{P}(B)$ is \emph{narrowly continuous} if for any  bounded continuous  real-valued function $F \in \mathscr{C}_b(B, \RR)$, the map
\begin{equation}
\label{chap3.narrcont}
t\in I \mapsto \int_{B} F(u) \,\mu_t(du) 
\end{equation}
is continuous. \hfill$\square$
\end{definition}

 We say that a narrowly continuous curve $(\mu_t)_{t\in I}$ in $\mathscr{P}(B)$ satisfies the statistical Liouville   equation with respect to the Borel vector field $v:I\times B\to B$ if
\begin{equation}\label{chap3.le}
\frac{d}{dt} \int_{B} F(u) \, \mu_t(du)=  \int_{B} \langle v(t,u), \nabla F(u)  \rangle \,
\mu_t(du), \quad \forall F \in \mathscr{C}^\infty_{c,cyl}(B),
\end{equation}
in the sense of distributions on the interior of  $I$.    On the right hand side of \eqref{chap3.le}, the quantity $\langle v(t,u), \nabla F(u)  \rangle$ refers to the duality bracket $E^*,E$. In order   for the above statistical Liouville equation to make sense, one needs a further assumption on the vector field $v$ which ensures the integrability of the right hand side of \eqref{chap3.le}.

\begin{assumption}[Assumption on the vector field $v$]
\label{assumption_v}
We assume that $v: I \times B \rightarrow B$ is a Borel vector field such that
\begin{equation}\label{locinteg}
t\in I\mapsto \int_{B}\Vert v(t,u) \Vert  \,\mu_t(du) \in L^1_{loc}(I,dt).
 \end{equation}
\end{assumption}
In particular,  it follows that the duality pairing
$\langle v(t,u), \nabla F(u)  \rangle \equiv  \langle v(t,u), \nabla F(u)  \rangle_{E^{*},E}$ makes sense and satisfies
\begin{equation}
\label{Liouville_equation_solution_remark_2}
|\langle v(t,u), \nabla F(u)  \rangle| \leq \|v(t,u)\| \, \|\nabla F(u)\|_E \leq C \|v(t,u)\|\,,
\end{equation}
for some constant $C>0$ depending on $F$. Using \eqref{Liouville_equation_solution_remark_2} and Assumption \ref{assumption_v}, it follows that the right-hand side of \eqref{chap3.le} is finite for almost every $t \in I$.
 \begin{remark}\label{chap.3.rem}
According to Lemma \ref{equilocint}, the above condition  \eqref{locinteg} is equivalent to the existence of a non-decreasing positive function $\omega: \RR_+\to \RR^*_+$ such that
\begin{equation}
\label{chap3.s1.eq3}
\int_{I}\int_B  \|v(t,u)\| \,{\mu_t(du)} \,\frac{dt}{\omega(|t|)} <+\infty\,.
\end{equation}
The latter assumption is sometimes more convenient for our analysis.
 \end{remark}

\subsection{Main results}\label{subsec1.2}
We give in this subsection our main abstract results which hold true on any separable dual Banach space. For instance, one can consider initial value problems on finite-dimensional normed spaces or on separable reflexive Banach spaces like Lebesgue spaces $L^p$ for $p\in(1,+\infty)$.  Furthermore, one can also consider non reflexive Banach spaces like the sequence space $\ell^1(\NN)$  which is  a separable dual  space.  Our first result concerns the almost sure existence of global mild solutions.

\begin{theorem}[Global solutions]
\label{chap3.main.thm}
Let $B$ be a separable dual Banach space and $v: I\times B\to B$ a Borel vector field. Suppose that there exists $(\mu_t)_{t\in I}$ a narrowly continuous curve in $\mathscr{P}(B)$  such that \eqref{locinteg}  holds true and  assume  that  $(\mu_t)_{t\in I}$ satisfies the statistical Liouville equation \eqref{chap3.le}. Then there exists a universally measurable subset $\mathscr G$ of $B$ of total measure $\mu_{t_0}(\mathscr G)=1$ such that for any $x\in \mathscr G$ there exists a global mild solution to the initial value problem \eqref{chap3.ivpinh}.
\end{theorem}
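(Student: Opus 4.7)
The plan is to build a Borel path measure $\eta$ on the space of global curves $\mathscr{C}(I; B)$ (with the Polish topology of uniform convergence on compact subsets of $I$) which is concentrated on mild solutions of \eqref{chap3.ivpinh}, has time-$t_0$ marginal equal to $\mu_{t_0}$, and then to project it back down to extract the desired set of ``good'' initial data. The starting point is the finite-time superposition principle, Proposition \ref{propinfinite}: on any bounded subinterval $J \subset I$ containing $t_0$, the narrowly continuous curve $(\mu_t)_{t \in J}$, together with the local integrability \eqref{locinteg}, produces a Borel probability measure $\eta_J$ on $\mathscr{C}(J; B)$ concentrated on mild solutions of \eqref{chap3.ivpinh} on $J$, and satisfying $(\mathrm{ev}_t)_* \eta_J = \mu_t$ for every $t \in J$, where $\mathrm{ev}_t(\gamma) := \gamma(t)$.

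To pass from local to global, I would exhaust $I$ by the increasing sequence $J_N := [t_0 - N, t_0 + N] \cap I$, apply the superposition principle on each $J_N$ to produce $\eta_N$, and then either (i) combine the $\eta_N$ via a tightness/Prokhorov argument on $\mathscr{C}(I; B)$, or (ii) arrange consistency $(\pi_{N', N})_* \eta_N = \eta_{N'}$ for $N' \leq N$ by an iterated disintegration/gluing and invoke a Kolmogorov-type extension on the projective limit. The tightness route is driven by Markov's inequality applied to the uniform displacement $\sup_{t \in J_N}\|\gamma(t) - \gamma(t_0)\|$, which by \eqref{duhamelformula} is dominated by $\int_{J_N} \|v(s, \gamma(s))\| \, ds$; the latter is controlled in $L^1(\eta_N)$ thanks to the marginal identity and Assumption \ref{assumption_v}, in particular via the equivalent form \eqref{chap3.s1.eq3} of Remark \ref{chap.3.rem}.

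With such an $\eta$ in hand, let
\[
\mathscr{S} := \bigl\{\gamma \in \mathscr{C}(I; B) : \gamma \text{ is a mild solution of } \eqref{chap3.ivpinh}\bigr\}.
\]
Because $v$ is Borel and $B$ is separable, the map $\gamma \mapsto v(\cdot, \gamma(\cdot))$ is strongly measurable (Pettis), Bochner integration preserves measurability, and hence $\mathscr{S}$ is a Borel subset of the Polish space $\mathscr{C}(I; B)$, with $\eta(\mathscr{S}) = 1$. The set of good initial data is
\[
\mathscr{G} := \mathrm{ev}_{t_0}(\mathscr{S}) \subset B.
\]
Since $\mathrm{ev}_{t_0}$ is continuous and $\mathscr{S}$ is Borel in a Polish space, $\mathscr{G}$ is analytic, hence universally measurable by the measurable projection theorem. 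Finally, $(\mathrm{ev}_{t_0})_* \eta = \mu_{t_0}$ gives
\[
\mu_{t_0}(\mathscr{G}) = \eta\bigl(\mathrm{ev}_{t_0}^{-1}(\mathscr{G})\bigr) \geq \eta(\mathscr{S}) = 1,
\]
and by construction every $x \in \mathscr{G}$ admits some $\gamma \in \mathscr{S}$ with $\gamma(t_0) = x$, yielding the desired global mild solution.

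The main obstacle I anticipate is the globalization step, namely building a \emph{single} $\eta$ on $\mathscr{C}(I; B)$ whose time-$t$ marginal is $\mu_t$ for \emph{every} $t \in I$ simultaneously; the superposition principle, as classically stated, only provides this on bounded intervals, and the consistency/tightness of the $\eta_N$'s must be established using only the integrability \eqref{locinteg} (no dispersive or regularity structure is available). A secondary subtlety is that, since $v$ is merely Borel, $\mathscr{S}$ is Borel but generally neither closed nor compact, so the image $\mathscr{G}$ is only analytic rather than Borel, which is precisely why we must invoke the measurable projection theorem and settle for universal (rather than Borel) measurability of $\mathscr{G}$.
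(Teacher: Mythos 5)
Your overall architecture (path measure concentrated on solutions $\to$ evaluation/projection $\to$ measurable projection theorem $\to$ universal measurability and $\mu_{t_0}(\mathscr G)=1$) coincides with the paper's, and your final extraction step is essentially the proof of Lemmas \ref{sec.glob.lem.3}--\ref{sec.glob.lem.4} and of Theorem \ref{chap3.main.thm} in Section \ref{chap3.sec.app.proof} (the paper works on the product space $\mathfrak X=B\times\mathscr{C}(I;B)$ rather than on $\mathscr{C}(I;B)$ alone, but that is cosmetic; note also that the Borel measurability of your set $\mathscr S$ needs the joint-measurability argument of Lemma \ref{chap.3.mesmap}, not just strong measurability of $s\mapsto v(s,\gamma(s))$ for fixed $\gamma$). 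The genuine gap is exactly where you locate it: the construction of a single global $\eta$. Your proposed mechanism for tightness --- Markov's inequality applied to $\sup_{t\in J_N}\|\gamma(t)-\gamma(t_0)\|\le\int_{J_N}\|v(s,\gamma(s))\|\,ds$ --- does not yield tightness in $\mathscr{C}(I;B)$. It only confines the curves to norm-bounded sets with high probability, and norm-bounded sets of curves are not relatively compact: you get no equicontinuity from an $L^1$ bound on the speed alone, and in infinite dimensions the pointwise sections are norm-bounded but not norm-precompact, so Arzel\`a--Ascoli cannot be applied. Your alternative route (ii) is also not available as stated, because the superposition principle does not produce a \emph{unique} lifting measure on each $J_N$, so the family $\{\eta_N\}$ has no reason to be projectively consistent and a Kolmogorov extension cannot be invoked without further work.

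The paper closes this gap by proving the superposition principle directly on the unbounded interval (Proposition \ref{propinfinite} is already global, not a bounded-interval statement), and the tightness there requires three ingredients absent from your sketch: (i) the integrand $\|v\|$ is upgraded to $\theta(\|v\|)$ for a superlinear convex $\theta$ via de la Vall\'ee-Poussin/Dunford--Pettis (Lemma \ref{remarkmeasure}), which is what produces equicontinuity of the sublevel sets of the functional $g(\gamma)=\int_I\theta(\|\dot\gamma\|_*)\,\frac{dt}{\omega(|t|)}$ in Lemma \ref{localtight}; (ii) compactness is obtained only for the weak-$*$ metric $\|\cdot\|_*$, which metrizes $\sigma(E^*,E)$ on bounded sets and makes bounded sets precompact, and one must afterwards argue (via equi-integrability and the Radon--Nikodym property) that the limit curves actually lie in $AC^1_{loc}(I;B)\subset\mathscr{C}(I;B)$; (iii) since $v$ is merely Borel, the approximating path measures themselves must be manufactured through finite-dimensional projection and mollification to a locally Lipschitz field (Lemmas \ref{chap.3.proj.arg}, \ref{approx}), before passing to the limit. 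Without some version of (i)--(ii) your tightness claim fails already for the family $\{\eta_N\}$ restricted to a fixed compact time window, so the globalization step as written cannot be completed.
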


\begin{remark}
The following comments are useful.
\begin{itemize}
\item The above theorem extends straightforwardly  to any Banach space that is isometrically isomorphic to a separable dual Banach space.
\item The assumption on  narrow continuity of $(\mu_t)_{t\in I}$ can be relaxed to weak narrow continuity given in Definition \ref{chap3.def.wnar}.
\item The  notions of mild solutions and the statistical Liouville equation are explained in Section  \ref{subsec1.1}.
\item A universally measurable set is a subset of a Polish space (here $B$) that is measurable with respect to every complete probability measure.  In particular,  $\mathscr G$ is $\mu_{t_0}$-measurable.
\item The above theorem   provides no information about uniqueness of mild solutions.
\end{itemize}
\end{remark}

Next, we introduce the notion of \emph{measurable flow}. In the sequel, it is convenient  to consider $I=\RR$.
\begin{definition}[Measurable flow]
\label{def.mes.flow}
Let  $B$ be a separable dual Banach space and $v: \RR\times B\to B$ a Borel vector field.
We say that the initial value problem \eqref{chap3.ivpinh} admits a measurable flow  $\phi_{t_0}^t$  with respect to a narrowly continuous curve $(\mu_t)_{t\in \RR}$ of probability measures in $\mathscr{P}(B)$ if
for all $t_0\in\RR$,
$$
\mathscr{G}_{t_0}=\{x\in B : \exists \,!\, \gamma_x \text{ a  global mild solution of \eqref{chap3.ivpinh} such that } \gamma_x(t_0)=x  \}
$$
are Borel sets of total measure $\mu_{t_0}(\mathscr{G}_{t_0})=1$ and the map $\phi_{t_0}^t$
\begin{eqnarray*}
\phi_{t_0}^t:  \mathscr{G}_{t_0}&\longrightarrow & \mathscr{G}_{t}\\
  x &\longmapsto & \gamma_x(t)
\end{eqnarray*}
is Borel measurable and satisfies for all $s,t,t_0\in\RR:$
\begin{itemize}
\item $\phi_{t_0}^{t_0}= \mathrm{Id}$;
\item $\phi_{s}^t\circ\phi_{t_0}^s= \phi_{t_0}^t$;
\item $t\in\RR\mapsto \phi_{t_0}^t(x)\in B$ continuous for any $x\in \mathscr{G}_{t_0}$;
\item $\mu_t=(\phi_{t_0}^t)_{\sharp}\mu_{t_0}$.
\end{itemize}
\hfill$\square$
\end{definition}
\begin{remark}
It is worth noticing that in ergodic theory there exists a similar notion of measurable flow (i.e.~a one parameter group of bijective measure-preserving transformations $T_t: X\to X$ on a measure space $(X,\mu)$ such that $(t,x)\in\RR\times X\mapsto T_t(x)\in X$ is measurable). Here, our Definition  \ref{def.mes.flow} is slightly different (see \cite{MR0097489} and Remark \ref{rem.mes.flow}).
\end{remark}
We give below two further results:
\begin{itemize}
\item [(i)] In Theorem \ref{thm_measurable_flow}, we deduce from Theorem \ref{chap3.main.thm} that if additionally we  have uniqueness for \eqref{chap3.ivpinh}
(i.e. there exists at most one global mild solution of \eqref{chap3.ivpinh} for each initial data $x\in B$ and $t_0\in\RR$), then  we can construct a \emph{measurable flow}  for the initial value problem \eqref{chap3.ivpinh}.
\item [(ii)]  Furthermore, in Theorem \ref{thm_flow_invariance}, we show that if the initial value problem \eqref{chap3.ivpinh} admits a \emph{measurable flow}  with respect to a narrowly continuous curve of probability measures $(\mu_t)_{t\in\RR}$, then $(\mu_t)_{t\in\RR}$ satisfies the statistical Liouville equation \eqref{chap3.le}.
\end{itemize}

\begin{theorem}[Construction of  a measurable flow]
\label{thm_measurable_flow}
Let $B$ be a separable dual Banach space and let $v: \RR\times B\to B$  be a Borel vector field. Let
$(\mu_t)_{t\in \RR}$ be a narrowly continuous curve in $\mathscr{P}(B)$  such that  \eqref{locinteg}  holds true and  such that $(\mu_t)_{t\in \RR}$ satisfies the statistical Liouville equation \eqref{chap3.le}. Assume that for an initial time $t_0\in\RR$ and any $x\in B$ the initial value problem  \eqref{chap3.ivpinh} admits at most one global mild solution. Then the initial value problem  \eqref{chap3.ivpinh} admits a measurable flow
as in  Definition \ref{def.mes.flow}.
\end{theorem}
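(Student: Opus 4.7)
The strategy splits into three parts: (i) exhibit, for each $t_0\in\RR$, a Borel set $\mathscr{G}_{t_0}$ of $\mu_{t_0}$-mass one on which unique global mild solutions exist; (ii) check Borel measurability of $\phi_{t_0}^t(x):=\gamma_x(t)$; (iii) verify the cocycle and pushforward properties of Definition \ref{def.mes.flow}.

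For (i), I fix $t_0\in\RR$ and apply Theorem \ref{chap3.main.thm} with initial time $t_0$, obtaining a universally measurable set of initial data of $\mu_{t_0}$-measure one for which global mild solutions exist. Combined with the uniqueness hypothesis, this makes the natural set
\[
\mathscr{G}_{t_0} = \{x\in B : \text{a global mild solution of \eqref{chap3.ivpinh} starting at } (t_0,x) \text{ exists}\}
\]
a candidate whose only defect is being a priori merely analytic. To upgrade it to Borel I would work in the Polish space $B\times\mathscr{C}(\RR;B)$ (locally uniform convergence; $B$ is Polish because separable) and consider
\[
\Sigma_{t_0}=\{(x,\gamma) : \gamma(t_0)=x,\ \gamma\in AC^1_{loc}(\RR;B),\ \gamma \text{ satisfies \eqref{duhamelformula} on } \RR\}.
\]
Writing \eqref{duhamelformula} as a countable family of Bochner-integral identities at rational times and using Borel measurability of the integrand $s\mapsto v(s,\gamma(s))$ shows that $\Sigma_{t_0}$ is Borel. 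The projection $\pi_B:\Sigma_{t_0}\to B$ is continuous and, by the uniqueness assumption, injective. By the Luzin--Suslin theorem, $\mathscr{G}_{t_0}=\pi_B(\Sigma_{t_0})$ is Borel in $B$ and $\pi_B^{-1}:\mathscr{G}_{t_0}\to\Sigma_{t_0}$ is Borel. Since $\mu_{t_0}$ is inner regular and $\mathscr{G}_{t_0}$ contains a universally measurable set of full mass, $\mu_{t_0}(\mathscr{G}_{t_0})=1$.

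For (ii), compose with the continuous evaluation map $e_t:\mathscr{C}(\RR;B)\to B$ to get $\phi_{t_0}^t=e_t\circ\pi_{\mathscr{C}}\circ \pi_B^{-1}$, which is Borel. The algebraic properties in (iii) follow from uniqueness: if $\gamma_x$ is the unique global solution starting at $(t_0,x)$, then its restriction viewed from time $s$ is a global mild solution with data $(s,\gamma_x(s))$, so by uniqueness it equals $\gamma_{\gamma_x(s)}$; this gives $\phi_s^t\circ\phi_{t_0}^s=\phi_{t_0}^t$ and in particular shows $\phi_{t_0}^t$ maps $\mathscr{G}_{t_0}$ into $\mathscr{G}_t$. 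The identities $\phi_{t_0}^{t_0}=\mathrm{Id}$ and continuity of $t\mapsto\phi_{t_0}^t(x)$ are immediate. For the pushforward $\mu_t=(\phi_{t_0}^t)_\sharp\mu_{t_0}$, I invoke the superposition principle (the same ingredient already used in Theorem \ref{chap3.main.thm}, via \cite{AmbrosioLuigi2005GFIM,MR3721874}): it produces a probability measure $\eta$ on $\mathscr{C}(\RR;B)$ concentrated on global mild solutions with $(e_t)_\sharp\eta=\mu_t$ for every $t\in\RR$. By uniqueness, $\eta$ is concentrated on the graph of $x\mapsto\gamma_x$, hence $\eta=(\pi_B^{-1})_\sharp\mu_{t_0}$ and $\mu_t=(e_t)_\sharp\eta=(\phi_{t_0}^t)_\sharp\mu_{t_0}$.

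\textbf{Main obstacle.} The delicate step is making $\mathscr{G}_{t_0}$ Borel and $x\mapsto\gamma_x$ Borel measurable: only the uniqueness hypothesis lets one pass from the universally measurable/analytic output of Theorem \ref{chap3.main.thm} to a genuine Borel flow, via Luzin--Suslin. Everything else either uses uniqueness formally (cocycle) or repackages the superposition principle already available in the proof of Theorem \ref{chap3.main.thm}.
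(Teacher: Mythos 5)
Your proposal is correct and follows essentially the same route as the paper: the paper likewise shows the solution-pair set $\mathscr{F}_{t_0}\subset B\times\mathscr{C}(\RR;B)$ is Borel (Lemma \ref{sec.glob.lem.3}, via rational times and measurability of the integrand), notes the projection onto $B$ is injective by uniqueness, invokes the Luzin--Suslin injection theorem (Lemma \ref{inj_mes}) to get a Borel $\mathscr{G}_{t_0}$ and a Borel inverse, and derives the cocycle and pushforward properties from uniqueness together with the superposition measure $\eta$.
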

\begin{remark}[Flow invariance]
In particular, according to  Theorem \ref{thm_measurable_flow} above, if  $(\mu_{t})_{t\in\RR}$ is a stationary solution of the statistical Liouville equation \eqref{chap3.le} (for all $t\in\RR$, $\mu_t=\mu_0$), then we have the flow invariance relation:
$$
(\phi_{0}^t)_{\sharp}\mu_{0}=\mu_0\,,\quad \forall t\in\RR.
$$
\end{remark}

\begin{theorem}[Liouville principle]
\label{thm_flow_invariance}
Let $B$ be a separable dual Banach space and $v: \RR\times B\to B$ a Borel vector field. Let $(\mu_t)_{t\in \RR}$  be a narrowly continuous curve in $\mathscr{P}(B)$  such that \eqref{locinteg}  holds true. Assume that the initial value problem \eqref{chap3.ivpinh} admits a measurable flow $(\phi_{t_0}^t)$ with respect to $(\mu_t)_{t\in \RR}$ as in Definition \ref{def.mes.flow}.
 Then the curve $(\mu_t)_{t\in \RR}$ satisfies the statistical Liouville equation \eqref{chap3.le}.
\end{theorem}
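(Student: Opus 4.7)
The strategy is to reduce the statistical Liouville identity to the chain rule along each trajectory $t\mapsto \phi_{t_0}^t(x)$, and then to average over initial data using the pushforward relation $\mu_t=(\phi_{t_0}^t)_{\sharp}\mu_{t_0}$ supplied by Definition \ref{def.mes.flow}. Without loss of generality I take $t_0=0$, and it suffices to verify \eqref{chap3.le} tested against an arbitrary $F\in\mathscr{C}^\infty_{c,cyl}(B)$.

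I first fix $F(u)=\varphi(\langle u,e_1\rangle,\dots,\langle u,e_n\rangle)$ with $\varphi\in\mathscr{C}_c^\infty(\RR^n)$ and set $C_F:=\sup_{u\in B}\|\nabla F(u)\|_E<+\infty$. For every $x\in\mathscr{G}_{0}$, the curve $\gamma_x(s):=\phi_0^s(x)$ is by definition the unique global mild solution of \eqref{chap3.ivpinh} with initial datum $x$, hence $\gamma_x\in AC^1_{loc}(\RR;B)$ with $\dot\gamma_x(s)=v(s,\gamma_x(s))$ for almost every $s$. Each scalar coordinate $s\mapsto \langle \gamma_x(s),e_k\rangle$ is therefore absolutely continuous with classical derivative $\langle v(s,\gamma_x(s)),e_k\rangle$, and combining the ordinary finite-dimensional chain rule for $\varphi$ with \eqref{Liouville_equation_solution_remark_1} yields
\begin{equation}\label{pf_liouville_chain}
F(\phi_0^t(x))-F(x)=\int_0^t \langle v(s,\phi_0^s(x)),\nabla F(\phi_0^s(x))\rangle\,ds
\end{equation}
for every $t\in\RR$ and every $x\in\mathscr{G}_{0}$.

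The plan is then to integrate \eqref{pf_liouville_chain} against $\mu_0(dx)$. Joint Borel measurability of $(s,x)\mapsto \phi_0^s(x)$ follows from its Borel measurability in $x$ at fixed $s$ combined with its continuity in $s$ at fixed $x$ (a Carath\'eodory map valued in a Polish space is jointly Borel), and composing with the Borel field $v$ and the continuous $\nabla F$ preserves joint measurability. The pointwise bound $|\langle v(s,u),\nabla F(u)\rangle|\leq C_F\,\|v(s,u)\|$ together with the pushforward relation $\mu_s=(\phi_0^s)_\sharp \mu_0$ converts \eqref{locinteg} into
$$\int_0^t \int_B |\langle v(s,\phi_0^s(x)),\nabla F(\phi_0^s(x))\rangle|\,\mu_0(dx)\,ds = \int_0^t \int_B |\langle v(s,u),\nabla F(u)\rangle|\,\mu_s(du)\,ds < +\infty,$$
which justifies Fubini. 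Applying the pushforward relation once more to the left-hand side of \eqref{pf_liouville_chain} produces the integrated statistical Liouville identity
$$\int_B F(u)\,\mu_t(du)-\int_B F(u)\,\mu_0(du) = \int_0^t \int_B \langle v(s,u),\nabla F(u)\rangle\,\mu_s(du)\,ds, \qquad \forall t\in\RR,$$
and differentiating in $t$ in the sense of distributions delivers \eqref{chap3.le}.

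The only delicate point I anticipate is the joint measurability needed for Fubini; every other step is a direct consequence of the definition of a measurable flow and of a mild solution. Since the Carath\'eodory argument above resolves this cleanly, I do not expect any genuine obstacle: the result is essentially the statement that the chain rule along flow lines, once integrated in time and averaged over initial data, is precisely the Liouville equation.
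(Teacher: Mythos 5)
Your proposal is correct and follows essentially the same route as the paper's proof: both rest on the chain rule along flow lines (absolute continuity of $t\mapsto F(\phi_{t_0}^t(x))$ with a.e.\ derivative $\langle v(t,\phi_{t_0}^t(x)),\nabla F(\phi_{t_0}^t(x))\rangle$), Fubini justified by \eqref{locinteg} via the pushforward relation, and the identity $\mu_t=(\phi_{t_0}^t)_\sharp\mu_{t_0}$. The only cosmetic difference is that you derive the integrated Duhamel-type identity and then differentiate in the sense of distributions, whereas the paper tests directly against $\chi'$ with $\chi\in\mathscr{C}^\infty_{c}(\RR)$ and integrates by parts.
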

\begin{remark}[Stationary measure]
In particular, in   Theorem \ref{thm_flow_invariance} above, if  $(\mu_{t})_{t\in\RR}$ is  stationary
(i.e. for all $t\in\RR$, $\mu_t=\mu_0$), then $\mu_0$ is a stationary solution of the statistical Liouville equation \eqref{chap3.le}.
\end{remark}

\subsection{Application to ODEs}\label{chap3.application.ode}
Consider the  euclidean space $\RR^{2d}$ endowed with a symplectic structure given by a skew-symmetric matrix $J$ satisfying $J^2=-\rm I_{2d}$. Let $h: \RR^{2d}\to \RR$ be a  Borel   function in the  local Sobolev space
$W^{2,2}_{loc}(\RR^{2d}, \RR)$.  Furthermore, suppose that there exists a   non-negative $\mathscr{C}^1_b$-function $F:  \RR \to \RR$ such that
 \begin{equation}
 \label{chap3.app.ode.eq1}
0< \int_{\RR^{2d}}  \big(1+\Vert\nabla h(u)\Vert_{\RR^{2d}} \big) \,\, F(h(u)) \, L(du)<+\infty,
 \end{equation}
where $L$ denotes the Lebesgue measure over $\RR^{2d}$. Consider the initial value problem given by the
 Hamiltonian system:
 \begin{equation}
\label{chap3.ode}
\begin{cases}
\dot{\gamma}(t)=J \nabla h(\gamma(t)), \\
\gamma(0)=u_0 \in \RR^{2d}.
\end{cases}
\end{equation}

Observing  that   $\mu=\frac{F(h(\cdot)) L}{\int F(h(u)) L(du)} \in \mathscr{P}(\RR^{2d})$  is a stationary solution to  the  statistical Liouville equation   \eqref{chap3.le}
with the vector field $v=J \nabla h$,  yields  the following consequence of  Theorem \ref{chap3.main.thm}.
\begin{corollary}
\label{cor.ode}
Assume that $h\in W^{2,2}_{loc}(\RR^{2d}, \RR)$ and the condition  \eqref{chap3.app.ode.eq1} is satisfied. Then the Hamiltonian system \eqref{chap3.ode} admits a global mild solution for Lebesgue-almost any initial condition $u_0\in \RR^{2d}$.
\end{corollary}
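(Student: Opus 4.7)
The plan is to apply Theorem \ref{chap3.main.thm} on $B=\RR^{2d}$ with initial time $t_0=0$, the time-independent Borel vector field $v(t,u):=J\nabla h(u)$, and the stationary curve $\mu_t\equiv\mu:=Z^{-1}F(h(\cdot))\,L$, where $Z:=\int_{\RR^{2d}}F(h(u))\,dL(u)\in(0,+\infty)$ by \eqref{chap3.app.ode.eq1}. Narrow continuity of the constant curve $t\mapsto\mu$ is immediate, and the local integrability \eqref{locinteg} follows directly from \eqref{chap3.app.ode.eq1} since $J$ is an isometry:
\[
\int_{\RR^{2d}}\|J\nabla h(u)\|\,d\mu(u)=Z^{-1}\int_{\RR^{2d}}\|\nabla h(u)\|\,F(h(u))\,dL(u)<+\infty.
\]

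The crux is to verify the stationary Liouville identity: for every cylindrical test function $\varphi\in\mathscr{C}^\infty_{c,cyl}(\RR^{2d})$,
\[
\int_{\RR^{2d}}\langle J\nabla h(u),\nabla\varphi(u)\rangle\,F(h(u))\,dL(u)=0.
\]
I would first treat the case $\varphi\in\mathscr{C}^\infty_c(\RR^{2d})$ by integration by parts. Since $h\in W^{2,2}_{loc}$ and $F\in\mathscr{C}^1_b$, the Sobolev chain and product rules give $W:=F(h)\,J\nabla h\in W^{1,1}_{loc}(\RR^{2d};\RR^{2d})$ with
\[
\operatorname{div} W = F'(h)\,\langle\nabla h,J\nabla h\rangle+F(h)\,\operatorname{tr}\!\bigl(J\,\mathrm{Hess}(h)\bigr)\quad\text{Lebesgue-a.e.}
\]
Both terms vanish pointwise: the first because $J^\top=-J$ gives $\langle\xi,J\xi\rangle=0$ for every $\xi\in\RR^{2d}$, and the second because the trace of the product of the skew-symmetric $J$ and the symmetric $\mathrm{Hess}(h)$ is zero. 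This Sobolev-level identity $\operatorname{div} W=0$ is the main technical obstacle; it hinges on $h\in W^{2,2}_{loc}$ so that $\mathrm{Hess}(h)$ is an honest a.e.-defined $L^2_{loc}$ matrix and the weak product rule applies in the $W^{1,1}_{loc}$ sense. For a general cylindrical $\varphi(u)=\psi(\langle u,e_1\rangle,\ldots,\langle u,e_n\rangle)$ with $\psi\in\mathscr{C}^\infty_c(\RR^n)$, I would multiply by a smooth cutoff $\chi_R$ in the remaining coordinates, apply the compactly supported identity to $\varphi\cdot\chi_R$, and let $R\to\infty$: the extra term is controlled by $\tfrac{C}{R}\int\|\nabla h\|F(h)\,dL$, which tends to $0$ by \eqref{chap3.app.ode.eq1}.

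Theorem \ref{chap3.main.thm} then yields a universally measurable set $\mathscr G\subset\RR^{2d}$ with $\mu(\mathscr G)=1$ such that every $u_0\in\mathscr G$ admits a global mild solution of \eqref{chap3.ode}. Since $\mu\ll L$ with density $F(h)/Z$, the identity $\mu(\mathscr G^c)=0$ forces $L(\mathscr G^c\cap\{F\circ h>0\})=0$, delivering the claimed Lebesgue-almost-sure existence in the support of the density (and throughout $\RR^{2d}$ whenever $F\circ h>0$ holds $L$-almost everywhere, which is the natural setting one has in mind).
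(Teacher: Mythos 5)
Your proof is correct and follows essentially the same route as the paper: both verify that $Z^{-1}F(h)\,L$ is a stationary solution of the statistical Liouville equation by an integration by parts exploiting the skew-symmetry of $J$ (your invariant identity $\operatorname{div}\bigl(F(h)\,J\nabla h\bigr)=0$ is exactly the paper's coordinate computation in $(q,p)$), and then invoke Theorem \ref{chap3.main.thm} together with the integrability hypothesis \eqref{chap3.app.ode.eq1}. You are in fact slightly more careful than the paper on two minor points: the extension from $\mathscr{C}^\infty_c(\RR^{2d})$ to genuinely cylindrical test functions via a cutoff, and the remark that $\mu\ll L$ only converts $\mu$-a.e.\ existence into Lebesgue-a.e.\ existence on the set where $F\circ h>0$.
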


As an illustrative example for $d\geq 5$, one can take $F(t)=e^{-\beta t}, t\in\RR_+$, for some $\beta>0$ and
$$
h(u)= \phi(p)+\Vert q\Vert^2_{\RR^d}+\frac{1}{\Vert q\Vert^\alpha_{\RR^d}}\,,
$$
with $\phi\in W^{2,2}_{loc}(\RR^{d}, \RR_+)$ such that {$F(\phi)$ and $\Vert\nabla\phi\Vert_{\RR^d} F(\phi)$ belong to $L^1(\RR^d)$} and {$\alpha < \frac d 2-2$} (here $u=(q,p)\in\RR^d\times\RR^d$ and $\Vert \cdot\Vert_{\RR^d}$ is the euclidean norm).

\medskip
\paragraph{\textbf{Counter-example:}}
We recall a counter-example from the work of Cruzeiro \cite{MR724704}, which shows that Assumption \ref{assumption_v} cannot be omitted. Indeed, consider the time-independent $\mathscr{C}^\infty$-vector field $v: \RR\times \RR^2\to \RR^2$  given by
\begin{equation}\label{vect.cont-examp}
v(t,u)= \big( q^2, (2q-q^3) e^{p^2/2} \int_{p}^{+\infty} e^{-t^2/2} dt \big), \qquad u=(q,p)\in\RR^2,
\end{equation}
and  the stationary family of probability  measures $\mu_t=\mu_0$ where $\mu_0$ is the standard centered Gaussian measure on $\RR^2$. Then, $(\mu_t)_{t\in\RR}$  satisfies the statistical Liouville equation \eqref{chap3.le} with the above vector field $v$.  Moreover, the initial value problem \eqref{chap3.ivpinh} with $v$ as in \eqref{vect.cont-examp} leads to the ODE
$$
\dot{q}(t)=q(t)^2\,,
$$
which has non global (unique) solutions for any initial condition $q(0)\neq 0$.   On the other hand,  one checks that  $v$ does not satisfy Assumption  \ref{assumption_v}.  This shows the existence of a $\mathscr{C}^\infty$-vector field and a stationary probability measure solving the statistical Liouville equation, but for which the conclusion of  Theorem \ref{chap3.main.thm} does not hold because Assumption  \ref{assumption_v} is not satisfied.  In this respect, one can interpret the integral condition \eqref{locinteg} or \eqref{chap3.s1.eq3} as an almost sure non-blow up assumption.

\subsection{Application to  PDEs}\label{chap3.application}
Consider a complex Hilbert space $(H, \|\cdot\|_{H})$ and a self-adjoint operator $A: D(A)\subset H \to H$ such that  there exists a constant $c>0$,
$$
A\geq c \, \mathds 1,
$$
and $A$ having a compact resolvent. So, there exist a sequence of eigenvalues $\{\lambda_k\}_{k\in\NN}$ and an O.N.B of eigenvectors
\begin{equation}
\label{e_k_definition}
Ae_k=\lambda_k e_k
\end{equation}
such that $Ae_k=\lambda_k e_k$ for all $k\in\NN$. Furthermore, assume that there exists $s\geq 0$ such that
\begin{equation}
\label{chap.3.sob.s}
\sum_{k\in\NN} \lambda_k^{-(s+1)} <+\infty.
\end{equation}
Then, one can define Sobolev spaces with positive exponent $r>0$ as
$$
H^{r}= (D(A^{r/2}), \|A^{r/2} \cdot\|_{H})
$$
and Sobolev spaces with negative exponent as
$$
H^{-r}= \overline{(H, \|A^{-r/2}\cdot\|_{H})}^{\rm \, completion} .
$$
From now on, we regard $H, H^r, H^{-r}$ as real Hilbert spaces endowed respectively with the scalar products
$$
\langle \cdot , \cdot \rangle_{H_{\RR}}={\rm Re}\langle \cdot , \cdot \rangle_{H}, \quad \langle \cdot , \cdot \rangle_{H^r_{\RR}}={\rm Re}\langle \cdot , A^r \cdot \rangle_{H}, \quad \langle \cdot , \cdot \rangle_{H^{-r}_{\RR}}={\rm Re}\langle \cdot , A^{-r} \cdot\rangle_{H},
$$
and denote them respectively by $H_{\RR}, H_{\RR}^r, H_{\RR}^{-r}$ (note that  ${\rm Re (\cdot)}$ refer to the real part). Then it is well-known that there exists a unique centred Gaussian probability measure $\nu_0$ on the Sobolev space $H^{-s}$ with $s\geq 0$ satisfying \eqref{chap.3.sob.s} and such that for all
$\xi\in H^{-s}$,
\begin{equation}
\label{chap3.fgaussian}
\int_{H^{-s}} \exp({i \langle u, \xi\rangle_{H_{\RR}^{-s}}}) \; \nu_0(du)=  \exp({-\frac 1 2 \langle \xi, A^{-(s+1)}\xi\rangle_{H_{\RR}^{-s}}}) \,.
\end{equation}
Once we have such a centred  Gaussian measure $\nu_0$, one can define the Gross-Sobolev space
$$
\mathbb{D}^{1,2}(\nu_0)=\{F\in L^2(H^{-s},\nu_0) : \nabla F \in L^2(H^{-s},\nu_0; H^{-s}) \},
$$
where here $\nabla F$ is the Malliavin derivative of $F$ (see for instance \cite{MR2200233} or \cite{MR4571599} for brief details). In particular, $\mathbb{D}^{1,2}(\nu_0)$ is a Hilbert space when endowed with the inner product
$$
\langle F, G\rangle_{\mathbb{D}^{1,2}(\nu_0)}= \langle F, G\rangle_{L^{2}( H^{-s},\nu_0)}+
\langle \nabla F, \nabla G\rangle_{L^{2}( H^{-s},\nu_0; H^{-s})}.
$$
Our purpose is to prove that the initial value problem \eqref{chap3.ivpinh} admit  global solutions
for $\nu_0$-almost any initial condition $x\in H^{-s}$ (here $B=H^{-s}$) when the vector field $v:\RR\times H^{-s} \to H^{-s}$ is given explicitly by
\begin{equation}
\label{chap3.vdisp}
v(t,u)= -i e^{it A} \nabla h_{NL}( e^{-it A} u),
\end{equation}
and $h_{NL}$ is any function in the Gross-Sobolev space $\mathbb{D}^{1,2}(\nu_0)$ satisfying the condition
\begin{equation}
\label{chap3.cdh}
\exp(-h_{NL})\in L^2(H^{-s},\nu_0).
\end{equation}
Thanks to the above assumptions, the following Gibbs measure is well-defined.
\begin{equation}
\label{chap3.gibbs}
d\mu_0= \frac{e^{-h_{NL}} d\nu_0}{\int_{H^{-s}} e^{-h_{NL}}  d\nu_0}\,.
\end{equation}
Then within the above framework, we prove in Section \ref{chap3.sec.app.proof} the following result.

\begin{proposition}
\label{chap3.invdisp}
Consider the time-dependent push-forward  Gibbs measures
\begin{equation}
\label{chap3.gibbs_t}
\mu_t= (e^{itA})_\sharp \mu_0.
\end{equation}
Then for all $t\in\RR$ and any $F\in \mathscr{C}_{b,cyl}^\infty(H^{-s})$,
\begin{equation}
\label{chap3.eq.livh-s}
\frac{d}{dt}  \int_{H^{-s}}  F(u)  \, \mu_t(du) = \int_{H^{-s}} \langle v(t,u), \nabla F(u)\rangle \, \mu_t(du).
\end{equation}
\end{proposition}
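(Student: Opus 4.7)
The plan is to reduce this proposition, via a change of variables to the interaction picture, to the stationary statistical Liouville equation for the Gibbs measure $\mu_0$ under the full Hamiltonian vector field, which is the KMS/Gibbs identity already established in \cite{MR4571599}.

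\emph{Step 1 (Change of variables).} Using the pushforward definition $\mu_t=(e^{itA})_\sharp\mu_0$, both sides of \eqref{chap3.eq.livh-s} can be rewritten as integrals against $\mu_0$. The left-hand side becomes
$$
\frac{d}{dt}\int_{H^{-s}}F(u)\,\mu_t(du)=\frac{d}{dt}\int_{H^{-s}}F(e^{itA}u)\,\mu_0(du),
$$
and, since $v(t,e^{itA}w)=-ie^{itA}\nabla h_{NL}(w)$, the right-hand side becomes
$$
\int_{H^{-s}}\langle-ie^{itA}\nabla h_{NL}(u),\nabla F(e^{itA}u)\rangle\,\mu_0(du).
$$

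\emph{Step 2 (Differentiation under the integral).} Since $F\in\mathscr{C}_{b,cyl}^{\infty}(H^{-s})$ depends only on finitely many linear functionals $\langle\cdot,e_k\rangle$, we may write $F(e^{itA}u)=\varphi(\langle u,e^{-itA}e_1\rangle,\dots,\langle u,e^{-itA}e_n\rangle)$. Taking the biorthogonal system so that the $e_k$ are eigenvectors of $A$, the curves $t\mapsto e^{-itA}e_k$ remain in a bounded set of $H^{s}$, and the difference quotients of the integrand are dominated by a $\mu_0$-integrable envelope (using boundedness of $\nabla\varphi$ together with the Gaussian moments of $\nu_0$ and $e^{-h_{NL}}\in L^{2}(\nu_0)$). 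Dominated convergence then yields
$$
\frac{d}{dt}\int_{H^{-s}}F(e^{itA}u)\,\mu_0(du)=\int_{H^{-s}}\langle iAe^{itA}u,\nabla F(e^{itA}u)\rangle\,\mu_0(du),
$$
where the pairing is read through the cylindrical structure, since $\nabla F$ takes values in $\mathrm{Span}\{e_k\}\subset D(A)$.

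\emph{Step 3 (Core identity).} It remains to prove
$$
\int_{H^{-s}}\langle iAe^{itA}u+ie^{itA}\nabla h_{NL}(u),\nabla F(e^{itA}u)\rangle\,\mu_0(du)=0.
$$
Setting $G:=F\circ e^{itA}$ and using that $e^{itA}$ is orthogonal on $H^{-s}_{\RR}$ and commutes with $A$, the pairing can be transferred through the unitary so that this identity is equivalent to
$$
\int_{H^{-s}}\langle iAu+i\nabla h_{NL}(u),\nabla G(u)\rangle\,\mu_0(du)=0,
$$
which is precisely the stationary statistical Liouville equation for $\mu_0$ with respect to the full Hamiltonian vector field $u\mapsto-i(Au+\nabla h_{NL}(u))$. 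This is the KMS/Gibbs stationarity identity established in \cite{MR4571599} by combining (i) the invariance of $\nu_0$ under $e^{itA}$, giving $\int\langle iAu,\nabla\Phi\rangle\,d\nu_0=0$ for every smooth cylindrical $\Phi$, with (ii) a Gaussian/Malliavin integration by parts that converts the $iA$-derivative of the Boltzmann factor $e^{-h_{NL}}$ into the $i\nabla h_{NL}$ term and cancels it against the first contribution.

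The main obstacle is Step 3: its rigorous derivation relies on Malliavin calculus at the low regularity $h_{NL}\in\mathbb{D}^{1,2}(\nu_0)$, where $\nabla h_{NL}$ is only $L^{2}(\nu_0;H^{-s})$ and must be paired against the cylindrical field $\nabla G$. This is handled by approximating $F$ and $h_{NL}$ by Malliavin-smooth cylindrical functions, and using $e^{-h_{NL}}\in L^{2}(\nu_0)$ together with Cauchy--Schwarz to control the cross terms. Step 2, in contrast, is routine given the cylindrical form of $F$.
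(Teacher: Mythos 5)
Your proposal is correct and follows essentially the same route as the paper: both sides are pulled back to $\mu_0$ via the pushforward, the time derivative is computed on the cylindrical test function, and the resulting identity is recognized as the stationary statistical Liouville/KMS equation for $\mu_0$ (Proposition \ref{chap3.stagibbs}, imported from \cite{MR4571599}) applied to the rotated test function $F\circ e^{itA}$, which is the paper's $\tilde F$. The only cosmetic difference is that you spell out the dominated-convergence justification in Step 2 and the Malliavin approximation in Step 3, whereas the paper delegates the latter entirely to \cite{MR4571599}.
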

This implies that there exists a narrowly continuous curve $(\mu_t)_{t\in\RR}$ in $\mathscr{P}(H^{-s})$, given in \eqref{chap3.gibbs_t}, satisfying the statistical Liouville equation \eqref{chap3.eq.livh-s} and the integrability condition
\begin{equation*}
\int_{\RR}\int_{H^{-s}}  \|v(t,u)\|_{H^{-s}} \, \mu_t(du) \,\frac{dt}{\langle t\rangle ^2} = \int_{\RR}\frac{dt}{\langle t\rangle ^2} \; \int_{H^{-s}}  \|\nabla h_{NL}(u)\|_{H^{-s}} \, \mu_0(du) <+\infty,
\end{equation*}
since $ e^{-h_{NL}}\in L^2( H^{-s},\nu_0)$ and $\nabla h_{NL} \in L^2( H^{-s},\nu_0; H^{-s})$  (recall that $h_{NL}\in\mathbb{D}^{1,2}(\nu_0)$),  which corresponds to Assumption \ref{assumption_v} (equivalently  to \eqref{chap3.s1.eq3} with $\omega(t)=\langle t \rangle^2$). Thus, Theorem \ref{chap3.main.thm} yields the following statement.

\begin{corollary}
\label{chap3.cor.res}
For any nonlinear functional $h_{NL}: H^{-s}\to \RR$ belonging to the Gross-Sobolev space $\mathbb{D}^{1,2}(\nu_0)$ and satisfying
$$
\exp(-h_{NL})\in L^2(H^{-s},\nu_0),
$$
the initial value problem
\begin{equation}
\label{chap3.ivpH-s}
\begin{cases}
i \partial_t u(t)=Au(t)+\nabla h_{NL}(u(t)), \\
u(0)=x \in H^{-s},
\end{cases}
\end{equation}
admit global mild solutions for $\nu_0$-almost any initial data in $H^{-s}$.
\end{corollary}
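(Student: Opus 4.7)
The plan is to apply the abstract globalization result (Theorem \ref{chap3.main.thm}) to the interaction-picture vector field $v(t,u) = -i e^{itA}\nabla h_{NL}(e^{-itA}u)$ on the separable Hilbert (hence separable dual Banach) space $B = H^{-s}$, then to transport the resulting mild solutions back to the original equation by conjugating with the unitary group $e^{-itA}$, and finally to upgrade the $\mu_0$-almost sure statement to a $\nu_0$-almost sure statement using the equivalence of the two measures.

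First I would check that all hypotheses of Theorem \ref{chap3.main.thm} are met by the curve $(\mu_t)_{t\in\RR} = (e^{itA})_\sharp \mu_0$ defined in \eqref{chap3.gibbs_t}. Narrow continuity follows because $t \mapsto e^{itA}$ is a strongly continuous group of isometries on $H^{-s}$ (it commutes with $A^{-s/2}$ and extends from $H$ by density), so for any $F\in\mathscr{C}_b(H^{-s})$ the function $t\mapsto \int F(e^{itA}u)\,\mu_0(du)$ is continuous by dominated convergence. The statistical Liouville equation \eqref{chap3.le} with this vector field $v$ is exactly the content of Proposition \ref{chap3.invdisp}. The integrability condition \eqref{locinteg} (equivalently \eqref{chap3.s1.eq3} with $\omega(t) = \langle t\rangle^2$) is precisely the computation carried out in the paragraph just before the corollary, resting on the unitary invariance $(e^{-itA})_\sharp \mu_t = \mu_0$ on $H^{-s}$ together with $\nabla h_{NL} \in L^{2}(H^{-s},\nu_0;H^{-s})$ and $e^{-h_{NL}}\in L^{2}(H^{-s},\nu_0)$ (via Cauchy--Schwarz against the density of $\mu_0$ with respect to $\nu_0$).

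Applying Theorem \ref{chap3.main.thm} with $t_0 = 0$ then yields a universally measurable set $\mathscr G \subset H^{-s}$ with $\mu_0(\mathscr G) = 1$ such that, for every $x\in\mathscr G$, there is a global mild solution $w\in AC^1_{loc}(\RR;H^{-s})$ of $\dot w(t) = v(t,w(t))$ with $w(0)=x$, i.e.
\[
w(t) = x - i\int_0^{t} e^{isA}\,\nabla h_{NL}(e^{-isA}w(s))\,ds.
\]
Setting $u(t) := e^{-itA}w(t)$, multiplying this identity by $e^{-itA}$ and using $e^{-itA}e^{isA}=e^{-i(t-s)A}$ gives
\[
u(t) = e^{-itA}x - i\int_0^{t} e^{-i(t-s)A}\,\nabla h_{NL}(u(s))\,ds,
\]
which is exactly the Duhamel (mild) formulation of \eqref{chap3.ivpH-s}. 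Continuity of $u$ and local integrability of $s\mapsto \nabla h_{NL}(u(s))$ follow from the corresponding properties of $w$ together with the fact that $e^{itA}$ is an isometry on $H^{-s}$.

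Finally, to pass from $\mu_0$-a.s. to $\nu_0$-a.s. existence, observe that by \eqref{chap3.gibbs} the density $\frac{d\mu_0}{d\nu_0} = Z^{-1}e^{-h_{NL}}$ with normalization $Z := \int_{H^{-s}} e^{-h_{NL}}\,d\nu_0 \in (0,\infty)$ (finite by the $L^2$ assumption on $e^{-h_{NL}}$, strictly positive because $h_{NL}$ is $\nu_0$-a.e.\ real-valued). Since $e^{-h_{NL}} > 0$ $\nu_0$-almost everywhere, the measures $\mu_0$ and $\nu_0$ are mutually equivalent, so $\mu_0(\mathscr G) = 1$ forces $\nu_0(\mathscr G) = 1$, completing the proof. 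The argument is a clean assembly of previously established ingredients; the only non-bookkeeping point is the straightforward conjugation identity relating mild solutions in the interaction picture to mild solutions of \eqref{chap3.ivpH-s}, so I do not anticipate a substantial obstacle.
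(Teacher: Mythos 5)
Your proposal is correct and follows essentially the same route as the paper: verify the hypotheses of Theorem \ref{chap3.main.thm} for the interaction-picture vector field \eqref{chap3.vdisp} and the pushforward Gibbs measures \eqref{chap3.gibbs_t} (narrow continuity, Proposition \ref{chap3.invdisp}, the integrability bound with $\omega(t)=\langle t\rangle^2$), apply the theorem to get $\mu_0$-a.s.\ global mild solutions, and pass to $\nu_0$ via the positive density $e^{-h_{NL}}$. The paper's write-up is terser, leaving implicit the conjugation $u(t)=e^{-itA}w(t)$ and the mutual absolute continuity of $\mu_0$ and $\nu_0$, both of which you correctly spell out.
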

Such a result is new to the best of our knowledge. It is a straightforward  consequence of the  Theorem \ref{chap3.main.thm}  and Proposition \ref{chap3.invdisp}.  The main point in Corollary \ref{chap3.cor.res} is that the existence of well-defined Gibbs measure provides a global solution to the statistical Liouville equation and hence by Theorem \ref{chap3.main.thm}, one deduces the almost sure existence of global solutions.   It is clear therefore  that there is a tight connection between Theorem  \ref{chap3.main.thm} and the subject of Gibbs measures and low regularity solutions of dispersive PDEs. In fact,  several examples of concrete PDEs like NLS, Hartree and Wave equations can be recast as the above initial value problem \eqref{chap3.ivpH-s}. One needs only to specify the Hilbert space $H$, the operator $A$ and the nonlinear functional $h_{NL}$.

\bigskip
Gibbs measures for nonlinear dispersive equations are well-studied and the literature on the subject is quite large as was summarised above. To  highlight the connection of our results with this topic, we provide here some applications of Corollary \ref{chap3.cor.res}  to concrete examples. Our aim is not to give all the possible applications, but rather to illustrate our method.

\medskip
\paragraph{\textbf{Hartree and NLS equations:} }
Let $H$ be  the Hilbert space $L^2(\TT^d)$ with $\TT^d=\RR^d/(2\pi\mathbb Z^d)$  the flat $d$-dimensional torus. Take the operator $A$ as
\begin{equation}
\label{chap3.eq.38}
 A=-\Delta+ 1\,,
\end{equation}
where $\Delta$ is the Laplacian  on $\TT^d$. So, the family $\{e_k=e^{ik x}\}_{k\in\mathbb Z^d}$ forms an O.N.B of eigenvectors for the operator $A$ which admits a compact resolvent. Now, consider an exponent $s\geq 0$ such that
\begin{equation}
\label{chap3.s_choice}
\fbox{$s>\frac{d}{2}-1$}\,,
\end{equation}
and define the Sobolev space $H^{-s}$ accordingly. Denote by $\nu_0$  the  well-defined centred Gaussian measure on  $H^{-s}$ given by \eqref{chap3.fgaussian}. Then, we list  some nonlinear functionals, $h_{NL}:H^{-s}\to \RR$, for which Corollary \ref{chap3.cor.res} applies; specified according to the dimension $d$ and the type of equation.

\begin{itemize}
\item \emph{The Hartree equation on $\TT$}: Let $V:\TT\to\RR$ be  a nonnegative even $L^1$ function and
\begin{equation}
\label{chap3.eq.nl.3}
h_{NL}(u)=\frac{1}{4}\;\int_{\TT}\,\int_{\TT} |u(x)|^2 \,V(x-y)\, |u(y)|^2 \,dx\,dy \,.
\end{equation}
\item \emph{The Hartree equation on $\TT^d$, $d=2,3$}: Let  $V \in L^1(\mathbb{T}^d)$ be
even and of positive type such that there exist $\epsilon>0$ and $C>0$ with the property that for all $k \in \mathbb{Z}^d$,
\begin{equation}
\label{V_hat_estimates}
\begin{cases}
\hat{V}(k) \leq \frac{C}{\langle k \rangle^{\epsilon}} &\mbox{if } d=2
\\
\hat{V}(k) \leq \frac{C}{\langle k \rangle^{2+\epsilon}} &\mbox{if } d=3\,.
\end{cases}
\end{equation}
Then take
\begin{equation}
\label{chap3.eq.nl.3_Wick}
h_{NL}(u) = \frac{1}{4}\;\int_{\TT^{d}}\,\int_{\TT^{d}} :|u(x)|^2: \,V(x-y)\, :|u(y)|^2: \,dx\,dy\,.
\end{equation}
\item \emph{The NLS equation on $\TT$}: Let $r \in \NN$ and let
\begin{equation}
\label{chap3.eq.nl.4}
h_{NL}(u)=\frac{1}{2r}\;\int_\TT |u(x)|^{2r}\, dx \geq 0\,.
\end{equation}
\item \emph{The NLS equation on $\TT^2$:} Let $r \in \NN$ with
\begin{equation}
\label{chap3.eq.nl.1}
h_{NL}(u)= \frac{1}{2r} \int_{\TT^{2}} :|u|^{2r}: \,dx\,.
\end{equation}
\end{itemize}
Here, the notation $: \;:$ refers to Wick ordering with respect to the Gaussian measure $\nu_0$. See for instance \cite{MR3844655} for a self-contained construction of these Wick ordered nonlinearities \eqref{chap3.eq.nl.3_Wick}-\eqref{chap3.eq.nl.1}.

\medskip
\paragraph{\textbf{Wave equations:}}  Consider the  Hilbert space $H=L_\RR^2(\TT^d)\oplus L_\RR^2(\TT^d)$  where  $L_\RR^2$ is  the space of real-valued square integrable  functions.  For  $s\in\RR$ satisfying \eqref{chap3.s_choice},  define  the  Sobolev space
\begin{equation}
\label{chap3.sob.wav}
H^{-s}=H^{-s}_\RR(\TT^d)\oplus H^{-s-1}_\RR(\TT^d)\,.
\end{equation}
The nonlinear wave equation takes the form
\[
\partial_t \begin{pmatrix} u \\ v\end{pmatrix} = \begin{bmatrix} 0 & \mathds 1\\ \Delta-\mathds 1 & 0\end{bmatrix}
\begin{pmatrix} u \\ v\end{pmatrix}+ \begin{pmatrix} 0 \\ -\nabla h_{NL}(u), \end{pmatrix}
\]
with $h_{NL}$ a nonlinear functional on $H^{-s}_\RR(\TT^d)$ given by
\begin{equation}
\label{nonlin_wa}
h_{NL}=
\begin{cases}
  \eqref{chap3.eq.nl.3} \text{ or } \eqref{chap3.eq.nl.4} & \text{ if } d=1,  \\
  \eqref{chap3.eq.nl.3_Wick} \text{ or } \eqref{chap3.eq.nl.1} & \text{ if } d=2,\\
  \eqref{chap3.eq.nl.3_Wick} & \text{ if } d=3. \\
\end{cases}
\end{equation}
The Gaussian measure $\nu_{0}$ in the case of the wave equation is defined as the product measure
$$
\nu_{0}=\nu_{0}^1\otimes \nu_{0}^2,
$$
where   $\nu_{0}^1$ and  $\nu_{0}^2$ are  Gaussian measures on the distribution space $\mathscr{D}'(\TT^d)$ with covariance operators $ (-\Delta+\mathds 1)^{-1}$ and $\mathds 1$ respectively.   Moreover,  one can define  rigorously  the Gibbs measure for the nonlinear wave equation as
$$
d\mu_0=\frac{e^{- h_{NL}} \,d\nu_{0}^1\otimes \nu_{0}^2}{\int e^{-h_{NL}} d\nu_0^1}.
$$
We recall the following result, proved in \cite{MR4571599}.
\begin{proposition}
\label{chap3.prop.GS1}
The nonlinear Borel functionals $h_{NL}(\cdot)$ given by \eqref{chap3.eq.nl.3}, \eqref{chap3.eq.nl.3_Wick}, \eqref{chap3.eq.nl.4} and \eqref{chap3.eq.nl.1} belong to the Gross-Sobolev space $\mathbb D^{1,2}(\nu_{0})$.
\end{proposition}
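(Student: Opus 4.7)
My plan is to verify the two defining properties of $\mathbb{D}^{1,2}(\nu_{0})$ for each of the four functionals in turn: (i) $h_{NL}\in L^{2}(H^{-s},\nu_{0})$, and (ii) the Malliavin derivative $\nabla h_{NL}$ lies in $L^{2}(H^{-s},\nu_{0};H^{-s})$. Throughout, I represent a typical sample $u\sim\nu_{0}$ by the Gaussian Fourier series $u=\sum_{k}g_{k}\,\lambda_{k}^{-(s+1)/2}\,e_{k}$, with $\{g_{k}\}$ independent standard (complex, respectively real) Gaussians consistent with the reality constraint, so that every moment is computable via Wick's (Isserlis') theorem.

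For the one-dimensional, non-Wick-ordered cases \eqref{chap3.eq.nl.3} and \eqref{chap3.eq.nl.4}, I would expand in Fourier modes and reduce (i) to summability of expressions of the form $\sum_{k_{1}+\cdots \pm k_{2r}=0}\langle k_{1}\rangle^{-2(s+1)}\cdots\langle k_{2r}\rangle^{-2(s+1)}$. With $s>d/2-1=-1/2$ and $d=1$, the covariance decay beats the $k$-space growth in the pairings produced by Wick's theorem, yielding convergent series. In the Hartree case the extra factor $|\hat V(k)|^{2}\leq\|V\|_{L^{1}}^{2}$ causes no trouble. For (ii) I compute $\nabla h_{NL}(u)=(V\ast|u|^{2})u$ and $\nabla h_{NL}(u)=|u|^{2r-2}u$ respectively, then pair against test elements of $H^{s}$ and run the same Gaussian-moment computation to bound the $L^{2}(\nu_{0};H^{-s})$ norm.

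For the Wick-ordered variants in $d=2,3$, namely \eqref{chap3.eq.nl.3_Wick} and \eqref{chap3.eq.nl.1}, I exploit the orthogonality of Wiener chaoses: $:|u(x)|^{2r}:$ lies in the $2r$-th homogeneous chaos, so $\|h_{NL}\|_{L^{2}(\nu_{0})}^{2}$ reduces to the sum of fully-connected (non self-contracted) Wick pairings, which is strictly better behaved than the raw moment. The decay hypotheses \eqref{V_hat_estimates} on $\hat V$ ($\langle k\rangle^{-\epsilon}$ in 2D, $\langle k\rangle^{-2-\epsilon}$ in 3D) are precisely calibrated so that, after the divergent self-contractions are removed by the Wick ordering, the remaining Fourier sums converge. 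For (ii), the Malliavin derivatives are $\nabla h_{NL}(u)=(V\ast:|u|^{2}:)\,u$ in the Hartree cases and $\nabla h_{NL}(u)=:|u|^{2r-2}u:$ in the NLS cases, and the same Wiener chaos decomposition, supplemented by Nelson hypercontractivity where higher moments are needed, yields finite $L^{2}(\nu_{0};H^{-s})$ norms.

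The principal obstacle is the three-dimensional Hartree case: even after Wick renormalization, some pairings in the top chaos are only logarithmically suppressed, and controlling them requires the full strength of $\hat V(k)\lesssim\langle k\rangle^{-2-\epsilon}$ together with a careful combinatorial accounting of which index pairs get contracted. Since the full set of estimates is carried out in detail in \cite{MR4571599}, the plan is to bundle the bounds (i)--(ii) for each case by invoking the explicit computations there and packaging them as the single conclusion $h_{NL}\in\mathbb{D}^{1,2}(\nu_{0})$.
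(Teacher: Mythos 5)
The paper gives no proof of this proposition beyond the citation to \cite{MR4571599}, and your proposal likewise defers all substantive estimates to that same reference, so the two approaches coincide. Your accompanying sketch of the Wiener-chaos and hypercontractivity computations is a correct outline of what is carried out there, up to a harmless normalization slip in the Gaussian series representation (the $L^{2}$-Fourier coefficients of $u$ under $\nu_{0}$ have variance $\lambda_{k}^{-1}$, not $\lambda_{k}^{-(s+1)}$, the latter being the variance of the coefficients with respect to an orthonormal basis of $H^{-s}$).
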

Hence, we recover the following statements as a consequence of Corollary \ref{chap3.cor.res}.
\begin{corollary}
\label{chap3.app.nl}
The above Hartree, NLS and Wave equations  with the nonlinearities   \eqref{chap3.eq.nl.3}, \eqref{chap3.eq.nl.3_Wick}, \eqref{chap3.eq.nl.4} and \eqref{chap3.eq.nl.1}  admit global mild solutions almost surely with respect to the Gaussian measure $\nu_0$.
\end{corollary}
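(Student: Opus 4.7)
The strategy is to reduce each of the three families of equations to the abstract initial value problem \eqref{chap3.ivpH-s} and invoke Corollary \ref{chap3.cor.res} (with a transparent variant in the wave case). Since Proposition \ref{chap3.prop.GS1} already supplies $h_{NL}\in\mathbb D^{1,2}(\nu_0)$ in every case, the work reduces to three bookkeeping tasks: (i) identifying the Hilbert space $H$ and the self-adjoint operator $A$ so that the PDE matches \eqref{chap3.ivpH-s}; (ii) verifying the spectral hypothesis $\sum_k\lambda_k^{-(s+1)}<+\infty$; and (iii) verifying the normalizability condition $\exp(-h_{NL})\in L^2(H^{-s},\nu_0)$ needed for the Gibbs measure \eqref{chap3.gibbs} to be well-defined.

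For the Hartree and NLS equations the setup is already the one prescribed in the excerpt: take $H=L^2(\TT^d)$ and $A=-\Delta+\mathds 1$, so that the eigenvalues are $\lambda_k=1+|k|^2$, $k\in\ZZ^d$, and the convergence of $\sum_k\lambda_k^{-(s+1)}$ reduces exactly to $s>d/2-1$, which is the standing assumption \eqref{chap3.s_choice}. The equation $i\partial_t u=Au+\nabla h_{NL}(u)$ is then the Hartree (resp.\ NLS) equation up to the trivial gauge $u\mapsto e^{it}u$ that absorbs the mass shift in $A$. For \eqref{chap3.eq.nl.3} the positivity of $V$ and hence of $h_{NL}$ is obvious, and for \eqref{chap3.eq.nl.4} $h_{NL}\geq 0$ by inspection, so in both cases $\exp(-h_{NL})\leq 1\in L^2(\nu_0)$. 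Corollary \ref{chap3.cor.res} then delivers global mild solutions on a set of full $\nu_0$-measure.

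For the nonlinear wave equation the operator is not of the complex Schr\"odinger form, so I would bypass Corollary \ref{chap3.cor.res} and go back to Theorem \ref{chap3.main.thm}. Set $B=H^{-s}_\RR(\TT^d)\oplus H^{-s-1}_\RR(\TT^d)$ and consider the free wave group $U_t$ generated by $\bigl[\begin{smallmatrix}0 & \mathds 1\\ \Delta-\mathds 1 & 0\end{smallmatrix}\bigr]$, which is unitary on each pair of Sobolev spaces with matching indices and preserves the product Gaussian measure $\nu_0=\nu_0^1\otimes\nu_0^2$ (this is the classical free-field invariance). Because $h_{NL}$ depends only on the first component $u$, the Gibbs measure $\mu_0\propto e^{-h_{NL}}\nu_0$ is well-defined under the assumption $e^{-h_{NL}}\in L^2(\nu_0^1)$, and its push-forward $\mu_t=(U_t)_\sharp\mu_0$ satisfies the statistical Liouville equation \eqref{chap3.le} with vector field $v(t,\Psi)=U_t \bigl(\begin{smallmatrix}0\\-\nabla h_{NL}\end{smallmatrix}\bigr)(U_{-t}\Psi)$ by exactly the calculation of Proposition \ref{chap3.invdisp}, modulo notational changes. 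Assumption \ref{assumption_v} then follows from $\nabla h_{NL}\in L^2(\nu_0^1;H^{-s}_\RR)$ and the trivial bound $\int \|v(t,\cdot)\|\,d\mu_t=\int\|\nabla h_{NL}\|\,d\mu_0$, and Theorem \ref{chap3.main.thm} produces the almost sure global flow.

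The one genuinely nontrivial point is (iii) for the Wick-ordered nonlinearities \eqref{chap3.eq.nl.3_Wick} and \eqref{chap3.eq.nl.1}, which are unbounded below $\nu_0$-a.s.\ in dimensions $d=2,3$; exponential integrability is the classical Nelson/Glimm--Jaffe estimate for the $\Phi^{2r}_2$ and Hartree$_{2,3}$ measures, relying on hypercontractivity of the Ornstein--Uhlenbeck semigroup applied to Wick polynomials, together with the Fourier decay hypothesis \eqref{V_hat_estimates} in the Hartree case. I would quote these bounds directly from the constructive field theory literature (and from the references surveyed in the paper) rather than reprove them; everything else in the proof is a direct verification of the hypotheses of Theorem \ref{chap3.main.thm} and Proposition \ref{chap3.invdisp}.
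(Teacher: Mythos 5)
Your proposal is correct and, for the Hartree and NLS cases, follows the paper's route exactly: the paper deduces Corollary \ref{chap3.app.nl} directly from Corollary \ref{chap3.cor.res} together with Proposition \ref{chap3.prop.GS1} (which supplies $h_{NL}\in\mathbb D^{1,2}(\nu_0)$) and the exponential integrability $e^{-h_{NL}}\in L^2(\nu_0)$, which --- exactly as you propose --- is not reproved but imported from the Gibbs-measure literature cited around \eqref{chap3.eq.nl.3_Wick}--\eqref{chap3.eq.nl.1}. The one place you genuinely diverge is the wave equation: the paper intends to recast it as an instance of the abstract problem \eqref{chap3.ivpH-s} (``one needs only to specify the Hilbert space $H$, the operator $A$ and the nonlinear functional $h_{NL}$''), i.e.\ via the standard complexification of the first-order system, so that Corollary \ref{chap3.cor.res} applies verbatim; you instead keep the real two-component formulation on $H^{-s}_\RR\oplus H^{-s-1}_\RR$ and rerun the argument of Proposition \ref{chap3.invdisp} and Theorem \ref{chap3.main.thm} for the free wave group $U_t$ directly. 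Both are valid; your route avoids the bookkeeping of the complex change of variables at the cost of repeating the Liouville-equation computation, and it has the virtue of making explicit that the relevant invariance is that of $\nu_0^1\otimes\nu_0^2$ under $U_t$. One small inaccuracy that does not affect the argument: the Wick-ordered Hartree functional \eqref{chap3.eq.nl.3_Wick} with $V$ of positive type is in fact nonnegative (being $\sum_k\hat V(k)\,|\widehat{\,:|u|^2:\,}(k)|^2$ in the limit), so among the listed nonlinearities only \eqref{chap3.eq.nl.1} is genuinely unbounded below and requires Nelson's hypercontractivity estimate; since you quote the integrability from the literature in either case, this changes nothing.
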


Before proceeding with the proof, we note a few applications of our results.

\medskip
\paragraph{\textbf{Nonlinear (an)-harmonic equations:}}
One can consider nonlinear Schr\"odinger type equations with (an)-harmonic oscillators on $\RR^d$, i.e.\ with
\begin{equation}\label{eq.A.anharmonic}
A= -\Delta+|x|^\alpha+1\,,
\end{equation}
for $\alpha>0$.  In this case, the Hilbert space we consider is $H=L^2(\RR^d)$. The Gaussian measure $\nu_0$ on $H^{-s}_{\RR}$ is defined as in \eqref{chap3.fgaussian} with $A$ as in
\eqref{eq.A.anharmonic}. The probability measure $\nu_0$ is well-defined provided that the assumption \eqref{chap.3.sob.s} on the eigenvalues of $A$ is satisfied.
Let us now verify the range of $s$ for which the latter is true.
We first recall the Lieb-Thirring inequality \cite{MR2253013}, which states that for $\gamma>\frac{d}{2}$, we have
\begin{equation} \label{Lieb-Thirring_inequality}
\sum_{k \in \NN} \lambda_k^{-\gamma} \leq C(\gamma) \int_{\RR^d} (|x|^{\alpha}+1)^{\frac{d}{2}-\gamma}\,dx\,.
\end{equation}
By \eqref{Lieb-Thirring_inequality}, we deduce that for
\begin{equation}
\label{eq.an.harmonic.assumpt}
   s> \frac{d}{2}+\frac{d}{\alpha}-1\,,
\end{equation}
the assumption  \eqref{chap.3.sob.s} is satisfied. Hence, we have all the ingredients to apply Corollary
\ref{chap3.cor.res}  and obtain the following statement.
\begin{corollary}
\label{chap3.app.anharmo.gwp}
Assume \eqref{eq.an.harmonic.assumpt} and consider any nonlinear functional $h_{NL}: H^{-s}\to \RR$ belonging to the Gross-Sobolev space $\mathbb{D}^{1,2}(\nu_0)$ such that
$$
\exp(-h_{NL})\in L^2(H^{-s},\nu_0).
$$
Then the nonlinear (an)-harmonic equation on $\RR^d$,
$$
i\partial_t u=-\Delta u+ |x|^\alpha u+u+\nabla h_{NL}(u)\,,
$$
admits global mild solutions on $H^{-s}$  almost surely with respect to the Gaussian measure $\nu_{0}$ in \eqref{chap3.fgaussian}, with $A$ as in \eqref{eq.A.anharmonic}.
\end{corollary}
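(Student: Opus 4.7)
The plan is to verify that all the structural hypotheses of Corollary \ref{chap3.cor.res} are met by the operator $A=-\Delta+|x|^\alpha+1$ on $H=L^2(\RR^d)$, and then invoke that corollary directly. The substantive content is entirely contained in the summability condition \eqref{chap.3.sob.s}, which I will check via Lieb--Thirring.

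First I would settle the abstract spectral properties of $A$. The operator is essentially self-adjoint on $\mathscr{C}^\infty_c(\RR^d)$ (standard for Schr\"odinger operators with confining polynomial potentials) and its closure satisfies $A\geq \mathds{1}$, since $-\Delta\geq 0$ and $|x|^\alpha+1\geq 1$; thus one may take $c=1$. Because $|x|^\alpha\to +\infty$ as $|x|\to\infty$, a standard Rellich-type compactness argument gives that $A^{-1}$ is compact, so $A$ has purely discrete spectrum $\{\lambda_k\}_{k\in\NN}$ with an orthonormal basis of eigenvectors $\{e_k\}_{k\in\NN}$. These are exactly the requirements placed on $A$ in the statement preceding \eqref{e_k_definition}.

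Second, I would verify the summability condition \eqref{chap.3.sob.s}. Applying the Lieb--Thirring inequality \eqref{Lieb-Thirring_inequality} with $\gamma=s+1$ (permissible as soon as $s+1>d/2$, which is implied by \eqref{eq.an.harmonic.assumpt} since $\alpha>0$) reduces the claim to the finiteness of
\[
\int_{\RR^d}(|x|^\alpha+1)^{\frac{d}{2}-(s+1)}\,dx.
\]
A passage to polar coordinates shows that this integral is finite if and only if $\alpha\bigl(s+1-\tfrac{d}{2}\bigr)>d$, i.e.~$s>\tfrac{d}{2}+\tfrac{d}{\alpha}-1$, which is precisely \eqref{eq.an.harmonic.assumpt}. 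Hence $\sum_{k\in\NN}\lambda_k^{-(s+1)}<+\infty$ and the centred Gaussian measure $\nu_0$ on $H^{-s}$ defined by \eqref{chap3.fgaussian} is well-posed.

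Finally, the two assumptions on $h_{NL}$, namely $h_{NL}\in\mathbb{D}^{1,2}(\nu_0)$ and $\exp(-h_{NL})\in L^2(H^{-s},\nu_0)$, are exactly the hypotheses of Corollary \ref{chap3.cor.res}. Writing the equation in the form $i\partial_t u = Au + \nabla h_{NL}(u)$ with $A=-\Delta+|x|^\alpha+1$ and applying Corollary \ref{chap3.cor.res} yields the almost sure existence of global mild solutions with respect to $\nu_0$. The only non-clerical step is the Lieb--Thirring computation above, and no genuine obstacle is expected: Corollary \ref{chap3.cor.res} already packages the dynamical content (construction of the time-evolved Gibbs measure \eqref{chap3.gibbs_t}, verification of the statistical Liouville equation, and the globalisation argument of Theorem \ref{chap3.main.thm}), so the proof reduces to an input check on the linear part $A$.
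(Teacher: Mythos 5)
Your proposal is correct and follows essentially the same route as the paper: verify the spectral summability condition \eqref{chap.3.sob.s} via the Lieb--Thirring inequality \eqref{Lieb-Thirring_inequality} with $\gamma=s+1$ (the polar-coordinate computation giving exactly the threshold \eqref{eq.an.harmonic.assumpt}), and then invoke Corollary \ref{chap3.cor.res} with $A=-\Delta+|x|^\alpha+1$. The only difference is that you spell out the self-adjointness and compact-resolvent properties of $A$, which the paper takes for granted.
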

Note that such a result still holds true under the perturbation of the (an)-harmonic oscillator \eqref{eq.A.anharmonic}  by a potential (see for instance the spectral asymptotics in \cite{MR4554532} and the references therein).

\medskip
\paragraph{\textbf{Nonlinear dispersive equations on bounded domains or manifolds:}}
Instead of working on the torus $\TT^d$ or on the whole space $\RR^d$, it is  possible to consider the  Hartree, NLS and Wave equations on bounded domains or on compact Riemannian  manifolds without boundary. In particular, Corollary \ref{chap3.cor.res} holds true in the following two cases.
\begin{itemize}
\item Let $\Omega$ be a bounded open domain in $\RR^d$. Take the Hilbert space $H=L^2(\Omega)$ and the operator
\begin{equation}
A=-\Delta_{\Omega}+ c\,,
\end{equation}
with Dirichlet or Neumann  boundary conditions. Here, $\Delta_{\Omega}$ is the Laplace operator on $\Omega$ and $c>0$ is a constant chosen such that $A$ is positive.  Thanks to Weyl's law (see \cite[Chapter 14]{MR2952218}), the assumption  \eqref{chap.3.sob.s} on the eigenvalues of the Laplacian $-\Delta_{\Omega}$ is satisfied whenever
 $$
 s>\frac{d}{2}-1\,.
 $$
 \item Let $(\mathcal M, g)$ be a $d$-dimensional compact Riemannian manifold without boundary.
Take the Hilbert space $H=L^2(\mathcal M)$ and consider  $A$ as the  Laplace-Beltrami operator
 \begin{equation}
A=-\Delta_{g}\,.
\end{equation}
Thanks to Weyl's law,  the assumption  \eqref{chap.3.sob.s}  on the eigenvalues of the  Laplace-Beltrami operator $-\Delta_g$  is satisfied whenever
 $$
 s>\frac{d}{2}-1\,.
 $$
\end{itemize}
As an example of a nonlinear functional $h_{NL}$ in this framework, one can consider the Wick-ordered nonlinearity \eqref{chap3.eq.nl.1} on bounded domains in $\RR^2$ or on $2$-dimensional compact Riemannian manifolds without boundary. We refer the reader to the discussions and full details given in \cite[Proposition 4.3, 4.5 and 4.6]{MR3844655} and \cite[Section 1.2]{MR4133695} for the rigorous construction of this type of functionals.

\medskip
\paragraph{\textbf{Fluid mechanics equations:}}

In this paragraph,  we follow the work of \cite{MR3818404} and refer the reader to the references therein for more details on the Euler and modified  SQG equations. Indeed,  the mSQG equation takes the form:
\begin{equation}
\label{chap3.msqg}
\begin{cases}
\partial_t \theta=-\big( \varphi \cdot\nabla \big)\theta , \qquad x\in\TT^2,  \; t>0, \\
\varphi = R^\perp |D|^{-\delta}\theta,
\end{cases}
\end{equation}
for $\delta>0$.  Here $\theta:\TT^2\to \RR, \varphi:\TT^2\to \RR^2$ are functions, $|D| := (-\Delta)^{1/2}$ and $R^\perp := \nabla^\perp |D|^{-1}$ denotes the Riesz transform with $\nabla^\perp =
(-\partial_{x_2}, \partial_{x_1})$.
The case $\delta =1$ in the above  equation corresponds  to the 2D Euler equation.   The streamline formulation ($u=|D|^{-1}\theta$) of the above mSQG equation yields
\begin{equation}
\label{chap3.msqg.bis}
\begin{cases}
\partial_t u=-|D|^{-1} \big( \varphi \cdot\nabla \big)|D| u , \\
\varphi = \nabla^\perp |D|^{-\delta} u,
\end{cases}
\end{equation}
so that the original mSQG equation is rephrased as an initial value problem with an autonomous  vector field  given by
\begin{equation}\label{chap.3.vectmsqg}
v(u)\equiv v(t, u)= -|D|^{-1} \big( \nabla^\perp |D|^{-\delta} u \cdot\nabla \big)|D|u\,.
\end{equation}
Consider now the centred Gaussian measure $\nu_0$ defined on  the negative the Sobolev space $H^{-s}$, $s>0$, as in the previous section (with $A=-\Delta$ and the Hilbert space $L_0^2(\TT^2)$ of mean zero square integrable functions). There are nice results on one hand  by Albeverio and Cruzeiro \cite{MR1051499} for the Euler equation and on the other by  Nahmod--Pavlovi\'{c}--Staffilani--Totz \cite{MR3818404} for the mSQG equation ($0<\delta\leq 1$), establishing the existence of solutions  for arbitrarily large lifespan and for almost initial data in the spaces $H^s$, $s<-2$, with respect to the Gaussian measure $\nu_0$.   Actually, thanks to preliminary results in \cite{MR3818404}, one can  apply our  Theorem \ref{chap3.main.thm}  to these equations too. Indeed, take $\mu_t=\nu_0$, $B=H^{-s}, s>2,$ and the vector field $v$ as in \eqref{chap.3.vectmsqg}, then in \cite[Proposition 4.1]{MR3818404} it is proved that $v \in  L^2(H^{-s},\nu_0; H^{-s})$ for all $s >2$.  This implies that the integrability condition \eqref{chap3.s1.eq3} is satisfied with any $\omega$ such that $\omega^{-1}\in L^1(\RR_+,dt)$. Moreover, the stationary  Liouville  equation  \eqref{chap3.le} is satisfied by the Gaussian measure $\nu_0$ thanks to  the  proof of  \cite[Lemma 5.1]{MR3818404}. Hence, as a consequence of Theorem   \ref{chap3.main.thm}, we obtain  $\nu_0$-almost surely the existence of global solutions to the mSQG equation \eqref{chap3.msqg}-\eqref{chap3.msqg.bis} in $\mathscr{C}(\RR_+, H^{-s})$ for all $s>2$.
In particular, our application of Theorem \ref{chap3.main.thm} yields an improvement of the result \cite{MR3818404} as it gives  almost sure global solutions  instead of arbitrarily large lifespan solutions.

\section{Global superposition principle}
\label{chap.3.sec.2}

Our purpose in this part is to state and prove the global superposition principle (Proposition \ref{propinfinite}). For that, we need to introduce the path space
\begin{equation}\label{mathcalH}
 {\mathfrak{X}}:=B\times \mathscr{C}(I; B),
 \end{equation}
 composed of  pairs $(x,\gamma)$ where $x\in B$ and $\gamma$ is a continuous curve in $(B,\Vert\cdot\Vert)$.
Denote, for  any $\gamma\in\mathscr{C}(I; B)$ and $m\in\NN$,
 \begin{equation}
 \label{chap.3.norm-m}
 \Vert \gamma\Vert_{m} = \sup_{[-m,m]\cap I} \Vert \gamma(t)\Vert ,
 \end{equation}
with the convention $ \Vert \gamma\Vert_{m}=0$ if $[-m,m]\cap I$ is the empty set. Since the interval $I$ is unbounded and closed, it is convenient to equip the space $\mathscr{C}(I; B)$ with the compact-open topology which is metrizable in our case with the metric:
\begin{equation}
\label{chap.3.compact-open}
d_0(\gamma_1,\gamma_2)=  \sum_{m\in\NN} \frac{1}{2^m} \frac{\Vert \gamma_1-\gamma_2\Vert_{m}}{1+\Vert \gamma_1-\gamma_2\Vert_{m}}\,.
\end{equation}
Then, we  accordingly define a distance $d$ over  the product space  $\mathfrak{X}=B\times \mathscr{C}(I; B)$,
\begin{equation}
 \label{d_metric}
 d\big((x_1,\gamma_1); (x_2,\gamma_2)\big)=\Vert x_1-x_2\Vert+d_0(\gamma_1,\gamma_2).
 \end{equation}
Furthermore, we define, for each $t \in I$, the  evaluation map
 \[ \begin{array}{lrcl}
\Xi_t \, : & \mathfrak{X} & \longrightarrow & B \\
&(x,\gamma) & \longmapsto & \displaystyle \gamma(t).
\end{array} \]

\noindent
Now, we are in position to state the global superposition principle. Recall that $I$ is an unbounded closed interval and $B$ is a separable dual Banach space equipped with a biorthogonal system satisfying  \ref{biorth.sys.1}-\ref{biorth.sys.3}  given in Definition \ref{biorthogonal_system}.
\begin{proposition}[Global superposition principle]\label{propinfinite}
Let $v:I\times B \rightarrow B$ be a Borel vector field. Let   $(\mu_t)_{t\in I}$ be a  narrowly continuous curve in $\mathscr{P}(B)$ satisfying  the integrability condition \eqref{locinteg}  and the statistical  Liouville equation   \eqref{chap3.le}.   Then, there exists  a Borel probability measure $\eta \in \mathscr{P}(\mathfrak X)$ such that:
\begin{enumerate}[label=(\roman*)]
\item \label{chap.3.(i)} $\eta$  concentrates on the set of pairs $(x,\gamma)$ such that $\gamma  \in  AC^1_{\it loc}(I;B)$ is a mild solution of the initial value problem \eqref{chap3.ivpinh} for a.e. $t\in I$ with $\gamma(t_0)=x$.
\vskip1mm
\item \label{chap.3.(ii)} $\mu_t $ is equal to the image measure $(\Xi_t)_{{\sharp}} \eta$, for all $t\in I$: {\it i.e.,}  $\forall \mathcal O \in \mathscr{B}(B), \; \mu_t(\mathcal O)=  \eta\big( \Xi_t^{-1}(\mathcal O)\big)$.
\end{enumerate}
\end{proposition}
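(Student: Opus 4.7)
The plan is to bootstrap the finite-time (bounded-interval) superposition principle, which is already available in separable Banach spaces via \cite{MR3721874} and its finite-dimensional ancestors \cite{AmbrosioLuigi2005GFIM,MR2400257,MR2335089}, into the global (unbounded-interval) statement through a tightness-and-compactness argument in the Polish path space $\mathfrak{X}$.

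First I would exhaust $I$ by a nested sequence of compact sub-intervals $I_n := [-n, n] \cap I$ all containing $t_0$. On each $I_n$, Assumption \ref{assumption_v} restricts to the finite-time integrability condition and $(\mu_t)_{t \in I_n}$ still solves the statistical Liouville equation, so the local superposition principle yields a Borel probability measure $\eta^n$ on $B \times \mathscr{C}(I_n; B)$ concentrated on mild solutions of \eqref{chap3.ivpinh} on $I_n$ and with the prescribed time marginals $(\Xi_t)_\sharp \eta^n = \mu_t$ for $t \in I_n$. I would then push $\eta^n$ forward to a measure $\tilde\eta^n \in \mathscr{P}(\mathfrak{X})$ by composing with a continuous extension map $\mathscr{C}(I_n; B) \to \mathscr{C}(I; B)$ (for instance, extending each curve to be constant outside $I_n$).

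The central analytic step is proving tightness of $\{\tilde\eta^n\}_n$ in $\mathscr{P}(\mathfrak{X})$, after which Prokhorov's theorem on the Polish space $\mathfrak{X}$ delivers a narrowly convergent subsequence with limit $\eta$. Tightness of the $B$-marginal is automatic, since for $n$ large the marginal is just $\mu_{t_0}$, which is Radon on the Polish space $B$ by Ulam's theorem. For the path marginal in $\mathscr{C}(I; B)$ equipped with the compact-open metric \eqref{chap.3.compact-open}, I would invoke an Arzel\`a--Ascoli-type criterion on each $I_m$: pointwise tightness at each $t$ follows from $(\Xi_t)_\sharp \tilde\eta^n \in \{\mu_t, \delta_{\gamma(\text{endpoint})}\text{-type}\}$, all tight; equi-continuity in probability follows from the Bochner representation of mild solutions and Markov's inequality,
\[
\tilde\eta^n\bigl(\{(x,\gamma) : \|\gamma(t) - \gamma(s)\| > \varepsilon\}\bigr) \leq \frac{1}{\varepsilon} \int_s^t \int_B \|v(r, u)\|\, \mu_r(du)\, dr,
\]
which is uniformly small in $n$ whenever $|t - s|$ is small, by the $L^1_{\text{loc}}$ integrability \eqref{locinteg}.

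Passing to the limit, property \ref{chap.3.(ii)} is immediate: the evaluation $\Xi_t$ is continuous, so narrow convergence transports the marginals, and for $k$ large $(\Xi_t)_\sharp \tilde\eta^{n_k} = \mu_t$ on $I_{n_k}$. The genuine obstacle is \ref{chap.3.(i)}: the set of $(x,\gamma)$ with $\gamma(t) = x + \int_{t_0}^t v(s,\gamma(s))\,ds$ for all $t \in I$ is \emph{not} closed in $\mathfrak{X}$ because $v$ is merely Borel, so one cannot simply pass $\tilde\eta^{n_k}$-a.s.\ mild-solution property to the narrow limit. I would handle this in two moves. First, show that for every fixed compact $J \subset I$, $\eta$-a.s.\ the curve is a mild solution on $J$, by approximating $v$ in $L^1(I \times B, dr \otimes \mu_r; B)$ by a sequence of continuous bounded vector fields $v_k$ (via Lusin-type approximation adapted to separable dual Banach spaces), verifying the integral identity with $v_k$ passes to the limit by narrow convergence and a dominated-convergence argument using the uniform bound \eqref{locinteg}, and then removing the approximation. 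Second, use that $J \subset I$ was arbitrary together with the countable exhaustion $I = \bigcup_m I_m$ to conclude $\eta$-a.s.\ that $\gamma$ is a mild solution on all of $I$. This last step --- the Lusin approximation of $v$ and the justification that the approximation errors vanish uniformly in $n$ --- is where I expect the bulk of the technical work to lie.
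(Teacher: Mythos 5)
Your architecture is genuinely different from the paper's: you truncate in \emph{time} (exhaust $I$ by $I_n=[-n,n]\cap I$, invoke the bounded-interval superposition principle of \cite{MR3721874} as a black box, extend curves constantly outside $I_n$, and compactify), whereas the paper truncates in \emph{space} (projects onto finite-dimensional subspaces $B_n$ via the biorthogonal system, re-proves the superposition principle on $\RR^d$ directly over the unbounded interval using a weight $\omega(|t|)$ equivalent to \eqref{locinteg}, and then lifts back by a tightness argument in the weak-$*$ metrics $\Vert\cdot\Vert_*$, $d_{0,*}$). Your identification of the limit --- proving \ref{chap.3.(i)} by approximating $v$ in $L^1(dr\otimes\mu_r;B)$ by continuous bounded fields and passing the Duhamel identity through the narrow limit --- is essentially the same density argument the paper uses in its Lemmas on concentration and lifting, and that part of your plan is sound.

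The genuine gap is in the tightness step. Your bound
\[
\tilde\eta^n\bigl(\{\|\gamma(t)-\gamma(s)\|>\varepsilon\}\bigr)\le \frac1\varepsilon\int_s^t\int_B\|v(r,u)\|\,\mu_r(du)\,dr
\]
controls a \emph{fixed} pair $(s,t)$, but tightness in $\mathscr{C}(J;B)$ requires controlling the modulus of continuity $w(\gamma,\delta)=\sup_{|t-s|\le\delta}\|\gamma(t)-\gamma(s)\|$ uniformly in $n$. If you partition $J$ into intervals of length $\delta$ and sum your estimate, the right-hand sides add up to $\tfrac1\varepsilon\int_J\int_B\|v\|\,d\mu_r\,dr$, which does not shrink as $\delta\to0$; a mere $L^1$ bound on $r\mapsto\int_B\|v(r,u)\|\mu_r(du)$ cannot rule out that the mass of $\int_s^t\|v(r,\gamma(r))\|dr$ concentrates on short time windows along a positive-probability set of paths. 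The repair is exactly the paper's mechanism: by De la Vall\'ee-Poussin/Dunford--Pettis (Lemmas \ref{equivalence}--\ref{Dunford-Pettis} and \ref{remarkmeasure}) one upgrades \eqref{chap3.s1.eq3} to $\int_I\int_B\theta(\|v\|)\,d\mu_t\,dt/\omega\le1$ for a superlinear convex $\theta$, and then the coercive functional $g(\gamma)=\int_I\theta(\|\dot\gamma\|_*)\,dt/\omega$ has relatively compact sublevels (Lemma \ref{localtight}) and uniformly bounded integrals against the path measures, which gives tightness via Lemma \ref{lemma2.5}. A second, related overstatement is your claim of tightness directly in the \emph{strong} compact-open topology on $\mathscr{C}(I;B)$: in infinite dimensions the Arzel\`a--Ascoli compacts require norm-relative compactness of the time slices, and the paper only obtains relative compactness of the sublevel sets in $\mathscr{C}(I;B_w)$ (weak-$*$ metric), recovering strong continuity of the \emph{limit} curves afterwards by a separate Dunford--Pettis argument on $\|\dot\gamma_n\|_B$. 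Your scheme can likely be carried through, but only after importing this equi-integrability machinery and weakening the topology in which compactness is sought.
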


\medskip
The remaining part of this section is dedicated to the proof the above proposition.  In subsection \ref{subsec.weak-star}, we introduce  convenient weaker topologies on the space $B$ and the path space $\mathfrak{X}$.  In Subsection \ref{project}, we set up a finite-dimensional projection argument. In Subsection \ref{finitedimension}, we prove Proposition \ref{propinfinite} when $B=\RR^d$.  Then, we extend such a result to infinite-dimensional separable dual Banach spaces in Subsection \ref{proofofprop2}.

\subsection{Weak topologies}
\label{subsec.weak-star}
The following  topological and measure theoretical aspects will be very useful in the proofs of our main Theorem \ref{chap3.main.thm} and the global superposition principle (Proposition \ref{propinfinite}).

 It is useful  to introduce a  norm over $B$ that ensures relative compactness of bounded sets,
\begin{equation}
\label{chap3.norm.def}
\|x\|_{*}:=\sum_{k\in\NN} \frac{1}{2^k \|e_k\|_{E}} \,\vert \langle x,e_k \rangle \vert, \qquad x\in B.
\end{equation}
Here   $\langle \cdot,\cdot \rangle$ is the duality  pairing of  $E^*$, $E$ and  $\lbrace e_k \rbrace_{k\in\NN} $,
$\lbrace e^*_k \rbrace_{k\in\NN} $ is the fixed biorthogonal system in Definition  \ref{biorthogonal_system}.  Actually,  the norm $\|\cdot\|_{*}$ yields a distance on $B=E^*$ which metrizes the weak-* topology $\sigma(E^*,E)$ on bounded sets.
For convenience, we denote by $B_w$ the space $B$ endowed with the above norm \eqref{chap3.norm.def} and remark  that $B_w=(B,\|\cdot\|_{*})$ is  separable.

\medskip
Recall that  $\mathscr{P}(B)$ denotes the space of Borel probability measures on $(B,\Vert\cdot\Vert)$. The following lemma shows that
$\mathscr{P}(B)$ is unchanged as a set, if we equip  the space $B$ with the norms $\Vert\cdot\Vert$ or  $\Vert\cdot\Vert_{*}$.
\begin{lemma}
The $\sigma$-algebras of  Borel sets of $(B,\|\cdot\|)$ and $(B, \|\cdot\|_{*})$ coincide.
\end{lemma}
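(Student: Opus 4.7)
The plan is to prove the two inclusions separately. First I would observe that the norm $\|\cdot\|_{*}$ is dominated by $\|\cdot\|$, because
\[
\|x\|_{*} = \sum_{k\in\NN} \frac{|\langle x,e_k\rangle|}{2^k\|e_k\|_E} \leq \sum_{k\in\NN} \frac{\|x\|\,\|e_k\|_E}{2^k\|e_k\|_E} = \|x\|,
\]
so the identity map $(B,\|\cdot\|)\to (B,\|\cdot\|_{*})$ is Lipschitz continuous. This immediately yields $\mathscr{B}(B,\|\cdot\|_{*})\subseteq \mathscr{B}(B,\|\cdot\|)$.

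For the reverse inclusion, I would exploit the separability of $(B,\|\cdot\|)$: fixing a countable dense set $\{x_n\}_{n\in\NN}\subset B$, the closed balls $\{x:\|x-x_n\|\leq r\}$ with rational $r$ generate $\mathscr{B}(B,\|\cdot\|)$. Hence it suffices to show that the real-valued function $x\mapsto \|x\|$ is Borel measurable with respect to $\|\cdot\|_{*}$. Since $E$ is separable, the closed unit ball of $E$ contains a countable dense set $\{y_m\}_{m\in\NN}$, and
\[
\|x\| = \sup_{m\in\NN} |\langle x,y_m\rangle|, \qquad \forall x\in B,
\]
so the task reduces to proving that each functional $\langle \cdot,y_m\rangle:B\to\RR$ is $\|\cdot\|_{*}$-Borel measurable.

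The key step uses property \ref{biorth.sys.1} of the biorthogonal system: ${\rm Span}\{e_k\}_{k\in\NN}$ is dense in $E$, so for each $m$ there exists a sequence $z_{m,N}\in{\rm Span}\{e_1,\ldots,e_N\}$ converging in $E$-norm to $y_m$. Then for every fixed $x\in B=E^*$,
\[
\langle x,y_m\rangle = \lim_{N\to\infty}\langle x,z_{m,N}\rangle,
\]
exhibiting $\langle \cdot,y_m\rangle$ as a pointwise limit of finite linear combinations of the coordinate functionals $\langle \cdot,e_k\rangle$. Each of these coordinate functionals is $\|\cdot\|_{*}$-continuous because the estimate $|\langle x-x',e_k\rangle|\leq 2^k\|e_k\|_E\,\|x-x'\|_{*}$ follows directly from the definition of $\|\cdot\|_{*}$. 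Consequently $\langle \cdot,y_m\rangle$ is a pointwise limit of $\|\cdot\|_{*}$-continuous functions, hence $\|\cdot\|_{*}$-Borel measurable, and taking the countable supremum shows $\|\cdot\|$ is $\|\cdot\|_{*}$-Borel. This makes every ball $\{x:\|x-x_n\|\leq r\}$ a $\|\cdot\|_{*}$-Borel set, giving $\mathscr{B}(B,\|\cdot\|)\subseteq \mathscr{B}(B,\|\cdot\|_{*})$ and completing the proof. The only non-trivial point is the passage from weak-* continuity of the functionals $\langle\cdot,y_m\rangle$ (which only metrizes on bounded sets) to global $\|\cdot\|_{*}$-Borel measurability, which is handled via the fundamentality of $\{e_k\}$.
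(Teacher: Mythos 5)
Your proof is correct. The paper itself does not present an argument for this lemma --- it simply cites \cite[Lemma C1]{alc2020} --- so there is nothing internal to compare against, but your reasoning is the standard one and all the steps check out: the inequality $\|\cdot\|_{*}\leq\|\cdot\|$ gives one inclusion via continuity of the identity map; for the other, separability of $(B,\|\cdot\|)$ reduces matters to Borel measurability of $x\mapsto\|x\|$ for the $\|\cdot\|_{*}$-topology, which you obtain by writing $\|x\|=\sup_{m}|\langle x,y_m\rangle|$ over a countable dense subset $\{y_m\}$ of the unit ball of the (separable) predual $E$ and checking that each $\langle\cdot,y_m\rangle$ is a pointwise limit of finite linear combinations of the $\|\cdot\|_{*}$-continuous coordinate functionals $\langle\cdot,e_k\rangle$. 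The closing remark correctly identifies the one point that needs care --- $\|\cdot\|_{*}$ only metrizes the weak-* topology on bounded sets, so one cannot simply invoke weak-* continuity globally --- and the approximation via fundamentality of $\{e_k\}$ handles it.
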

\begin{proof}
See \cite[Lemma C1]{alc2020}.
\end{proof}
It is useful  to distinguish two narrow topologies  over  $\mathscr{P}(B)$.  Namely, the (strong) narrow topology when $B$ is equipped with the original norm $\Vert\cdot\Vert$ and the "weak" narrow topology  when $B$ is endowed with the norm $\Vert\cdot\Vert_{*}$.
\begin{definition}[Weak narrow topology]
\label{chap3.def.wnar}
We say that a sequence $(\mu_n)_{n\in \NN}$ of Borel probability measures in $\mathscr{P}(B)$ converges \emph{weakly narrowly} to $\mu\in \mathscr{P}(B)$ if for every bounded continuous function $F \in \mathscr{C}_b(B_w;\RR)$,
\[ \lim_n  \int_{B} F(u) \,\mu_n(du)=\int_{B} F(u) \,\mu(du).\]
In such a case, we denote
$$
{\mu}_n \underset{n\rightarrow\infty}{\rightharpoonup} \mu\,.
$$
Accordingly, a curve $(\mu_t)_{t\in I}$ in $\mathscr{P}(B)$ is said to be  \emph{weakly narrowly continuous} if the real-valued map
\[ t \in I \longmapsto \int_{B} F(u) \,\mu_t(du),\]
is continuous for every  $F \in \mathscr{C}_b(B_w;\RR)$.
\hfill$\square$
\end{definition}
\begin{remark}
Note  that in finite dimensions the weak narrow and (strong) narrow topologies  coincide.
\end{remark}

\medskip
On the other hand, we  define  similarly  new distances on $\mathscr{C}(I; B)$ and the path space $\mathfrak{X}=B\times \mathscr{C}(I; B)$ given respectively by
 \begin{equation}
 \label{chap.3.d0star}
 d_{0,*}\big(\gamma_1; \gamma_2\big)=\sum_{m\in\NN} \frac{1}{2^m} \frac{\Vert \gamma_1-\gamma_2\Vert_{*,m}}{1+\Vert \gamma_1-\gamma_2\Vert_{*,m}},
 \end{equation}
 and
 \begin{equation}
 \label{dstar_metric}
 d_*\big((x_1,\gamma_1); (x_2,\gamma_2)\big)=\Vert x_1-x_2\Vert_{*}+ d_{0,*}\big(\gamma_1; \gamma_2\big),
\end{equation}
where
 \begin{equation}
 \label{T_norm}
 \Vert \gamma\Vert_{*,m} = \sup_{[-m,m]\cap I} \Vert \gamma(t)\Vert_{*}\,,
 \end{equation}
 with the convention $ \Vert \gamma\Vert_{*,m}=0$ if $[-m,m]\cap I$ is the empty set.
It is clear that the topology induced by   $d_*$ is coarser than the one induced by the distance $d$ given in \eqref{d_metric}.  Moreover, $d_*$ is the induced metric on $\mathfrak{X}$ corresponding  to  the  product topology between $(B,\Vert\cdot\Vert_*)$ and the space $\mathscr{C}(I; B_w)$ endowed with the compact-open topology.

Recall that a Polish space is a Hausdorff topological space homeomorphic to a separable complete metric space while a Suslin space is
a Hausdorff topological space which is the image of a Polish space under a continuous map.  { In particular, in our case, $(B,\Vert\cdot\Vert_*)$ is a Suslin space.}
\begin{lemma}
\label{sec.glob.lem.1}
With the distances  $d$ and $d_*$ given respectively in  \eqref{d_metric} and \eqref{dstar_metric}, we have:
\begin{itemize}
\item $(\mathscr{C}(I; B), d_0)$ and $(\mathfrak{X}, d)$    are  Polish spaces.
\item  $(\mathscr{C}(I; B), d_{0,*})$ and $(\mathfrak{X}, d_*)$ are   Suslin spaces.
\end{itemize}
\end{lemma}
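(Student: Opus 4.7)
The plan is classical functional–topological bookkeeping. First I unpack the metrics: $d_0$ is just the standard Fréchet-type metric for uniform convergence on the compact exhaustion $K_m := [-m,m]\cap I$ of $I$ in $(B,\|\cdot\|)$, while $d_{0,*}$ is the same construction but for the weaker norm $\|\cdot\|_*$ on $B$. With that out of the way, the proof splits into a ``Polish'' part and a ``Suslin'' part.

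For the Polish claim, I would argue as follows. Since $(B,\|\cdot\|)$ is a separable Banach space it is itself Polish, so for each $m\in\NN$ the Banach space $(\mathscr{C}(K_m;B),\|\cdot\|_{\infty,K_m})$ is complete and separable (separability of $\mathscr{C}(K_m;B)$ follows from separability of $B$ and $K_m$ via the usual piecewise-affine approximation, or equivalently from density of $\mathscr{C}(K_m)\otimes B$). The restriction map $\gamma\mapsto(\gamma|_{K_m})_{m\in\NN}$ embeds $\mathscr{C}(I;B)$ as the closed ``compatibility'' subset of $\prod_{m\in\NN}\mathscr{C}(K_m;B)$, and this identification is a homeomorphism for the compact-open topology, i.e.\ for $d_0$. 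A countable product of Polish spaces is Polish, and a closed subset of a Polish space is Polish, so $(\mathscr{C}(I;B),d_0)$ is Polish. Then $(\mathfrak{X},d)$ is a finite product of the Polish spaces $(B,\|\cdot\|)$ and $(\mathscr{C}(I;B),d_0)$, so it is Polish as well.

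For the Suslin claim, the key observation is a trivial dominated-norm estimate: for every $x\in B$ and every $k$, $|\langle x,e_k\rangle|\le\|x\|\,\|e_k\|_E$, so
\[
\|x\|_* \;=\; \sum_{k\in\NN}\frac{1}{2^k\|e_k\|_E}|\langle x,e_k\rangle| \;\le\; 2\,\|x\|.
\]
Hence the identity map $(B,\|\cdot\|)\to(B,\|\cdot\|_*)=B_w$ is $2$-Lipschitz, in particular continuous, which exhibits $B_w$ as a continuous image of a Polish space and therefore Suslin. The same inequality, applied on each $K_m$, gives $\|\gamma\|_{*,m}\le 2\|\gamma\|_m$, so the identity $(\mathscr{C}(I;B),d_0)\to(\mathscr{C}(I;B),d_{0,*})$ is continuous (convergence for each $\|\cdot\|_m$ implies convergence for each $\|\cdot\|_{*,m}$, and the Fréchet-type combination preserves this). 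Thus $(\mathscr{C}(I;B),d_{0,*})$ is the continuous image of the Polish space $(\mathscr{C}(I;B),d_0)$ and hence Suslin. The same argument applied to the product metrics yields that $(\mathfrak{X},d_*)$ is a continuous image of the Polish space $(\mathfrak{X},d)$, hence Suslin.

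There is no genuine obstacle here: the only mildly nontrivial item is the description of $(\mathscr{C}(I;B),d_0)$ as a closed subspace of a countable Polish product, which is standard once one uses the $\sigma$-compact exhaustion $\{K_m\}$; everything else is a direct consequence of the inequality $\|\cdot\|_*\le 2\|\cdot\|$ and the fact that continuous images of Polish spaces are, by definition, Suslin.
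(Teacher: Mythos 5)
Your proof is correct and follows essentially the same route as the paper: Polishness of $(\mathscr{C}(I;B),d_0)$ via the compact exhaustion $K_m=[-m,m]\cap I$ (the paper cites hemicompactness of $I$ where you spell out the closed embedding into $\prod_m\mathscr{C}(K_m;B)$), and the Suslin claims via continuity of the identity map from the Polish topology to the weaker one, which you justify explicitly through the bound $\|\cdot\|_*\lesssim\|\cdot\|$. The extra detail you supply is consistent with the paper's citations and introduces no gap.
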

\begin{proof}
In fact, $(\mathfrak{X}, d)$ is a metric space which is the product of two separable complete metric  spaces. Note that
$\mathscr{C}(I; B)$ is separable and complete with respect to the compact-open topology because $I$ is a hemicompact space (see {\it e.g.} \cite{MR0464128,MR0146625}). On the other hand, the identity map
$$
\mathrm{Id}: (\mathfrak X, d)  \to (\mathfrak X, d_*)\,,
$$
is continuous with $(\mathfrak X, d)$ a Polish space and hence its image $(\mathfrak X, d_*)$ is a Suslin space. The spaces  $(\mathscr{C}(I; B), d_0)$ and
$(\mathscr{C}(I; B), d_{0,*})$ are treated similarly.
\end{proof}

It is also  useful to stress the following result.
\begin{lemma}
\label{sec.glob.lem.2}
The $\sigma$-algebras of $(\mathfrak{X}, d)$ and  $(\mathfrak{X}, d_*)$ coincide.
\end{lemma}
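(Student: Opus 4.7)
My plan is to deduce the equality of the two Borel $\sigma$-algebras from the Polish/Suslin structure established in Lemma \ref{sec.glob.lem.1} together with a Lusin--Souslin type argument.

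The easy inclusion is immediate. As observed in the paragraph preceding Lemma \ref{sec.glob.lem.1}, the topology induced by $d_*$ is coarser than the one induced by $d$, so the identity map
\[
\mathrm{Id}: (\mathfrak{X}, d) \longrightarrow (\mathfrak{X}, d_*)
\]
is continuous and hence Borel measurable. This already yields $\mathscr{B}(\mathfrak{X}, d_*) \subseteq \mathscr{B}(\mathfrak{X}, d)$.

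The opposite inclusion is the one requiring work. By Lemma \ref{sec.glob.lem.1}, $(\mathfrak{X}, d)$ is a Polish space while $(\mathfrak{X}, d_*)$ is a Hausdorff Suslin space, and the identity map above is a continuous bijection between them. I would then invoke the Lusin--Souslin theorem (see, e.g., Kechris, \emph{Classical Descriptive Set Theory}, Theorem~15.1, or Bogachev, \emph{Measure Theory}, Theorem~6.7.3): a continuous injection from a Polish space into a Hausdorff space with the Suslin property sends Borel sets to Borel sets. Applying it to $\mathrm{Id}$, every $d$-Borel subset of $\mathfrak{X}$ is a $d_*$-Borel subset, giving $\mathscr{B}(\mathfrak{X}, d) \subseteq \mathscr{B}(\mathfrak{X}, d_*)$ and completing the argument.

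The only genuine obstacle is verifying the hypotheses of Lusin--Souslin, which is exactly what Lemma \ref{sec.glob.lem.1} provides. As a hands-on alternative, one may instead check directly that the $d_*$-Borel $\sigma$-algebra on $\mathfrak{X}$ is generated by the countable family of maps $(x,\gamma) \mapsto \langle x, e_j\rangle$ and $(x,\gamma)\mapsto \langle \gamma(t_k), e_j\rangle$, for $j\in\NN$ and $\{t_k\}$ a countable dense subset of $I$, and that the same family generates $\mathscr{B}(\mathfrak{X}, d)$ in view of \cite[Lemma C1]{alc2020}; however, the descriptive set-theoretic route is shorter and is the approach I would adopt.
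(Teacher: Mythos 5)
Your argument is correct. The paper itself offers no proof beyond the citation ``See \cite[Lemma C.2]{alc2020}'', so there is nothing to compare line by line; your write-up supplies a complete, self-contained justification. The easy inclusion $\mathscr{B}(\mathfrak{X}, d_*) \subseteq \mathscr{B}(\mathfrak{X}, d)$ via continuity of the identity is right, and the reverse inclusion via the Lusin--Souslin injection theorem is exactly the standard route here: Lemma \ref{sec.glob.lem.1} gives that $(\mathfrak{X}, d)$ is Polish and $(\mathfrak{X}, d_*)$ is a (metric, hence Hausdorff) Suslin space, and the identity is a continuous bijection between them, so the hypotheses of the theorem (in the form ``an injective Borel map between Suslin spaces sends Borel sets to Borel sets'') are verified. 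This is also, in substance, how the cited reference and the analogous Lemma C.1 for $(B,\Vert\cdot\Vert)$ versus $(B,\Vert\cdot\Vert_*)$ are handled: two comparable Suslin topologies generate the same Borel $\sigma$-algebra. Your proposed hands-on alternative (generating both $\sigma$-algebras by the countable family $(x,\gamma)\mapsto \langle \gamma(t_k), e_j\rangle$, using separability and continuity of the curves to reduce suprema to countable ones) would also work and avoids descriptive set theory, but the route you chose is shorter and fully rigorous.
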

\begin{proof}
See \cite[Lemma C.2]{alc2020}.
\end{proof}
Hence, as a consequence, the set of Borel probability measures  on $(\mathfrak{X}, d)$ and on $(\mathfrak{X}, d_*)$  coincide.

\subsection{Projective argument}\label{project}
We introduce the finite rank linear operators:
 \begin{equation}
 \begin{array}{rcl}
T_n: B  & \longrightarrow &  B \\
x & \longmapsto & T_n(x)=\displaystyle\sum_{k=1}^{n}  \langle x, e_k\rangle \, e^*_k.
\end{array}
\end{equation}

\begin{lemma}
\label{weaklyconv}
The operators $T_n: B\to B$ satisfy the following properties:
\begin{enumerate}[label=(\roman*)]
 \item $\Vert T_n(x)\Vert_{*} \leq  \Vert x \Vert_{*}, \quad \forall x \in B$.
\item  $\displaystyle\lim_{n \rightarrow +\infty} \Vert T_n(x)-x\Vert_{*}=0, \quad \forall x \in B.$
\end{enumerate}
\end{lemma}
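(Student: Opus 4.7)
The plan is to exploit the biorthogonality relation $\langle e^*_{k'}, e_k\rangle = \delta_{k',k}$ to reduce both statements to elementary manipulations of the series defining $\|\cdot\|_*$. Before attacking (i)--(ii), I would first record that $\|x\|_* < +\infty$ for every $x \in B$: since $|\langle x, e_k\rangle| \leq \|x\|\,\|e_k\|_E$ by duality, one obtains
\[
\|x\|_* \;=\; \sum_{k\in\NN} \frac{|\langle x,e_k\rangle|}{2^k\|e_k\|_E} \;\leq\; \|x\| \sum_{k\in\NN} 2^{-k} \;=\; \|x\|,
\]
so in particular the series $\sum_k 2^{-k}\|e_k\|_E^{-1}|\langle x,e_k\rangle|$ converges.

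For part (i), the key observation is that for every $j\in\NN$,
\[
\langle T_n(x), e_j\rangle \;=\; \sum_{k=1}^{n} \langle x,e_k\rangle \, \langle e^*_k, e_j\rangle \;=\; \sum_{k=1}^{n} \langle x,e_k\rangle\,\delta_{k,j},
\]
which equals $\langle x, e_j\rangle$ if $j\leq n$ and $0$ otherwise. Substituting this into the definition of $\|\cdot\|_*$ then gives
\[
\|T_n(x)\|_* \;=\; \sum_{k=1}^{n} \frac{|\langle x, e_k\rangle|}{2^k\|e_k\|_E} \;\leq\; \sum_{k\in\NN} \frac{|\langle x,e_k\rangle|}{2^k\|e_k\|_E} \;=\; \|x\|_*,
\]
which is precisely (i).

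For part (ii), the same coordinate computation yields $\langle T_n(x)-x, e_j\rangle = 0$ for $j\leq n$ and $\langle T_n(x)-x, e_j\rangle = -\langle x,e_j\rangle$ for $j>n$, so
\[
\|T_n(x)-x\|_* \;=\; \sum_{k=n+1}^{\infty} \frac{|\langle x,e_k\rangle|}{2^k\|e_k\|_E}.
\]
This is the tail of the convergent series $\|x\|_*$, so it tends to $0$ as $n\to\infty$, proving (ii).

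There is no substantial obstacle here: the whole argument is book-keeping once biorthogonality and the duality estimate $|\langle x,e_k\rangle|\leq \|x\|\,\|e_k\|_E$ are invoked. The only subtle point worth double-checking is that $T_n$ genuinely maps into $B$ (which is obvious since $e^*_k \in B$) and that both (i) and (ii) are stated with respect to the weak norm $\|\cdot\|_*$ rather than the original norm $\|\cdot\|$; indeed, a uniform bound $\|T_n(x)\|\leq \|x\|$ would require additional structure (such as a Schauder basis property), whereas the weaker statement in $\|\cdot\|_*$ follows directly from biorthogonality alone.
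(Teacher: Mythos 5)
Your proof is correct. Part (i) coincides with the paper's argument: both use the biorthogonality $\langle e^*_k, e_{k'}\rangle=\delta_{k,k'}$ to compute $\Vert T_n x\Vert_*$ exactly as the partial sum $\sum_{k=1}^n 2^{-k}\Vert e_k\Vert_E^{-1}|\langle x,e_k\rangle|$. For part (ii), however, you take a genuinely different and more direct route. The paper first verifies $\Vert T_n x - x\Vert_*\to 0$ for $x$ in ${\rm Span}\{e^*_k\}$ and then invokes the density of this span in $B$ (the ``strongly total'' property \ref{biorth.sys.2}) together with the uniform bound (i) in a standard approximation argument. You instead observe that, by the same coordinate computation as in (i), $\Vert T_n x - x\Vert_*=\sum_{k>n}2^{-k}\Vert e_k\Vert_E^{-1}|\langle x,e_k\rangle|$ is precisely the tail of the series defining $\Vert x\Vert_*$, which converges because $|\langle x,e_k\rangle|\le \Vert x\Vert\,\Vert e_k\Vert_E$ gives $\Vert x\Vert_*\le\Vert x\Vert<\infty$; hence the tail vanishes. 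Your argument is shorter and needs only biorthogonality and finiteness of $\Vert\cdot\Vert_*$ — it does not use the density of ${\rm Span}\{e^*_k\}$ in $B$ at all — whereas the paper's density-plus-uniform-boundedness scheme is the generic template that would survive if $\Vert\cdot\Vert_*$ did not have this explicit weighted-$\ell^1$ coordinate structure. Here that structure is available, so your direct tail estimate is a legitimate (and arguably cleaner) alternative.
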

\begin{proof}
Recall that the families $\{e_k\}_{k\in \NN}$ and $\{e^*_{k}\}_{k\in\NN}$ define  a fundamental and strongly total biorthogonal system satisfying  \ref{biorth.sys.1}-\ref{biorth.sys.3}  in Definition \ref{biorthogonal_system}.

\noindent
(i) For all $x\in B$,
\begin{eqnarray*}
\Vert T_n x\Vert_{*} &=&\sum_{k'\in \NN} \frac{1}{2^{k'}\Vert e_{k'}\Vert_{E}}  \, |\langle \sum_{k=1}^n \langle x,
e_k\rangle e^*_{k}, e_{k'}\rangle | \\
&=&\sum_{k=1}^n  \frac{1}{2^{k}\Vert e_{k}\Vert_{E}}  \, |\langle x,
e_k\rangle| \\
&\leq& \Vert x\Vert_{*}\,.
\end{eqnarray*}
(ii)   For all $x\in  {\rm { Span}}\{e^*_k,k\in\NN\}$, one checks
\begin{eqnarray*}
\lim_n \Vert T_n x-x\Vert_{*}=0.
\end{eqnarray*}
Then using  \textit{(i)} and the density of ${\rm { Span}}\{e^*_k,k\in\NN\}$ in $(B,\Vert\cdot\Vert)$,  we prove  \textit{(ii)} by an
approximation argument.
\end{proof}

\noindent  For  $n \in \NN$, let  $B_n=T_n(B)={\rm { Span}}(e_1^*,\dots,e_n^*)\subset B$.  Denote by
\[ \begin{array}{rcl}
P_n \, : B_n  & \longrightarrow &\RR^{n} \\
\displaystyle \sum_{k=1}^{n} a_k e^*_k & \longmapsto & \displaystyle (a_1, \cdots , a_{n}).
\end{array} \]
Define also the maps \[ \pi_n:B \longmapsto \RR^{n}, \quad  \pi_n=P_n\circ T_n,\]
 and
 \begin{equation}\label{tildepi}
 \begin{array}{rcl}
\tilde{\pi}_n \, : \RR^{n}  & \longrightarrow & B_n\subset B \\
(a_1,\cdots, a_{n}) & \longmapsto & \displaystyle \tilde{\pi}_n(a_1,\cdots, a_{n})=\sum_{k=1}^{n} a_k e_k^*.
\end{array}
\end{equation}
Remark that we have the following relations
\begin{equation}\label{relations} P_n \circ \tilde\pi_n={\rm id}_{\RR^{n}}, \qquad \tilde \pi_n \circ \pi_n=T_n.
\end{equation}

\medskip
Let  $(\mu_t)_{t\in I}$ be a family of Borel probability measures on $B$ satisfying the assumptions in Proposition \ref{propinfinite}.  Then consider the following image measures
\begin{equation}\label{imagemeasure}
\boxed{\displaystyle \mu_t^{n}:=(\pi_n)_{\sharp} \mu_t \in \mathscr{P}(\RR^{n})}\,, \quad \boxed{ \tilde\mu_t^{n}:=(\tilde \pi_n)_{\sharp} \mu_t^{n}=(T_n)_{\sharp} \mu_t \in \mathscr{P}(B_n)}\,.
\end{equation}
According to Definition \ref{chap3.def.wnar},  Lemma \ref{weaklyconv} implies
 \be \label{narrconv}
 \tilde{\mu}^n_t \underset{n\rightarrow\infty}{\rightharpoonup} \mu_t
 \ee
  for all $t \in I$.
In our case, it is useful to consider  $\RR^{n}$ with the norm
\begin{equation}\label{Rnorm}
\vert \vert \vert y \vert \vert \vert_{\RR^{n}}:= \| \tilde \pi_n y  \|_{*}.\,
\end{equation}
\begin{lemma}[Projection to finite dimensions]\label{chap.3.proj.arg}
Assume \eqref{chap3.s1.eq3}.
For each $n\in\NN $, the curve $(\mu^{n}_t)_{t\in I}$ given by \eqref{imagemeasure}  is narrowly continuous and satisfies  the statistical Liouville  equation,
\begin{equation}\label{weaksenseinR}
\frac{d}{dt} \int_{\RR^{n}}  \varphi (x)  \mu_t^{n}(dx) =\int_{\RR^{n}} \langle v^{n}(t,x), \nabla_{x} \varphi(x) \rangle_{\RR^{n}}
\,\mu_t^{n}(dx), \quad \forall \varphi \in \mathscr{C}_{c}^{\infty}(\RR^{n}),
\end{equation}
in the distribution sense over the interior of $I$ and for some   Borel vector field $v^{n}:I \times \RR^{n}\rightarrow  \RR^{n}$ satisfying
\begin{equation}\label{ass1}
\int_I \int_{\RR^{n}}  \vert \vert \vert v^{n}(t,x) \vert \vert \vert_{\RR^{n}} \,\mu_t^{n}(dx) \,\frac{dt}{\omega(|t|)}<+\infty.
\end{equation}

\end{lemma}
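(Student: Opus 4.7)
\emph{Proof proposal.} The plan is to test the statistical Liouville equation \eqref{chap3.le} against cylindrical test functions lifted from $\RR^n$, and then descend the resulting identity from $B$ back to $\RR^n$ via a parametric Radon--Nikodym step which yields the desired jointly Borel vector field $v^n$.

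\emph{Step 1 (Cylindrical lift).} For $\varphi \in \mathscr{C}_c^\infty(\RR^n)$ set $F_\varphi := \varphi\circ\pi_n$, that is $F_\varphi(u)=\varphi(\langle u,e_1\rangle,\dots,\langle u,e_n\rangle)$. By Definition \ref{cylindrical_test_functions}, $F_\varphi\in\mathscr{C}_{c,cyl}^\infty(B)$, and formula \eqref{Liouville_equation_solution_remark_1} yields $\nabla F_\varphi(u) = \sum_{k=1}^n (\partial_k\varphi)(\pi_n u)\,e_k\in E$. Combining \eqref{chap3.le} with the pushforward identity $\int_B F_\varphi\,d\mu_t = \int_{\RR^n}\varphi\,d\mu_t^n$ gives, in the distributional sense on the interior of $I$,
\begin{equation*}
\frac{d}{dt}\int_{\RR^n}\varphi(x)\,\mu_t^n(dx) = \sum_{k=1}^n \int_B \langle v(t,u), e_k\rangle\,(\partial_k\varphi)(\pi_n u)\,\mu_t(du).
\end{equation*}
The task is therefore to recast the right-hand side as $\int_{\RR^n}\langle v^n(t,x),\nabla_x\varphi(x)\rangle_{\RR^n}\,\mu_t^n(dx)$ for some jointly Borel map $v^n: I \times \RR^n \to \RR^n$.

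\emph{Step 2 (Construction of $v^n$).} On $I \times B$ introduce the $\sigma$-finite Borel reference measure $\mu(dt\,du) := \omega(|t|)^{-1}\mu_t(du)\,dt$, which is well-defined thanks to narrow continuity of $(\mu_t)$, and for each $k \in \{1,\dots,n\}$ the finite signed Borel measure $\sigma_k(dt\,du) := \langle v(t,u), e_k\rangle\,\mu(dt\,du)$; its total variation is controlled by $|\langle v(t,u), e_k\rangle| \le \|e_k\|_E\,\|v(t,u)\|$ together with \eqref{chap3.s1.eq3}. Pushing forward by the continuous Borel map $\Psi:(t,u)\mapsto(t,\pi_n u)$ yields $\Psi_\sharp \mu = \omega(|t|)^{-1}\mu_t^n(dx)\,dt$ and $\Psi_\sharp \sigma_k \ll \Psi_\sharp \mu$. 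The Radon--Nikodym theorem then provides a jointly Borel measurable density $v^n_k: I \times \RR^n \to \RR$ satisfying, for every bounded Borel $\phi$ on $I \times \RR^n$,
\begin{equation*}
\int_I \int_B \langle v(t,u),e_k\rangle\,\phi(t,\pi_n u)\,\frac{\mu_t(du)\,dt}{\omega(|t|)} = \int_I \int_{\RR^n} v^n_k(t,x)\,\phi(t,x)\,\frac{\mu_t^n(dx)\,dt}{\omega(|t|)}.
\end{equation*}
Testing against $\phi(t,x)=\chi(t)\partial_k\varphi(x)$ for $\chi \in \mathscr{C}_c^\infty(\mathring I)$ and summing over $k$ recovers \eqref{weaksenseinR}.

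\emph{Step 3 (Narrow continuity and integrability).} Narrow continuity of $(\mu_t^n)_{t\in I}$ is immediate: $\pi_n: B \to \RR^n$ is bounded linear hence continuous, so for every $F \in \mathscr{C}_b(\RR^n)$ the composition $F \circ \pi_n \in \mathscr{C}_b(B)$, and $t\mapsto \int F\,d\mu_t^n = \int F \circ \pi_n\,d\mu_t$ is continuous by assumption on $(\mu_t)$. For \eqref{ass1}, a direct computation from \eqref{chap3.norm.def} and the biorthogonality relation \ref{biorth.sys.3} yields $\|\tilde\pi_n y\|_{*} = \sum_{k=1}^n 2^{-k}\|e_k\|_E^{-1}|y_k|$; combining with the total variation inequality $|\Psi_\sharp \sigma_k| \le \Psi_\sharp |\sigma_k|$ and \eqref{chap3.s1.eq3} gives
\begin{equation*}
\int_I\int_{\RR^n}|||v^n(t,x)|||_{\RR^n}\,\frac{\mu_t^n(dx)\,dt}{\omega(|t|)} \le \sum_{k=1}^n 2^{-k}\int_I\int_B \|v(t,u)\|\,\frac{\mu_t(du)\,dt}{\omega(|t|)} <+\infty.
\end{equation*}

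The main technical delicacy is the joint $(t,x)$-Borel measurability of $v^n$, which is needed for \eqref{weaksenseinR} to make sense as a distributional identity in $t$ with a locally integrable right-hand side. A pointwise-in-$t$ disintegration of $\mu_t$ along $\pi_n$ only produces sections that are Borel in $x$, and patching them coherently in $t$ would a priori require a measurable-selection argument. The parametric Radon--Nikodym construction above sidesteps this issue by bundling the time variable directly into the ambient measure space $I\times B$ before pushing forward.
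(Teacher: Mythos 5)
Your proposal is correct, and the construction of $v^{n}$ differs from the paper's in a way worth recording. The paper obtains $v^{n}$ by applying the disintegration theorem to $\mu_t$ along $\pi_n$ \emph{for each fixed} $t$, setting $v^{n}(t,y)=\int_{(\pi_n)^{-1}(y)}\pi_n\circ v(t,x)\,\mu^n_{t,y}(dx)$; this gives the Liouville identity and the bound \eqref{ass1} directly (via Jensen and Lemma \ref{weaklyconv}), but, as you rightly observe, it only produces a function defined $\mu^n_t$-a.e.\ in $y$ for each separate $t$, and the joint Borel measurability in $(t,y)$ asserted in the statement is left implicit. Your parametric Radon--Nikodym construction on $I\times B$ with reference measure $\omega(|t|)^{-1}\mu_t(du)\,dt$ is essentially the same disintegration performed once on space--time rather than slice by slice, and it delivers a jointly Borel $v^{n}$ for free together with the total-variation bound needed for \eqref{ass1}; the two vector fields agree $\mu^n_t\otimes dt/\omega$-a.e., which is all the lemma requires. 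Everything else (the cylindrical lift in Step 1, narrow continuity via $F\circ\pi_n\in\mathscr{C}_b(B)$, the computation of $\Vert\tilde\pi_n y\Vert_*$ from biorthogonality) matches the paper. One cosmetic point: when you descend from the Radon--Nikodym identity to \eqref{weaksenseinR}, the test function should be $\phi(t,x)=\chi(t)\,\omega(|t|)\,\partial_k\varphi(x)$ rather than $\chi(t)\,\partial_k\varphi(x)$, so that the weights $\omega(|t|)^{-1}$ built into your reference measure cancel; since $\chi$ has compact support and $\omega$ is locally bounded, this $\phi$ is still bounded Borel and the argument goes through unchanged.
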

\begin{proof}
Notice first that the curve $(\mu^{n}_t)_{t\in I}$ is narrowly continuous. Indeed, let $\varphi \in \mathscr{C}_b(\RR^{n};\RR)$ implies $\varphi\circ \pi_n \in \mathscr C_b(B_w;\RR)$. Hence, the curve $t \longmapsto \int_{\RR^{n}} \varphi(x) \, d\mu^{n}_t(x)=\int_{B} \varphi\circ \pi_n(x) \, \mu_t(dx)$ is continuous since $(\mu_t)_{t\in I}$ is weakly narrowly continuous as it is (strongly) narrowly continuous (see Definitions
\ref{narrow_continuity} and \ref{chap3.def.wnar}).\\
We then consider the statistical Liouville equation (\ref{chap3.le}) and  select $F=\psi \circ \pi_n$ where $\psi \in \mathscr C_c^\infty (\RR^{n})$.
The left part of  (\ref{chap3.le}) is transformed to:
\be \label{eqn} \frac{d}{dt} \int_{B} F(x) \, \mu_t(dx)=\frac{d}{dt} \int_B \psi \circ \pi_n(x)\, \mu_t(dx)  =
\frac{d}{dt} \int_{\RR^{n}} \psi(y) \, d\mu_t^{n}(y).
\ee
Since  the differential ${\rm D}(\psi\circ \pi_n) (x)={\rm D}\psi (\pi_n(x))\circ   \pi_n \in \mathscr{L}(B,\RR)=E^{**}\supset E$,  identities with the element
$\displaystyle \sum_{j=1}^n \partial_j\psi(\pi_n(x)) \, e_j \in E$, the right part of  (\ref{chap3.le})  is transformed to:
\[\begin{aligned}
 \int_{B} \langle \nabla F (x), v(t,x) \rangle_{E,E^*} \, \mu_t(dx)&=  \int_{B} \langle \nabla\psi (\pi_n(x)),  \pi_n \circ  v(t,x) \rangle_{\RR^{n}} \, \mu_t(dx)\, .
 \end{aligned}
 \]
 Remark that  since  $(B,\|\cdot \|_{*})$ is a  separable Radon space, we can apply the disintegration Theorem \ref{disint} (see Appendices
 \ref{chap3.sec.appB} and \ref{chap3.sec.appD}). In particular,  there exists a $\mu^{n}_t$-a.e. determined family of measures $ \lbrace \mu^n_{t,y},\, y \in \RR^{n} \rbrace \subset \mathscr{P}(B)$ such that $\mu^n_{t,y}\big(B  \setminus(\pi_n)^{-1}(y) \big)=0$
 and  applying formula \eqref{f}  with $f(x)= \langle \nabla\psi (\pi_n(x)),  \pi_n \circ  v(t,x) \rangle_{\RR^{n}}$ yields
 \[\int_{B} \langle \nabla\psi (\pi_n(x)),  \pi_n \circ  v(t,x) \rangle_{\RR^{n}}\, \mu_t(dx)=\int_{\RR^{n}} \int_{(\pi_n)^{-1}(y)} \langle \nabla\psi (\pi_n(x)),  \pi_n \circ  v(t,x) \rangle_{\RR^{n}} \, \mu^n_{t,y}(dx) \, d\mu_t^{n}(y)\, .\]
 So, we get
 \[\begin{aligned}
 \int_{B} \langle \nabla F(x), v(t,x) \rangle_{E,E^*} \, \mu_t(dx) & =  \int_{\RR^{n}} \int_{(\pi_n)^{-1}(y)}\langle \nabla\psi (\pi_n(x)),  \pi_n \circ  v(t,x) \rangle_{\RR^{n}} \, \mu^n_{t,y}(dx) \, d\mu_t^{n}(y)
  \\& =  \int_{\RR^{n}} \langle \nabla\psi (y),  \int_{(\pi_n)^{-1}(y)}\pi_n \circ  v(t,x)\, \mu^n_{t,y}(dx) \rangle_{\RR^{n}}  \, d\mu_t^{n}(y)
  \\& =  \int_{\RR^{n}} \langle \nabla\psi (y),  v^{n}(t,y) \rangle_{\RR^{n}}  \, d\mu_t^{n}(y)\, .
\end{aligned}
\]
where we have introduced  the vector field $v^{n}$ as follows:
\begin{equation}\label{vd}
 v^{n}(t,y):=\int_{(\pi_n)^{-1}(y)}\pi_n \circ  v(t,x) \, \mu^n_{t,y}(dx), \text{ for $t\in I$ and  $ \mu^{n}_t$-a.e. $y\in \RR^{n}$.}
\end{equation}
Furthermore, by gathering the above  equations, we get
\[\frac{d}{dt} \int_{\RR^{n}} \psi(y) \, d\mu_t^{n}(y)= \int_{\RR^{n}} \langle \nabla\psi (y),  v^{n}(t,y) \rangle_{\RR^{n}}  \, d\mu_t^{n}(y)\,. \]
So, we obtain the  statistical  Liouville equation (\ref{weaksenseinR}). We also obtain that
\be \label{argument}
\begin{aligned}
&\int_{I}\int_{\RR^{n}} \vert \vert \vert v^{n}(t,y)\vert \vert \vert_{\RR^{n}} \, \mu^{n}_t(dy) \, \frac{dt}{\omega(|t|)} \\ & = \int_{I}\int_{\RR^{n}} \Big \vert \Big \vert \Big \vert\int_{(\pi_n)^{-1}(y)}\pi_n \circ  v(t,x) \, \mu^n_{t,y}(dx)\Big \vert \Big \vert \Big \vert_{\RR^{n}} \, \mu^{n}_t(dy) \, \frac{dt}{\omega(|t|)} \\ & \leq \int_{I}\int_{B} \|\tilde \pi_n\circ \pi_n \circ v(t,x) \|_{*}  \, \mu_t(dx) \,
\frac{dt}{\omega(|t|)}
\\ & \leq    \int_{I}\int_{B} \| v(t,x) \|  \, \mu_t(dx) \, \frac{dt}{\omega(|t|)}< +\infty,
\end{aligned}
\ee
where in the above lines, we have used \eqref{vd}, \eqref{Rnorm},  { the Disintegration Theorem \ref{disint}}, the second equality in (\ref{relations}),  Lemma \ref{weaklyconv}-(i) and  (\ref{chap3.s1.eq3}).
\end{proof}

\subsection{Analysis in finite dimensions}\label{finitedimension}
In this part, we restrict our selves to the case $B=\RR^{d}$.
So, we aim to prove the \emph{global superposition principle}  (Proposition \ref{propinfinite}) when $B=\RR^d$. In fact,
similar results are already known in  finite dimensions and proved  in the book of   Ambrosio et al. \cite{AmbrosioLuigi2005GFIM} and in the work of  Maniglia \cite{MR2335089}. The main difference here is  that we consider times in unbounded intervals like the half-line $\RR_+$ while in the latter references it is restricted to  $[0,T]$. Unfortunately, we  could not deduce directly Proposition \ref{propinfinite} in the case  $B=\RR^d$ from the result of Maniglia \cite{MR2335089} or even from \cite{AmbrosioLuigi2005GFIM}. In fact, one needs to go through the main ideas and adjust some topological arguments which are behind the compactness properties that lead to the construction of the probability measure $\eta$ on the  path space $\mathfrak X$.  We first discuss  in Subsection \ref{subsec2.1}  the case where the  vector field  is  locally Lipschitz in the second variable, then we consider the Borel case in Subsection \ref{subsec2.2}.

\subsubsection{The Lipschitz case}\label{subsec2.1}
Let $\|\cdot\|_{\RR^d}$ be any norm on $\RR^d$. We denote often $v_t=v(t,\cdot)$.
In this paragraph, we impose the following local  Lipschitz condition.
\begin{assumption}[Lipschitz condition]\label{lipass}
For  every compact set  $K \subset \RR^d$,
\begin{equation}\label{lipeqn}
 t\in I\mapsto\sup_{K}\Vert v_t\Vert_{\RR^d} +{\rm lip}(v_t,K)  \in L^1_{loc}(I,dt),
\end{equation}
where ${\rm lip}(\cdot,K)$ denotes the Lipschitz constant on $K$.
\end{assumption}

\begin{proposition}[The global superposition principle in the {\it Lipschitz case}]\label{prop1}
Consider $B=\RR^d$ and  $v$, $(\mu_t)_{t\in I}$ as in  Proposition  \ref{propinfinite} satisfying the same hypotheses. Additionally, assume that $v: I \times \RR^d \rightarrow \RR^d$ satisfies the local Lipschitz Assumption  \ref{lipass}. Then the conclusion of Proposition  \ref{propinfinite} holds true with the Borel probability measure $\eta$  defined as in \eqref{etad}.
\end{proposition}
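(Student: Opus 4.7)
Since Assumption \ref{lipass} provides local Lipschitz control on $v_t$, the classical Cauchy--Lipschitz theorem (in its Carath\'eodory form) yields, for every $x \in \RR^d$, a unique maximal mild solution $\gamma_x$ of the initial value problem \eqref{chap3.ivpinh}, defined on an open subinterval $J(x) \subset I$ containing $t_0$. Standard continuous dependence then shows that the map $x \mapsto \gamma_x$ is continuous (hence Borel) from the open set $\{x \in \RR^d : I_m \subset J(x)\}$ into $(\mathscr{C}(I_m;\RR^d),\|\cdot\|_\infty)$, where $I_m := [t_0 - m, t_0 + m] \cap I$. The natural candidate for $\eta$ is the pushforward of $\mu_{t_0}$ along the map $x \mapsto (x,\gamma_x)$, once we verify that $J(x) = I$ for $\mu_{t_0}$-almost every $x$.

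My strategy is to reduce to the already-known bounded-time superposition principle of Ambrosio--Gigli--Savar\'e \cite{AmbrosioLuigi2005GFIM} and Maniglia \cite{MR2335089} on each $I_m$, and then glue. Restricted to $I_m$, our hypotheses match those of the finite-time theorem, producing a probability measure $\eta_m$ on $\RR^d \times \mathscr{C}(I_m;\RR^d)$ concentrated on pairs $(x,\gamma)$ with $\gamma$ a mild solution on $I_m$ satisfying $\gamma(t_0)=x$, and with $(\Xi_t)_\sharp \eta_m = \mu_t$ for every $t \in I_m$. Because $v$ is locally Lipschitz in the spatial variable, uniqueness of mild solutions on $I_m$ forces
\[
\eta_m = (\mathrm{Id}, X_m)_\sharp \mu_{t_0}, \qquad X_m(x) := \gamma_x|_{I_m},
\]
so that concentration on solutions yields $\mu_{t_0}(\mathscr G_m) = 1$ for $\mathscr G_m := \{x \in \RR^d : I_m \subset J(x)\}$.

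Setting $\mathscr G := \bigcap_{m \in \NN}\mathscr G_m$ gives a Borel subset of $\RR^d$ with $\mu_{t_0}(\mathscr G)=1$ on which the local solutions $\gamma_x|_{I_m}$ extend consistently (by Lipschitz uniqueness) to a global mild solution $\gamma_x \in \mathscr{C}(I;\RR^d)$. Defining $X(x) := \gamma_x$ for $x \in \mathscr G$, I set
\begin{equation}
\label{etad}
\eta \;:=\; \big(\mathrm{Id},X\big)_\sharp \big(\mu_{t_0}|_{\mathscr G}\big),
\end{equation}
extended trivially outside $\mathscr G$. Property \ref{chap.3.(i)} is then automatic from the construction. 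For property \ref{chap.3.(ii)}, fix $t \in I$ and choose $m$ with $t \in I_m$; both $(\Xi_t)_\sharp \eta$ and $(\Xi_t)_\sharp \eta_m$ are the pushforward of $\mu_{t_0}|_{\mathscr G_m}$ by $x \mapsto \gamma_x(t)$, hence $(\Xi_t)_\sharp \eta = (\Xi_t)_\sharp \eta_m = \mu_t$.

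The main obstacle is the passage from bounded to unbounded time. One must verify that the Borel/measurability setup developed in Subsection \ref{subsec.weak-star} (in particular that $\mathscr{C}(I;\RR^d)$ endowed with the compact-open topology of Lemma \ref{sec.glob.lem.1} is Polish and that its Borel $\sigma$-algebra coincides with the one generated by cylindrical evaluations) is compatible with the finite-time construction; that the maps $X_m$ are consistent on $\mathscr G_{m+1}\subset \mathscr G_m$ (which is where Lipschitz uniqueness is decisive, as it rules out branching at common times); and that the full-measure property $\mu_{t_0}(\mathscr G) = 1$ survives the countable intersection. A secondary subtlety is checking that the bounded-time superposition principle of \cite{AmbrosioLuigi2005GFIM,MR2335089} applies verbatim under the time-dependent Lipschitz Assumption \ref{lipass} paired with the integrability condition \eqref{chap3.s1.eq3}, rather than the uniform-in-$t$ bounds sometimes imposed in those references.
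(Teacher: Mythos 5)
Your proposal is correct and follows essentially the same route as the paper: apply the known finite-time theory on the windows $[-T,T]\cap I$ to get a full-measure Borel set of initial data with (unique) solutions there, intersect over countably many windows to obtain a full-measure set $\mathscr G$ of globally solvable data, and define $\eta$ as the pushforward of $\mu_{t_0}$ under $x\mapsto (x,\gamma_x)$. The only cosmetic difference is that the paper invokes the Lipschitz-case characteristics representation formula (Lemma \ref{lemma2.2}, i.e.\ \cite[Proposition 8.1.8]{AmbrosioLuigi2005GFIM}, which is stated precisely under the time-dependent Assumption \ref{lipass} with the integrability condition, so your final worry is moot) rather than the general bounded-time superposition principle followed by a uniqueness argument.
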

\begin{proof}
  To prove the above result, we  recall in  Appendix \ref{chap3.sec.appA}  several auxiliary results from the literature.  In particular, the  proof  is based on Lemmas \ref{lemma2.1}--\ref{lemma2.2}.   The assumption  (\ref{locinteg}), which is equivalent to (\ref{chap3.s1.eq3}) by Remark \ref{chap.3.rem},  holds true. Thus, we can  apply Lemmas  \ref{lemma2.1} and \ref{lemma2.2} to show that there exists a  Borel set $\cG^{d,T}\subset \RR^d$ such that $\mu_{t_0}(\cG^{d,T})=1$ and for all $x\in \cG^{d,T}$, there is a unique solution $\gamma \in AC^1(I\cap[-T,T];\RR^d)$ to the initial value problem  (\ref{chap3.ivpinh}) on $I\cap [-T,T]$ while we take $T>\pm t_0$ so that $I\cap[-T,T]$ has non empty interior.

\medskip

Let $$\cG^d=\underset{T \in \NN, T>\pm t_0}{\bigcap} \cG^{d,T},$$ which is  a Borel subset of $\RR^d$. Remark that by construction $\lbrace \cG^{d,T}\rbrace_{T \in \NN}$  is a decreasing sequence of Borel sets. Then, by monotone convergence theorem, we have
\[\mu_{t_0}(\cG^d)=\mu_{t_0}\big(\underset{T \in \NN, T>\pm t_0}{\bigcap}\cG^{d,T}\big)=\lim_{T\rightarrow +\infty}\mu_{t_0}(\cG^{d,T})= 1.
 \]
Recall that in the Lipschitz case, we have uniqueness of solutions for the initial value problem \eqref{chap3.ivpinh}. So, we conclude that for each $x\in \cG^d$  there exists a global unique solution of  \eqref{chap3.ivpinh}. Thus,  we construct a well-defined
global flow
\[ \begin{array}{rcl}
\Phi \, :I \times \cG^d   & \longrightarrow & \RR^d \\
(t,x) & \longmapsto & \displaystyle \Phi_t(x)=\gamma_x(t),\,
\end{array} \]
where $\gamma_x$ is the global solution of  \eqref{chap3.ivpinh} with the initial condition $\gamma(t_0)=x$.
Note that $\Phi $ is the flow with prescribed initial conditions at time $t_0$.
Moreover, using the identity  \eqref{chap3.appA.eq.flow}, we have for all $ t\in I$,
\[ \mu_t=(\Phi_t)_{ \sharp}{\mu_{t_0}}\,.\]
Now, we construct the measure $\eta$ as
\begin{equation}\label{etad}
\eta=({\rm Id}\times \Phi_\cdot)_{\sharp}\mu_{t_0}\in \mathscr{P}(\mathfrak X)
\end{equation}
where ${\rm Id}\times \Phi_\cdot$ is the map
given by
\[ \begin{array}{rcl}
{\rm Id}\times\Phi_\cdot \, : \cG^d   & \longrightarrow &\mathfrak X=\RR^d  \times\mathscr{C}(I;\RR^d) \\
x & \longmapsto & \displaystyle (x,\Phi_\cdot(x))\equiv (x,\gamma_x(\cdot))\, .
\end{array} \]
Next, we want to prove that $\eta$ satisfies the conditions \ref{chap.3.(i)} and \ref{chap.3.(ii)} of Proposition \ref{propinfinite}.\\
{\bf For (i):} We have to prove that $\eta(\cF^d)=1$ where
\[\cF^d=\bigl\lbrace (x,\gamma) \in \mathfrak X;\quad  \gamma \in AC^1_{\it loc}(I;\RR^d), \ \gamma(t)=x +\int_{t_0}^{t}v(\tau,\gamma(\tau))\,d\tau, \ t \in I \bigr\rbrace\,.  \]
Indeed, we have for $t\in I$
\[\begin{aligned}
&\int_{\mathfrak X}  \left\lVert \gamma(t)-x-\int_{t_0}^{t} v(\tau,\gamma(\tau)) \, d \tau \right\rVert_{\RR^d} \,\eta(dx,d\gamma)\\ &=\int_{\mathfrak X} \left\lVert\gamma(t)-x-\int_{t_0}^{t} v(\tau,\gamma(\tau)) \,d \tau \right\rVert_{\RR^d}  \,({\rm Id}\times \Phi_\cdot)_{\sharp}\mu_{t_0}(dx,d\gamma)
 \\ &
=\int_{\RR^d} \left\lVert \Phi_t(x)-x-\int_{t_0}^{t} v(\tau,\Phi_\tau(x)) \,d \tau \right\rVert_{\RR^d}  \,\mu_{t_0}(dx)\\&=0\,,
 \end{aligned}
  \]
  which implies that we have
  \begin{equation}\label{duhamel}
   \gamma(t)=x+\int_{t_0}^{t} v(\tau,\gamma(\tau)) \,d \tau, \quad \eta-{\rm a.e.} \ (x,\gamma)\in\mathfrak X.
\end{equation}
{For each $t_j\in \mathbb{Q}\cap I$, a rational number, there exists a null set $\cN^{t_j}$ with $\eta(\cN^{t_j})=0$ and such that (\ref{duhamel}) holds true on  $\mathfrak X\setminus \cN^{t_j}$. Then taking
\begin{equation*} \cN= \underset{j \in \NN}{\bigcup} \cN^{t_j}
\end{equation*}
with $\eta(\cN)=0$ such that for all $(x,\gamma)  \notin \cN$ and for all rational numbers in $\{t_j\}_j=\QQ\cap I$, we have
\be  \label{duhameltj}\gamma(t_j)=x+\int_{t_0}^{t_j} v(\tau,\gamma(\tau)) \,d \tau\,.
\ee
Now, using the continuity of the curves $\gamma$ and since $v(\cdot,\gamma(\cdot))\in L^1_{loc}(I,dt)$ $\eta-$almost surely (see condition  \eqref{chap3.s1.eq3}), the identity \eqref{duhameltj} is well-defined and moreover we get (\ref{duhamel})  for all times $t \in I$ and for all $(x,\gamma) \notin \cN$. }\\
   {\bf For (ii):} Let $\varphi \in \mathscr{C}_b(\RR^d) $
 \[\begin{aligned}
\int_{\RR^d} \varphi(x)\;({(\Xi_t)}_{\sharp}\eta)(dx)&=\int_{\mathfrak X} \varphi(\Xi_t(x,\gamma)) \,\eta(dx,d\gamma)=
\int_{\mathfrak X} \varphi\circ \Xi_t (x,\gamma) \;({\rm Id}\times \Phi_\cdot)_{\sharp}\mu_{t_0}(dx,d\gamma)
 \\ &
=\int_{\cG^d} \varphi\circ \Xi_t \circ ({\rm Id}\times \Phi_\cdot)(x) \;\mu_{t_0}(dx)=\int_{\cG^d} \varphi  ( \Phi_t(x)) \,\mu_{t_0}(dx)
\\& =\int_{\RR^d} \varphi  ( x) \;(\Phi_t)_{\sharp}{\mu_{t_0}}(dx)=\int_{\RR^d} \varphi  ( x) \; \mu_t(dx).
 \end{aligned}
  \]
  And thus $(\Xi_t)_{\sharp}\eta=\mu_t$, for all $t \in I$.
\end{proof}

\subsubsection{The Borel case}\label{subsec2.2}
In this section, we prove  Proposition \ref{propinfinite} when $B=\RR^d$ and $v$ is a Borel vector field. Now since $v$ is no more assumed to be  Lipschitz in the  second variable, we have to take into account that the characteristics may not be unique. Indeed, the potential lack of uniqueness of solutions to (\ref{chap3.ivpinh}) on finite intervals  makes it impossible to follow the same strategy as before.

\begin{proposition}[The global superposition principle in the {\it Borel case}]\label{prop2}
Consider $B=\RR^d$ and  $v$, $(\mu_t)_{t\in I}$ as in  Proposition  \ref{propinfinite} satisfying the same hypotheses. Then the conclusion of Proposition  \ref{propinfinite} holds true.
\end{proposition}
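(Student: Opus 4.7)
\emph{Proof strategy.} The plan is to reduce the Borel case to the Lipschitz case (Proposition~\ref{prop1}) by a simultaneous regularization of the measure and the vector field, then extract a weak limit on the path space. This follows the general scheme of Ambrosio-Gigli-Savaré and Maniglia, but because the interval $I$ is unbounded one must work throughout with the compact-open topology on $\mathscr{C}(I;\RR^d)$ and carry the weight $\omega(|t|)$ of \eqref{chap3.s1.eq3} through the estimates.

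\emph{Regularization.} Fix a standard Gaussian mollifier $\rho_\epsilon$ on $\RR^d$ and set
$$\mu_t^\epsilon := \mu_t * \rho_\epsilon, \qquad v_t^\epsilon(x) := \frac{[(v_t\mu_t)*\rho_\epsilon](x)}{\mu_t^\epsilon(x)},$$
where $v_t\mu_t$ is viewed as a vector-valued finite Borel measure on $\RR^d$. Because $\mu_t^\epsilon$ admits a strictly positive smooth density, $v_t^\epsilon$ is smooth in $x$ (hence locally Lipschitz in the sense of Assumption~\ref{lipass}) and Borel in $t$. Direct computation shows that $(\mu_t^\epsilon, v_t^\epsilon)$ still satisfies the Liouville equation \eqref{weaksenseinR}, and the convolution inequality $|(v_t\mu_t)*\rho_\epsilon|(x) \leq (|v_t|\mu_t)*\rho_\epsilon(x)$ combined with Fubini gives
$$\int_I\!\!\int_{\RR^d}\|v_t^\epsilon(x)\|_{\RR^d}\,\mu_t^\epsilon(dx)\,\frac{dt}{\omega(|t|)} \;\leq\; \int_I\!\!\int_{\RR^d}\|v_t(x)\|_{\RR^d}\,\mu_t(dx)\,\frac{dt}{\omega(|t|)} < +\infty,$$
uniformly in $\epsilon$. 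Proposition~\ref{prop1} then yields $\eta^\epsilon\in\mathscr{P}(\mathfrak{X})$, concentrated on the flow of $v^\epsilon$, with $(\Xi_t)_\sharp\eta^\epsilon = \mu_t^\epsilon$ for every $t\in I$.

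\emph{Tightness and limit identification.} The family $\{\eta^\epsilon\}_{\epsilon>0}$ is tight in $\mathscr{P}(\mathfrak{X})$. The $\RR^d$-marginal $\mu_{t_0}^\epsilon$ converges narrowly to $\mu_{t_0}$ and is therefore tight. Tightness of the path marginals on $(\mathscr{C}(I;\RR^d),d_0)$ reduces by Arzelà-Ascoli on each compact $I_T := I\cap[-T,T]$ to uniform boundedness and equicontinuity, both controlled by the Duhamel identity $\gamma(t)=x+\int_{t_0}^t v^\epsilon(\tau,\gamma(\tau))\,d\tau$, the preceding $L^1$-bound, and Markov's inequality; a diagonal argument in $T\in\NN$ assembles this into genuine tightness in the compact-open topology. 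Extract a narrowly convergent subsequence $\eta^{\epsilon_k}\rightharpoonup\eta$. Since each $\Xi_t$ is continuous and $\mu_t^{\epsilon_k}\rightharpoonup\mu_t$, the marginal property $(\Xi_t)_\sharp\eta=\mu_t$ required in \ref{chap.3.(ii)} follows immediately.

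\emph{Main obstacle.} The crux is showing that $\eta$ concentrates on mild solutions of the \emph{original} equation, i.e.\ property \ref{chap.3.(i)}; since $v$ is only Borel, the defect functional
$$C_T(x,\gamma) := \sup_{t\in I_T}\Bigl\|\gamma(t) - x - \int_{t_0}^t v(\tau,\gamma(\tau))\,d\tau\Bigr\|_{\RR^d}\wedge 1$$
is not continuous on $\mathfrak{X}$, so narrow convergence of $\eta^{\epsilon_k}$ does not directly apply. The argument splits $C_T$ via the triangle inequality around $\int v^{\epsilon_k}$: the first piece vanishes $\eta^{\epsilon_k}$-a.s.\ because $\eta^{\epsilon_k}$ is carried by the flow of $v^{\epsilon_k}$, and integrating the second piece against $\eta^{\epsilon_k}$, using the marginal identity $(\Xi_\tau)_\sharp\eta^{\epsilon_k}=\mu_\tau^{\epsilon_k}$, reduces to
$$\int_{I_T}\!\!\int_{\RR^d}\|v^{\epsilon_k}(\tau,x) - v(\tau,x)\|_{\RR^d}\,\mu_\tau^{\epsilon_k}(dx)\,d\tau \;\xrightarrow[k\to\infty]{}\; 0,$$
which follows from standard mollifier convergence applied to the finite measure $\|v_\tau(x)\|\,\mu_\tau(dx)\,d\tau/\omega(|\tau|)$ together with a truncation eliminating large $|\tau|$. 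Approximating $C_T$ from below by bounded continuous cylindrical functionals and passing to the limit then yields $\int C_T\,d\eta = 0$ for every $T$, so $\eta$ concentrates on mild solutions of \eqref{chap3.ivpinh}, completing the proof.
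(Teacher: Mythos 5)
Your overall scheme (mollify, invoke Proposition \ref{prop1} for the regularized pair, prove tightness on the path space, extract a narrow limit and identify it) is the same as the paper's, and your regularization step and the identification of the marginals $(\Xi_t)_\sharp\eta=\mu_t$ are correct. The first genuine gap is in the tightness of the path marginals: a uniform $L^1$ bound on $\int_I\int\|v^{\eps}\|\,d\mu^{\eps}_t\,dt/\omega(|t|)$ plus Markov's inequality does \emph{not} give equicontinuity. The set of absolutely continuous curves with fixed initial point and $\int_I\|\dot\gamma\|\,dt/\omega(|t|)\le R$ is bounded but not relatively compact in the compact-open topology (the derivative can concentrate near a point), so Markov applied to the $L^1$ functional does not yield compact sets of nearly full $\eta^{\eps}$-measure. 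One needs equi-integrability of $\|\dot\gamma\|$, which the paper obtains via the de la Vall\'ee-Poussin/Dunford--Pettis criterion (Lemmas \ref{equivalence} and \ref{remarkmeasure}): there is a superlinear convex $\theta$ with $\int_I\int_B\theta(\|v\|)\,d\mu_t\,dt/\omega\le 1$, and the functional $g(\gamma)=\int_I\theta(\|\dot\gamma\|)\,dt/\omega$ has relatively compact sublevel sets (Lemma \ref{localtight}), so that Lemma \ref{lemma2.5} applies. This superlinear upgrade is not cosmetic; without it the tightness claim is simply false.

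The second, more serious gap is in the concentration step. The claimed convergence $\int_{I_T}\int\|v^{\eps_k}-v\|\,d\mu^{\eps_k}_\tau\,d\tau\to 0$ fails for a general Borel $v$: the integrand equals $\int_{\RR^d}\bigl\|\int_{\RR^d}(v_\tau(y)-v_\tau(x))\rho_{\eps}(x-y)\,\mu_\tau(dy)\bigr\|\,dx$, which requires a translation-continuity of $v_\tau$ relative to $\mu_\tau$ that Borel fields need not have (take $\mu_\tau=\delta_0$ and $v_\tau=\mathbf{1}_{\{0\}}e_1$: the quantity is identically $1$). Moreover, the concluding device of ``approximating $C_T$ from below by bounded continuous functionals'' cannot recover $\int C_T\,d\eta$, since $C_T$ involves $v$ composed with $\gamma$ and is not lower semicontinuous, hence is not a supremum of continuous functionals. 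The paper avoids both problems by a two-stage approximation: it first inserts an arbitrary bounded \emph{continuous} field $w$, for which the defect functional is continuous on $\mathfrak X$, passes to the limit $\eta^{\eps_k}\rightharpoonup\eta$ to get the bound \eqref{w_bound} with right-hand side $\int_J\int\|v-w\|\,d\mu_\tau\,d\tau$, and only then approximates $v$ by continuous $w_m$ in $L^1(J\times\RR^d,\nu)$ (Lemma \ref{density}), controlling the resulting error against $\eta$ by means of the already-established marginal identity $(\Xi_t)_\sharp\eta=\mu_t$. Your direct mollifier limit does not close the argument; you need this detour through continuous vector fields.
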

\begin{proof}
The proof of Proposition \ref{prop2} is based on the  three lemmas stated below. The scheme goes as follows.
 We  apply first  the regularization Lemma \ref{approx}   to get an approximating family of probability measures $(\mu_t^\eps)_{t\in I}$ which  satisfies a statistical Liouville equation similar to \eqref{chap3.le} with a locally Lipschitz vector field $v^\eps$ satisfying Assumption \ref{lipass}. Then, we can apply  Proposition \ref{prop1} to the couple $(v^\eps,(\mu_t^\eps)_{t\in I})$ and  get a corresponding probability measure $\eta_{\eps} \in \mathscr{P}(\mathfrak X)$. Hence, we apply Lemma \ref{tight} to prove that the family $\lbrace \eta_{\eps} \rbrace_{\eps} $ is tight in $\mathscr{P}(\mathfrak{X})$. Therefore, there exists $\eta\in\mathscr P(\mathfrak X)$ such that $\eta_{\eps} \underset{\eps \rightarrow 0}{\rightharpoonup} \eta$ weakly narrowly (at least for a subsequence). Finally, by  Lemma \ref{limit}, we check that the constructed measure $\eta$ satisfies  \ref{chap.3.(i)} and \ref{chap.3.(ii)} in Proposition \ref{propinfinite}.
\end{proof}

We  provide here the above mentioned technical  Lemmas \ref{approx}, \ref{tight} and \ref{limit}.
\begin{lemma}[Regularization]\label{approx}
Consider $B=\RR^d$ and  $v$, $(\mu_t)_{t\in I}$ as in  Proposition  \ref{propinfinite} satisfying the same hypotheses. Then,  the regularized vector field  $v_t^{\eps}$  and the measures $\mu_t^{\eps}$ given in \eqref{approximation} satisfy a statistical Liouville equation as in \eqref{chap3.le}  over  the interval $I$. Moreover, define
\be \label{etafunction}\eta_{\eps}= ({\rm Id} \times \Phi_\cdot^\eps)_{\sharp}\mu_{t_0}^{\eps}\in \mathscr{P}(\mathfrak X)
\ee
 where  $\Phi_t^\eps(x) \equiv \gamma_x^\eps(t)$ is the unique global solution to the initial value problem
\begin{equation}\label{epseqn}
\dot{\gamma_x^{\eps}}(t)=v^{\eps}(t,\gamma_x^\eps(t)),\quad \gamma^\eps_x(t_0)=x.
\end{equation}
Then for all $t\in I$,
 \be
 \label{projeqmu}
 \mu^{\eps}_t=(\Xi_t)_{\sharp}\eta_{\eps}= (\Phi^\eps_t)_{\sharp}\mu_{t_0}^{\eps}.
 \ee
 \end{lemma}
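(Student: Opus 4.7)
The strategy is to reduce the statement to Proposition \ref{prop1} applied to a suitable regularized pair $(v^\eps, (\mu_t^\eps)_{t \in I})$. The approximation in \eqref{approximation} is a standard mollification on $\RR^d$: fixing a smooth nonnegative mollifier $\rho_\eps$ with $\int \rho_\eps = 1$, set $\mu_t^\eps := \mu_t * \rho_\eps$, which is a probability measure with smooth, strictly positive density, and define $v_t^\eps$ by the identity $v_t^\eps \mu_t^\eps = (v_t \mu_t) * \rho_\eps$, i.e.\ as the Radon-Nikodym derivative of the smooth $\RR^d$-valued measure $(v_t \mu_t)* \rho_\eps$ with respect to $\mu_t^\eps$. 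This quotient is well-defined everywhere in $\RR^d$ since $\mu_t^\eps > 0$. The first point I would verify is that $v^\eps$ satisfies Assumption \ref{lipass} of Proposition \ref{prop1}: smoothness of $\rho_\eps$ gives that $v_t^\eps$ is $\mathscr{C}^\infty$ in $x$, and Young's convolution inequality combined with \eqref{chap3.s1.eq3} yields the local integrability in $t$ of $\sup_K \|v_t^\eps\|_{\RR^d} + \mathrm{lip}(v_t^\eps,K)$ on every compact $K \subset \RR^d$.

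Next, I would check that $(\mu_t^\eps, v_t^\eps)$ satisfies the statistical Liouville equation \eqref{chap3.le} on $I$. For $\varphi \in \mathscr{C}_c^\infty(\RR^d)$, the function $\varphi * \check\rho_\eps$ (with $\check\rho_\eps(x) := \rho_\eps(-x)$) is again smooth and compactly supported, hence admissible as a test function in the original Liouville equation \eqref{chap3.le} for $(\mu_t, v_t)$. Applying Fubini, the elementary identity $\nabla(\varphi * \check\rho_\eps) = (\nabla \varphi) * \check\rho_\eps$, and the definition of $v_t^\eps$ yields
\[
\frac{d}{dt} \int_{\RR^d} \varphi \, d\mu_t^\eps \;=\; \int_{\RR^d} \langle (v_t\mu_t) * \rho_\eps,\, \nabla \varphi \rangle \, dx \;=\; \int_{\RR^d} \langle v_t^\eps,\, \nabla \varphi \rangle \, d\mu_t^\eps,
\]
in the sense of distributions on the interior of $I$. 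Narrow continuity of $(\mu_t^\eps)_{t\in I}$ is immediate from that of $(\mu_t)_{t\in I}$ since convolution with $\rho_\eps$ is narrowly continuous.

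With these two points established, Proposition \ref{prop1} applies directly to the pair $(v^\eps, (\mu_t^\eps)_{t\in I})$: it produces a unique global flow $\Phi^\eps_t$ solving \eqref{epseqn} for every initial datum $x \in \RR^d$, together with the flow invariance identity $\mu_t^\eps = (\Phi_t^\eps)_\sharp \mu_{t_0}^\eps$ for all $t \in I$. The measure $\eta_\eps$ in \eqref{etafunction} is then precisely the one furnished by Proposition \ref{prop1}, and \eqref{projeqmu} reduces to a direct computation: for any $\varphi \in \mathscr{C}_b(\RR^d)$,
\[
\int_{\RR^d} \varphi \, d(\Xi_t)_\sharp \eta_\eps \;=\; \int_{\mathfrak X} \varphi(\Xi_t(x,\gamma)) \, d\eta_\eps \;=\; \int_{\RR^d} \varphi(\Phi_t^\eps(x)) \, d\mu_{t_0}^\eps(x) \;=\; \int_{\RR^d} \varphi \, d(\Phi_t^\eps)_\sharp \mu_{t_0}^\eps,
\]
which combined with the flow invariance identity yields $(\Xi_t)_\sharp \eta_\eps = (\Phi_t^\eps)_\sharp \mu_{t_0}^\eps = \mu_t^\eps$.

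The main obstacle is the compatibility of the mollification with the nonlinear quotient defining $v^\eps$: one must treat $v_t \mu_t$ as a finite $\RR^d$-valued Borel measure on $\RR^d$ (whose total variation is controlled by $\|v_t\|_{L^1(\mu_t)}$, hence locally integrable in $t$ by \eqref{locinteg}), so that its convolution with $\rho_\eps$ is a bona fide smooth function and the Radon-Nikodym step makes sense pointwise. All remaining items -- narrow continuity, integrability against $\omega(|t|)^{-1}\,dt$, admissibility in \eqref{chap3.le} -- then follow from routine convolution estimates.
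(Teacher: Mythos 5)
Your proposal is correct and follows essentially the same route as the paper: the paper simply delegates the mollification facts (narrow continuity, the Liouville equation for $(\mu_t^\eps, v_t^\eps)$, Assumption \ref{lipass}, and the uniform bound \eqref{cdt1}) to the cited regularization Lemma \ref{lemma2.6}, then applies Proposition \ref{prop1} and reads off \eqref{projeqmu} exactly as you do. Two harmless imprecisions worth noting: with a strictly positive mollifier (needed so that the quotient defining $v_t^\eps$ is everywhere defined) the test function $\varphi*\check\rho_\eps$ is bounded but not compactly supported, so a small cutoff/dominated-convergence step is required to use it in \eqref{chap3.le}; and Proposition \ref{prop1} yields the global flow only for $\mu_{t_0}^\eps$-a.e.\ $x$ (on the set $\cG^{d,\eps}$), not for every $x\in\RR^d$, which suffices since $\eta_\eps$ is a push-forward of $\mu_{t_0}^\eps$.
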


\begin{proof}
We apply the regularization Lemma \ref{lemma2.6}, finding the approximation $\mu^{\eps}_t$  and $v^{\eps}_t$ in \eqref{approximation} satisfying the  Liouville equation \eqref{chap3.le}. In particular, the vector field $v^{\eps}_t$ is locally  Lipschitz as in Assumption \ref{lipass} and satisfies \eqref{chap3.s1.eq3} as a consequence of \eqref{cdt1}. Thus, we can apply Proposition \ref{prop1} to get  $\eta_{\eps}=({\rm Id} \times \Phi_\cdot^\eps)_{\sharp}\mu_{t_0}^{\eps} \in \mathscr{P}(\mathfrak X)$, with $\Phi_t^\eps(x) \equiv \gamma_x^\eps(t)$  the unique global solution to (\ref{epseqn}) for all $x\in \cG^{d,\eps}$,
where  we have denoted by $\cG^{d,\eps}$ the set of all initial data where (\ref{epseqn}) admits a unique global solution. More precisely, we define here
 \begin{equation}\label{Feps}
\cF^{d,\eps}:=\bigl\lbrace (x,\gamma) \in \mathfrak X; \gamma \in AC^1_{\it loc}(I;\RR^d), \ \gamma(t)=x +\int_{t_0}^{t}v^{\eps}(\tau,\gamma(\tau))d\tau, \   \forall t \in I \bigr\rbrace .
\end{equation}
We have then
\begin{equation}\label{Geps}
\cG^{d,\eps}=\bigl\lbrace x \in \RR^d ;\,  \exists   \gamma \in AC^1_{\it loc}(I;\RR^d) \, \text{ s.t. } \,  (x,\gamma) \in \cF^{d,\eps} \bigr\rbrace
\end{equation}
 with $\mu_{t_0}^{\eps}(\cG^{d,\eps})=1$ and $\eta_{\eps}(\cF^{d,\eps})=1$.
The identity \eqref{projeqmu} follows { from \eqref{etafunction}.}
\end{proof}

\begin{lemma}[Tightness]\label{tight}
The family $\lbrace \eta_{\eps} \rbrace_{\eps} $, defined in \eqref{etafunction}, is tight in $\mathscr{P}(\mathfrak X)$.
\end{lemma}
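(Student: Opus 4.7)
\medskip\noindent\emph{Proof plan.} We invoke Prokhorov's theorem in the Polish space $(\mathfrak X,d)$: it suffices to produce, for every $\delta>0$, a compact set $K_{\delta}\subset\mathfrak X$ with $\eta_{\eps}(\mathfrak X\setminus K_{\delta})<\delta$ uniformly in $\eps$. Since $\mathfrak X=\RR^{d}\times\mathscr C(I;\RR^{d})$ and tightness of a family of laws on a product of Polish spaces follows from tightness of each marginal, we handle the two factors separately. The first marginal is $\mu_{t_{0}}^{\eps}$, which by the regularization Lemma \ref{lemma2.6} converges narrowly to $\mu_{t_{0}}$; a narrowly convergent sequence in $\mathscr P(\RR^{d})$ is automatically tight. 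The real work is therefore the tightness of the family of laws of the random path $\gamma$ under $\eta_{\eps}$ in $(\mathscr C(I;\RR^{d}),d_{0})$.

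\medskip
Since $I$ is a countable union of compact subintervals $J_{m}:=[-m,m]\cap I$, tightness in $\mathscr C(I;\RR^{d})$ reduces to tightness in each $\mathscr C(J_{m};\RR^{d})$, for which Arzelà-Ascoli prescribes two conditions: uniform boundedness and equicontinuity, both in $\eta_{\eps}$-probability, uniformly in $\eps$. The engine driving both is a single uniform integrability estimate obtained by transporting \eqref{cdt1} through the flow. Indeed, using $\eta_{\eps}=(\mathrm{Id}\times\Phi_{\cdot}^{\eps})_{\sharp}\mu_{t_{0}}^{\eps}$, the relation $\mu_{\tau}^{\eps}=(\Phi_{\tau}^{\eps})_{\sharp}\mu_{t_{0}}^{\eps}$ from \eqref{projeqmu}, and Fubini,
\begin{equation}
\int_{\mathfrak X}\!\int_{I}\|v^{\eps}(\tau,\gamma(\tau))\|\,\frac{d\tau}{\omega(|\tau|)}\,\eta_{\eps}(dx,d\gamma)
=\int_{I}\!\int_{\RR^{d}}\|v^{\eps}(\tau,u)\|\,\mu_{\tau}^{\eps}(du)\,\frac{d\tau}{\omega(|\tau|)}\leq C,
\nonumber
\end{equation}
uniformly in $\eps$; restricted to $J_{m}$ this reads $\mathbb{E}^{\eta_{\eps}}\!\int_{J_{m}}\|v^{\eps}(\tau,\gamma(\tau))\|\,d\tau\leq \omega(m)C=:C_{m}$.

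\medskip
\emph{Uniform boundedness} follows at once from the Duhamel formula satisfied $\eta_{\eps}$-a.e.: $\sup_{J_{m}}\|\gamma\|\leq\|x\|+\int_{J_{m}}\|v^{\eps}(\tau,\gamma(\tau))\|\,d\tau$, combining tightness of $\{\mu_{t_{0}}^{\eps}\}$ with Markov's inequality applied to $C_{m}$. \emph{Equicontinuity} is the core difficulty: the Duhamel formula yields $\|\gamma(t)-\gamma(s)\|\leq\int_{s}^{t}\|v^{\eps}(\tau,\gamma(\tau))\|\,d\tau$ for $s,t\in J_{m}$, but a mere $L^{1}$ control of the integrand does not imply equicontinuity. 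The plan is to exploit the equi-integrability of the family $\{(\tau,u)\mapsto\|v^{\eps}(\tau,u)\|\}_{\eps}$ with respect to $\mu_{\tau}^{\eps}(du)\otimes d\tau$ on $J_{m}\times\RR^{d}$, namely
\[
\lim_{M\to\infty}\sup_{\eps}\int_{J_{m}}\!\int_{\RR^{d}}\|v^{\eps}(\tau,u)\|\,\mathbf 1_{\{\|v^{\eps}(\tau,u)\|>M\}}\,\mu_{\tau}^{\eps}(du)\,d\tau=0,
\]
a property that is inherited from the fact that in Lemma \ref{lemma2.6} the pair $(v^{\eps},\mu_{\tau}^{\eps})$ is obtained by mollification from the fixed integrable datum $(v,\mu_{\tau})$ (whence the Vitali/de la Vallée-Poussin criterion applies uniformly in $\eps$). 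Given this, splitting $\|v^{\eps}\|=\|v^{\eps}\|\mathbf 1_{\|v^{\eps}\|\leq M}+\|v^{\eps}\|\mathbf 1_{\|v^{\eps}\|>M}$ controls the increment by $M\rho+\beta_{M}^{\eps}(\gamma)$ where $\beta_{M}^{\eps}(\gamma)$ has small $\eta_{\eps}$-expectation uniformly in $\eps$; first fixing $M$ large and then $\rho$ small produces a modulus of continuity $w_{m}(\rho)\to 0$ valid on a set of $\eta_{\eps}$-probability at least $1-\delta$, thus completing the Arzelà-Ascoli verification. The main obstacle, and the single nonroutine step, is securing this equi-integrability from the regularization scheme.
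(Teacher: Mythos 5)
Your proposal is correct and follows essentially the same route as the paper: reduce to tightness of the two marginals, handle the initial-datum marginal via narrow convergence of $\mu_{t_0}^{\eps}$, and drive the path-space tightness by a de la Vall\'ee-Poussin/equi-integrability bound on $\|v^{\eps}\|$ that is uniform in $\eps$ because the mollified pair $(v^{\eps},\mu^{\eps}_t)$ satisfies $\int\theta(\|v^{\eps}_t\|)\,d\mu^{\eps}_t\le\int\theta(\|v_t\|)\,d\mu_t$ by Jensen's inequality (the paper cites \cite[Lemma 3.10]{MR2335089} for exactly this). The only cosmetic difference is that the paper packages the Arzel\`a--Ascoli step through the coercive functional $g(\gamma)=\int_I\theta(\Vert\dot\gamma\Vert)\,dt/\omega(|t|)$ and the sublevel-set criterion of Lemma \ref{lemma2.5} (applied to the shifted marginal $\gamma-x$), whereas you run the truncation/modulus-of-continuity argument directly in $\eta_{\eps}$-probability; these are equivalent.
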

\begin{proof}
We use here Lemma \ref{lemma2.4} with $X \equiv \mathfrak X$, $X_1\equiv \RR^d$ and $X_2\equiv \mathscr{C}(I; \RR^d)$. The latter spaces are separable metric spaces. Recall that $\mathscr{C}(I; \RR^d)$ is endowed with the compact-open topology (see the metric $d$ in \eqref{chap.3.norm-m}-\eqref{chap.3.compact-open}).  Define the homeomorphism map
$r:=r^1\times r^2: X \rightarrow X $ by
\[\begin{array}{rcl}
{r^1} \, : X  & \longrightarrow & X_1 \\
(x,\gamma) & \longmapsto & \displaystyle r^1(x,\gamma)= x
\end{array} ,\qquad  \begin{array}{rcl}
{r^2} \, : X  & \longrightarrow & X_2 \\
(x,\gamma) & \longmapsto & \displaystyle r^2(x,\gamma)= \gamma-x\,.
\end{array} \]
It is obvious that $r$ is proper. To prove the tightness of $ \lbrace \eta_{\eps}\rbrace_\eps $, it suffices to prove:
\begin{enumerate}
\item  The family of measures $\lbrace (r^1)_{\sharp}\eta_{\eps} \rbrace_\eps$ is tight in $\mathscr{P}(\RR^d)$.
\vskip 1mm
\item   The family of measures $\lbrace (r^2)_{\sharp}\eta_{\eps} \rbrace_\eps$ is tight in $\mathscr{P}(\mathscr{C}(I; \RR^d))$.
\end{enumerate}
For (1), we have $  (r^1)_{\sharp}\eta_{\eps} =\mu_{t_0}^{\eps}$. Indeed, let $\varphi \in \mathscr C_b(\RR^d)$, we have
\[ \begin{aligned}
&\int_{\RR^d}\varphi(x)  \;(r^1)_{\sharp}\eta_{\eps}(dx)= \int_{\cF^{d,\eps}}\varphi(r^1(x,\gamma))\; \eta_{\eps}(dx,d\gamma)= \int_{\cF^{d,\eps}}\varphi(x) \, \eta_{\eps}(dx,d\gamma) \\ &=\int_{\cF^{d,\eps}}\varphi(\Xi_{t_0}(x,\gamma))\; \eta_{\eps}(dx,d\gamma)
=\int_{\cG^{d,\eps}}\varphi(x)\; (\Xi_{t_0})_{\sharp}\eta_{\eps}(dx)=\int_{\RR^d}\varphi(x)\;  \mu_{t_0}^{\eps}(dx),
\end{aligned}
\]
since $(\Xi_{t_0})_{\sharp}\eta_{\eps}=\mu_{t_0}^{\eps}$ and $\mu^{\eps}_{t_0}(\RR^d\setminus \cG^{d,\eps})=0$.
Remark also $\mu^{\eps}_{t_0} \underset{\eps \rightarrow 0}{\rightharpoonup} \mu_{t_0}$.
Since $\RR^d$ is a separable Radon space, we get by Lemma \ref{cam}, that the family $\lbrace (r^1)_{\sharp}\eta_{\eps} \rbrace_\eps$  is tight in $\mathscr{P}(\RR^d)$.

\medskip
The proof of (2) is more complicated to handle.  For that, we apply Lemma \ref{lemma2.5}.  In fact, by Lemma  \ref{remarkmeasure}, we get the existence of a non-decreasing  superlinear function $\theta:\RR_+\to[0,+\infty]$  satisfying the inequality \eqref{chap.3.est.theta}.
 Then we introduce $g: \mathscr C(I; \RR^d)\longrightarrow [0,+\infty]$
\begin{align*}
\notag
g(\gamma):= \begin{cases}
\displaystyle\int_{I} \theta(\Vert \dot{\gamma} \Vert_{\RR^d})\frac{dt}{\omega(|t|)}  \qquad &\text{if} \ \gamma(t_0)=0 \ \text{and}  \ \gamma \in AC^1_{\it loc}(I;\RR^d),\\
+\infty \qquad \qquad \qquad  &\text{if} \ \gamma(t_0)\neq 0 \ \text{or} \ \gamma \notin AC^1_{\it loc}(I;\RR^d).
\end{cases}
\end{align*}
In order to obtain the tightness of the family $\lbrace (r^2)_{\sharp}\eta_{\eps} \rbrace_\eps$,  it is enough  according to  Lemma \ref{lemma2.5} to prove the following points:
\begin{enumerate}[label=(\alph*)]
\item $\displaystyle \sup_{\eps>0} \int_{\mathscr C(I; \RR^d)} g(\gamma)\,\,(r^2)_{\sharp} \eta_{\eps}(d\gamma)<+\infty$.
\vskip 1mm
\item For all $c \geq 0$, the sublevel sets $\lbrace  \gamma \in\mathscr C(I; \RR^d); \  g(\gamma)\leq c \rbrace$  are {relatively} compact in the space  $\mathscr C(I; \RR^d)$ endowed with the compact-open topology.
\end{enumerate}
For (a), let $\eps>0$. We have for  $\cF^{d,\eps}$ and $\cG^{d,\eps}$ as defined in (\ref{Feps}) and (\ref{Geps})
\[ \begin{aligned}
\int_{\mathscr C(I; \RR^d)}g(\gamma) \,\,  (r^2)_{\sharp} \eta_{\eps}(d\gamma) &= \int_{\mathscr C(I; \RR^d)}
 \int_{I} \theta(\Vert \dot{\gamma}(t) \Vert_{\RR^d})\frac{dt}{\omega(|t|)} \,   \, (r^2)_{\sharp} \eta_{\eps}(d\gamma)
\\ &=\int_{I} \int_{\cF^{d,\eps}}   \theta(\Vert v^{\eps}(t,{\gamma}(t)) \Vert_{\RR^d}) \,  \,   \eta_{\eps}(dx,d\gamma)
 \, \frac{dt}{\omega(|t|)}
 \\ & =  \int_{I} \int_{\cG^{d,\eps}}  \theta\Big(\Vert v^{\eps}(t,{\Phi}_t^\eps(x)) \Vert_{\RR^d}\Big)  \,    \mu_{t_0}^{\eps}(dx) \, \frac{dt}{\omega(|t|)}
\\ & \leq \int_{I} \int_{\RR^d}  \theta(\Vert v^{\eps}(t,x) \Vert_{\RR^d})    \,    \mu_t^{\eps}(dx)\, \frac{dt}{\omega(|t|)}\\ &  \leq \int_{I} \int_{\RR^d}  \theta( \Vert v(t,x) \Vert_{\RR^d})     \,  \mu_t(dx)  \, \frac{dt}{\omega(|t|)}<+\infty.
\end{aligned}
\]
 Note that for the above inequality, we used Fubini's theorem and in the last step, we used  \cite[Lemma 3.10]{{MR2335089}} which generalizes \eqref{cdt1}.  The above inequality holds uniformly in $\eps>0$.

\noindent
For (b),  thanks to Lemma \ref{localtight} and Remark \ref{chap.3.rem.fincomp}, the sublevels
$$
\cA_c=\lbrace  \gamma \in\mathscr C(I; \RR^d); \  g(\gamma)\leq c \rbrace
$$
are  {relatively} compact in the separable metric space $(\mathscr C(I; \RR^d),d_0)$ with the distance $d_0$ inducing the compact-open topology   given in \eqref{chap.3.compact-open}. Hence, thanks to Lemma \ref{lemma2.4}, we conclude that $\lbrace  \eta_{\eps}\rbrace_\eps $ is tight in  $\mathscr{P}(\mathfrak X)$.

\end{proof}

\begin{lemma}[Concentration and lifting properties]\label{limit}
The subsequential limit $\eta$  (in the sense of narrow convergence) of the family $\lbrace  \eta_{\eps}\rbrace_\eps $  satisfies  \ref{chap.3.(i)} and \ref{chap.3.(ii)} in Proposition \ref{propinfinite}.
\end{lemma}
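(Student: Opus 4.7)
\medskip\noindent\textit{Proof plan.} The plan is to handle (ii) first, which is essentially automatic, and then reduce (i) to passing to the limit in a bounded Duhamel-defect functional along the approximating sequence $\eta_\eps$. For statement (ii), fix $t \in I$ and observe that the evaluation map $\Xi_t : \mathfrak{X} \to \RR^d$ is continuous. Narrow convergence $\eta_\eps \rightharpoonup \eta$ therefore passes to pushforwards: $(\Xi_t)_\sharp \eta_\eps \rightharpoonup (\Xi_t)_\sharp \eta$. By Lemma \ref{approx} the left-hand side equals $\mu_t^\eps$, and the regularization Lemma \ref{lemma2.6} yields $\mu_t^\eps \rightharpoonup \mu_t$; uniqueness of narrow limits then gives $(\Xi_t)_\sharp \eta = \mu_t$.

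For statement (i), the plan is to show that for each $t \in I$
\begin{equation*}
\int_\mathfrak{X} \xi_t(x,\gamma)\, d\eta(x,\gamma) = 0, \quad \text{where } \ \xi_t(x,\gamma) := \min\!\Big(1,\, \Big\|\gamma(t)-x-\int_{t_0}^{t} v(s,\gamma(s))\,ds\Big\|\Big).
\end{equation*}
Once this is established for every rational $t \in I \cap \QQ$, continuity of $\gamma$ and local integrability of $s \mapsto v(s,\gamma(s))$ (which holds $\eta$-a.s.~by (ii) together with \eqref{locinteg}) will upgrade the Duhamel identity to every $t \in I$ off a single $\eta$-null set, yielding (i). To bound $\int \xi_t\,d\eta$, I first compute against $\eta_\eps$: since $\eta_\eps$ concentrates on $\cF^{d,\eps}$ where $\gamma$ solves the Duhamel equation with $v^\eps$, replacing $\gamma(t)-x$ by $\int_{t_0}^{t} v^\eps(s,\gamma(s))\,ds$ and using $\min(1,\cdot) \leq \mathrm{id}$ together with Fubini and $(\Xi_s)_\sharp \eta_\eps = \mu_s^\eps$ yields
\begin{equation*}
\int_\mathfrak{X} \xi_t\, d\eta_\eps \;\leq\; \int_{t_0}^{t}\int_{\RR^d}\|v(s,y)-v^\eps(s,y)\|\, \mu_s^\eps(dy)\,ds,
\end{equation*}
which tends to $0$ as $\eps \to 0$ by the convergence properties of the mollification stated in Lemma \ref{lemma2.6}.

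The main obstacle is that $\xi_t$ is only Borel on $\mathfrak{X}$ (the vector field $v$ being merely Borel), so narrow convergence does not directly give $\int \xi_t\, d\eta = \lim_\eps \int \xi_t\, d\eta_\eps$. My plan to overcome this is a double approximation: introduce bounded continuous vector fields $\tilde v_n$ with $\|v - \tilde v_n\| \to 0$ in $L^1(\mu_s)$ uniformly for $s$ in compact subsets of $I$, and define $\tilde\xi_t^{\,n}$ by substituting $\tilde v_n$ for $v$ in the formula for $\xi_t$. Since $\tilde\xi_t^{\,n}$ is bounded and continuous on $\mathfrak{X}$, narrow convergence gives $\int \tilde\xi_t^{\,n}\, d\eta = \lim_\eps \int \tilde\xi_t^{\,n}\, d\eta_\eps$. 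Writing $\xi_t \leq \tilde\xi_t^{\,n} + |\xi_t - \tilde\xi_t^{\,n}|$ and bounding the second term by $\int_{t_0}^{t}\|v - \tilde v_n\|(s,\gamma(s))\,ds$, I would push forward by $\Xi_s$ under $\eta$ and under $\eta_\eps$ respectively, reducing these errors to integrals of $\|v - \tilde v_n\|$ against $\mu_s$ and $\mu_s^\eps$. Using $\mu_s^\eps \rightharpoonup \mu_s$ together with the uniform $\eps$-integrability of $\|v\|$ against $\mu_s^\eps$ inherited from Lemma \ref{lemma2.6} (the analogue of \eqref{chap3.s1.eq3} for the regularized curve), I first let $\eps \to 0$ and then $n \to \infty$ to conclude $\int \xi_t\, d\eta = 0$. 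The delicate step is controlling these error integrals uniformly in $\eps$, and this is precisely where Assumption \ref{assumption_v} and the integrability bound \eqref{ass1} transferred through the projection argument are essential.
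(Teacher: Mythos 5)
Your overall architecture matches the paper's: (ii) follows immediately from continuity of $\Xi_t$ and narrow convergence, and (i) is obtained by approximating $v$ with bounded continuous vector fields, testing a Duhamel-defect functional against $\eta_\eps$, letting $\eps\to 0$ first and the approximation index tend to infinity second. The truncation $\min(1,\cdot)$ is a harmless variant (the paper works with the untruncated, lower semicontinuous functional and uses the portmanteau inequality in the favourable direction). However, there is one step in your plan that would fail as written.

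The soft spot is your treatment of the error integrals $\int_{t_0}^{t}\int_{\RR^d}\|v-\tilde v_n\|\,d\mu_s^{\eps}\,ds$ (and, in your first computation, $\int\int\|v-v^{\eps}\|\,d\mu_s^{\eps}\,ds$). You propose to control these using ``$\mu_s^{\eps}\rightharpoonup\mu_s$ together with uniform $\eps$-integrability''. This is not enough: the integrand $\|v-\tilde v_n\|$ is merely Borel, and narrow convergence of $\mu_s^{\eps}$ to $\mu_s$ gives no information about integrals of Borel functions, uniformly integrable or not (a Borel function can be altered on a $\mu_s$-null set that carries positive $\mu_s^{\eps}$-mass). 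Likewise, Lemma \ref{lemma2.6} does not assert $\int\|v-v^{\eps}\|\,d\mu_s^{\eps}\to 0$ for Borel $v$; it only provides the contraction bound \eqref{cdt1}. The missing idea is to exploit the specific structure of the mollification: writing $w=\tilde v_n$, one bounds $\int\|v^{\eps}-w\|\,d\mu_s^{\eps}$ by $\int\|v^{\eps}-w^{\eps}\|\,d\mu_s^{\eps}+\int\|w^{\eps}-w\|\,d\mu_s^{\eps}$, where $w^{\eps}=\frac{(w_s\mu_s)\ast\rho_\eps}{\mu_s\ast\rho_\eps}$. The first term is controlled \emph{uniformly in $\eps$} by $\int\|v-w\|\,d\mu_s$ via the Jensen-type contraction \eqref{cdt1} applied to the difference $v-w$ (this is where the linearity of $u\mapsto(u\mu)\ast\rho_\eps$ matters), and the second term tends to $0$ by continuity of $w$ and dominated convergence, since it reduces to $\int\int\|w(s,\eps\tilde x+y)-w(s,y)\|\rho(\tilde x)\,d\tilde x\,\mu_s(dy)$. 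With this substitution your scheme closes exactly as in the paper: one obtains \eqref{w_bound} for each bounded continuous $w$, then chooses $w=w_m\to v$ in $L^1(d\mu_\tau\,d\tau)$ by the density Lemma \ref{density} and lets $m\to\infty$.
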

\begin{proof}
The existence of $\eta$ is guaranteed by Lemma \ref{tight}.
We show that $\eta$ satisfies \ref{chap.3.(i)} and \ref{chap.3.(ii)} in Proposition \ref{propinfinite}.
 Remark that we do not have the explicit expression for  $\eta$ in this case. And thus, we cannot proceed as before to prove first \ref{chap.3.(i)}. Then, we start with proving \ref{chap.3.(ii)}. For $\varphi \in \mathscr C_b(\RR^d;\RR)$, we have
\[\begin{aligned}
\int_{\RR^d} \varphi(x)  \,\mu^{\eps}_t(dx)=\int_{\mathfrak X} \varphi(\gamma(t)) \,\, \eta_{\eps}(dx,d\gamma)\,,
\end{aligned}
\]
where  $\eta_{\eps}$ is as in \eqref{etafunction}. Hence, we can let $\eps \rightarrow 0$ in the above equation and deduce
\[\begin{aligned}
\int_{\RR^d} \varphi(x)  \,\mu_t(dx)=\int_{\mathfrak X} \varphi(\gamma(t)) \,\, \eta (dx,d\gamma).
\end{aligned}
\]
The above equality is true for all $\varphi \in \mathscr C_b(\RR^d;\RR)$. This gives $(\Xi_t)_{\sharp} \eta= \mu_t$, for all $t \in I.$ And thus, condition \ref{chap.3.(ii)} is satisfied.
Finally, we check condition \ref{chap.3.(i)}. Let $ w:I \times \RR^d \rightarrow \RR^d$ be a bounded continuous vector field. We write  $w(t,x)\equiv w_t(x)$
and introduce  the regularized  vector field  $\displaystyle w^{\eps}_\tau:=\frac{(w_\tau \mu_\tau)\ast \rho_\eps}{\mu_\tau\ast \rho_\eps}$ (for $\rho_\eps$ as in Lemma \ref{lemma2.6}).
For $T>0$, we have for all $t\in J=I\cap[-T,T]$,
\[ \begin{aligned}
&\int_{\mathfrak X} \left\lVert \gamma(t)-x-\int_{t_0}^{t} w(\tau,\gamma(\tau)) \,d \tau \right\rVert_{\RR^d} \,\,\eta_{\eps}(dx,d\gamma)
 \\ &
=\int_{\cF^{d,\eps}}  \left\lVert \int_{t_0}^{t}v^{\eps}(\tau, \gamma(\tau)) \ d\tau -\int_{t_0}^{t} w(\tau,\gamma(\tau)) \,d \tau  \right\rVert_{\RR^d} \,\,\eta_{\eps}(dx,d\gamma)\\&
=\int_{\cG^{d,\eps}} \left\lVert \int_{t_0}^{t} v^{\eps}(\tau,\Phi^\eps_\tau(x))\, d\tau-\int_{t_0}^{t} w(\tau,\Phi^\eps_\tau(x)) \,d \tau \right\rVert_{\RR^d} \,\mu_{t_0}^{\eps}(dx)\\&
\leq
 \int_{J}\int_{\RR^d} \Vert v^{\eps}(\tau,x)- w(\tau,x)\Vert_{\RR^d}   \,\mu_\tau^{\eps}(dx)\,d \tau\\&
\leq
 \int_{J}\int_{\RR^d} \Vert v^{\eps}(\tau,x)- w^{\eps}(\tau,x)\Vert_{\RR^d}  \,\mu_\tau^{\eps}(dx)\,d \tau
 \\ & \quad +\int_{J}\int_{\RR^d}  \Vert w^{\eps}(\tau,x)- w(\tau,x)\Vert_{\RR^d}  \,\mu_\tau^{\eps}(dx)\,d \tau
 \\&
\leq \int_{J} \int_{\RR^d} \Vert v(\tau,x)- w(\tau,x)\Vert_{\RR^d}   \,d\mu_\tau(x)\,d \tau
\\ & \quad +  \int_{J}\int_{\RR^d} \left\lVert (w_\tau  \mu_\tau) \ast \rho_{\eps}(x)- w(\tau,x) \ \mu_\tau \ast \rho_{\eps}(x)\right\rVert_{\RR^d} \mu_\tau(dx) \, d\tau
\\&
\leq \int_{J} \int_{\RR^d} \left\lVert v(\tau,x)- w(\tau,x)\right\rVert_{\RR^d}   \,\mu_\tau(dx)\,d \tau
\\ & \quad +  \int_{J}\int_{\RR^d} \Big[ \int_{\RR^d} \left\lVert w(\tau,x)- w(\tau,y)  \right\rVert_{\RR^d} \rho_{\eps}(x-y) dx \Big] \, \mu_\tau(dy) \,d \tau
 .
\end{aligned}
\]
For the above inequality, we used Fubini's theorem as well as the inequality
(\ref{cdt1}).
On the other hand,
\[ \begin{aligned}
&\int_{J}\int_{\RR^d} \Big[ \int_{\RR^d} \left\lVert w(\tau,x)- w(\tau,y)  \right\rVert_{\RR^d} \rho_{\eps}(x-y)\,  dx \Big] \, \mu_\tau(dy) \,d \tau \\ & \leq
 \int_{J}\int_{\RR^d} \Big[ \int_{\RR^d} \Vert w(\tau,\eps \tilde x+y)- w(\tau,y)  \Vert_{\RR^d} \, \rho(\tilde x)\,  d\tilde x \Big] \, \mu_\tau(dy) \,d \tau
 \end{aligned}\]
where $\rho_\eps(x)=1/\eps^d \, \rho(x/\eps)$. Moreover, by the  Lebesgue dominated convergence theorem, the above expression tends to zero as $\eps \rightarrow 0$.   We then deduce that
\begin{equation}
\label{w_bound}
\int_{\mathfrak X} \left\lVert\gamma(t)-x-\int_{t_0}^{t} w(\tau,\gamma(\tau)) \,d \tau \right\rVert_{\RR^d} \,\,\eta(dx,d\gamma)
\leq  \int_{J}\int_{\RR^d} \Vert v(\tau,x)- w(\tau,x)\Vert_{\RR^d}   \,\mu_\tau(dx)\,\,d \tau.
\end{equation}
{Follow the same argument as  in the proof of  Lemma  \ref{remarkmeasure} to define a measure $\nu$ as in \eqref{chap.3.def.nu} on the product space $J \times \RR^d$. Then using Lemma \ref{density},  the space $\mathscr C_b(J \times \RR^d ;\RR^d)$ is dense in $L^1(J\times\RR^d,\nu;\RR^d)$. Let $(w_m)_m\subset \mathscr C_b(J \times \RR^d ;\RR^d)$ be a sequence of continuous bounded functions  converging to $v \in L^1(J\times\RR^d,\nu;\RR^d)$.}
 Using the fact that $\mu_t=(\Xi_t)_{\sharp} \eta$ as well as $\lim_m {w_m} = v$ in $L^1(J\times\RR^d,\nu;\RR^d)$, we have  for all $T>0$

\begin{equation}\label{con}
\begin{aligned} &\int_{\mathfrak X} \int_{J}\Vert w_m(\tau,\gamma(t))- v(\tau,\gamma(t))\Vert_{\RR^d} \,d \tau  \,\,\eta(dx,d\gamma)\\& =
\int_{J}  \int_{\RR^d} \Vert w_m(\tau,x)- v(\tau,x)\Vert_{\RR^d}   \,\mu_\tau(dx)\,d \tau
 \underset{m\rightarrow +\infty}{\longrightarrow} 0 \, .
\end{aligned}
\end{equation}
 At the end, we  use the triangle inequality, and apply  \eqref{w_bound} with  $w\equiv w_m$ together with (\ref{con}) to obtain
\[
\begin{aligned}
0&\leq \int_{\mathfrak X} \left\lVert\gamma(t)-x-\int_{t_0}^{t} v(\tau,\gamma(\tau)) \,d \tau \right\rVert_{\RR^d} \,\,\eta(dx,d\gamma)
 \\ &
\leq \int_{\mathfrak X} \left\lVert\gamma(t)-x-\int_{t_0}^{t} w_m(\tau,\gamma(\tau)) \,d \tau \right\rVert_{\RR^d} \,\,\eta(dx,d\gamma)+\int_{\mathfrak X}\int_{J} \Vert (w_m- v)(\tau,\gamma(\tau))\Vert_{\RR^d} \,d \tau  \,\,\eta(dx,d\gamma)
\\ &
\leq   2\int_{J}\int_{\RR^d} \Vert v(\tau,x)- w_m(\tau,x)\Vert_{\RR^d}   \,\mu_\tau(dx)\,d \tau.
\end{aligned}
 \]
 We take $m\rightarrow +\infty$  to deduce that:
 $\forall T\in \RR_+^*, \ \forall t \in J= I\cap  [-T,T]$,
  \begin{equation}\label{concen}
  \int_{\mathfrak X} \left\lVert \gamma(t)-x-\int_{t_0}^{t} v(\tau,\gamma(\tau)) \,d \tau \right\rVert_{\RR^d} \,\,\eta(dx,d\gamma)=0.
  \end{equation}
Hence,  for all $t \in I$, we get \eqref{concen}.
This implies that for each $t \in I$, we have
\begin{equation}\label{appro}\gamma(t)=x+\int_{t_0}^{t} v(\tau,\gamma(\tau)) \,d \tau, \ {\rm for}  \ \eta-a.e . \ (x,\gamma)\in \RR^d\times \mathscr{C}(I; \RR^d).
\end{equation}
{ Now, due to the continuity of the curves $\gamma$ as well as  $v(\cdot,\gamma(\cdot))\in L^1_{\it loc}(I;dt)$ $\eta-$almost surely, using the  same arguments as in the proof of Proposition \ref{prop1}, we can find by density arguments  an $\eta-$null set $\cN$ such that the Duhamel formula \eqref{appro} holds true for all times $t \in I$ and for all $(x,\gamma) \notin \cN$.}
\end{proof}

\subsection{Analysis on Banach spaces}\label{proofofprop2}
We want to complete the  proof of the \emph{global superposition principle} of  Proposition \ref{propinfinite} by applying the results in the previous Section \ref{finitedimension}.

\medskip
\noindent
\emph{Proof of Proposition \ref{propinfinite}:}
The strategy of the proof is similar to the finite-dimensional case in Proposition \ref{prop2}.  Consider $(B,\Vert\cdot\Vert)$ to be an infinite-dimensional
separable dual Banach space. Let $v$ and  $(\mu_t)_{t\in I}$ as in  Proposition  \ref{propinfinite} satisfying the same hypotheses.
Recall the image measures  $\mu^n_t\in\mathscr{P}(\RR^n)$ and  $\tilde\mu_t^n\in \mathscr{P}(B_n)$ given in  \eqref{imagemeasure} and the subspace $B_n={\rm Span}(e^*_1,\dots,e^*_n)\subset B$ as well as  $\vert \vert \vert \cdot \vert \vert \vert_{\RR^d}$ in \eqref{Rnorm}.
We  apply then  the projection argument in Lemma  \ref{chap.3.proj.arg}. Hence, we conclude that there exists a Borel vector field
$v^n:I\times \RR^n\to\RR^n$ given in \eqref{vd} such that the probability measures $(\mu^n_t)_{t\in I}$ satisfy the estimate \eqref{ass1} and the statistical Liouville equation \eqref{weaksenseinR}. Therefore, we have all the ingredients to apply Proposition \ref{prop2} for the couple
$(v^n_t, (\mu^n_t)_{t\in I})$ and get the existence of the path measure $\eta^n\in\mathscr{P}(\RR^n\times \mathscr{C}(I;\RR^n))$ so that  $\eta^n$ satisfies the concentration and lifting properties in Proposition  \ref{prop2}  for each $n\in\NN$. \\
We then define
\begin{equation}\label{tildeeta}
 \boxed{\tilde{\eta}^{n} := (\tilde\pi_n \times \tilde \pi_n)_{\sharp} \eta^{n} \in \mathscr{P}(\mathfrak{X}_n)},
\end{equation}
where $\tilde{\pi}_n$ is introduced in \eqref{tildepi} and  $\mathfrak{X}_n:= B_n \times \mathscr{C}(I;B_n)\subset \mathfrak{X}=B \times \mathscr{C}(I;B)$.
Thanks to Lemma \ref{chap.3.tight.B} given below, we  obtain that the sequence $\{\tilde\eta^n\}_{n}$ is tight in
$ \mathscr{P}(\mathfrak{X})\supset \mathscr{P}(\mathfrak{X}_n)$ (since $\mathfrak{X}_n$ is a Borel subset of $\mathfrak{X}$).
So,  there exist an $\eta\in\mathscr P(\mathfrak X)$  and a  subsequence that we still denote by $(\tilde\eta^{n} )_n$ such that $\tilde\eta^{n} \underset{ n\rightarrow \infty}{\rightharpoonup} \eta$ weakly narrowly. Finally, by  Lemma \ref{chap.3.i+ii}, we conclude that the constructed path measure $\eta$ satisfies  \ref{chap.3.(i)} and \ref{chap.3.(ii)} in Proposition \ref{propinfinite}.

\hfill$\square$

 We are now going to state and prove the aforementioned  technical Lemmas \ref{chap.3.tight.B} and \ref{chap.3.i+ii} used in the proof of Proposition \ref{propinfinite}.
\begin{lemma}[Tightness in Banach spaces]
\label{chap.3.tight.B}
The family of path measures $\lbrace \tilde{\eta}^{n} \rbrace_{n} $ given in \eqref{tildeeta},  is tight  for the weak narrow topology of $\mathscr{P}(\mathfrak X)$.
\end{lemma}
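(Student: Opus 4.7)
The strategy is to mimic the proof of Lemma \ref{tight}, working now in the weak $\ast$ setting $(B,\|\cdot\|_{*})$. I would apply the product tightness criterion of Lemma \ref{lemma2.4} to $\mathfrak{X}$ equipped with the metric $d_*$ via the proper homeomorphism $r=r^1\times r^2$, where $r^1(x,\gamma)=x$ and $r^2(x,\gamma)=\gamma-x$. This reduces the claim to tightness of the two marginals $\{(r^1)_{\sharp}\tilde{\eta}^n\}_n$ and $\{(r^2)_{\sharp}\tilde{\eta}^n\}_n$ in $\mathscr{P}(B_w)$ and $\mathscr{P}(\mathscr{C}(I;B_w))$ respectively.

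\textbf{First marginal.} A computation parallel to step (1) of the proof of Lemma \ref{tight} gives $(r^1)_{\sharp}\tilde{\eta}^n=(\tilde{\pi}_n)_{\sharp}\mu_{t_0}^n=\tilde{\mu}_{t_0}^n$, using Proposition \ref{prop2}\,(ii) applied to $\eta^n$ at time $t_0$. Combining Lemma \ref{weaklyconv} with \eqref{narrconv} yields $\tilde{\mu}_{t_0}^n\rightharpoonup\mu_{t_0}$ weakly narrowly in $\mathscr{P}(B_w)$, and since $(B,\|\cdot\|_{*})$ is a Suslin (hence Radon) space by Lemma \ref{sec.glob.lem.1}, Lemma \ref{cam} delivers the tightness of $\{\tilde{\mu}_{t_0}^n\}_n$ in $\mathscr{P}(B_w)$.

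\textbf{Second marginal: uniform cost bound.} Apply Lemma \ref{remarkmeasure} to the nonnegative function $(t,u)\mapsto\|v(t,u)\|_{*}$, which is integrable against $\mu_t\otimes \omega(|t|)^{-1}dt$ because $\|\cdot\|_{*}\le\|\cdot\|$ and \eqref{chap3.s1.eq3} holds, to produce a convex nondecreasing superlinear $\theta:\RR_+\to[0,+\infty]$ satisfying $\int_I\int_B\theta(\|v(t,u)\|_{*})\,\mu_t(du)\,\omega(|t|)^{-1}dt<+\infty$. Define $g:\mathscr{C}(I;B_w)\to[0,+\infty]$ by $g(\gamma):=\int_I\theta(\|\dot{\gamma}(t)\|_{*})\,\omega(|t|)^{-1}dt$ if $\gamma(t_0)=0$ and $\gamma\in AC^1_{loc}(I;B)$, and $g(\gamma):=+\infty$ otherwise. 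Following step (a) of the proof of Lemma \ref{tight}, but now using the disintegration formula \eqref{vd} transported to $B_n$ via $\tilde{\pi}_n$ together with the $\|\cdot\|_{*}$-contraction $\|T_n(\cdot)\|_{*}\le\|\cdot\|_{*}$ of Lemma \ref{weaklyconv} and Jensen's inequality for the convex $\theta$, one obtains the uniform bound $\sup_n\int g(\gamma)\,(r^2)_{\sharp}\tilde{\eta}^n(d\gamma)\le\int_I\int_B\theta(\|v(t,u)\|_{*})\,\mu_t(du)\,\omega(|t|)^{-1}dt<+\infty$.

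\textbf{Second marginal: main obstacle and stochastic Arzel\`a--Ascoli.} The delicate point, absent in finite dimensions, is that $\|\cdot\|_{*}$-bounded sets in $B$ need not be precompact, so the $g$-sublevels themselves are not automatically relatively compact in $\mathscr{C}(I;B_w)$ and Lemma \ref{localtight} does not transfer verbatim. I would resolve this by a stochastic Arzel\`a--Ascoli-type construction: given $\epsilon>0$, Markov's inequality and the uniform cost bound produce $M_\epsilon$ such that $(r^2)_{\sharp}\tilde{\eta}^n(\{g\le M_\epsilon\})\ge 1-\epsilon/2$ uniformly in $n$, and the superlinearity of $\theta$ yields equi-integrability of $\{\|\dot\gamma\|_{*}:g(\gamma)\le M_\epsilon\}$ on each $[-T,T]\cap I$ by de la Vall\'ee-Poussin, hence uniform equicontinuity in $B_w$. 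Fixing a countable dense $\{t_j\}_{j\in\NN}\subset I$, the first-marginal argument applied at each $t_j$ shows that $\{\tilde{\mu}_{t_j}^n\}_n$ is tight in $B_w$; since $(\Xi_{t_j})_{\sharp}(r^2)_{\sharp}\tilde{\eta}^n$ is the law of $\gamma(t_j)-\gamma(t_0)$ under $\tilde{\eta}^n$ and is therefore tight in $B_w$ as a difference of tight families, one selects compact sets $K_{j,\epsilon}\subset B_w$ with $(\Xi_{t_j})_{\sharp}(r^2)_{\sharp}\tilde{\eta}^n(K_{j,\epsilon})\ge 1-\epsilon\,2^{-j-2}$ for all $n$. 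The intersection of $\{g\le M_\epsilon\}$ with $\bigcap_j\{\gamma(t_j)\in K_{j,\epsilon}\}$ is relatively compact in $\mathscr{C}(I;B_w)$ by Arzel\`a--Ascoli and carries $(r^2)_{\sharp}\tilde{\eta}^n$-mass at least $1-\epsilon$ uniformly in $n$, giving tightness of the second marginal and completing the proof via Lemma \ref{lemma2.4}.
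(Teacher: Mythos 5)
Your reduction via Lemma \ref{lemma2.4} with $r=r^1\times r^2$, the identification $(r^1)_{\sharp}\tilde{\eta}^n=\tilde{\mu}_{t_0}^n$ combined with Lemma \ref{cam}, and the uniform cost bound $\sup_n\int g\,d\big((r^2)_{\sharp}\tilde{\eta}^n\big)<+\infty$ obtained from the disintegration formula, Jensen's inequality and the contraction $\Vert T_n\cdot\Vert_*\le\Vert\cdot\Vert_*$ all coincide with the paper's argument. You diverge at the compactness step for the second marginal: the paper simply invokes Lemma \ref{localtight} (which is stated in the appendix for a general separable dual $B$, not only in finite dimensions) to get relative compactness of the $g$-sublevels in $(\mathscr{C}(I;B_w),d_{0,*})$, whereas you replace it by a ``stochastic Arzel\`a--Ascoli'' argument intersecting $\{g\le M_\epsilon\}$ with the marginal constraints $\{\gamma(t_j)\in K_{j,\epsilon}\}$ over a countable dense set of times. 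That substitution has its own problem: since $(B,\Vert\cdot\Vert_*)$ is not complete, equicontinuity plus compact containment at the dense grid $\{t_j\}$ only yields \emph{total boundedness} of the evaluation sets at intermediate times $t$; a $\Vert\cdot\Vert_*$-Cauchy sequence need not converge in $B$, and $\Vert\cdot\Vert_*$-compactness of $K_{j,\epsilon}$ gives no $\Vert\cdot\Vert$-bound, so the pointwise relative compactness at \emph{every} $t$ that Arzel\`a--Ascoli requires is not established.

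The more serious gap is that you stop at relative compactness in $\mathscr{C}(I;B_w)$. Tightness in $\mathscr{P}(\mathfrak X)$ with $\mathfrak X=B\times\mathscr{C}(I;B)$ requires compact subsets of $(\mathscr{C}(I;B),d_{0,*})$; since this space is not complete, a totally bounded family whose $d_{0,*}$-Cauchy sequences escape to curves that are merely $B_w$-continuous (or valued in the completion of $B_w$) is \emph{not} relatively compact in it, and the subsequent Prokhorov extraction of a limit $\eta\in\mathscr{P}(\mathfrak X)$ in the proof of Proposition \ref{propinfinite} would break down. The paper devotes the entire second half of its proof of this lemma to precisely this upgrade: given a $d_{0,*}$-convergent sequence $(\gamma_{n_k})$ in a sublevel set $\cA_c$, it uses equi-integrability of the derivatives together with the Dunford--Pettis Theorem \ref{Dunford-Pettis} to extract a weak $L^1$ limit $m$, and then lower semicontinuity of the norm under weak-$*$ convergence to conclude $\Vert\gamma(t)-\gamma(t_0)\Vert\lesssim\int_{[t_0,t]}m(s)\,\omega(|s|)^{-1}\,ds$, hence $\gamma\in AC^1_{loc}(I;B)\subset\mathscr{C}(I;B)$. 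Your proposal contains no analogue of this step, so as written it does not prove the lemma as stated.
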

\begin{proof}
We use here Lemma \ref{lemma2.4} with $X\equiv (\mathfrak X,d_*)$ defined in \eqref{mathcalH}, $X_1\equiv B_w=(B,\Vert\cdot\Vert_*)$ and $X_2\equiv (\mathscr{C}(I;B), d_{0,*})$. The latter spaces are separable metric spaces. Define the homeomorphism map
$r:=r^1\times r^2: X\rightarrow X $ by
\[\begin{array}{rcl}
{r^1} \, : X  & \longrightarrow & X_1 \\
(x,\gamma) & \longmapsto & \displaystyle r^1(x,\gamma)= x
\end{array} ,\qquad  \begin{array}{rcl}
{r^2} \, : X  & \longrightarrow & X_2 \\
(x,\gamma) & \longmapsto & \displaystyle r^2(x,\gamma)= \gamma-x\,.
\end{array} \]
According to Lemma \ref{lemma2.4}, to prove the tightness of $ \lbrace \tilde{\eta}^{n}\rbrace_n $, it suffices to show:
\begin{enumerate}
\item  The family of measures $\lbrace (r^1)_{\sharp}\tilde{\eta}^{n} \rbrace_n$ is tight in $\mathscr{P}(X_1)$.
\vskip 1mm
\item   The family of measures $\lbrace (r^2)_{\sharp}\tilde{\eta}^{n} \rbrace_n$ is tight in $\mathscr{P}(X_2)$.
\end{enumerate}
For (1), we have $  (r^1)_{\sharp}\tilde\eta^{n} =\tilde\mu_{t_0}^n$ for all $n \in \NN$. Indeed, let $\varphi \in \mathscr C_b(B_w;\RR)$, we have
\[ \begin{aligned}
&\int_{B}\varphi(x)  \,\,(r^1)_{\sharp}\tilde\eta^{n}(dx)= \int_{\mathfrak X}\varphi(r^1(x,\gamma))\,\, \tilde\eta^{n}(dx,d\gamma)= \int_{\mathfrak X}\varphi(x) \, \,\tilde\eta^{n}(dx,d\gamma)  =\int_{\mathfrak X_n}\varphi(\tilde \pi_n(x)) \,\, \eta^{n}(dx,d\gamma) \\ & =\int_{\mathfrak X_n}\varphi\circ \tilde \pi_n \circ \Xi_{t_0}(x,\gamma) \,\, \eta^{n}(dx,d\gamma)=\int_{\RR^{n}}\varphi\circ \tilde \pi_n (x) \,  \mu^{n}_{t_0}(dx)
=\int_{B} \varphi(x) \,  \tilde\mu_{t_0}^n(dx).
\end{aligned}
\]
where $(\tilde\pi_{n})_{ \sharp}\mu^{n}_{t_0}=\tilde\mu_{t_0}^n$ is given in  \eqref{imagemeasure}.
Remark also $\tilde{\mu}^{n}_{t_0}\rightharpoonup \mu_{t_0}$  as $n \rightarrow +\infty$.
And since $B_w$ is a separable Radon space, we get by Lemma \ref{cam}, that the family $\lbrace (r^1)_{\sharp}\tilde\eta^{n} \rbrace_n$  is tight in $\mathscr{P}(B)$.

\smallskip\noindent
The proof of (2)  follows the same strategy  as in the finite-dimensional case.
Using Lemma \ref{remarkmeasure}, there exists a non-decreasing super-linear continuous convex function $\theta: \RR_+\rightarrow [0,+\infty]$  such that
\be \label{estimating}
\int_{I} \int_{B}  \theta(\Vert v(t,x) \Vert_{B}) \, \mu_t(dx)\, \frac{dt}{\omega(|t|)} \leq 1.
\ee
 We want to apply Lemma \ref{lemma2.5}. To this end, we introduce $g: \mathscr{C}(I;B)\longrightarrow [0,+\infty]$
\begin{align*}
\notag
g(\gamma):= \begin{cases}
\displaystyle\int_{I} \theta(\Vert \dot{\gamma} \Vert_{*}) \, \frac{dt}{\omega(|t|)}  \qquad &\text{if} \ \gamma(t_0)=0 \ \text{and}  \ \gamma \in AC^1_{{\it loc}}(I;B),\\
+\infty \qquad \qquad \qquad  &\text{if} \ \gamma(t_0)\neq 0 \ \text{or} \ \gamma \notin AC^1_{\it loc}(I;B).
\end{cases}
\end{align*}
According to  Lemma \ref{lemma2.5}, we have to prove the following points:
\begin{enumerate}[label=(\alph*)]
\item $\displaystyle \sup_{n\in \NN} \int_{ \mathscr{C}(I;B)} g(\gamma)\, \,(r^2)_{\sharp} \tilde\eta^{n}(d\gamma)<+\infty$.
\vskip 1mm
\item The sublevel sets $\cA_c:=\lbrace  \gamma \in  \mathscr{C}(I;B); \  g(\gamma)\leq c \rbrace$  are {relatively} compact in $ \big( \mathscr{C}(I;B),d_{0,*}\big)$ for all $c \geq 0$.
\end{enumerate}
For (a), let $n \in \NN$. We have
\[ \begin{aligned}
\int_{\mathscr{C}(I;B)}g(\gamma) \,  \,(r^2)_{\sharp} \tilde\eta^{n}(d\gamma) &= \int_{\mathscr{C}(I;B)}  \int_{I} \theta(\| \dot{\gamma}(t) \|_{*}) \, \frac{dt}{\omega(|t|)} \, \,   (r^2)_{\sharp} \tilde\eta^{n}(d\gamma)\\ & = \int_{\mathfrak X}  \int_{I} \theta(\| \dot{\gamma}(t) \|_{*}) \, \frac{dt}{\omega(|t|)} \,    \, \tilde\eta^{n}(dx,d\gamma) \\ & =\int_{\RR^n \times \mathscr{C}(I ; \RR^n)}  \int_{I} \theta(\| \tilde{\pi}_n(\dot{\gamma}(t) )\|_{*}) \, \frac{dt}{\omega(|t|)} \, \,    \eta^{n}(dx,d\gamma) \\& =\int_{\RR^n \times \mathscr{C}(I ; \RR^n)}  \int_{I} \theta(\| \tilde{\pi}_n\circ v^{n}(t,\gamma(t)) \|_{*}) \, \frac{dt}{\omega(|t|)} \,   \,  \eta^{n}(dx,d\gamma) \\&
= \int_{I} \int_{\RR^n \times \mathscr{C}(I ; \RR^n)}   \theta(\vert \vert \vert  v^{n}(t,\gamma(t) )\vert \vert \vert_{\RR^{n}})  \, \,   \eta^{n}(dx,d\gamma) \, \frac{dt}{\omega(|t|)} \\&
= \int_{I}\int_{\RR^{n}}  \theta\Bigg(\left \vert \left\vert \left\vert \int_{( \pi_n)^{-1}(y)} \pi_n \circ v(t,x ) \, \mu^n_{t,y}(dx)\right \vert \right \vert \right \vert_{\RR^{n}}\Bigg)  \,     \mu^{n}_t(dy)\, \frac{dt}{\omega(|t|)}
 \\ & \leq
 \int_{I} \int_{B}  \theta( \| v(t,x)\|_{*}) \,     \mu_t(dx) \,\frac{dt}{\omega(|t|)},
\end{aligned}
\]
where we used the definition of $v^{n}$ as in \eqref{vd}, Jensen's inequality and the arguments as for \eqref{argument}.
 For the last line, we used
\begin{equation*}
\left \vert \left \vert \left \vert \pi_n \circ v(t,x) \right \vert \right \vert \right \vert_{\RR^n}= \|\tilde \pi_n \circ \pi_n \circ v(t,x)\|=\|T_n v(t,x)\|_{*} \leq \|v(t,x)\|_{*}\,,
\end{equation*}
 which follows from \eqref{Rnorm}, the second equality in \eqref{relations}, and Lemma \ref{weaklyconv} (i).
Then by using the estimate \eqref{estimating}, the monotonicity of $\theta$, and $\|v(t,x)\|_{*} \leq \|v(t,x)\|$ (which follows from \eqref{chap3.norm.def}), we conclude from the above calculation that
\[ \displaystyle \sup_{n\in \NN} \int_{ \mathscr{C}(I;B)} g(\gamma)\,\, (r^2)_{\sharp} \tilde\eta^{n}(d\gamma)<+\infty.  \]
For (b), we apply Lemma \ref{localtight} and conclude that the sublevels $\cA_c:=\lbrace  \gamma \in  \mathscr{C}(I;B); \  g(\gamma)\leq c \rbrace$  are relatively compact in $ \big( \mathscr{C}(I;B_w),d_{0,*}\big)$ for all $c \geq 0$.  However, we still need to check that  $\cA_c$ is relatively compact in $\big(\mathscr{C}(I;B), d_{0,*}\big)$. Let $(\gamma_n)_n $ be a sequence in $ \cA_c$. Then there is a subsequence $(\gamma_{n_k})_{k}$ and $\gamma \in\mathscr{C}(I;B_w)$ such that $d_{0,*}(\gamma_{n_k};\gamma) \underset{k\rightarrow \infty}{\longrightarrow 0}$. Hence, we just need  to prove that  $\gamma \in \mathscr{C}(I;B)$. First remark that
\[\|\gamma_n(t)-\gamma_n(t_0)\|_B \leq \int_{[t_0,t]} \| \dot{\gamma}_n(s)\|_B \, ds . \]
Assume $|t| \leq T$ and consider the set
 \[\cF=\lbrace  \| \dot{\gamma}_n(\cdot)\|_B ; \, n\in \NN \rbrace . \]
Remark that
\be
\displaystyle \cF \subset \Big \lbrace  f \in L^1 ( I;\frac{dt}{\omega(|t|)}): \, \int_{I} \frac{\theta(|f|)}{c} \, \frac{ds}{\omega(|s|)} \leq 1 \Big\rbrace .
\ee
Then by Lemma \ref{equivalence}, $\cF$ is equi-integrable. And thus by the Dunford-Pettis theorem \ref{Dunford-Pettis}, $\cF$ is relatively sequentially compact in the topology $\sigma(L^1, L^\infty)$. More precisely, this means there exists $  m(\cdot) \in L^1(I, \frac{dt}{\omega(|t|)})$ such that
\[\| \dot{\gamma}_n(\cdot)\|_B \rightharpoonup m(\cdot)\, \, \text{in} \, \,  \sigma(L^1, L^\infty) .\]
And thus, for all $f \in L^\infty$
 \[ \int_{I}\| \dot{\gamma}_n(s)\|_B f(s) \, \frac{ds}{\omega(|s|)} \longrightarrow \int_{I} m(s) f(s)\, \frac{ds}{\omega(|s|)} .\]
Take $f=1$, we get
\[\int_{I} \| \dot{\gamma}_n(s)\|_B \, \frac{ds}{\omega(|s|)} \longrightarrow \int_{I} m(s) \, \frac{ds}{\omega(|s|)}.
\]
Since $\Vert\cdot\Vert_*$ induces  the weak-* topology on bounded sets, for $|t|,|t_0|\leq T$, we get
\[ \begin{aligned}
\|\gamma(t)-\gamma(t_0)\|_B & \leq  \liminf_{n\rightarrow +\infty} \| \gamma_n(t)-\gamma_n(t_0)\|_B \\& \leq \liminf_{n\rightarrow +\infty} \int_{[t_0,t]} \| \dot{\gamma}_n(s)\|_B \, ds \\& \lesssim \, \liminf_{n\rightarrow +\infty} \int_{[t_0,t]} \| \dot{\gamma}_n(s)\|_B \frac{ds}{\omega(|s|)}
\\& \lesssim \,\int_{[t_0,t]} \frac{ m(s)}{\omega(|s|)} \, ds.
\end{aligned} \]
Thus, we conclude that $\gamma \in AC^1_{\it loc}(I;B)\subset \mathscr{C}(I;B)$.
\end{proof}

We give now the proof of the concentration and lifting properties \ref{chap.3.(i)} and \ref{chap.3.(ii)}  in Proposition \ref{propinfinite}.
\begin{lemma}\label{chap.3.i+ii}
Let $\eta\in\mathscr{P}(\mathfrak X)$ be any cluster point of the tight sequence $\{\tilde\eta^n\}_{n}$ defined in  \eqref{tildeeta}. Then $\eta$ satisfies the properties \ref{chap.3.(i)} and \ref{chap.3.(ii)} of Proposition  \ref{propinfinite}.
\end{lemma}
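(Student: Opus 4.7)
The argument mirrors the finite-dimensional Lemma~\ref{limit}, with the mollification $v^\eps$ replaced by the finite-rank projections $(v^n,\tilde\pi_n)$ and strong narrow convergence replaced by weak narrow convergence on $(\mathfrak X,d_*)$.

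\medskip

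\noindent\emph{Property~\ref{chap.3.(ii)}.} A direct pushforward computation using $\Xi_t\circ(\tilde\pi_n\times\tilde\pi_n)=\tilde\pi_n\circ\Xi_t$ and property~\ref{chap.3.(ii)} of $\eta^n$ from Proposition~\ref{prop2} gives $(\Xi_t)_\sharp\tilde\eta^n=(\tilde\pi_n)_\sharp\mu_t^n=\tilde\mu_t^n$ for every $n\in\NN$ and $t\in I$. Since $\Xi_t:(\mathfrak X,d_*)\to(B,\|\cdot\|_*)$ is continuous, the pushforward is sequentially continuous for the weak narrow topology; combined with $\tilde\mu_t^n\rightharpoonup\mu_t$ from \eqref{narrconv} this yields $(\Xi_t)_\sharp\eta=\mu_t$.

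\medskip

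\noindent\emph{Property~\ref{chap.3.(i)}.} Fix $T>|t_0|$ and set $J=I\cap[-T,T]$. The core step is to establish, for every bounded $\|\cdot\|_*$-continuous vector field $w:J\times B\to B$,
\begin{equation}\label{plan.key}
\int_{\mathfrak X}\left\|\gamma(t)-x-\int_{t_0}^t w(s,\gamma(s))\,ds\right\|_*\,d\eta(x,\gamma) \leq \int_J\int_B \|v(s,z)-w(s,z)\|_*\,d\mu_s(z)\,ds.
\end{equation}
For $\tilde\eta^n$-almost every $(x,\gamma)=(\tilde\pi_n y,\tilde\pi_n\circ\alpha)$ the curve $\alpha$ solves the $v^n$-Duhamel equation, so the defect collapses to $\int_{t_0}^t[\tilde\pi_n v^n(s,\pi_n\gamma(s))-w(s,\gamma(s))]\,ds$. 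Inserting \eqref{vd}, applying Jensen to the conditional expectation along $\pi_n$, and using property~\ref{chap.3.(ii)} of $\eta^n$ with the disintegration of $\mu_s$ bound the $\tilde\eta^n$-integral of the defect by $\int_J\int_B\|T_n v(s,z)-w(s,T_n z)\|_*\,d\mu_s(z)\,ds$. Lemma~\ref{weaklyconv} and dominated convergence with majorant $\|v(s,\cdot)\|+\|w\|_\infty$ (integrable via Assumption~\ref{assumption_v}) drive this to the right-hand side of \eqref{plan.key} as $n\to\infty$, while $d_*$-lower semicontinuity of the defect and Portmanteau transfer the inequality to $\eta$. Approximating $v$ in $L^1(J\times B,ds\otimes d\mu_s;B)$ by a sequence $(w_m)$ in the above test class (Lemmas~\ref{remarkmeasure} and~\ref{density}) and running the triangle inequality as in \eqref{con}--\eqref{concen} forces $\int_{\mathfrak X}\|\gamma(t)-x-\int_{t_0}^t v(s,\gamma(s))\,ds\|_*\,d\eta=0$ for every $t\in J$. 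A rational-time selection, together with continuity of $\gamma$ and $v(\cdot,\gamma(\cdot))\in L^1_{\it loc}(I;B)$ $\eta$-a.s.\ (a consequence of property~\ref{chap.3.(ii)} just proved and Assumption~\ref{assumption_v}), extends the Duhamel identity to every $t\in J$, and exhausting $I$ by such intervals closes the argument.

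\medskip

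\noindent\emph{Main obstacle.} The critical difficulty is that a bounded strongly continuous $w$ is in general not $\|\cdot\|_*$-continuous on $B$, so the defect functional in \eqref{plan.key} is only lower semicontinuous, not continuous, in the $d_*$-topology. One must therefore restrict the test class to $\|\cdot\|_*$-continuous $w$ (for instance, $w$ cylindrical in the $x$-variable) and separately establish density of this restricted class in $L^1(J\times B,ds\otimes d\mu_s;B)$ via the Suslin/Radon framework of Section~\ref{subsec.weak-star}. Reconciling this density with the uniform-in-$n$ domination needed to invoke Assumption~\ref{assumption_v} (so that $T_n z\to z$ passes through $w$ under dominated convergence) is the principal technical point.
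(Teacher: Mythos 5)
Your proposal is correct and follows essentially the same route as the paper's proof: for \ref{chap.3.(ii)} the pushforward identity $(\Xi_t)_\sharp\tilde\eta^{n}=\tilde\mu^{n}_t$ combined with the weak narrow limits $\tilde\eta^{n}\rightharpoonup\eta$, $\tilde\mu^{n}_t\rightharpoonup\mu_t$, and for \ref{chap.3.(i)} the same key estimate (the paper's \eqref{bdd}) obtained via the projected field $\tilde v^{n}$, disintegration along $T_n$, the triangle/Jensen inequality, $T_n\to\mathrm{Id}$ by dominated convergence, followed by $L^1(J\times B,\tilde\nu;B)$-approximation of $v$ by bounded continuous fields and the rational-time/continuity argument. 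The obstacle you flag --- that passing the defect functional through $\tilde\eta^{n}\rightharpoonup\eta$ requires the test field to be continuous for the $\Vert\cdot\Vert_*$-topology, not merely for the norm topology --- is a genuine point that the paper glosses over by simply taking $h$ ``bounded continuous,'' and your proposed repair (restricting to $\Vert\cdot\Vert_*$-continuous test fields and checking their $L^1$-density, which is available since the Borel structures of $(B,\Vert\cdot\Vert)$ and $(B,\Vert\cdot\Vert_*)$ coincide and Lemma \ref{density} applies to the metric $\Vert\cdot\Vert_*$ as well) is the natural one and does not change the overall architecture of the argument.
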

\begin{proof}
We start to give the proof of \ref{chap.3.(ii)}. Then, we address to the proof of (i) which can be achieved using \ref{chap.3.(ii)}.\\
{\bf For (ii):}\\
We have, for $n \in \NN$, for $\varphi \in \mathscr{C}_b(B_w)$, by \eqref{tildeeta}, Proposition \ref{prop2} and \eqref{imagemeasure}
\[\begin{aligned} \int_{\mathfrak X} \varphi(\gamma(t))\, \,\tilde{\eta}^n(dx,d\gamma)& = \int_{\RR^n \times \mathscr{C}(I ; \RR^n)} \varphi \circ \tilde{\pi}_n(\gamma(t)) \, \,\eta^{n} (dx,d\gamma)=\int_{\RR^n} \varphi(\tilde{\pi}_n(x)) \, \mu^{n}_t(dx)\\ &=\int_{B} \varphi(x) \, (\tilde{\pi}_{n})_{\sharp}\mu^{n}_t(dx)=\int_{B} \varphi(x) \, \tilde{\mu}_t^n(dx).
\end{aligned}\]
By   \eqref{imagemeasure}, we have $\tilde{\mu}^n_t \rightharpoonup \mu_t$  and $ \tilde{\eta}^n \rightharpoonup \eta$  as $n \rightarrow +\infty$. We take limits  in the above formula to get
\[\int_{\mathfrak X} \varphi(\gamma(t))\,\, {\eta}(dx,d\gamma) =\int_{B} \varphi(x) \, {\mu}_t(dx). \]
The above equality implies that ${\Xi_t}_{\sharp}\eta =\mu_t,$ for all $t \in I$. \\
{\bf For (i):}
\\
Let $h:I \times B\rightarrow  B$ be any bounded continuous function. Then, we claim that  for all $t\in I$
\begin{equation}\label{bdd}
\int_{\mathfrak X} \left\lVert \gamma(t)-x-\int_{t_0}^{t}h(\tau,\gamma(\tau)) \, d\tau \right\rVert_{*} \, \eta(dx,d\gamma)\leq \int_{[t_0,t]} \int_{B} \Vert v(\tau,x) -h(\tau,x)\Vert_* \, \mu_\tau(dx) \, d\tau.\,
\end{equation}
 To prove the above inequality, we consider first the following points:
\begin{enumerate}
\item Using the disintegration Theorem  \ref{disint} with the projection $T_n: B \rightarrow B_n$, and since $\tilde{\mu}_t^n=({T_n})_{\sharp}\mu_t \in \mathscr{P}(B_n)$, there exists a $\tilde{\mu}_t^n$-a.e. uniquely determined family of Borel probability measures $\lbrace{\tilde{\mu}_{t,y}^n}\rbrace_{y \in B_n} \subset \mathscr{P}(B)$ such that $ \tilde{\mu}_{t,y}^n(B \setminus T_n^{-1}(y))=0$ for $\tilde{\mu}_t^n $-a.e. $y\in B_n$ and
$$ \int_{B} f(x) \, \mu_t(dx)= \int_{B_n} \Bigl( \int_{T_n^{-1}(y) } f(x) \, \tilde\mu_{t,y}^n(dx)\Bigr) \, \tilde\mu_t^n(dy) $$
for every Borel map $f:B \rightarrow [0,+\infty]$ with $T_n$ is the map introduced in Section \ref{project} and satisfying the properties in Lemma \ref{weaklyconv}.
\item Recall \eqref{tildeeta} and \eqref{imagemeasure}. We have
\be \label{measuren}
\tilde\mu^n_t =(\Xi_t)_{{\sharp}} \tilde\eta^n
\ee
for all $t\in I$. Moreover,
$\tilde\eta^n$ is concentrated on the set of pairs $(x,\gamma)$ such that $\gamma  \in  AC^1_{\it loc}(I;B_n)$ is a solution of the $\dot{\gamma}(t)=\tilde{v}^n(t,\gamma(t)), \,\gamma(t_0)=x \in B_n$ for a.e. $t\in I$ where the vector field  $\tilde{v}^n$ is the projected one defined as follows
\begin{equation}\label{tildevn}
\tilde{v}^n(t,y):=\int_{{T_n}^{-1}(y)} T_n \circ v(t,x)\, \tilde{\mu}_{t,y}^n(dx) \quad \text{for $ \tilde{\mu}_t^n$ a.e. $t\in I.$}
\end{equation}
We obtain \eqref{tildevn} by arguing analogously as for \eqref{vd} in the proof of Lemma \ref{chap.3.proj.arg} above.
\item We have also
\[ T_n \circ h(t,y)= \int_{T_n^{-1}(y)} T_n \circ h(t,T_n(x)) \, \tilde{\mu}_{t,y}^n(dx), \quad  \forall y \in B_n. \]
The above identity is true by the support property of $\tilde{\mu}_{t,y}^n $.
\end{enumerate}
To prove (\ref{bdd}), we start with
\begin{equation}\label{decom}
\begin{aligned}
&\int_{\mathfrak X} \left\lVert \gamma(t)-x-\int_{t_0}^{t}h(\tau,\gamma(\tau)) \, d\tau \right\rVert_{*} \, \,\tilde\eta^n(dx,d\gamma)\\ &
\leq\int_{\mathfrak X} \left\lVert \gamma(t)-x-\int_{t_0}^{t}T_n \circ h(\tau,\gamma(\tau)) \, d\tau \right\rVert_{*} \, \,\tilde\eta^n(dx,d\gamma)
\\ & +\int_{\mathfrak X} \left\lVert \int_{t_0}^{t}\Big[ T_n \circ h(\tau,\gamma(\tau)) - h(\tau,\gamma(\tau)) \Big] \, d\tau \right\rVert_{*} \, \,\tilde\eta^n(dx,d\gamma).
  \end{aligned}
  \end{equation}
    Gathering (1), (2) and (3),  \eqref{measuren}, Lemma \ref{weaklyconv}, then using the disintegration Theorem \ref{disint} and  the Lebesgue dominated convergence theorem, the second  line in (\ref{decom}) gives
\[\begin{aligned}
&\int_{\mathfrak X} \left\lVert \gamma(t)-x-\int_{t_0}^{t}T_n \circ h(\tau,\gamma(\tau)) \, d\tau \right\rVert_{*} \, \,\tilde\eta^n(dx,d\gamma)
\\&\leq
\int_{\mathfrak X_n} \left\lVert \int_{[t_0,t]} \Big[ \tilde{v}^n(\tau,\gamma(\tau))- T_n \circ h(\tau,\gamma(\tau)) \Big] \, d\tau \right\rVert_{*} \, \,\tilde{\eta}^n(dx,d\gamma)
\\&\leq
 \int_{[t_0,t]}\int_{\mathfrak X_n} \left\lVert  \tilde{v}^n(\tau,\gamma(\tau))- T_n \circ h(\tau,\gamma(\tau))  \right\rVert_{*} \, \,\tilde{\eta}^n(dx,d\gamma) \, d\tau
 \\&=
 \int_{[t_0,t]}\int_{B_n} \left\lVert  \tilde{v}^n(\tau,y)- T_n \circ h(\tau,y)  \right\rVert_{*} \, \,\tilde{\mu}^n_\tau(dy) \, d\tau
  \\&=
 \int_{[t_0,t]}\int_{B_n} \left\lVert\int_{T_n^{-1}(y)} T_n \circ v(\tau,x) \, \tilde{\mu}_{\tau,y}^n(dx)  - \int_{T_n^{-1}(y)} T_n \circ h(\tau,T_n(x)) \, \tilde{\mu}_{\tau,y}^n(dx) \right\rVert_{*} \, \tilde{\mu}^n_\tau(dy) \, d\tau
 \\&\leq \int_{[t_0,t]}\int_{B} \left\lVert T_n \circ v(\tau,x)  -  T_n \circ h(\tau,T_n(x)) \right\rVert_{*} \, {\mu}_\tau(dx) \, d\tau
 \\&\leq  \int_{[t_0,t]}\int_{B} \left\lVert  v(\tau,x)  -  h(\tau,T_n(x)) \right\rVert_{*} \, {\mu}_\tau(dx) \, d\tau
 \\&\leq  \Big[  \int_{[t_0,t]}\int_{B} \left\lVert  v(\tau,x)  -  h(\tau,x) \right\rVert_{*} \, {\mu}_\tau(dx) \, d\tau + \int_{[t_0,t]}\int_{B} \left\lVert  h(\tau,T_nx)  -  h(\tau,x) \right\rVert_{*} \, {\mu}_\tau(dx) \, d\tau \Big]
 \\&=    \int_{[t_0,t]}\int_{B} \left\lVert  v(\tau,x)  -  h(\tau,x) \right\rVert_* \, {\mu}_\tau(dx) \, d\tau + \eps_1(n),
\end{aligned} \]
with $\eps_1(n) \longrightarrow 0$ as $n\rightarrow +\infty$.
Again by the  Lebesgue dominated convergence theorem,
 the third line of (\ref{decom}) gives
\[\begin{aligned}
&\int_{\mathfrak X} \left\lVert \int_{t_0}^{t}\Big[ T_n \circ h(\tau,\gamma(\tau)) - h(\tau,\gamma(\tau)) \Big] \, d\tau \right\rVert_{*} \, \,\tilde\eta^n(dx,d\gamma) \\& \leq\int_{[t_0,t]}  \int_{\mathfrak X} \left\lVert  T_n \circ h(\tau,\gamma(\tau)) - h(\tau,\gamma(\tau)) \right\rVert_{*} \, \,\tilde\eta^n(dx,d\gamma) \, d\tau
\\& =\int_{[t_0,t]}  \int_{B_n} \left\lVert  T_n \circ h(\tau,x) - h(\tau,x) \right\rVert_{*} \, \tilde\mu_\tau^n(dx) \, d\tau
\\& =\int_{[t_0,t]}  \int_{B_n} \left\lVert  T_n \circ h(\tau,T_nx) - h(\tau,T_nx) \right\rVert_{*} \, \mu_\tau(dx) \, d\tau
\\& \leq \eps_2(n),
 \end{aligned}\]
 with $\eps_2(n) \longrightarrow 0$ as $n\rightarrow +\infty$. Combining the bounds on the second and third lines in (\ref{decom}) and  the above calculations, we conclude
\[\begin{aligned}
\int_{\mathfrak X} \left\lVert \gamma(t)-x-\int_{0}^{t}h(\tau,\gamma(\tau)) \, d\tau \right\rVert_{*} \, \tilde\eta^n(dx,d\gamma) \leq &
 \int_{[t_0,t]}\int_{B} \left\lVert  v(\tau,x)  -  h(\tau,x) \right\rVert_* \, {\mu}_\tau(dx) \, d\tau \\ &+ \eps_1(n)+\eps_2(n).
\end{aligned} \]
Letting $n \rightarrow +\infty$  in the above equality, we  get (\ref{bdd}).
We have then
\[\begin{aligned}
&\int_{\mathfrak X} \left\lVert \gamma(t)-x-\int_{t_0}^{t}v(\tau,\gamma(\tau)) \, d\tau \right\rVert_{*} \, \,\eta(dx,d\gamma)
\\ &\leq
\int_{\mathfrak X} \left\lVert \gamma(t)-x-\int_{t_0}^{t}h(\tau,\gamma(\tau)) \, d\tau \right\rVert_{*} \, \eta(dx,d\gamma)+
\int_{\mathfrak X} \left\lVert \int_{t_0}^{t}\Big[ v(\tau,\gamma(\tau))- h(\tau,\gamma(\tau)) \Big] \, d\tau \right\rVert_{*}\, \eta(dx,d\gamma)
\\ &\leq 2\, \int_{[t_0,t]} \int_{B} \Vert v(\tau,x) -h(\tau,x)\Vert_* \, \mu_\tau(dx) \, d\tau
\\& \leq 2 \Big[ \int_{[t_0,t]}\int_{ B} \|T_n\circ v(\tau,x)- v(\tau,x)\|_* \, \mu_\tau(dx) \, d\tau+  \int_{[t_0,t]}\int_{ B} \|T_n\circ v(\tau,x)-h(\tau,x)\|_* \, \mu_\tau(dx) \, d\tau \Big]\,.
\end{aligned}\]
First, since we have \eqref{chap3.s1.eq3}, then by the  Lebesgue dominated convergence theorem
\[ \int_{[t_0,t]}\int_{ B} \|T_n\circ v(\tau,x)-v(\tau,x)\|_* \, \mu_\tau(dx) \, d\tau \underset{n\rightarrow+\infty}{\longrightarrow}0. \]
Remember that $\Vert\cdot\Vert_*\leq \Vert\cdot\Vert$. So, it remains to seek a sequence of continuous bounded functions $h_n: [t_0,t]\times B\rightarrow B$ such that
\[\lim_{n\rightarrow +\infty}\int_{[t_0,t]}\int_{ B} \|T_n \circ v(\tau,x)-h_n(\tau,x)\|_B \, \,\mu_\tau(dx) \, d\tau=0.  \]
To this end, set for $a,b \in [t_0,t]$ with and for every Borel set $K$ in $B$
\[\tilde{\nu}([a,b]\times K):=\int_{[a,b]} \mu_\tau(K) \, d\tau.  \]
We have
\[ \int_{[t_0,t] \times B} \|T_n \circ v(\tau,x)\|_B \, \tilde\nu(d\tau,dx)\leq   \int_{[t_0,t]}\int_{ B}\| v(\tau,x)\|_B \, \mu_\tau(dx) \, d\tau <+\infty.\]
This implies $T_n \circ v \in L^1([t_0,t] \times B,\tilde\nu;B_n)$. Remark that  $B_n$ can be  identified with $\RR^{n}$. Then, applying  Lemma \ref{density}, one obtains a sequence of continuous bounded functions $(h_n)_{n\in \NN}$ from $[t_0,t]\times B $ to $B_n\subset B$ such that
\[\|T_n\circ v-h_n\|_{L^1([t_0,t] \times B,\tilde\nu;B_n)} \underset{n \rightarrow +\infty}{\longrightarrow 0}.\]
Hence, we have
\[\begin{aligned} \int_{[t_0,t]}\int_{ B} \|T_n\circ v(\tau,x)-h_n(\tau,x)\|_B \,\, \mu_\tau(dx) \, d\tau=&\int_{[t_0,t]\times B} \|T_n\circ v(\tau,x)-h_n(\tau,x)\|_B \, \,\tilde\nu(d\tau,dx) \\ &\underset{n\rightarrow \infty}{\longrightarrow}0.
\end{aligned}\]
We conclude
\[ \int_{\mathfrak X} \left\lVert \gamma(t)-x-\int_{t_0}^{t}v(\tau,\gamma(\tau)) \, d\tau \right\rVert_{*} \, \eta(dx,d\gamma) =0. \]
This implies that there exists an $\eta$ null set $\cN$ such that
\[ \gamma(t)=x+\int_{t_0}^{t}v(\tau,\gamma(\tau)) \, d\tau =0, \quad \forall   (x,\gamma) \in \mathfrak X \setminus \cN.\]
Then using a density argument and the continuity of the curves $\gamma$ in $B$ as in the finite-dimensional case, we obtain the concentration property   \ref{chap.3.(i)} in  Proposition \ref{propinfinite}.
\end{proof}

\section{The globalization argument}
\label{chap3.sec.app.proof}

We give the proof of our main results. In particular,  Theorem \ref{chap3.main.thm} is proved in
the subsection below while the applications to ODEs and PDEs are analyzed in Subsection \ref{chap3.sec.PDEana}.

\subsection{Proof of main results}
In order to prove Theorem \ref{chap3.main.thm}, we  rely  on the global superposition principle Proposition \ref{propinfinite}
proved in  Section \ref{chap.3.sec.2} and the measurable projection theorem recalled below (see \cite[Theorem 2.12]{MR1993844}).

\begin{theorem}[Measurable projection theorem]\label{chap.3.projmes}
  Let $(X,\mathscr T)$ be a measurable space and let $(Y,\mathscr B)$ be
 a Polish space with Borel $\sigma$-algebra $\mathscr{B}$. Then for every set $S$ in the product $\sigma$-algebra  $
\mathscr {T}\otimes \mathscr{B}$  the projected set $p(S)$, $p:X\times Y\to X, p(x,y)=x$,  is a universally measurable set of  $X$ relatively to $\mathscr T$.
\end{theorem}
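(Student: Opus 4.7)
The plan is to recognize $p(S)$ as a $\mathscr{T}$-analytic set (a set obtained from $\mathscr{T}$ by the Souslin operation) and then invoke the classical theorem that Souslin operations preserve universal measurability. Accordingly, the argument splits naturally into a structural part, where a Souslin scheme representation of $S$ is produced, and an analytic part, where universal measurability of $\mathscr{T}$-analytic sets is established by Choquet's capacitability theorem.

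First I would exploit the Polish structure of $Y$. Since $(Y,\mathscr{B})$ is Polish, every Borel set admits a Souslin representation on the paving $\mathscr{K}$ of compact subsets of $Y$: there exists a tree-indexed family $(K_s)_{s\in \mathbb{N}^{<\mathbb{N}}}$ of compacts such that any fixed $B\in\mathscr{B}$ can be written as
$$B=\bigcup_{\sigma\in\mathbb{N}^{\mathbb{N}}}\bigcap_{n\in\mathbb{N}} K_{\sigma|n}.$$
This is classical, and combined with the definition of the product $\sigma$-algebra it yields that every $S\in\mathscr{T}\otimes\mathscr{B}$ admits a Souslin representation on the paving of ``measurable-compact rectangles'':
$$S=\bigcup_{\sigma\in\mathbb{N}^{\mathbb{N}}}\bigcap_{n\in\mathbb{N}} \bigl(A_{\sigma|n}\times K_{\sigma|n}\bigr),$$
with $A_s\in\mathscr{T}$ and $K_s\subset Y$ compact. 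Projecting, and using that $p(A\times K)=A$ whenever $K\neq\emptyset$ and that the Souslin operation commutes with direct images, one obtains
$$p(S)=\bigcup_{\sigma\in\mathbb{N}^{\mathbb{N}}}\bigcap_{n\in\mathbb{N}} A'_{\sigma|n}$$
for appropriate $A'_s\in\mathscr{T}$. In other words, $p(S)$ belongs to the Souslin closure $\mathscr{A}(\mathscr{T})$ of $\mathscr{T}$.

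Second I would invoke Choquet's capacitability theorem to pass from this Souslin representation to universal measurability. Fix an arbitrary complete probability measure $\mu$ on $(X,\mathscr{T})$ with associated outer measure $\mu^*$. One verifies that $\mu^*$ is a Choquet capacity relative to the paving $\mathscr{T}$: it is monotone, continuous from below along increasing sequences, and continuous from above along decreasing sequences of $\mathscr{T}$-sets (the latter by $\sigma$-additivity of $\mu$ applied inside a $\mathscr{T}$-hull). Choquet's capacitability theorem then guarantees that every Souslin-$\mathscr{T}$ set $E$ is $\mu^*$-capacitable, which combined with $\sigma$-additivity gives a $\mathscr{T}_{\sigma\delta}$-inner approximation of $E$ with the same outer measure. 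A symmetric argument (or the classical inner/outer regularity consequence) forces $E$ to differ from a $\mathscr{T}$-set by a $\mu$-null set, hence $E\in\mathscr{T}_\mu$. Applying this to $E=p(S)$ and letting $\mu$ range over all complete probability measures on $(X,\mathscr{T})$ yields the claimed universal measurability of $p(S)$.

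The main obstacle is the analytic second ingredient: Choquet's capacitability theorem. The combinatorial projection argument (the first ingredient) is essentially bookkeeping once one knows the Borel $\sigma$-algebra on a Polish space is generated by the compact paving under the Souslin operation. By contrast, the fact that the Souslin operation preserves the $\mu$-completion of $\mathscr{T}$ is the deep measure-theoretic input, and its proof requires careful handling of outer measures along trees indexed by $\mathbb{N}^{<\mathbb{N}}$, together with a truncation-and-exhaustion scheme that extracts a capacitating $\mathscr{T}_{\sigma\delta}$-set inside any Souslin-$\mathscr{T}$ set.
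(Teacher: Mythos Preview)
The paper does not prove this theorem; it is stated with a reference to \cite[Theorem~2.12]{MR1993844} and then used as a black box in the proof of Theorem~\ref{chap3.main.thm}. So there is no ``paper's own proof'' to compare against: your proposal supplies an argument where the paper simply cites the literature.

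Your outline is the standard route (Souslin representation plus Choquet capacitability) and is essentially correct, but one step is phrased too loosely. You write that ``the Souslin operation commutes with direct images''; this is false as stated, because projections do not commute with intersections. What actually makes the argument work is the compactness of the paving on $Y$: after arranging the scheme to be regular (i.e.\ $K_{s\frown k}\subset K_s$), the finite intersection property gives $\bigcap_n K_{\sigma|n}\neq\emptyset$ if and only if every $K_{\sigma|n}\neq\emptyset$, and this is a condition depending only on the finite truncations $\sigma|n$. One can then set $A'_s=A_s$ when $K_s\neq\emptyset$ and $A'_s=\emptyset$ otherwise, which yields the Souslin representation of $p(S)$ over $\mathscr{T}$. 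You should make this compactness argument explicit rather than hiding it behind a false slogan. A related point: the claim that every $S\in\mathscr{T}\otimes\mathscr{B}$ is Souslin over $\mathscr{T}\times\mathscr{K}$ also deserves a line of justification (e.g.\ embed $Y$ into a compact metrizable $\hat Y$ so that closed sets become compact, and use that Borel sets in a Polish space are Souslin over the closed paving).
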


We will also need a measure theoretical  argument provided for instance in \cite[Lemma 4.3 and Lemma C.2]{alc2020} and in the PhD thesis of C.~Rouffort \cite[Lemma 3.A.1]{rouff2018}.

\begin{lemma}[see {\cite[Lemma 3.A.1]{rouff2018}}]\label{chap.3.mesmap}
Let $(M, d_M)$ be a metric space, let $a, b\in\RR$, $a < b$. Then, for any Borel measurable function
$f : [a, b] \times M \to \RR$ such that for all $u\in M, f(\cdot, u)\in L^1([a, b])$, the mapping given by
\begin{eqnarray}
M &\longrightarrow & \RR \\
x &\longmapsto &\int_{[a,b]} f(s,x) ds
\end{eqnarray}
is Borel measurable.
\end{lemma}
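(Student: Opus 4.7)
The plan is a standard monotone class plus truncation argument, organized around the approximation of a Borel function on the product $[a,b]\times M$ by bounded continuous functions. Write $F(x):=\int_{[a,b]} f(s,x)\,ds$. I will first establish Borel measurability of $F$ for bounded Borel $f$, then remove the boundedness hypothesis using the integrability assumption $f(\cdot,u)\in L^1([a,b])$.

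For the base case, suppose $f\in \mathscr{C}_b([a,b]\times M)$. If $x_n\to x$ in $M$, continuity gives $f(s,x_n)\to f(s,x)$ pointwise in $s$, uniformly bounded by $\|f\|_\infty$; by dominated convergence on $[a,b]$ (with constant dominator $\|f\|_\infty$, integrable on a bounded interval) one gets $F(x_n)\to F(x)$, so $F$ is sequentially continuous on the metric space $M$, hence continuous, hence Borel. Next, let
\[
\mathcal{H}:=\bigl\{g:[a,b]\times M\to\RR \text{ bounded Borel}: x\mapsto \textstyle\int_{[a,b]} g(s,x)\,ds \text{ is Borel measurable on }M\bigr\}.
\]
Then $\mathcal{H}$ is a vector space containing $\mathscr{C}_b([a,b]\times M)$ and the constants, and it is stable under bounded monotone pointwise limits: if $g_n\uparrow g$ with $|g_n|\le C$, then $\int g_n(\cdot,x)\,ds\to\int g(\cdot,x)\,ds$ pointwise in $x$ by dominated convergence, and pointwise limits of Borel functions on $M$ are Borel. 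The functional monotone class theorem therefore implies that $\mathcal{H}$ contains every bounded function measurable with respect to the Baire $\sigma$-algebra $\sigma(\mathscr{C}_b([a,b]\times M))$.

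The bridge to Borel measurability uses the fact that on any metric space every closed set $C$ equals $\{\varphi=0\}$ for the continuous function $\varphi(\cdot):=d(\cdot,C)$; hence closed sets are Baire, and Baire and Borel $\sigma$-algebras coincide on $[a,b]\times M$. Consequently $\mathcal{H}$ contains all bounded Borel functions on $[a,b]\times M$, settling the bounded case.

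To handle an unbounded Borel $f$ with $f(\cdot,u)\in L^1([a,b])$ for every $u\in M$, decompose $f=f^+-f^-$ and truncate via $f^\pm_n:=\min(f^\pm,n)$. Each $f^\pm_n$ is bounded Borel, so the previous step gives that $x\mapsto \int f^\pm_n(s,x)\,ds$ is Borel. Monotone convergence on $[a,b]$ yields pointwise convergence of these integrals, as $n\to\infty$, to $\int f^\pm(s,x)\,ds$, which is finite for every $x$ by the $L^1$ hypothesis; Borel measurability is preserved under pointwise limits, so $F=\int f^+\,ds-\int f^-\,ds$ is Borel. The only subtle point in the entire argument is the identification Baire $=$ Borel on the metric space $[a,b]\times M$, which is what legitimizes applying the monotone class conclusion to the target Borel class; everything else is bookkeeping with DCT and MCT.
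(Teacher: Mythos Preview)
Your argument is correct. The paper does not actually prove this lemma; it merely cites it from \cite[Lemma 3.A.1]{rouff2018} and \cite[Lemma 4.3, Lemma C.2]{alc2020}, so there is no in-paper proof to compare against. Your monotone class argument starting from $\mathscr{C}_b([a,b]\times M)$, together with the identification of the Baire and Borel $\sigma$-algebras on metric spaces and the truncation step for unbounded $f$, is a standard and complete route to the statement.
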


\medskip
Define the set
\begin{equation}\label{chap.3.Fset}
\mathscr{F}_{t_0}=\{(x,\gamma) \in\mathfrak{X} : \gamma \text{ is a  global mild solution of \eqref{chap3.ivpinh} s.t.  } \gamma(t_0)=x \}\,.
\end{equation}

\begin{lemma}
\label{sec.glob.lem.3}
The set $\mathscr{F}_{t_0}$ is a Borel subset of  $(\mathfrak{X}, d)$ satisfying $\eta(\mathscr{F}_{t_0})=1$  where   $\eta$ is the Borel probability measure on $ \mathfrak{X}$ provided by the global superposition principle in Proposition \ref{propinfinite}.
\end{lemma}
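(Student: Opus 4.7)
The plan has two main parts: first establish that $\mathscr{F}_{t_0}$ is Borel in $(\mathfrak{X},d)$, then obtain $\eta(\mathscr{F}_{t_0})=1$ from the concentration statement in Proposition \ref{propinfinite}(i).

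For the Borel measurability I would decompose the defining conditions of a mild solution into three independent pieces: (a) the initial condition $\gamma(t_0)=x$, (b) the local integrability $v(\cdot,\gamma(\cdot))\in L^1_{loc}(I,dt;B)$, and (c) the Duhamel identity $\gamma(t)=x+\int_{t_0}^{t}v(s,\gamma(s))\,ds$ for every $t\in I$. Condition (a) cuts out the closed set $\{(x,\gamma):\Xi_{t_0}(x,\gamma)=x\}$ since $\Xi_{t_0}$ is continuous. For (b) and (c), the key observation is that the map $(s,x,\gamma)\mapsto v(s,\gamma(s))$ from $I\times\mathfrak{X}$ to $B$ is Borel, being the composition of the Borel vector field $v$ with the continuous map $(s,x,\gamma)\mapsto (s,\gamma(s))$. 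Therefore $(s,x,\gamma)\mapsto\Vert v(s,\gamma(s))\Vert$ is Borel; by a truncation with $\Vert v(s,\gamma(s))\Vert\wedge N$ followed by monotone convergence as $N\to\infty$, Lemma \ref{chap.3.mesmap} gives that for every $m\in\NN$ the map
$$(x,\gamma)\longmapsto \int_{I\cap[-m,m]}\Vert v(s,\gamma(s))\Vert\,ds\in[0,+\infty]$$
is Borel. Hence $E_\infty:=\bigcap_m\{(x,\gamma):\int_{I\cap[-m,m]}\Vert v(s,\gamma(s))\Vert\,ds<+\infty\}$ is Borel and captures (b).

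On $E_\infty$ the Bochner integral $J_t(x,\gamma):=\int_{t_0}^{t}v(s,\gamma(s))\,ds\in B$ is well-defined for every $t\in I$. To upgrade this to Borel measurability as a $B$-valued map, I would pair with the biorthogonal system: for each $k\in\NN$, Lemma \ref{chap.3.mesmap} gives that
$$(x,\gamma)\in E_\infty\;\longmapsto\;\langle J_t(x,\gamma),e_k\rangle=\int_{t_0}^{t}\langle v(s,\gamma(s)),e_k\rangle\,ds$$
is Borel. Since $E$ is separable and the linear span of $\{e_k\}$ is dense in $E$, the Borel $\sigma$-algebra on the separable dual $B=E^*$ is generated by the countable family $\{\langle\cdot,e_k\rangle\}_{k}$, so $J_t$ is Borel on $E_\infty$. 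By continuity in $t$ of both sides of the Duhamel identity (the right-hand side by absolute continuity of the Bochner integral for $(x,\gamma)\in E_\infty$, the left-hand side because $\gamma\in\mathscr{C}(I;B)$), condition (c) on all of $I$ is equivalent to its validity on $I\cap\mathbb{Q}$, so (c) cuts out
$$\bigcap_{t\in I\cap\mathbb{Q}}\bigl\{(x,\gamma)\in E_\infty:\gamma(t)-x-J_t(x,\gamma)=0\bigr\},$$
a countable intersection of Borel sets (pre-images of $\{0\}$ under Borel $B$-valued maps). Combining (a), (b), (c), the set $\mathscr{F}_{t_0}$ is Borel in $(\mathfrak{X},d)$ (equivalently, in $(\mathfrak{X},d_*)$ by Lemma \ref{sec.glob.lem.2}).

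For the measure statement, Proposition \ref{propinfinite}(i) asserts that $\eta$ concentrates on the set of mild solutions with initial datum $\gamma(t_0)=x$; inspection of the proofs of Lemma \ref{limit} and Lemma \ref{chap.3.i+ii} shows that this concentration is precisely the $\eta$-almost sure validity of the Duhamel formula together with the integrability \eqref{chap3.s1.eq3}, both of which define $\mathscr{F}_{t_0}$. Since $\mathscr{F}_{t_0}$ is now known to be Borel, we obtain $\eta(\mathscr{F}_{t_0})=1$. The main obstacle I anticipate is the Borel measurability of the vector-valued Bochner integral $(x,\gamma)\mapsto J_t(x,\gamma)$: the passage from the scalar case of Lemma \ref{chap.3.mesmap} to a $B$-valued statement relies crucially on the Pettis-type reduction enabled by the separability of $E$ and the fundamental strongly total biorthogonal system of Definition \ref{biorthogonal_system}.
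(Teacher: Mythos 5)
Your proposal is correct and follows essentially the same route as the paper's proof: reduce the Duhamel identity to rational times by continuity, handle the local integrability set via truncation, monotone convergence and Lemma \ref{chap.3.mesmap}, test the identity against the biorthogonal family $\{e_k\}$, and then invoke the concentration property \ref{chap.3.(i)} of Proposition \ref{propinfinite}. The only cosmetic difference is that you establish Borel measurability of the $B$-valued map $(x,\gamma)\mapsto J_t(x,\gamma)$ and take the preimage of $\{0\}$, whereas the paper writes the same condition directly as the countable intersection over $k$ of the scalar level sets $E_{j,k}=(\psi_1-\psi_2)^{-1}(\{0\})$; the two formulations are interchangeable since $\{e_k\}$ separates points of $B=E^*$.
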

\begin{proof}
According to Proposition \ref{propinfinite}, the path measure $\eta$ constructed there concentrates on the set $\mathscr{F}_{t_0}$ of global solutions with specified initial conditions given in \eqref{chap.3.Fset}. More precisely, the concentration property \ref{chap.3.(i)} says that $\mathfrak X\setminus{\mathscr F_{t_0}}$ is a $\eta$-null set. So, to prove the  lemma it is enough to show that $\mathscr F_{t_0}$ is a Borel subset of $\mathfrak X$.  Such a statement follows from Lemma \ref{chap.3.mesmap}.  We have \
\begin{eqnarray}
\mathscr{F}_{t_0}&=&\{(x,\gamma) \in\mathfrak{X} : v(\cdot,\gamma(\cdot))\in L^1_{loc}(I) \text{ and }  \gamma(t)=x+\int_{t_0}^t v(s,\gamma(s)) ds, \forall t\in I\}\\
&=&\bigcap_{t_j\in\mathbb Q\cap I} \{(x,\gamma) \in\mathfrak{X} : v(\cdot,\gamma(\cdot))\in L^1_{loc}(I)  \text{ and }  \gamma(t_j)=x+\int_{t_0}^{t_j} v(s,\gamma(s)) ds\}
\end{eqnarray}
On the other hand, for each $j\in\NN$
\begin{eqnarray}
\mathscr{F}_{t_0}(j)
&:=&\{(x,\gamma) : v(\cdot,\gamma(\cdot))\in L^1_{loc}(I)  \text{ and }  \gamma(t_j)=x+\int_{t_0}^{t_j} v(s,\gamma(s)) ds\}\\
&=&\bigcap_{k\in \NN} \{(x,\gamma) : v(\cdot,\gamma(\cdot))\in L^1_{loc}(I)  \text{ and }
\langle\gamma(t_j)-x-\int_{t_0}^{t_j} v(s,\gamma(s)) ds, e_k\rangle =0\} \\
&=:&\bigcap_{k\in \NN} E_{j,k}\,,
\end{eqnarray}
where $\{e_k\}_{k\in\NN}$ is the elements of the biorthogonal system in $B$ (see Definition \ref{biorthogonal_system}).  Hence, it is enough to show $E_{j,k}$ are Borel sets.  Let $\mathfrak{L} (I, B)$ denote  the set of curves $\gamma \in\mathscr{C}(I;B)$ such that
$v(\cdot,\gamma(\cdot))\in L^1_{loc}(I;B)$. Taking the functions
\begin{eqnarray*}
\Lambda_T: \mathscr{C}(I;B) &\longrightarrow &   \overline{\RR}  \\
\gamma &\longmapsto & \int_{[-T,T]\cap I} \Vert v(s,\gamma(s))\Vert \, ds\\ && =\lim_{N\to+\infty}
\int_{[-T,T]\cap I} \min(N, \Vert v(s,\gamma(s))\Vert) \, ds,
\end{eqnarray*}
we prove using Lemma \ref{chap.3.mesmap} with $M=\mathscr{C}(I;B)$ and the monotone convergence theorem that $\Lambda_T$ are Borel measurable for all $T\in\NN$.  Hence, we conclude that $\mathfrak{L} (I, B)=\cap_{T\in\NN} \Lambda_T^{-1}(\RR)$ is a Borel subset of $\mathscr{C}(I;B)$.
In particular, Borel sets of $(\mathfrak{L} (I, B),d_0)$ equipped with the induced metric $d_{0}$ in \eqref{chap.3.norm-m} coincide with
Borel sets of $\mathscr{C}(I;B)$ which are in $\mathfrak{L} (I, B)$.  Now, using again Lemma \ref{chap.3.mesmap}
with $M=(\mathfrak{L} (I, B),d_0)$, we show that the map
\begin{eqnarray*}
\psi_1: B\times \mathfrak{L} (I, B) &\longrightarrow & \RR \\
(x,\gamma) &\longmapsto & \int_{t_0}^{t_j} \langle v(s,\gamma(s)), e_k\rangle \, ds
\end{eqnarray*}
is Borel measurable. Moreover, the map
\begin{eqnarray*}
\psi_2: B\times \mathfrak{L} (I, B) &\longrightarrow & \RR \\
(x,\gamma) &\longmapsto & \langle \gamma(t_j)-x, e_k\rangle
\end{eqnarray*}
is continuous. Hence, we get
\begin{equation*}
E_{j,k}= (\psi_1-\psi_2)^{-1} (\{0\})
\end{equation*}
is Borel.

\end{proof}

Define the set
\begin{equation}
\mathscr{G}_{t_0}=\{x\in B : \exists \gamma \text{ a  global mild solution of \eqref{chap3.ivpinh} s.t. } \gamma(t_0)=x  \}\,.
\end{equation}

\begin{lemma}
\label{sec.glob.lem.4}
The set $\mathscr{G}_{t_0}$ is a universally measurable subset of $(B,\Vert\cdot\Vert)$.
\end{lemma}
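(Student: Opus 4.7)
The plan is to realize $\mathscr{G}_{t_0}$ as the projection of the Borel set $\mathscr{F}_{t_0}$ (whose Borel measurability in $(\mathfrak{X},d)$ was established in Lemma \ref{sec.glob.lem.3}) onto the first coordinate, and then invoke the Measurable Projection Theorem \ref{chap.3.projmes}.

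Concretely, let $p : \mathfrak{X} = B \times \mathscr{C}(I;B) \to B$ denote the canonical projection $p(x,\gamma) = x$. By the very definition of $\mathscr{F}_{t_0}$ and $\mathscr{G}_{t_0}$, one has
\begin{equation*}
\mathscr{G}_{t_0} \;=\; p(\mathscr{F}_{t_0})\,.
\end{equation*}
First I would note that since $B$ and $\mathscr{C}(I;B)$ are both separable metric spaces (Lemma \ref{sec.glob.lem.1} gives that $(\mathscr{C}(I;B),d_0)$ is Polish, and $B$ is Polish by assumption), their Borel $\sigma$-algebra on the product coincides with the product $\sigma$-algebra $\mathscr{B}(B) \otimes \mathscr{B}(\mathscr{C}(I;B))$. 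Hence the Borel set $\mathscr{F}_{t_0} \subset \mathfrak{X}$ provided by Lemma \ref{sec.glob.lem.3} lies in this product $\sigma$-algebra.

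Then I would apply Theorem \ref{chap.3.projmes} with $X = B$ equipped with its Borel $\sigma$-algebra and $Y = \mathscr{C}(I;B)$, which is a Polish space. The theorem yields immediately that $p(\mathscr{F}_{t_0}) = \mathscr{G}_{t_0}$ is universally measurable in $B$, which is exactly the claim. There is no serious obstacle here: all the work was front-loaded into Lemma \ref{sec.glob.lem.3} (showing $\mathscr{F}_{t_0}$ is Borel, which required the measurability argument of Lemma \ref{chap.3.mesmap}) and into the measurable projection theorem, which we quote. The only point to double-check when writing the full proof is the compatibility between Borel and product $\sigma$-algebras on $\mathfrak{X}$, which follows from separability.
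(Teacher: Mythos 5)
Your proposal is correct and follows exactly the paper's own argument: write $\mathscr{G}_{t_0}=p(\mathscr{F}_{t_0})$ with $\mathscr{F}_{t_0}$ Borel by Lemma \ref{sec.glob.lem.3}, note that $\mathscr{C}(I;B)$ is Polish by Lemma \ref{sec.glob.lem.1}, and apply the measurable projection Theorem \ref{chap.3.projmes}. Your extra remark that the Borel $\sigma$-algebra of the product of two separable metric spaces coincides with the product $\sigma$-algebra is a sensible detail the paper leaves implicit, but it does not change the route.
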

\begin{proof}
Take the projection map $p: B\times \mathscr{C}( I; B)\to B$, $p(x,\gamma)=x$. Recall that according to Lemma \ref{sec.glob.lem.1},
$\mathscr{C}( I; B)$ endowed with the metric $d_0$ of  the compact-open topology is a Polish space. Then using Lemma \ref{sec.glob.lem.3} and the measurable projection Theorem \ref{chap.3.projmes}, we obtain that
$$
p(\mathscr F_{t_0})= \mathscr G_{t_0}
$$
is a universally measurable set of $(B,\Vert\cdot\Vert)$.
\end{proof}

\medskip\noindent
{\bf Proof of Theorem  \ref{chap3.main.thm}:}
The global superposition principle, Proposition \ref{propinfinite}, yields the existence of a probability measure
$\eta\in \mathscr{P}(\mathfrak{X})$ such that $\mu_t =(\Xi_t)_{{\sharp}} \eta$, for all $t\in I$ where  $\Xi_t$ is the evaluation map
$$(x,\gamma)\in\mathfrak{X}\mapsto \gamma(t)\in B.$$
Thanks to Lemma \ref{sec.glob.lem.4}, we know that the set $\mathscr{G}_{t_0}$ is $\mu_{t_0}$-measurable since it is universally measurable.
Hence, Proposition \ref{propinfinite} and Lemma \ref{sec.glob.lem.3} imply
\begin{equation}
\label{eq_mu_eta}
\mu_{t_0}(\mathscr{G}_{t_0})= (\Xi_{t_0})_{\sharp}\eta(\mathscr{G}_{t_0})= \eta\big(\Xi_{t_0}^{-1}(\mathscr{G}_{t_0})\big)\geq \eta(\mathscr{F}_{t_0})=1\,.
\end{equation}
The last inequality is a consequence of  the inclusion $\mathscr{F}_{t_0}\subset \Xi_{t_0}^{-1}(\mathscr{G}_{t_0})$.   $\hfill\square$

\medskip
 Before proceeding with the proof of Theorem \ref{thm_measurable_flow}, we note the following measure-theoretic result, which is proved for instance in \cite[Theorem 3.9]{MR0226684}.

\begin{lemma}
\label{inj_mes}
 Let $X_{1}$ and $X_{2}$ be two complete separable metric spaces with $E_{1} \subset X_{1}$ and  $E_{2} \subset X_{2}$. Suppose that $E_{1}$ is a Borel set. Let $\psi$ be a measurable one-to-one map of $E_{1}$ into $X_{2}$ such that $\psi(E_{1})=E_{2}$. Then $E_{2}$ is a Borel set of $X_{2}$.
\end{lemma}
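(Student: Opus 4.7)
The plan is to prove this as an instance of the classical Lusin--Suslin theorem on injective Borel images, in three stages.

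First, I would reduce to the case of a continuous injection. Since $E_1$ is Borel in the Polish space $X_1$, it is a standard Borel space and carries a Polish refinement $\tau'$ of its subspace topology with the same Borel $\sigma$-algebra. By iteratively refining $\tau'$ so that the $\psi$-preimages of a countable base of $X_2$ become clopen, while preserving Polishness at each step, one may assume that $\psi\colon (E_1,\tau')\to X_2$ is continuous. Hence without loss of generality, $\psi$ is a continuous injection from a Polish space into $X_2$.

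Next, the key tool is Lusin's separation theorem: two disjoint analytic subsets of a Polish space can be separated by disjoint Borel sets, and iteration extends this to any countable pairwise disjoint family. Using it, I would build a Suslin scheme as follows. Fix a complete metric on $E_1$ and choose a tree of Borel sets $\{A_s\}_{s\in\NN^{<\NN}}$ with $A_\emptyset=E_1$, $A_s=\bigsqcup_n A_{s^\frown n}$, and $\mathrm{diam}(A_s)\le 2^{-|s|}$. By injectivity of $\psi$, the images $\psi(A_{s^\frown n})$ are pairwise disjoint analytic sets, so separation furnishes Borel sets $B_s\supset \psi(A_s)$ in $X_2$ which are nested, pairwise disjoint on siblings, and (after intersection with suitable shrinking open sets) satisfy $\mathrm{diam}(B_s)\le 2^{-|s|}$. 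Setting $E_2^\star := \bigcap_{n\ge 1}\bigsqcup_{|s|=n} B_s$ yields a Borel subset of $X_2$. The inclusion $E_2\subset E_2^\star$ is immediate; conversely, each $y\in E_2^\star$ determines a unique branch $\sigma\in\NN^\NN$ with $y\in B_{\sigma|n}$ for all $n$, and the nested closures of $A_{\sigma|n}$ shrink (by completeness) to a single point $x\in E_1$ with $\psi(x)=y$ by continuity. Hence $E_2=E_2^\star$ is Borel.

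The main obstacle is the bookkeeping in the Suslin scheme: running Lusin separation countably many times while simultaneously preserving nesting, sibling disjointness, and diameter shrinkage for the $B_s$ requires care, since each fresh separation must be intersected with both the previously constructed Borel envelopes and a basis of small open sets in $X_2$. A subtler point is guaranteeing that the nested sets $A_{\sigma|n}$ have nonempty intersection; this forces one to work with the refined Polish topology on $E_1$ and to invoke Cantor's intersection theorem via closures, so that the completeness built into the refinement does the work of pinning down a unique preimage.
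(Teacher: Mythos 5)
The paper does not actually prove this lemma: it is quoted as a known measure-theoretic fact with a citation to Parthasarathy \cite[Theorem 3.9]{MR0226684}, i.e.\ to the classical Lusin--Suslin theorem that an injective Borel image of a Borel set is Borel. Your proposal reconstructs the standard textbook proof of that cited result, and the overall architecture is sound: the reduction to a continuous injection via a Polish refinement of the topology on $E_1$ is correct, as is the use of the countable Lusin separation theorem on the pairwise disjoint analytic images of the cells of a Luzin scheme. One step of the bookkeeping, however, is stated in a way that does not quite work: you cannot intersect $B_s$ with ``suitable shrinking open sets'' to force $\mathrm{diam}(B_s)\le 2^{-|s|}$ while also keeping $\psi(A_s)\subset B_s$, unless $\psi(A_s)$ itself has small diameter --- and your tree only controls $\mathrm{diam}(A_s)$, not $\mathrm{diam}(\psi(A_s))$. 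Two standard repairs are available: either refine the partition $A_s=\bigsqcup_n A_{s^\frown n}$ by additionally intersecting with $\psi$-preimages of a fine open cover of $X_2$ (legitimate once $\psi$ is continuous), so that both $A_s$ and $\psi(A_s)$ have diameter at most $2^{-|s|}$ and the shrinking open envelope exists; or drop the diameter condition on $B_s$ altogether and instead set $B_{s^\frown n}=B_s\cap B^*_{s^\frown n}\cap\overline{\psi(A_{s^\frown n})}$, in which case the final identification $\psi(x)=y$ follows because $y\in\overline{\psi(A_{\sigma|n})}$ lets one pick $x_n\in A_{\sigma|n}$ with $\psi(x_n)\to y$, while completeness of the refined metric and continuity of $\psi$ give $\psi(x_n)\to\psi(x)$. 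With either repair (and the harmless convention that $B_s=\emptyset$ whenever $A_s=\emptyset$, so that the nested cells along a branch are nonempty), your argument is complete and agrees with the proof in the cited reference.
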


 Let us now show how Theorem \ref{chap3.main.thm} and Lemma \ref{inj_mes} imply Theorem \ref{thm_measurable_flow}.

\medskip\noindent
{\bf Proof of Theorem  \ref{thm_measurable_flow}:}
  Let $t_0\in\RR$ be any initial time. Consider $X_1=(\mathfrak{X}, d)$ given by \eqref{mathcalH}-\eqref{d_metric}, $X_2=B$ and $E_{1}=\mathscr{F}_{t_0}$ defined by \eqref{chap.3.Fset}.
  Recall that $(\mathfrak{X}, d)$ is  a (Polish) complete separable metric  space by Lemma \ref{sec.glob.lem.1} and  $\mathscr{F}_{t_0}$ is Borel measurable  by Lemma \ref{sec.glob.lem.3}.
  Let $\psi$  be the (measurable) projection map
$p: \mathscr{F}_{t_0}\subset\mathfrak{X}\to B$, $p(x,\gamma)=x$. Since by assumption, for any $x\in B$ the initial value problem  \eqref{chap3.ivpinh} admits at most one global mild solution, one deduces
that $p$ is a one-to-one map. Hence, according to Lemma \ref{inj_mes}, we have  that $p(\mathscr{F}_{t_0})$ is Borel measurable. Moreover, we have
$$
\mathscr{G}_{t_0}=p(\mathscr{F}_{t_0})\,, \quad \text{ and } \quad \mu_{t_0}(\mathscr{G}_{t_0})
=\eta(\mathscr{F}_{t_0})=1\,.
$$
The latter equality is a consequence of \eqref{eq_mu_eta}. By  Lemma  \ref{inj_mes}, we conclude  that
$p^{-1}:\mathscr{G}_{t_0} \to \mathscr{F}_{t_0}$ is Borel measurable.  Therefore,  the flow map
\begin{eqnarray*}
\phi_{t_0}^t : \mathscr{G}_{t_0} &\overset{p^{-1}}{\longrightarrow}  \mathscr{F}_{t_0} & \overset{\Xi_{t}}{\longrightarrow} B\\
x & \longmapsto  (x,\gamma) & \longmapsto  \gamma(t)\,,
\end{eqnarray*}
is well-defined and Borel measurable by composition. Now, we check  that  $\phi_{t_0}^t$ is a measurable flow satisfying Definition \ref{def.mes.flow}. By construction, the properties $\phi_{t_0}^{t_0}=\mathrm{Id}$ and $t\mapsto \phi_{t_0}^{t}(x)=\gamma(t)\in  \mathscr{C}(\RR;B)$ hold true. Thanks to the uniqueness assumption, we see that for all $t,t_0\in\RR$,
$$
\mathscr{G}_{t}=\phi_{t_0}^t(\mathscr{G}_{t_0})\,.
$$
In particular, for any $x\in\mathscr{G}_{t_0}$ let $\gamma_x(\cdot)$  denote  the (unique)  global mild solution of the  initial value problem \eqref{chap3.ivpinh} satisfying the initial condition $\gamma_x(t_0)=x$, then we check
$$
\phi_{s}^t\circ\phi_{t_0}^s(x)= \phi_{s}^t(\gamma_x(s))=\gamma_x(t)=\phi_{t_0}^t(x)\,,
$$
since $y=\gamma_x(s)\in \mathscr{G}_{s}$ and the curve $\gamma(\cdot)=\gamma_x(\cdot)$ is the unique global mild solution of \eqref{chap3.ivpinh} satisfying $\gamma(s)=y$. Finally, we have for any bounded measurable function $F: B\to \RR$,
\begin{eqnarray}
\label{mes_rel_1}
\int_{\mathscr{G}_{t_0}} F(\phi_{t_0}^t(x)) \, \mu_{t_0}(dx) &=&
\int_{\mathscr{F}_{t_0}} F(\phi_{t_0}^t(\Xi_{t_0}(x,\gamma))) \, \eta(dx,d\gamma) \\ \label{mes_rel_2}
&=&\int_{\mathscr{F}_{t_0}} F(\gamma(t)) \, \eta(dx,d\gamma)\\ \label{mes_rel_3}
&=& \int_{\mathfrak{X}} F(\Xi_{t}(x,\gamma)) \, \eta(dx,d\gamma)\\ \label{mes_rel_4}
&=& \int_{\mathscr{G}_{t}} F(x) \, \mu_t(dx)\,.
\end{eqnarray}
Recall that $\mu_t$ concentrates on the set $\mathscr{G}_t$ and $\eta$ on the set $\mathscr{F}_{t_0}$. The equalities  \eqref{mes_rel_1} and \eqref{mes_rel_4} are consequence of the superposition principle of Proposition \ref{propinfinite}, while  \eqref{mes_rel_2} follows from the definition of the flow $\phi_{t_0}^t$. This proves that $(\phi_{t_0}^t)_\sharp\mu_{t_0}=\mu_t$ for all $t,t_0\in\RR$. \hfill$\square$

\begin{remark}\label{rem.mes.flow}
We remark furthermore that by composition the map
\begin{align*}
\phi_{t_0}^{\cdot} : \RR\times\mathscr{G}_{t_0} &\overset{\mathrm{Id}\times p^{-1}}{\longrightarrow} \mathscr{F}_{t_0}  \overset{\Xi_{t}}{\longrightarrow}  B\\
(t,x) & \longmapsto   (t,p^{-1}(x)) \longmapsto \gamma(t)\,,
\end{align*}
is Borel measurable.
\end{remark}

\medskip\noindent
{\bf Proof of Theorem  \ref{thm_flow_invariance}:}
We assume that the initial value problem \eqref{chap3.ivpinh} admits a measurable flow $(\phi_{t_0}^t)$ with respect to $(\mu_t)_{t\in \RR}$ as in Definition \ref{def.mes.flow}. We intend to prove that the curve $(\mu_t)_{t\in \RR}$ satisfies the statistical Liouville equation \eqref{chap3.le}. By Fubini for any
$F \in \mathscr{C}^\infty_{c,cyl}(B)$ and $\chi\in\mathscr{C}^\infty_{c}(\RR)$,
\begin{eqnarray}
\label{eq_int_part_1}
\int_{\RR} \chi'(t) \int_{\mathscr{G}_{t_0}} F(\phi_{t_0}^t(x)) \,\mu_{t_0}(dx) dt &=&
 \int_{\mathscr{G}_{t_0}} \int_{\RR} \chi'(t)  F(\phi_{t_0}^t(x)) \,dt \,\mu_{t_0}(dx).
\end{eqnarray}
For any $x\in \mathscr{G}_{t_0}$, by integration by parts
\begin{eqnarray}
\label{eq_int_part_2}
\int_{\RR} \chi'(t)  F(\phi_{t_0}^t(x)) \,dt &=& -\int_{\RR} \chi(t)  \frac{d}{dt}F(\phi_{t_0}^t(x)) \,dt.
\end{eqnarray}
The above equality holds true because the function $t\mapsto F(\phi_{t_0}^t(x))$ is absolutely  continuous
 admitting a Lebesgue almost everywhere derivative locally integrable for any $x\in\mathscr{G}_{t_0}$. In particular, the curve $t\mapsto \phi_{t_0}^t(x)$ is a global mild solution satisfying
$t\mapsto v(t,  \phi_{t_0}^t(x))\in L^1_{loc}(\RR,dt)$ and for a.e. $t\in\RR$
$$
\frac{d}{dt}F( \phi_{t_0}^t(x))=\langle v(t,\phi_{t_0}^t(x)),\nabla F(\phi_{t_0}^t(x)) \rangle\,,
$$
with
$$
\lvert \langle v(t,\phi_{t_0}^t(x)), \nabla F(\phi_{t_0}^t(x)) \rangle \rvert  \lesssim \lVert
v(t,\phi_{t_0}^t(x))\rVert\in L^1_{loc}(\RR,dt)\,.
$$
 Hence, by \eqref{eq_int_part_1}-\eqref{eq_int_part_2}, we have
\begin{eqnarray*}
\int_{\RR} \chi'(t) \int_{\mathscr{G}_{t_0}} F(\phi_{t_0}^t(x)) \,\mu_{t_0}(dx) dt &=&-
  \int_{\RR} \chi(t) \int_{\mathscr{G}_{t_0}}   \langle v(t,\phi_{t_0}^t(x)), \nabla F(\phi_{t_0}^t(x)) \rangle \, \mu_{t_0}(dx) dt \\
  &=& -
  \int_{\RR} \chi(t) \int_{\mathscr{G}_{t_0}}   \langle  v(t,x), \nabla F(x)\rangle \, \mu_{t}(dx) dt.
\end{eqnarray*}
The latter equality yields the statistical Liouville equation \eqref{chap3.le} in the distributional sense. \hfill$\square$

\subsection{Analysis of ODEs and PDEs}
\label{chap3.sec.PDEana}
We sketch here the proofs of Corollaries \ref{cor.ode} and \ref{chap3.cor.res} which provide respectively two  applications for  ODEs and PDEs.

\medskip
\noindent
\paragraph{\emph{Analysis of ODEs:}}  Recall the initial value problem \eqref{chap3.ode} and assume that the assumptions of Corollary \ref{cor.ode} are satisfied.

{\bf Proof of Corollary \ref{cor.ode}:}
Without loss of generality, we may assume that the symplectic structure $J$  is canonical.  Precisely, the skew-symmetric matrix $J$ satisfying $J^2=-\rm I_{2d}$ is given by
$J=\begin{pmatrix}
0 & \rm I_d \\
- \rm I_d & 0
\end{pmatrix}$.  For any $\varphi\in\mathscr{C}_c^\infty(\RR^{2d})$, we have by integration by parts
\begin{eqnarray*}
\int_{\RR^{2d}} \frac{\partial h}{\partial q} (u)  \frac{\partial \varphi}{\partial p}(u) \;F(h(u)) \, L(du) = \\
\int_{\RR^{2d}}  \varphi(u) \Bigl( -\frac{\partial^2 h}{\partial p\partial q} (u)  \, F(h(u)) -
 \frac{\partial h}{\partial q} (u) \frac{\partial h}{\partial p} (u) \, F'(h(u))\Bigr) \, L(du),
\end{eqnarray*}
and  similarly
\begin{eqnarray*}
- \int_{\RR^{2d}}  \frac{\partial h}{\partial p} (u)  \frac{\partial \varphi}{\partial q}(u) \;F(h(u)) \, L(du) = \\
\int_{\RR^{2d}}  \varphi(u) \Bigl( \frac{\partial^2 h}{\partial q\partial p} (u)  \, F(h(u)) +
\frac{\partial h}{\partial p} (u) \frac{\partial h}{\partial q} (u) \, F'(h(u))\Bigr) \, L(du).
\end{eqnarray*}
Combining the two identities, we obtain
\begin{eqnarray}
\int_{\RR^{2d}} \Bigl( \frac{\partial h}{\partial q} (u)  \frac{\partial \varphi}{\partial p}(u)-  \frac{\partial h}{\partial p} (u)  \frac{\partial \varphi}{\partial q}(u)\Bigr) \;F(h(u)) \, L(du) =0.
\end{eqnarray}
Hence, using the symplectic structure on $\RR^{2d}$ and the Hamiltonian character  of the initial value problem, we prove
\begin{eqnarray}
\int_{\RR^{2d}} \bigl\langle \nabla\varphi, J\nabla h(u)\bigr\rangle \;F(h(u)) \, L(du) =0.
\end{eqnarray}
This shows  that the  measure  $\frac{F(h(\cdot)) dL}{\int F(h(u)) L(du)}$  satisfies the statistical Liouville equation and so we are within the framework of Theorem
\ref{chap3.main.thm}.  The latter grants us  the almost sure existence of global solutions to the ODE \eqref{chap3.ode}.
$\hfill\square$

\bigskip
\paragraph{\emph{Analysis of PDEs:}}
In this paragraph, we provide the proof of  Proposition \ref{chap3.invdisp} and Corollary \ref{chap3.cor.res}. Recall that the
initial value problem \eqref{chap3.ivpH-s}  can be written equivalently in the interaction representation as
\begin{equation*}
\dot\gamma(t)=v(t,\gamma(t)),
\end{equation*}
with a Borel vector field $v:\RR\times H^{-s}\to H^{-s}$ given by
\begin{equation}
\label{chap3.vgaus}
v(t,u)= -i e^{it A} \nabla h_{NL}( e^{-it A} u),\qquad x\in H^{-s}.
\end{equation}
First, we notice the following invariance:
\begin{proposition}
\label{chap3.stagibbs}
Consider the Gibbs measure
\begin{equation}\label{gibbs-gaussian-measure}
d\mu_0= \frac{e^{-h_{NL}} d\nu_0}{\int_{H^{-s}} e^{-h_{NL}}  d\nu_0}\,.
\end{equation}
Then for any $F\in \mathscr{C}_{b,cyl}^\infty(H^{-s})$,
\begin{equation}
\label{chap3.statmu}
\int_{H^{-s}} \langle \nabla F(u),  -i A u -i \nabla h_{NL}(u)\rangle \, \mu_0(du)=0.
\end{equation}
\end{proposition}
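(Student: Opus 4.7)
The identity \eqref{chap3.statmu} encodes the formal invariance of the Gibbs measure $\mu_0 \propto e^{-h_{NL}}\nu_0$ under the Hamiltonian vector field $V(u) = -iAu - i\nabla h_{NL}(u) = -i\nabla H(u)$, where $H(u) = \tfrac{1}{2}\langle u, Au\rangle_H + h_{NL}(u)$. Formally, $d\mu_0 \propto e^{-H}\,du$ and the Hamiltonian flow of $H$ preserves both $H$ (energy conservation) and the ``Lebesgue measure'' (Liouville's theorem), hence $\mu_0$ itself. My plan is to make this formal invariance rigorous by reducing the problem, via a cylindrical approximation of $h_{NL}$, to a classical Lebesgue integration by parts on a finite-dimensional slice.

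The main step is the approximation of $h_{NL}$ by smooth cylindrical functionals $h_{NL}^N \in \mathscr{C}^\infty_{b,cyl}(H^{-s})$ depending only on the first $N$ eigenmodes of $A$, chosen so that $h_{NL}^N \to h_{NL}$ in $\mathbb{D}^{1,2}(\nu_0)$ and $e^{-h_{NL}^N} \to e^{-h_{NL}}$ in $L^2(\nu_0)$. For each $N$, I claim
\[
\int_{H^{-s}}\langle \nabla F(u), -iAu - i\nabla h_{NL}^N(u)\rangle\,\mu_0^N(du) = 0,
\]
where $\mu_0^N \propto e^{-h_{NL}^N}\nu_0$ denotes the truncated Gibbs measure. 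To prove it, observe that $F$, $h_{NL}^N$, and $\nabla h_{NL}^N$ all factor through $\Pi_N u \in \mathrm{span}_{\CC}(e_1,\dots,e_N) \cong \RR^{2N}$, and the orthogonal tail $(\mathrm{Id}-\Pi_N)u$ contributes nothing because $\langle \nabla F(u), -iA(\mathrm{Id}-\Pi_N)u\rangle = 0$ by orthogonality of the eigenbasis. Integrating out the tail against the Gaussian leaves a finite-dimensional integral over $\RR^{2N}$ against the Lebesgue density proportional to $e^{-H^N}$, where $H^N = \tfrac{1}{2}\langle u_N, Au_N\rangle_H + h_{NL}^N$. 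Classical Lebesgue integration by parts then gives
\[
\int \langle \nabla F, -i\nabla H^N\rangle\,e^{-H^N}\,du_N = -\int F\,\nabla\cdot\bigl(-i\nabla H^N \cdot e^{-H^N}\bigr)\,du_N = 0,
\]
because $\nabla\cdot(-i\nabla H^N) = 0$ (the trace of a skew-symmetric matrix contracted with a symmetric Hessian) and $\langle -i\nabla H^N, \nabla H^N\rangle = 0$ (a skew-symmetric pairing of a vector with itself).

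I would then pass to the limit $N \to \infty$. The linear part $\int \langle \nabla F, -iAu\rangle\,d\mu_0^N$ converges to $\int \langle \nabla F, -iAu\rangle\,d\mu_0$ because $\langle \nabla F(u), -iAu\rangle$ is a polynomial in finitely many Gaussian modes (since $\nabla F$ is a finite linear combination of smooth eigenvectors of $A$), hence lies in every $L^p(\nu_0)$, and Cauchy--Schwarz against the $L^2(\nu_0)$-convergence $e^{-h_{NL}^N} \to e^{-h_{NL}}$ closes the limit. The nonlinear part $\int \langle \nabla F, -i\nabla h_{NL}^N\rangle\,d\mu_0^N$ converges to $\int \langle \nabla F, -i\nabla h_{NL}\rangle\,d\mu_0$ using boundedness of $\nabla F$, the convergence $\nabla h_{NL}^N \to \nabla h_{NL}$ in $L^2(\nu_0; H^{-s})$, and again the $L^2(\nu_0)$-convergence of $e^{-h_{NL}^N}$. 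Summing yields \eqref{chap3.statmu}.

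The main obstacle is constructing the approximating sequence $\{h_{NL}^N\}$ enjoying the joint convergence $h_{NL}^N \to h_{NL}$ in $\mathbb{D}^{1,2}(\nu_0)$ \emph{together with} $e^{-h_{NL}^N} \to e^{-h_{NL}}$ in $L^2(\nu_0)$: density of cylindrical functionals in the Gross--Sobolev space yields the first, while the second requires an additional truncation (e.g.\ replacing $h_{NL}^N$ by $\max\{-M,\min\{M,h_{NL}^N\}\}$ for a suitable $M = M(N) \to \infty$) combined with dominated convergence. It is precisely here that the stronger hypothesis $e^{-h_{NL}} \in L^2(\nu_0)$, rather than merely $L^1$, plays its essential role.
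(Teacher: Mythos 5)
Your proposal is correct in outline but takes a genuinely different route from the paper. The paper's proof is a short reduction to the companion work \cite{MR4571599}: it quotes the integration-by-parts identity
$\int_{H^{-s}}\{F,G\}\,d\mu_0=\int_{H^{-s}}\langle\nabla F(u),-iAu-i\nabla h_{NL}(u)\rangle\,G(u)\,d\mu_0$
for cylindrical $F,G$ (Theorem 4.11 and (4.38) there) and then removes the test function $G$ by letting $G_n\to 1$ pointwise with uniformly bounded derivatives $\partial_j G_n\to 0$. You instead re-derive the substance of that quoted identity from first principles: project to the first $N$ modes, where $(\pi_N)_{\sharp}\nu_0$ has Lebesgue density proportional to $e^{-\frac12\langle u_N,Au_N\rangle_H}$ (your normalization of $H^N$ is consistent with \eqref{chap3.fgaussian}), run the classical symplectic integration by parts (trace of a skew matrix against a symmetric Hessian, plus $\langle J\xi,\xi\rangle=0$), and pass to the limit using the $\mathbb{D}^{1,2}(\nu_0)$-convergence of $h_{NL}^N$ together with the $L^2(\nu_0)$-convergence of $e^{-h_{NL}^N}$; your observation that this last step is exactly where the hypothesis $e^{-h_{NL}}\in L^2(\nu_0)$ rather than merely $L^1$ is consumed is correct and matches the standing assumption \eqref{chap3.cdh}. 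What your route buys is self-containedness and a transparent explanation of why the statement is a rigorous form of Liouville invariance of the Gibbs measure; what it costs is the approximation lemma you flag at the end, which is the one step still to be written out: cylindrical density in $\mathbb{D}^{1,2}(\nu_0)$ is standard, but the simultaneous convergence $e^{-h_{NL}^N}\to e^{-h_{NL}}$ in $L^2(\nu_0)$ needs a smooth truncation and a two-parameter diagonal argument (for fixed $M$, dominate by $e^M$ along an a.e.-convergent subsequence as $N\to\infty$; then let $M\to\infty$ using the domination $e^{-\max(h_{NL},-M)}\le e^{-h_{NL}}\in L^2(\nu_0)$; and check via the Lipschitz chain rule of Malliavin calculus that truncation preserves the $\mathbb{D}^{1,2}$-convergence of both the function and its gradient). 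With that lemma supplied, your proof is complete and independent of the external citation.
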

\begin{proof}
For $k \in \NN$, let $e_k$ be as in \eqref{e_k_definition} above. Note that two copies of $\{e_k,i e_k, k\in\NN\}$ forms a fundamental strongly total biorthogonal system on $H^{-s}$.
Using  \cite[Theorem 4.11 and (4.38)]{MR4571599}, we have for all $F,G\in \mathscr{C}_{c,cyl}^\infty(H^{-s})$
\begin{equation}
\label{chap3.poiss}
\int_{H^{-s}} \{F,G\}(u) \, \mu_0(du) =\int_{H^{-s}} \langle \nabla F(u),  -i A u -i \nabla h_{NL}(u)\rangle \,G(u)\, \mu_0(du),
\end{equation}
where $\{\cdot,\cdot\}$ refers to the Poisson bracket {(see \cite{MR4571599} for more details)}. Take  a sequence of functions $(G_n)_{n\in\NN}$ in $\mathscr{C}_{c,cyl}^\infty(H^{-s})$ with given fixed basis\footnote{{\it i.e.} $G(u)=
\varphi(\langle u,e_1\rangle,\cdots, \langle u,e_m\rangle; \langle u, i e_1\rangle,\cdots, \langle u, i e_m\rangle )$, for $m$ fixed.}.
Suppose that $G_n \to 1$ pointwisely  with $\partial_j G_n$ are uniformly bounded with respect to $n$ and  $\partial_j G_n\to 0$ pointwise.  Then replacing $G$ by the sequence $G_n$  and letting $n\to \infty$ in \eqref{chap3.poiss}, yields the identity  \eqref{chap3.statmu}.
\end{proof}

\medskip
\noindent
{\bf Proof of Proposition \ref{chap3.invdisp}:}
We let $\{f_k, k \in \NN\}:=\{e_k,ie_k, k \in \NN\}$.
Let $F\in \mathscr{C}_{b,cyl}^\infty(H^{-s})$ be such that
$$
F(u)=\varphi(\langle u,f_1\rangle,\cdots,\langle u,f_m\rangle),
$$
for some $\varphi\in \mathscr{C}_{b}^\infty(\RR^{m})$. Then
\begin{equation}
\frac{d}{dt} \int_{H^{-s}} F(u)\,\mu_t(du)= \sum_{k=1}^m \int_{H^{-s}} \partial_k\varphi(\langle e^{itA} u, f_1\rangle,\dots, \langle  e^{itA} u, f_m\rangle) \, \langle  f_k, iA e^{itA} u \rangle   \, \mu_0(du).
\end{equation}
On the other hand, we check
  \begin{eqnarray*}
 \int_{H^{-s}} \langle v(t,u), \nabla F(u)\rangle \, \mu_t(du) &=& \\ && \hspace{-.6in}
 \sum_{k=1}^m \int_{H^{-s}} \partial_k\varphi(\langle  e^{itA} u, f_1\rangle,\dots, \langle  e^{itA} u, f_m\rangle)
  \, \langle f_k,-i e^{itA} \nabla h_{NL}(u)  \rangle   \, \mu_0(du).
\end{eqnarray*}
 Hence, Proposition \ref{chap3.stagibbs} yields
  \begin{eqnarray*}
 \int_{H^{-s}} \langle v(t,u), \nabla F(u)\rangle \, \mu_t(du) -\frac{d}{dt} \int_{H^{-s}} F(u)\,\mu_t(du) &=&
 \int_{H^{-s}} \langle \nabla \tilde F(u),  -i A u -i \nabla h_{NL}(u)\rangle \, \mu_0(du)\\
  &=& 0,
\end{eqnarray*}
for $\tilde F\in\mathscr{C}_{b,cyl}^\infty(H^{-s})$ given by
$$
\tilde F(u)=\varphi(\langle  e^{it\lambda_1} u, f_1\rangle,\dots, \langle  e^{it\lambda_m} u, f_m\rangle),
$$
and satisfying
$$
\nabla \tilde F(u)=\sum_{k=1}^m  \partial_k\varphi(\langle  e^{it\lambda_1} u, f_1\rangle,\dots, \langle  e^{it\lambda_m} u, f_m\rangle) \,
e^{-it\lambda_k} f_k\,.
$$
$\hfill\square$

\medskip
\noindent
{\bf Proof of Corollary \ref{chap3.cor.res}:} \\
In this framework: $B=H^{-s}$ is a separable dual Banach space with predual $E=H^s$.  The vector field $v:\RR\times H^{-s}\to H^{-s}$ given by \eqref{chap3.vgaus} is Borel measurable. The Gaussian measure $\nu_0$ and the Gibbs measures $\mu_t$ are well-defined Borel probabilities on $B$.  Then, one checks thanks to Proposition \ref{chap3.invdisp} that   $(\mu_t)_{t\in\RR}$ is a narrowly continuous curve in $\mathscr{P}(B)$ satisfying the condition \eqref{chap3.s1.eq3} and the statistical Liouville equation \eqref{chap3.le}. Hence, applying Theorem \ref{chap3.main.thm}, we obtain the $\mu_0$-almost sure existence of global mild solutions for the initial value  problem \eqref{chap3.ivpH-s}.   Taking now  into account the expression \eqref{gibbs-gaussian-measure} of the measure $\mu_0$,  one deduces the $\nu_0$-almost sure existence of global solutions  as  stated in Corollary \ref{chap3.cor.res}.
$\hfill\square$

\bigskip
\appendix
\addcontentsline{toc}{chapter}{Appendices}
\addtocontents{toc}{\protect\setcounter{tocdepth}{-1}}

\section{Local integrability condition}
\label{sec.loc.intg}
\medskip
We show below the equivalence between the local integrability condition \eqref{locinteg}  and \eqref{chap3.s1.eq3}.
\begin{lemma}
\label{equilocint}
Let  $v: I \times B \rightarrow B$ be a Borel vector field and  let $(\mu_t)_{t\in I}$ be a narrowly continuous curve in $\mathscr{P}(B)$. The statements  \eqref{locinteg} and
\eqref{chap3.s1.eq3} are equivalent.
\end{lemma}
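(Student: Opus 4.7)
The strategy is to prove the equivalence as two separate implications, both being essentially measure-theoretic bookkeeping once one sets $f(t) := \int_B \Vert v(t,u)\Vert \, \mu_t(du)$ (a $[0,+\infty]$-valued measurable function of $t$, which is the object that both conditions control). The direction \eqref{chap3.s1.eq3} $\Rightarrow$ \eqref{locinteg} will be immediate; the converse requires one to manufacture a non-decreasing positive weight $\omega$ that grows fast enough to tame the local $L^1$ masses of $f$ over a countable exhaustion of $I$ by compact intervals.

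For the easy direction, if \eqref{chap3.s1.eq3} holds for some non-decreasing positive $\omega$, then for any compact interval $[a,b]\subset I$ the quantity $\omega(|t|)$ is bounded on $[a,b]$ by $M:=\omega(\max(|a|,|b|))<+\infty$. Hence
\[
\int_a^b f(t)\,dt \;\leq\; M \int_a^b f(t)\,\frac{dt}{\omega(|t|)} \;\leq\; M \int_I f(t)\,\frac{dt}{\omega(|t|)} \;<\;+\infty,
\]
so $f \in L^1_{loc}(I,dt)$, which is \eqref{locinteg}.

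For the converse, assume $f\in L^1_{loc}(I,dt)$ and partition $I$ by the countable family of (possibly empty) Borel sets $J_n := \{t\in I : n\leq |t|<n+1\}$ for $n\in\NN$; each $J_n$ is contained in a compact subinterval of $I$, so the numbers
\[
c_n \;:=\; \int_{J_n} f(t)\,dt \;\in\; [0,+\infty)
\]
are all finite. I will now define $\omega$ as a non-decreasing step function on $\RR_+$ by the recursion $\omega_0 := 1$ and
\[
\omega_n \;:=\; \max\!\bigl(\omega_{n-1},\; 2^n(c_n+1)\bigr), \qquad n\geq 1,
\]
and set $\omega(t) := \omega_{\lfloor t\rfloor}$. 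Then $\omega: \RR_+ \to \RR_+^*$ is non-decreasing and positive, and
\[
\int_I f(t)\,\frac{dt}{\omega(|t|)} \;=\; \sum_{n\in\NN} \frac{c_n}{\omega_n} \;\leq\; \sum_{n\in\NN} \frac{c_n}{2^n(c_n+1)} \;\leq\; \sum_{n\in\NN} 2^{-n} \;<\;+\infty,
\]
which is \eqref{chap3.s1.eq3}. The only subtle point — more a bookkeeping matter than a genuine obstacle — is ensuring the monotonicity of $\omega$ while simultaneously controlling each ratio $c_n/\omega_n$; the running-maximum definition above handles both requirements at once. This completes the plan.
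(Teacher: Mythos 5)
Your proof is correct. Both directions are sound: the forward implication is the same boundedness-of-$\omega$-on-compacts observation the paper uses, and your converse construction genuinely works — on $J_n$ one has $\omega(|t|)=\omega_n\geq 2^n(c_n+1)$, so $\int_{J_n} f(t)\,\frac{dt}{\omega(|t|)}\leq c_n/(2^n(c_n+1))\leq 2^{-n}$, and the running maximum guarantees monotonicity. Note that \eqref{chap3.s1.eq3} only asks for a non-decreasing positive $\omega$, so your step function is admissible even though the paper's $\omega$ happens to be continuous.

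Where you differ from the paper is in the choice of decomposition. The paper first splits into cases according to whether $f=\int_B\|v(t,\cdot)\|\,d\mu_t$ is globally integrable (taking $\omega\equiv 1$ if so), and otherwise partitions $I$ by \emph{equal mass}: it chooses points $a_n$ with $\tfrac{M}{2}\leq\int_{a_{n-1}}^{a_n}f\leq M$, must then argue that $a_n\to+\infty$ (using the failure of global integrability), and finally sets $\omega(a_n)=(n+1)^2$ so that the weighted sum converges like $\sum n^{-2}$. You instead partition $I$ by \emph{unit length} and let the weight $\omega_n\geq 2^n(c_n+1)$ adapt to the (finite but possibly unbounded) local masses $c_n$, so the sum converges like $\sum 2^{-n}$. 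Your variant is slightly cleaner in that it needs neither the case distinction nor the unboundedness argument for the partition points; the paper's variant has the minor aesthetic advantage of producing a weight with an explicit polynomial growth rate independent of $f$. Both are complete proofs.
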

\begin{proof}
For simplicity take $I=\RR_+$. The proof in the general case is analogous. Suppose that \eqref{locinteg} is true and let for $t\in\RR_+$,
$$
f(t)=\int_{B}\Vert v(t,u) \Vert  \,\mu_t(du)\,.
$$
If $f\in L^1(\RR_+,dt)$ then one can take $\omega(t)=1$ for all $t\in \RR_+$ and \eqref{chap3.s1.eq3} is then satisfied. \\
Suppose now that $f\notin L^1(\RR_+,dt)$. Let $M>0$  be given. Then, there exists a non-negative increasing sequence
$(a_n)_{n\in\NN}$, with $a_0=0$, such that for all $n\geq 1$,
\begin{equation}
\frac M 2 \leq \int_{a_{n-1}}^{a_n} f(t) \, dt \leq M.
\end{equation}
We claim that the sequence $(a_n)_{n\in\NN}$ is unbounded (i.e.~$\lim_n a_n=+\infty$). If not, we have for all $k\geq 1$,
\begin{equation}
\label{inq.eq.cond}
k \frac M 2\leq \sum_{n=1}^k \int_{a_{n-1}}^{a_n} f(t) \, dt \leq
\int_{0}^{\sup a_n} f(t) dt<+\infty\,.
\end{equation}
This yields a contradiction as the left hand side of \eqref{inq.eq.cond} tends to $+\infty$ as $k\to\infty$. Hence, define the function $\omega: \RR_+\to \RR_+^*$ such that $\omega(0)=1$, $\omega(a_n)=(n+1)^2$  and
interpolate linearly between these points. Then $\omega$ is a positive  non-decreasing (continuous) function satisfying
$$
0\leq \int_{0}^{+\infty} f(t) \, \frac{dt}{\omega(t)}= \sum_{n=1}^\infty \int_{a_{n-1}}^{a_n} f(t) \, \frac{dt}{\omega(t)}\leq  M \sum_{n=1}^\infty \frac{1}{n^2} <\infty.
$$
The opposite implication  $\eqref{chap3.s1.eq3}\Rightarrow \eqref{locinteg}$ is trivial.
\end{proof}

\section{Results in  finite dimensions}
\label{chap3.sec.appA}
In this appendix we recall some useful results in finite dimensions.
Here, we consider $B=\RR^d$ and $I$ is a closed unbounded interval either equal to $\RR$ or half-closed. Take $t_0$ any initial time if $I=\RR$ or $t_0$ is its end-point if $I$ is half-closed.

\medskip\noindent
The following Cauchy-Lipschitz type theorem holds true.
\begin{theorem}[{\bf Cauchy--Lipschitz theorem} ({\cite[Lemma 8.1.4]{AmbrosioLuigi2005GFIM}})]\label{lemma2.1}
Let  $v: I\times \RR^d\to \RR^d$ be a Borel vector field satisfying the local Lipschitz Assumption \ref{lipass}.  Then, for every $x \in \RR^d$, the initial value problem (\ref{chap3.ivpinh}) admits a unique maximal solution in a relatively open interval $I(x)\subseteq I$ with  $t_0 \in I(x)$.
\end{theorem}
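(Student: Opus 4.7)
The plan is standard Carath\'eodory theory: solve a Picard fixed-point problem on a small time window, propagate uniqueness by Gr\"onwall, and glue. The only subtlety compared to the classical continuous case is that $v$ is merely Borel, but this is harmless because whenever $\gamma$ is continuous the composition $s \mapsto v(s,\gamma(s))$ is Borel measurable and pointwise dominated by the $L^1_{loc}$ majorant supplied by Assumption \ref{lipass}, so the Bochner integral appearing in \eqref{duhamelformula} is well defined and, as an indefinite integral, automatically continuous (in fact absolutely continuous) in $t$.

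\medskip
For local existence I would fix $x \in \RR^d$, choose $R > 0$, set $K := \{y \in \RR^d : \Vert y - x \Vert_{\RR^d} \le R\}$, and put $M(t) := \sup_K \Vert v_t \Vert_{\RR^d}$, $L(t) := \mathrm{lip}(v_t, K)$; both functions lie in $L^1_{loc}(I,dt)$ by Assumption \ref{lipass}. Absolute continuity of the indefinite integral lets me pick $\delta > 0$ such that the interval $J_\delta := [t_0 - \delta, t_0 + \delta] \cap I$ satisfies $\int_{J_\delta} M(s)\,ds \le R$ and $\int_{J_\delta} L(s)\,ds \le \tfrac12$. On the complete metric space $X_\delta := \{\gamma \in \mathscr{C}(J_\delta;\RR^d) : \gamma(t_0) = x,\ \gamma(J_\delta) \subset K\}$ equipped with the sup-norm, consider the Picard operator $\Gamma(\gamma)(t) := x + \int_{t_0}^t v(s,\gamma(s))\,ds$. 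The first bound shows $\Gamma(\gamma) \in X_\delta$, and the Lipschitz bound $\Vert v(s,\gamma_1(s)) - v(s,\gamma_2(s)) \Vert_{\RR^d} \le L(s)\,\Vert \gamma_1(s) - \gamma_2(s) \Vert_{\RR^d}$ makes $\Gamma$ a $\tfrac12$-contraction on $X_\delta$. Banach's fixed-point theorem then yields a unique mild solution on $J_\delta$.

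\medskip
Uniqueness of mild solutions on any common interval $J \ni t_0$ follows from Gr\"onwall: two such solutions remain in a single compact $K$ on compact sub-intervals of $J$, and subtracting the Duhamel formulas gives $\Vert \gamma_1(t) - \gamma_2(t) \Vert_{\RR^d} \le \bigl| \int_{t_0}^t L(s)\,\Vert \gamma_1(s) - \gamma_2(s) \Vert_{\RR^d}\,ds \bigr|$, forcing $\gamma_1 \equiv \gamma_2$. I would then define $I(x)$ as the union of all sub-intervals of $I$ containing $t_0$ that carry a mild solution; uniqueness makes the family compatible and glues them into one mild solution on $I(x)$. The point requiring the most care — relative openness of $I(x)$ in $I$ — I would handle by contradiction: at any endpoint $t^\ast$ of $I(x)$ strictly interior to $I$, the absolute continuity of $\int M$ and $\int L$ near $t^\ast$ supplies a fresh Picard window based at $(t^\ast, \gamma(t^\ast))$, which would extend the solution strictly past $t^\ast$ and contradict the definition of $I(x)$. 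This extension step is really the only nontrivial ingredient, and it relies crucially on the $L^1_{loc}$ integrability in Assumption \ref{lipass} rather than on any pointwise regularity of $v$.
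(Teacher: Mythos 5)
Your proof is correct: the paper does not prove this statement but cites it from Ambrosio--Gigli--Savar\'e [Lemma 8.1.4], and your argument (Picard contraction on a window where $\int M$ and $\int L$ are small, Gr\"onwall uniqueness with the $L^1_{loc}$ Lipschitz coefficient, gluing, and continuation to get relative openness of $I(x)$) is essentially the standard Carath\'eodory-type proof given there.
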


 We recall below the characteristics representation formula for solution of statistical Liouville equations in  finite dimensions with locally Lipschitz vector fields  (see  \cite[Proposition 8.1.8]{AmbrosioLuigi2005GFIM}).

\begin{lemma}[{\bf Characteristics representation formula}]\label{lemma2.2}
Let  $v: I\times \RR^d\to \RR^d$ be a Borel vector field satisfying the local Lipschitz Assumption \ref{lipass}.
Let  $(\mu_t)_{t\in I}$ be a narrowly continuous curve in $\mathscr{P}(\RR^d)$ satisfying (\ref{locinteg})  and
 the statistical Liouville equation (\ref{chap3.le}).  Then for all $T>0$ and  for $\mu_{t_0}$-a.e $x\in \RR^d$,
 the  initial value problem (\ref{chap3.ivpinh}) admits a  solution $\Phi_t(x) \equiv \gamma_x(t)$ over $I\cap[-T,T]$ and
\begin{equation}
\label{chap3.appA.eq.flow}
\mu_t= (\Phi_t)_{\sharp}\,\mu_{t_0}, \quad \forall t\in I\cap[-T,T].
\end{equation}
\end{lemma}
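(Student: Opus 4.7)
The plan is to combine the Cauchy--Lipschitz local well-posedness from Lemma \ref{lemma2.1} with a truncation-and-uniqueness argument for the statistical Liouville equation, in the spirit of \cite[Proposition 8.1.8]{AmbrosioLuigi2005GFIM}, adapted to the unbounded time interval $I$. Fix $T>0$ and set $J_T := I \cap [-T,T]$; the goal is to construct a Borel set $\cG^{d,T} \subseteq \RR^d$ with $\mu_{t_0}(\cG^{d,T}) = 1$ on which the maximal solution from Lemma \ref{lemma2.1} is defined throughout $J_T$, and then to establish the pushforward identity $\mu_t = (\Phi_t)_\sharp \mu_{t_0}$ on $J_T$.

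To bypass the potential blow-up, I would truncate the vector field: take $\chi_R \in \mathscr{C}_c^\infty(\RR^d)$ with $\chi_R \equiv 1$ on $B_R$ and supported in $B_{2R}$, and set $v^R := \chi_R v$. By Assumption \ref{lipass}, $v^R$ is bounded and globally Lipschitz in $x$ with time-integrable Lipschitz constant on any compact subinterval of $I$, so Cauchy--Lipschitz yields a global flow $\Phi^R : I \times \RR^d \to \RR^d$. A direct chain-rule computation verifies that $\mu^R_t := (\Phi^R_t)_\sharp \mu_{t_0}$ is a narrowly continuous solution of the Liouville equation for $v^R$ with initial datum $\mu_{t_0}$. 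Uniqueness in this class I would obtain by duality: for each $f \in \mathscr{C}_c^\infty(\RR^d)$ and $t^\star \in J_T$, the function $\phi(s,x) := f\bigl(\Phi^R_{t^\star}((\Phi^R_s)^{-1}(x))\bigr)$ is a $\mathscr{C}^1$ solution of the backward transport equation $\partial_s\phi + \langle v^R,\nabla\phi\rangle = 0$ with spatial support contained in a fixed compact set uniformly in $s \in J_T$. Testing any narrowly continuous $v^R$-Liouville solution $\tilde\mu_\cdot$ against $\phi(s,\cdot)$ kills the time derivative, giving $\int f\,d\tilde\mu_{t^\star} = \int \phi(t_0,\cdot)\,d\mu_{t_0} = \int f\,d\mu^R_{t^\star}$, and density of $\mathscr{C}_c^\infty(\RR^d)$ in the relevant measure-determining class forces $\tilde\mu_t = \mu^R_t$ throughout $J_T$.

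The main obstacle is that $\mu_t$ a priori solves the Liouville equation for $v$, not for $v^R$, so the uniqueness above does not directly identify $\mu_t$ with $\mu^R_t$. To bridge this I would localize: on the good set $A_R := \{x \in \RR^d : \sup_{s \in J_T}\|\Phi^R_s(x)\| < R\}$, the truncated trajectory stays in $\{v = v^R\}$ and hence $\Phi_t(x) = \Phi^R_t(x)$ is defined on all of $J_T$. The key quantitative step is to show $\mu_{t_0}(\RR^d \setminus A_R) \to 0$ as $R \to \infty$; this I would obtain by testing the Liouville equation for $v$ against a cylindrical cutoff $F_R(x) = 1-\zeta(\|x\|/R)$ with $\zeta \in \mathscr{C}_c^\infty([0,\infty))$, $\zeta \equiv 1$ on $[0,1]$, yielding a Gronwall-type bound
\[\sup_{t\in J_T}\mu_t(\RR^d\setminus B_R) \lesssim \mu_{t_0}(\RR^d\setminus B_{R/2}) + R^{-1}\int_{J_T}\!\int_{\RR^d}\|v\|\,d\mu_t\,dt,\]
which vanishes as $R\to\infty$ by \eqref{locinteg} and tightness of the Borel probability measure $\mu_{t_0}$. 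Setting $\cG^{d,T} := \bigcup_R A_R$ then produces a Borel set of full $\mu_{t_0}$-measure on which the true flow $\Phi_t$ is defined throughout $J_T$, and combining the identity $\mu^R_t = (\Phi^R_t)_\sharp \mu_{t_0}$ with the agreement $\Phi^R\rvert_{A_R} = \Phi\rvert_{A_R}$ and the vanishing excess mass gives $\mu_t = (\Phi_t)_\sharp \mu_{t_0}$ on $J_T$. The hardest technical point I anticipate is rigorously justifying the Gronwall step under only the narrow continuity hypothesis on $(\mu_t)_{t\in I}$, since the time derivative of $t \mapsto \int F_R\,d\mu_t$ has to be interpreted distributionally and integrated against approximate cutoffs in time before passing to the limit.
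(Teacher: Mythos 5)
The paper does not actually prove this lemma: it is recalled verbatim from \cite[Proposition 8.1.8]{AmbrosioLuigi2005GFIM}, so your proposal has to be judged against the known argument rather than against a proof in the text. Your overall architecture (truncate the field, get a global flow $\Phi^R$ by Cauchy--Lipschitz, identify $(\Phi^R_t)_\sharp\mu_{t_0}$ as the unique solution of the $v^R$-continuity equation by duality along backward characteristics, then localize to the set where truncated and true trajectories agree) is a sensible reconstruction of that circle of ideas, and the first two stages are fine modulo standard mollification issues (your dual function $\phi(s,\cdot)$ is only Lipschitz, not an admissible $\mathscr{C}^\infty_c$ test function, and $\partial_s\phi$ exists only a.e.).

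The genuine gap is the step $\mu_{t_0}(\RR^d\setminus A_R)\to 0$. Your Gronwall/cutoff estimate controls $\sup_{t\in J_T}\mu_t(\RR^d\setminus B_R)$, i.e.\ the tails of the \emph{given} measures $\mu_t$; but $A_R$ is defined through the truncated flow $\Phi^R$, and the measures that see where $\Phi^R$-trajectories go are $\mu^R_t=(\Phi^R_t)_\sharp\mu_{t_0}$, which at this stage of the argument have not been related to $\mu_t$ in any way. Concretely, the natural Chebyshev bound reads
\[
\mu_{t_0}\bigl(B_{R/2}\setminus A_R\bigr)\;\leq\;\frac{2}{R}\int_{J_T}\int_{\RR^d}\Vert v^R(s,y)\Vert\,\mu^R_s(dy)\,ds,
\]
and converting the right-hand side into the quantity $\int_{J_T}\int\Vert v\Vert\,d\mu_s\,ds$ that \eqref{locinteg} controls requires precisely a comparison of the form $(\Phi^R_t)_\sharp(\mu_{t_0}(\,\cdot\cap A_R))\leq\mu_t$ -- which is the actual content of the proposition, so the argument as written is circular. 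Nor can you escape by treating $\mu_t$ as an approximate solution of the $v^R$-equation with error $\int_{\|x\|\geq R}\Vert v\Vert\,d\mu_t$: the stability constant in the duality argument involves the Lipschitz constant of $v^R$ on $B_{2R}$, which grows with $R$, so the product need not vanish. Closing this step requires the comparison argument of \cite[Proposition 8.1.8]{AmbrosioLuigi2005GFIM} (showing that the flow-transported restriction of $\mu_{t_0}$ to the set where trajectories survive is dominated by $\mu_t$, and then using conservation of total mass together with \eqref{locinteg} to rule out blow-up on a set of positive measure); as it stands your sketch assumes the hard inequality rather than proving it.
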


We recall the following regularization argument for solutions of Liouville equations in finite dimensions (see  \cite[Lemma 8.1.9]{AmbrosioLuigi2005GFIM}).

\begin{lemma}[{\bf Approximation by regular curves}]\label{lemma2.6}
Let $v:I\times \RR^d\to\RR^d$ be a Borel vector field and $(\mu_t)_{t\in I}$ be a narrowly continuous curve in $\mathscr{P}(\RR^d)$ satisfying (\ref{locinteg})  and  the statistical Liouville equation (\ref{chap3.le}).
Let $ \rho \in \mathscr C^{\infty}(\RR^d)$ strictly positive such that $\int_{\RR^d} \rho(x) \, dx =1$. Set $\rho_\eps(x)=\frac{1}{\eps^d} \, \rho(\frac{x}{\eps})$  and
\be \label{approximation}\mu_t^{\eps}:=\mu_t\ast \rho_\eps,\qquad  v^{\eps}_t:= \frac{ (v_t \mu_t) \ast \rho_\eps}{ \mu_t^{\eps}}. \ee
Then $(\mu_t^{\eps})_{t\in I}$ is a narrowly continuous solution of the Liouville equation \eqref{chap3.le} with respect to the vector field $v^\eps$.  Moreover,  $v^{\eps}_t$  satisfies the local Lipschitz Assumption \ref{lipass} and the uniform integrability bound
\begin{equation}\label{cdt1}
\int_{\RR^d} \Vert v_t^{\eps}(x)  \Vert_{\RR^d}  \,\mu_t^{\eps}(dx)\leq \int_{\RR^d} \Vert v_t(x)  \Vert_{\RR^d}  \,\mu_t(dx),\quad \forall t\in I.
\end{equation}
 \end{lemma}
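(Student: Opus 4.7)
The plan is to verify in turn the four claimed properties: smoothness of $v_t^\eps$, the integrability bound \eqref{cdt1}, the local Lipschitz regularity of Assumption~\ref{lipass}, and the Liouville equation for $(\mu_t^\eps)$. The argument is a direct computation with the convolution, and the only subtle point is the derivation of the Liouville equation.

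First, I would note that since $\rho\in\mathscr C^\infty(\RR^d)$ is strictly positive with unit mass, $\mu_t^\eps=\mu_t*\rho_\eps$ is absolutely continuous with smooth, strictly positive density $x\mapsto \int \rho_\eps(x-y)\,\mu_t(dy)$. Similarly, the vector-valued measure $v_t\mu_t$ is locally finite by \eqref{locinteg}, so $(v_t\mu_t)*\rho_\eps$ is a smooth vector field and hence $v_t^\eps$ is smooth. The bound \eqref{cdt1} then follows from Jensen's inequality applied to the probability kernel $\rho_\eps(x-y)\,\mu_t(dy)/\mu_t^\eps(x)$, which gives pointwise $\|v_t^\eps(x)\|\,\mu_t^\eps(x)\leq \int \|v_t(y)\|\,\rho_\eps(x-y)\,\mu_t(dy)$, followed by Fubini in $x$. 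The local Lipschitz property on a compact $K\subset\RR^d$ would be obtained by differentiating the ratio $((v_t\mu_t)*\rho_\eps)/(\mu_t*\rho_\eps)$: since $\mu_t^\eps$ is bounded below uniformly on $K$ by a positive constant depending only on $\eps$ and $K$, both $\sup_K\|v_t^\eps\|$ and $\mathrm{lip}(v_t^\eps,K)$ are dominated by $C(\eps,K)\bigl(1+\int\|v_t(y)\|\,\mu_t(dy)\bigr)$, which is $L^1_{loc}(I,dt)$ by \eqref{locinteg}. Narrow continuity of $t\mapsto \mu_t^\eps$ follows at once from the identity $\int F\,d\mu_t^\eps=\int (F*\check\rho_\eps)\,d\mu_t$ for $F\in\mathscr C_b(\RR^d)$, where $\check\rho_\eps(x):=\rho_\eps(-x)$.

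The core step is the Liouville equation for $(\mu_t^\eps,v_t^\eps)$. For $\varphi\in\mathscr C_c^\infty(\RR^d)$, Fubini gives $\int \varphi\,d\mu_t^\eps=\int (\varphi*\check\rho_\eps)\,d\mu_t$. Applying \eqref{chap3.le} for $(\mu_t,v_t)$ with the smooth bounded test function $\varphi*\check\rho_\eps$ and using $\nabla(\varphi*\check\rho_\eps)=(\nabla\varphi)*\check\rho_\eps$, one gets
\begin{equation*}
\frac{d}{dt}\int \varphi\,d\mu_t^\eps=\int \langle v_t(y),(\nabla\varphi*\check\rho_\eps)(y)\rangle\,\mu_t(dy).
\end{equation*}
A second Fubini rewrites the right-hand side as $\int \langle\nabla\varphi(x),((v_t\mu_t)*\rho_\eps)(x)\rangle\,dx=\int \langle v_t^\eps(x),\nabla\varphi(x)\rangle\,\mu_t^\eps(dx)$, which is \eqref{chap3.le} for $(\mu_t^\eps,v_t^\eps)$ in the distributional sense.

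The main obstacle I anticipate is the justification of the preceding step when $\rho$ is strictly positive but not compactly supported (e.g.\ a Gaussian), so that $\varphi*\check\rho_\eps$ is not compactly supported and lies outside the class of admissible test functions of \eqref{chap3.le}. I would resolve this by truncating $\varphi*\check\rho_\eps$ against an exhausting sequence of smooth plateau functions and passing to the limit with the aid of dominated convergence, using the bound $\|v_t\|\in L^1_{loc}(I, dt;L^1(\mu_t))$ from \eqref{locinteg}; in the finite-dimensional setting this is a routine cutoff argument.
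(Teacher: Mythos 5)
Your proof is correct and is essentially the standard mollification argument of \cite[Lemma 8.1.9]{AmbrosioLuigi2005GFIM}, which is precisely what the paper cites in place of giving a proof: the identity $\int\varphi\,d\mu_t^\eps=\int(\varphi*\check{\rho}_\eps)\,d\mu_t$, the Jensen/Fubini derivation of \eqref{cdt1}, the lower bound on the mollified density for the Lipschitz estimate, and the cutoff argument to handle the fact that $\varphi*\check{\rho}_\eps$ is not compactly supported are all the right steps. Two minor caveats: the constant bounding $\inf_K\mu_t^\eps$ from below genuinely depends on the compact time interval as well as on $(\eps,K)$ (it is obtained from tightness of $\{\mu_t\}$ over compact time sets, which is all that the $L^1_{loc}$ requirement of Assumption \ref{lipass} needs), and the smoothness and Lipschitz bounds implicitly use that $\rho$ and $\nabla\rho$ are bounded (e.g.\ a Gaussian kernel), a restriction already implicit in the lemma's statement.
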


 \section{Radon spaces, Tightness}
 \label{chap3.sec.appB}
\noindent \underline{\it Radon spaces:} Let $X$ be a Hausdorff topological space. A Borel measure $\mu$ is a Radon measure if it is locally finite and inner regular. A topological space is called a Radon space if every finite Borel measure is a Radon measure (see e.g.  \cite[chapter 2]{MR0426084}).  {In particular,  separable complete metric spaces  and Suslin spaces \cite[Theorem 10]{MR0426084} are Radon spaces.}\\
\noindent \underline{\it Tightness: } Let $X$ be a separable metric space and $\mathscr{P}(X)$ the set of Borel probability measures on $X$.
A set $\cK \subset \mathscr{P}(X)$ is tight if
\[\forall \eps>0, \ \exists K_\eps \ \text{ compact in $X$ such that } \mu(X\setminus K_\eps) \leq \eps, \ \forall \mu \in \cK.\]
\noindent Prokhorov's theorem  \cite[Theorem 5.1.3]{AmbrosioLuigi2005GFIM} says that any tight set $\cK\subset \mathscr{P}(X)$ is relatively (sequentially) compact in $\mathscr{P}(X)$ in the narrow topology. A useful characterization is given below.
\begin{lemma}[{\cite[Remark 5.1.5]{AmbrosioLuigi2005GFIM}}]\label{lemma2.5}
A set  $\cK \subset \mathscr{P}(X)$  is tight if and only if there exists a function $\varphi :X\longrightarrow [0,+\infty]$ whose sublevel sets $ \lbrace x \in X; \varphi(x)\leq c\rbrace $ are relatively compact in $X$ for all $c \geq 0$ and which satisfies \[\begin{aligned}\sup_{\mu \in \cK} \int_{X} \varphi(x) \,\mu(dx)<+\infty\end{aligned}.\]
\end{lemma}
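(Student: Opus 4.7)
The plan is to prove the two implications separately. The $(\Leftarrow)$ direction is a standard Markov-inequality argument, while the $(\Rightarrow)$ direction is constructive and requires an explicit definition of $\varphi$ from a carefully chosen exhaustion of $X$ by compact sets.

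For sufficiency, I would assume $M := \sup_{\mu \in \cK} \int_X \varphi(x)\,\mu(dx) < +\infty$. Given $\varepsilon > 0$, I would choose $c > M/\varepsilon$ and set $K_\varepsilon$ to be the closure of $\{\varphi \leq c\}$, which is compact by the hypothesis on the sublevels. Markov's inequality then gives $\mu(X \setminus K_\varepsilon) \leq \mu(\{\varphi > c\}) \leq M/c < \varepsilon$ uniformly over $\mu \in \cK$, proving tightness.

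For necessity, I would use the tightness assumption to extract, for every $n \in \NN$, a compact set $K_n \subset X$ with $\mu(X \setminus K_n) \leq 2^{-n}$ for all $\mu \in \cK$; replacing $K_n$ by $\bigcup_{j \leq n} K_j$, I can further assume the $K_n$ are nested. Then I would define $\varphi(x) := n$ for $x \in K_n \setminus K_{n-1}$ (with the convention $K_0 := \emptyset$) and $\varphi(x) := +\infty$ for $x \notin \bigcup_n K_n$. Since $\{\varphi \leq c\} \subset K_{\lfloor c \rfloor}$, all sublevel sets are relatively compact. A layer-cake computation yields $\int_X \varphi\,\mu(dx) = \sum_{n \geq 1} \mu(\{\varphi \geq n\}) = \sum_{n \geq 0} \mu(X \setminus K_n) \leq 1 + \sum_{n \geq 1} 2^{-n} \leq 2$, uniformly in $\mu \in \cK$.

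The main point requiring attention, though minor, is ensuring that $\varphi$ is Borel measurable so that the integral is well-defined and that the set $\{\varphi = +\infty\}$ is $\mu$-negligible for every $\mu \in \cK$. The first point follows because each $K_n$ is closed in the Hausdorff metric space $X$, so each $K_n \setminus K_{n-1}$ is Borel and $\varphi$ is a countable sum of Borel-measurable simple functions. The second is automatic since $\mu(X \setminus \bigcup_n K_n) \leq \inf_n 2^{-n} = 0$. No deeper obstacle appears; the proof is essentially combinatorial once the geometric decay rate $2^{-n}$ is chosen so as to make the summation-by-parts step converge.
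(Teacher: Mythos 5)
Your proof is correct: the Markov-inequality argument for sufficiency and the construction of $\varphi$ as a layered indicator built on a nested exhaustion $K_1\subset K_2\subset\cdots$ with $\mu(X\setminus K_n)\le 2^{-n}$ is exactly the standard argument behind \cite[Remark 5.1.5]{AmbrosioLuigi2005GFIM}, which the paper cites without reproducing a proof. The measurability and layer-cake details you check are the only points needing care, and you handle them correctly.
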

\begin{lemma}[{\cite[page 108]{AmbrosioLuigi2005GFIM}}]\label{cam} Let $(X,d)$ be a (metric, separable) Radon space. Then every narrowly converging sequence $(\mu_n)_n$ is
tight.
\end{lemma}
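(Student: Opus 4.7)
The plan is to produce, for each $\eps > 0$, a single compact set $K_\eps \subset X$ such that $\mu_n(K_\eps) > 1 - \eps$ uniformly in $n$. Two ingredients are available: (i) the Radon property of $X$ ensures that every individual finite Borel measure, and in particular the narrow limit $\mu$ together with each $\mu_n$, is tight (inner regular by compact sets); and (ii) the Portmanteau theorem, which for narrow convergence gives $\liminf_n \mu_n(U) \geq \mu(U)$ for every open $U \subset X$.

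I would fix a countable dense subset $\{x_i\}_{i \in \NN}$ of $X$, and for each integer $k \geq 1$ use the tightness of $\mu$ to select a finite index $M_k$ so that the open set $V_k := \bigcup_{i=1}^{M_k} B(x_i, 1/k)$ has $\mu$-mass at least $1 - \eps/2^{k+2}$. This is possible because almost all of $\mu$ is supported on a compact set, which is a fortiori covered by finitely many open $1/k$-balls drawn from the fixed dense net. Portmanteau applied to the open $V_k$ yields an $N_k$ with $\mu_n(V_k) > 1 - \eps/2^{k+1}$ for every $n \geq N_k$; for the finitely many remaining indices $n < N_k$ I would invoke the individual tightness of each such $\mu_n$ to enlarge $M_k$ if necessary (replacing it by some $M_k' \geq M_k$), thereby arranging that $\mu_n(V_k) > 1 - \eps/2^{k+1}$ holds simultaneously for all $n$. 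Taking the intersection $W := \bigcap_k V_k$, the set $W$ is totally bounded (it admits a finite $1/k$-net for every $k$), and a Borel--Cantelli type estimate gives $\mu_n(W) > 1 - \eps/2$ for every $n$.

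It remains to upgrade $W$ from totally bounded to genuinely compact. In the intended applications of the lemma in the present paper, the ambient space $X$ is Polish — the path space $(\mathfrak{X}, d)$ of Lemma \ref{sec.glob.lem.1}, or the Banach space $B$ equipped with its original norm — so that $\overline{W}$ is automatically compact, being closed and totally bounded in a complete metric space, and we may take $K_\eps := \overline{W}$. In the general Radon case without completeness, one invokes inner compact regularity of the individual $\mu_n$ once more to extract a compact subset of $W$ of almost full measure; since Portmanteau controls the tail uniformly and only finitely many initial indices require individual correction, a finite union of compact approximants yields the desired $K_\eps$. I expect this promotion from totally bounded to genuinely compact to be the only real subtlety, the remainder of the argument being a standard diagonal extraction combined with Portmanteau.
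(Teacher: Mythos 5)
Your construction of the totally bounded set $W$ with $\inf_n\mu_n(W)>1-\eps/2$ is correct and standard (indeed you do not even need tightness there: since the balls $B(x_i,1/k)$ centred at a dense set cover $X$, continuity from below already gives $\mu\bigl(\bigcup_{i\le M}B(x_i,1/k)\bigr)\to 1$ as $M\to\infty$, and likewise for each fixed $\mu_n$). The genuine gap is in your final step, the promotion of $W$ to a compact set when $X$ is not complete. The proposed fix --- that ``Portmanteau controls the tail uniformly'' so that ``a finite union of compact approximants yields the desired $K_\eps$'' --- fails: Portmanteau gives the lower bound $\liminf_n\mu_n(U)\ge\mu(U)$ only for \emph{open} $U$, while for a compact (hence closed) set it gives $\limsup_n\mu_n(C)\le\mu(C)$, the wrong direction, so no single compact set produced by inner regularity of the limit controls the tail of the sequence; and the inner-regularity approximants $C_n\subset W$ with $\mu_n(C_n)$ large come one per index $n$, so no \emph{finite} union of them handles infinitely many $n$. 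Note also that the Polish fallback does not cover the paper's actual uses of the lemma: in Lemma \ref{chap.3.tight.B} it is invoked with $X=B_w=(B,\Vert\cdot\Vert_*)$, which is Suslin but in general not complete, so the general Radon case is genuinely needed. (The paper itself offers no proof, only the citation to Ambrosio--Gigli--Savar\'e, so there is no in-paper argument to compare against.)

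The standard repair is Le Cam's trick. By inner regularity of the limit choose a compact $K$ with $\mu(K)>1-\eps$; for each $m$ apply Portmanteau to the open $1/m$-neighbourhood $K^{1/m}$ to obtain $n_m\uparrow\infty$ with $\mu_n(K^{1/m})>1-\eps$ for all $n\ge n_m$; for $n\ge n_1$ let $m(n)$ be the largest $m$ with $n_m\le n$ (so $m(n)\to\infty$) and use inner regularity of $\mu_n$ to pick a compact $C_n\subset K^{1/m(n)}$ with $\mu_n(C_n)>1-2\eps$, while for the finitely many $n<n_1$ take any compact $C_n$ with $\mu_n(C_n)>1-2\eps$. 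Then $K_\eps:=K\cup\bigcup_n C_n$ is compact --- a sequence meeting infinitely many distinct $C_n$ has distance to $K$ tending to $0$, hence subconverges to a point of the compact $K$ --- and $\mu_n(K_\eps)>1-2\eps$ for every $n$. The essential point your proposal misses is that the union of compact approximants must be countably infinite, and its compactness is earned by forcing the $C_n$ to accumulate on a fixed compact set, not by finiteness.
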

\begin{lemma}[{\cite[Lemma 5.2.2]{AmbrosioLuigi2005GFIM} }]\label{lemma2.4}
Let $X$, $X_1$ and $X_2$ be separable spaces and let $r^i: X\longrightarrow X_i$ be continuous maps such that the product map $r:=r^1 \times r^2: X\longrightarrow X_1 \times X_2 $ is proper. Let $\cK \subset \mathscr{P}(X)$ be such that $\cK_i:=(r^i)_{\sharp} {\cK}$ is tight in $\mathscr{P}(X_i)$ for $i=1,2$. Then, $\cK$ is tight in $\mathscr{P}(X)$.
\end{lemma}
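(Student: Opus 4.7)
The plan is to use the standard epsilon/2 argument, exploiting properness of $r=r^1\times r^2$ to manufacture compact subsets of $X$ from compact subsets of $X_1\times X_2$. Given $\varepsilon>0$, since each $\cK_i=(r^i)_\sharp\cK$ is tight in $\mathscr{P}(X_i)$, I would first extract compact sets $K_i^\varepsilon\subset X_i$ ($i=1,2$) such that
\[
(r^i)_\sharp\mu\bigl(X_i\setminus K_i^\varepsilon\bigr)\ \le\ \tfrac{\varepsilon}{2}\qquad \text{for every } \mu\in\cK.
\]
Then I would define the candidate compact set in $X$ as
\[
K^\varepsilon\ :=\ r^{-1}\bigl(K_1^\varepsilon\times K_2^\varepsilon\bigr).
\]
Since $K_1^\varepsilon\times K_2^\varepsilon$ is compact in $X_1\times X_2$ and $r$ is proper (preimages of compact sets are compact), $K^\varepsilon$ is a compact subset of $X$.

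Next I would estimate $\mu(X\setminus K^\varepsilon)$ for an arbitrary $\mu\in\cK$. Using
\[
(X_1\times X_2)\setminus(K_1^\varepsilon\times K_2^\varepsilon)
\ \subseteq\ \bigl((X_1\setminus K_1^\varepsilon)\times X_2\bigr)\ \cup\ \bigl(X_1\times(X_2\setminus K_2^\varepsilon)\bigr),
\]
and taking $r$-preimages followed by subadditivity of $\mu$, I obtain
\[
\mu(X\setminus K^\varepsilon)
\ \le\ \mu\bigl((r^1)^{-1}(X_1\setminus K_1^\varepsilon)\bigr)
+\mu\bigl((r^2)^{-1}(X_2\setminus K_2^\varepsilon)\bigr)
=\,(r^1)_\sharp\mu(X_1\setminus K_1^\varepsilon)+(r^2)_\sharp\mu(X_2\setminus K_2^\varepsilon)\ \le\ \varepsilon.
\]
Since this bound is uniform in $\mu\in\cK$ and $\varepsilon>0$ was arbitrary, $\cK$ is tight in $\mathscr{P}(X)$.

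There is essentially no serious obstacle here: the only point that needs to be invoked carefully is the properness of $r$, which ensures compactness of $K^\varepsilon$ in $X$ (as opposed to mere closedness); without properness the argument would only give a closed set with small complement, insufficient for tightness. The Borel measurability of $(r^i)^{-1}(X_i\setminus K_i^\varepsilon)$ is automatic from the continuity of $r^i$, so the pushforward identities used in the estimate are legitimate.
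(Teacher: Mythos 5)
Your proof is correct and is exactly the standard argument: the paper itself gives no proof of this lemma, citing \cite[Lemma 5.2.2]{AmbrosioLuigi2005GFIM}, and your $\varepsilon/2$ decomposition with $K^\varepsilon = r^{-1}(K_1^\varepsilon\times K_2^\varepsilon)$, made compact by properness, is precisely the argument given there. The only identity worth making explicit is $r^{-1}(K_1^\varepsilon\times K_2^\varepsilon)=(r^1)^{-1}(K_1^\varepsilon)\cap(r^2)^{-1}(K_2^\varepsilon)$, which is what turns your set inclusion into the complement decomposition you use.
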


\section{Equi-integrability and Dunford-Pettis theorem}
Let $(X,\nu)$ be a measurable space. Recall that the dual of  $L^1\equiv L^1(X,\nu)$ is $L^{\infty} \equiv L^{\infty}(X,\nu)$. More precisely, we have $(L^1)^*=L^\infty$.  The weak topology $ \sigma(L^1,L^\infty)$ on $L^1$ is the coarsest topology associated to the collection of linear functions  $(\varphi_f)_{f\in L^\infty}$ where
\[\begin{array}{rcl}
\varphi_f \, : L^1  & \longrightarrow & \RR \\
g & \longmapsto & \displaystyle (\varphi_f)g=\langle f,g \rangle_{L^1, L^\infty}= \int_{X} f(x)g(x)\, \nu(dx).
\end{array}\]
\noindent
The Dunford-Pettis theorem is a standard criterion that allows us to deduce  the relative compactness of sets in $L^1$ with respect to the weak topology $\sigma(L^1,L^\infty)$.\\
  Let $(X,\Sigma)$ be a measurable space and let $\mu$ be a finite measure on $(X,\Sigma)$. We say that a family $\cF \subset L^1(X,\mu)$ is equi-integrable if for any $\eps>0$ there exists $\delta>0$ such that:
 \[ \forall K \in \Sigma,  \ \mu (K)<\delta \Rightarrow \sup_{f \in \cF}  \int_{K} \big|f \big| \,d\mu <\eps. \]
 \noindent A characterization of equi-integrability is given below.
 \begin{lemma}[{\cite[Proposition 1.27]{MR1857292}}]\label{equivalence}
 A bounded set $\cK$ in $L^1(X,\mu)$ is equi-integrable if and only if
 \[\cK \subset  \lbrace  f \in L^1(X,\mu): \quad \int_{X} \theta(|f|) \, d\mu \leq 1 \rbrace , \]
 for some nondecreasing convex continuous function $\theta:\RR_+\longmapsto [0,\infty]$ satisfying $\theta(t)/t\rightarrow\infty$ when $t\rightarrow \infty$ (superlinear) or equivalently if and only if
 \be \label{limm}\lim_{L\rightarrow +\infty} \sup_{f\in \cK} \int_{\lbrace \vert f \vert>L \rbrace  } \vert f \vert  \,d\mu=0.\ee
 \end{lemma}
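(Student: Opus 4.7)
The plan is to prove the cyclic chain $(1)\Rightarrow(3)\Rightarrow(2)\Rightarrow(1)$, where $(1)$ denotes equi-integrability of $\cK$, $(2)$ denotes containment in a sublevel set $\{f\in L^1:\int\theta(|f|)\,d\mu\leq 1\}$ for some nondecreasing convex continuous superlinear $\theta:\RR_+\to[0,+\infty]$, and $(3)$ is the uniform tail control \eqref{limm}. Throughout I would use $M:=\sup_{f\in\cK}\|f\|_{L^1(\mu)}<\infty$.

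For $(1)\Rightarrow(3)$: given $\eps>0$ let $\delta>0$ be supplied by equi-integrability. Chebyshev gives $\mu(\{|f|>L\})\leq M/L$, which is $<\delta$ once $L>M/\delta$; then by the defining property of $\delta$, $\int_{\{|f|>L\}}|f|\,d\mu<\eps$ uniformly in $f\in\cK$. For $(2)\Rightarrow(1)$: given $\eps>0$, superlinearity lets me pick $L$ with $\theta(t)\geq (2/\eps)\,t$ for all $t\geq L$. For any measurable $K$, split and bound
\[
\int_{K}|f|\,d\mu \;\leq\; L\,\mu(K)+\int_{\{|f|>L\}}|f|\,d\mu\;\leq\; L\,\mu(K)+\frac{\eps}{2}\int\theta(|f|)\,d\mu\;\leq\; L\,\mu(K)+\eps/2,
\]
and choose $\delta=\eps/(2L)$.

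The substantive step is $(3)\Rightarrow(2)$, a de la Vall\'ee Poussin-style construction. Using \eqref{limm}, I would extract an increasing sequence $0=L_0<L_1<\cdots\to\infty$ such that $a_n:=\sup_{f\in\cK}\int_{\{|f|>L_n\}}|f|\,d\mu\leq 2^{-n}$ for all $n\geq 1$, and set
\[
\theta(t)\;:=\;\sum_{n\geq 0}(t-L_n)_+\,.
\]
On $[L_N,L_{N+1}]$ only the first $N+1$ summands are active, giving $\theta(t)=(N+1)t-\sum_{n\leq N}L_n$; hence $\theta$ is piecewise linear with slopes increasing to $+\infty$, and therefore nondecreasing, convex, continuous, superlinear, with $\theta(0)=0$. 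Monotone convergence yields
\[
\int\theta(|f|)\,d\mu\;=\;\sum_{n\geq 0}\int(|f|-L_n)_+\,d\mu\;\leq\;\int|f|\,d\mu+\sum_{n\geq 1}a_n\;\leq\;M+1,
\]
uniformly in $f\in\cK$. Replacing $\theta$ by $\theta/(M+1)$ preserves all structural properties and delivers $\int\theta(|f|)\,d\mu\leq 1$ for every $f\in\cK$.

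The main obstacle is the step $(3)\Rightarrow(2)$: one must produce $\theta$ simultaneously convex, nondecreasing, continuous, and genuinely superlinear, yet with $\int\theta(|f|)\,d\mu$ uniformly controlled on $\cK$. The piecewise-linear ansatz above is the minimal clever choice that encodes all these properties at once; the remaining implications reduce to Chebyshev and splitting the integral at the level $\{|f|>L\}$. A final bookkeeping point, deserving a line of care in the writeup, is to verify that the normalized $\theta/(M+1)$ really retains superlinearity (it does, since dividing by a positive constant preserves the divergence of $\theta(t)/t$).
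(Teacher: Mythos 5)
Your proof is correct and complete. The paper does not prove this lemma at all — it is quoted from \cite[Proposition 1.27]{MR1857292} — and your argument is precisely the classical de la Vall\'ee Poussin criterion given there: the cycle $(1)\Rightarrow(3)\Rightarrow(2)\Rightarrow(1)$ via Chebyshev, splitting at the level $L$, and the piecewise-linear $\theta(t)=\sum_{n}(t-L_n)_+$ with tails $a_n\leq 2^{-n}$, so there is nothing to add.
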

 \begin{lemma}[Dunford-Pettis {\cite[Theorem 1.38]{MR1857292}}]\label{Dunford-Pettis} A bounded set $\cK$ in  $L^1(X,\mu)$ is relatively sequentially compact for the weak topology $\sigma(L^1,L^\infty)$ if and only if $\cK$ is equi-integrable.
 \end{lemma}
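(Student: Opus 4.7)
The plan is to prove both directions of the Dunford-Pettis equivalence, making essential use of the de la Vallée-Poussin characterization of equi-integrability provided by Lemma \ref{equivalence}.

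For the necessity direction (relative weak sequential compactness implies equi-integrability), the cleanest route goes through the Vitali-Hahn-Saks theorem. Weak sequential compactness of $\cK$ in $L^1(X,\mu)$ translates to weak sequential compactness of the family of signed measures $\{f \mu : f \in \cK\}$ on $(X,\Sigma)$, all of which are $\mu$-absolutely continuous. Vitali-Hahn-Saks then yields uniform absolute continuity: for every $\epsilon>0$ there exists $\delta>0$ such that $\mu(A) < \delta$ implies $\sup_{f \in \cK} \int_A |f|\,d\mu < \epsilon$. Combined with the $L^1$-boundedness of $\cK$ (inherited from weak compactness via Banach-Steinhaus), this is precisely equi-integrability.

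For the sufficiency direction (equi-integrability implies relative weak sequential compactness), Lemma \ref{equivalence} provides a nondecreasing convex superlinear continuous function $\theta : \RR_+ \to [0,\infty]$ with $\sup_{f \in \cK} \int \theta(|f|)\,d\mu \leq 1$. Given a sequence $(f_n) \subset \cK$, I would introduce the truncations $f_n^{(L)} := (f_n \wedge L)\vee(-L)$, which for each fixed $L$ form a uniformly $L^\infty$-bounded (hence weakly relatively compact in $L^1$) family. A Cantor diagonal extraction over $L \in \NN$ yields a single subsequence $(f_{n_k})_k$ such that $f_{n_k}^{(L)} \rightharpoonup g_L$ in $\sigma(L^1, L^\infty)$ for every $L$. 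The superlinearity of $\theta$ forces $\sup_n \int_{\{|f_n| > L\}} |f_n|\,d\mu \to 0$ as $L \to \infty$ (since $|f|\,\mathbf{1}_{\{|f|>L\}} \leq \theta(|f|)\cdot L/\theta(L)$ and $\theta(L)/L \to \infty$), which shows both that $(g_L)_L$ is Cauchy in $L^1$ with some limit $f \in L^1(X,\mu)$ and that $\sup_k \|f_{n_k} - f_{n_k}^{(L)}\|_{L^1} \to 0$ as $L \to \infty$. A standard three-$\epsilon$ argument then delivers $f_{n_k} \rightharpoonup f$ in $\sigma(L^1, L^\infty)$.

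The main obstacle is the sufficiency step, specifically ensuring that the truncated weak limits assemble into a genuine $L^1$-limit rather than merely a finitely additive set function or a measure outside $L^1$. The superlinearity encoded in $\theta(t)/t \to \infty$ is the exact ingredient that upgrades the integrability bound $\int \theta(|f|)\,d\mu \leq 1$ to the uniform tail estimate needed to close the diagonal extraction; without it, the $g_L$ might fail to have a bounded $L^1$-limit and one would be forced into a larger ambient space.
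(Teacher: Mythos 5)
Your proof is correct in substance, but note that the paper does not actually prove this lemma: it is quoted directly from Ambrosio--Fusco--Pallara \cite[Theorem 1.38]{MR1857292}, so there is no in-paper argument to compare against. What you give is the standard textbook proof, and both halves go through. Two small points of precision are worth recording. In the necessity direction, the Vitali--Hahn--Saks theorem applies to a \emph{sequence} of measures $\nu_k$ for which $\lim_k \nu_k(A)$ exists for every $A\in\Sigma$; to use it you should argue by contradiction, picking $f_n\in\cK$ and sets $A_n$ with $\mu(A_n)\to 0$ and $\int_{A_n}|f_n|\,d\mu\ge\epsilon$, extracting a $\sigma(L^1,L^\infty)$-convergent subsequence (whose integrals over every measurable set then converge), and invoking Vitali--Hahn--Saks on that subsequence; uniform absolute continuity of the signed measures $f_{n_k}\mu$ passes to their total variations $|f_{n_k}|\mu$ because every subset of a set of small $\mu$-measure again has small $\mu$-measure. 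In the sufficiency direction, the pointwise bound $|f|\,\mathbf{1}_{\{|f|>L\}}\le \theta(|f|)\,L/\theta(L)$ presupposes that $t\mapsto\theta(t)/t$ is nondecreasing, which holds after the harmless normalization $\theta(0)=0$; alternatively replace $L/\theta(L)$ by $\sup_{t>L}t/\theta(t)$, or simply observe that the uniform tail decay you need is exactly the equivalent formulation \eqref{limm} already recorded in Lemma \ref{equivalence}. With these adjustments, the diagonal extraction over truncation levels, the Cauchy property of $(g_L)_L$ obtained from weak lower semicontinuity of the $L^1$-norm, and the final three-$\epsilon$ argument are all sound.
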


 \section{Compactness argument}\label{chap.3.appx.comp}
We discuss in this paragraph the main compactness argument used throughout the text.  Let $v:I\times B\to B$ be Borel vector field and $(\mu_t)_{t\in I}$ a weakly narrowly continuous curve in $\mathscr{P}(B)$.

\begin{lemma} \label{remarkmeasure}
Assume that $v$ and $(\mu_t)_{t\in I}$ satisfy the  integrability condition  \eqref{chap3.s1.eq3} for some  non-decreasing positive function $\omega: \RR_+\to \RR^*_+$. Then there exists a non-decreasing super-linear  convex function $\theta: \RR_+\rightarrow [0,+\infty]$  such that
\begin{equation}\label{chap.3.est.theta}
\int_{I} \int_{B} \theta(\Vert v(t,u) \Vert) \, \mu_t(du)\, \frac{dt}{\omega(|t|)} \leq 1.
\end{equation}
\end{lemma}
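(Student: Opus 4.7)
The plan is to reduce the statement to the classical de la Vallée Poussin criterion applied to the single integrable function $\Vert v\Vert$ on a suitable product measure space.

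First, I would use the weak narrow continuity of $(\mu_t)_{t\in I}$ to build a Borel measure $\nu$ on $I\times B$ via the Fubini-type formula
$$
\int_{I\times B} f\,d\nu := \int_I \int_B f(t,u)\,\mu_t(du)\,\frac{dt}{\omega(|t|)}\,,
$$
first defined for $f(t,u)=\phi(t)F(u)$ with $\phi\in\mathscr{C}_c(I)$ and $F\in\mathscr{C}_b(B_w;\RR)$, then extended by a monotone class / Dynkin $\pi$--$\lambda$ argument to all bounded Borel $f:I\times B\to\RR$. The key measurability input is that $t\mapsto \int_B F(u)\mu_t(du)$ is continuous (hence Borel) for such $F$. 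Setting $g(t,u):=\Vert v(t,u)\Vert$, which is Borel on $I\times B$, the assumption \eqref{chap3.s1.eq3} becomes exactly $g\in L^1(I\times B,\nu)$.

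Next, I would apply the single-function de la Vallée Poussin construction. Since $g\in L^1(\nu)$, dominated convergence gives $\int_{\{g>n\}}g\,d\nu\to 0$ as $n\to+\infty$, so one can choose integers $n_k\nearrow+\infty$ with $\int_{\{g>n_k\}} g\,d\nu\leq 2^{-k-1}$ for every $k\in\NN$. Define
$$
\theta(s):=\sum_{k\in\NN}(s-n_k)^+\,,\qquad s\in\RR_+\,.
$$
Only finitely many summands are non-zero at each finite $s$, so $\theta$ is well-defined, continuous, non-decreasing and convex as a finite sum of such functions. Super-linearity follows because $\theta(s)\geq K\cdot s/2$ whenever $s\geq 2n_K$, giving $\liminf_{s\to\infty}\theta(s)/s\geq K/2$ for every fixed $K$. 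Finally, by Tonelli together with the bound $(g-n_k)^+\leq g\,\one_{\{g>n_k\}}$,
$$
\int_{I\times B}\theta(g)\,d\nu \;=\; \sum_{k\in\NN}\int_{I\times B}(g-n_k)^+\,d\nu \;\leq\; \sum_{k\in\NN}2^{-k-1} \;\leq\; 1\,,
$$
which is the claimed inequality.

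The only genuinely delicate step is the construction of $\nu$ in the first paragraph, namely verifying the Borel measurability of $t\mapsto \int_B f(t,u)\,\mu_t(du)$ for arbitrary bounded Borel $f$ from the weak narrow continuity of $(\mu_t)_{t\in I}$ alone; the remainder is the standard de la Vallée Poussin truncation argument for a single integrable function.
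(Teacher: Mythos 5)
Your proof is correct. It performs the same initial reduction as the paper — assembling the curve $(\mu_t)_{t\in I}$ and the weight $\omega$ into a single Borel measure $\nu$ on $I\times B$ (the paper's \eqref{chap.3.def.nu}) so that \eqref{chap3.s1.eq3} reads $\Vert v\Vert\in L^1(I\times B,\nu)$ — but then diverges at the key step: the paper obtains $\theta$ by observing that the singleton $\{\Vert v\Vert\}$ is compact, hence equi-integrable by the Dunford--Pettis Theorem \ref{Dunford-Pettis}, and then invoking the characterization of equi-integrability in Lemma \ref{equivalence}; you instead construct $\theta$ explicitly via the de la Vall\'ee Poussin truncation $\theta(s)=\sum_k(s-n_k)^+$ with $\int_{\{g>n_k\}}g\,d\nu\le 2^{-k-1}$. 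Your route is more self-contained (no compactness input) and has a genuine technical advantage: Lemma \ref{equivalence} and Theorem \ref{Dunford-Pettis} are stated for \emph{finite} measures, whereas $\nu(I\times B)=\int_I\frac{dt}{\omega(|t|)}$ need not be finite (e.g.\ $\omega\equiv 1$ on $I=\RR$); your argument only uses $\int_{\{g>n\}}g\,d\nu\to 0$, which holds for any measure once $g\in L^1(\nu)$, so it closes this small gap in the citation. What the paper's version buys is brevity and uniformity with the rest of the text, where the same equi-integrability lemmas are reused (e.g.\ in Lemma \ref{localtight}). Your extra care about the measurability of $t\mapsto\int_B f(t,u)\,\mu_t(du)$ via a monotone class argument is also sound and is implicitly assumed in the paper's one-line definition of $\nu$.
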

\begin{proof}
For every  $\alpha, \beta \in I$ and for every Borel subset $K$ of $B$, define the measure
\begin{equation}
\label{chap.3.def.nu}
 \nu([\alpha ,\beta] \times K):= \int_{[\alpha,\beta]} \mu_\tau (K) \, \frac{d\tau}{\omega(|\tau|)}\,.
 \end{equation}
We have
\[ \int_{I \times B}\Vert v(t,x) \Vert\, d\nu(t,u)=\int_{I} \int_{\RR^d} \Vert v(t,u) \Vert_{\RR^d}\, \mu_t(du)\, \frac{dt}{\omega(|t|)}<+\infty,   \]
which means $\Vert v(\cdot) \Vert_{\RR^d} \in L^1(I \times B,\nu)$. Since the singleton $ \lbrace \Vert v(\cdot) \Vert_{} \rbrace $  is a compact set, then by the Dunford-Pettis Theorem \ref{Dunford-Pettis},  it is equi-integrable. And thus, by using Lemma \ref{equivalence}, there exists a non-decreasing super-linear convex function $\theta: \RR_+\rightarrow [0,+\infty]$  such that
\[\int_{I} \int_{B} \theta(\Vert v(t,u) \Vert_{}) \, \mu_t(du)\, \frac{dt}{\omega(|t|)} \leq 1.\]
\end{proof}

\begin{remark}\label{chap.3.rem.bfin}
The previous  result applies for finite dimensions  $B=\RR^d$ with any norm on $\RR^d$.
\end{remark}

\noindent We consider the space of continuous functions $\mathscr{C}(I;B)$ endowed with the compact-open topology induced by the
metric $d_{0,*}$ defined in \eqref{chap.3.d0star}.
\begin{lemma}[Compactness]\label{localtight}
Let $\theta: \RR_+\rightarrow [0,+\infty]$   be a non-decreasing super-linear  convex  function.
Define  the function $g: \mathscr{C}(I;B) \to [0,+\infty]$ as
\begin{align*}
g(\gamma):= \begin{cases}
\displaystyle \int_{I} \theta(\Vert \dot{\gamma} \Vert_{*}) \, \frac{dt}{\omega(|t|)}   \qquad &\text{if} \ \gamma(t_0)=0 \ \text{and}  \ \gamma \in AC_{loc}^1(I;B),\\
+\infty \qquad \qquad \qquad   &\text{if} \ \gamma(t_0)\neq 0 \ \text{or} \ \gamma \notin AC_{loc}^1(I;B).
\end{cases}
\end{align*}
Then for all $c\geq 0$, the sublevel set
$$
\cA_c=\lbrace  \gamma \in \mathscr{C}(I;B); \  g(\gamma)\leq c \rbrace
$$
 is relatively compact in the space $(\mathscr C(I; B_w), d_{0,*})$.
\end{lemma}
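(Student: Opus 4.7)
The plan is to apply the Arzel\`a--Ascoli compactness criterion in the path space. Since the metric $d_{0,*}$ defined in \eqref{chap.3.d0star} metrizes the compact-open topology of $\mathscr{C}(I;B_w)$, and $I$ is a countable union of compact intervals $J_m := [-m,m]\cap I$, a standard diagonal extraction reduces the claim to showing, for each $m\in\NN$, relative compactness of the restricted family $\{\gamma|_{J_m}: \gamma \in \cA_c\}$ in $\mathscr{C}(J_m;B_w)$ endowed with its sup metric $\|\cdot\|_{*,m}$.

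First I would establish equi-integrability. The uniform bound $\int_I \theta(\|\dot\gamma\|_*)\,\frac{dt}{\omega(|t|)}\le c$ on $\cA_c$, together with the superlinearity of $\theta$, places $\{\|\dot\gamma(\cdot)\|_*: \gamma \in \cA_c\}$ inside the admissible family characterized in Lemma \ref{equivalence}, so that this family is equi-integrable in $L^1(I,\frac{dt}{\omega(|t|)})$.

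Second, I would derive uniform equicontinuity on each $J_m$: for $\gamma \in \cA_c$ and $s,r \in J_m$ with $s\le r$, using $\gamma \in AC^1_{\it loc}(I;B)$ and monotonicity of $\omega$,
\[
\|\gamma(r)-\gamma(s)\|_* \;\le\; \int_s^r \|\dot\gamma(t)\|_*\,dt \;\le\; \omega(m) \int_s^r \|\dot\gamma(t)\|_* \,\frac{dt}{\omega(|t|)}.
\]
As $|r-s|\downarrow 0$, the Lebesgue (and hence $\frac{dt}{\omega(|t|)}$-) measure of $[s,r]$ vanishes, so by equi-integrability the right-hand side tends to zero uniformly over $\gamma \in \cA_c$. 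For pointwise relative compactness in $B_w$ at each $t \in J_m$, the anchor $\gamma(t_0)=0$ and the same estimate yield $\|\gamma(t)\|_* \le \omega(m)\int_{J_m}\|\dot\gamma(s)\|_* \,\frac{ds}{\omega(|s|)} \le C(m,c)$, uniform bound furnished by equi-integrability on the finite-$\frac{dt}{\omega}$-measure set $J_m$. Since $\|\cdot\|_*$ metrizes the weak-$*$ topology $\sigma(E^*,E)$ on norm-bounded subsets of $B=E^*$, and Banach--Alaoglu grants weak-$*$ compactness of such balls, I would obtain the desired pointwise relative compactness in $B_w$.

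Arzel\`a--Ascoli then gives relative compactness of $\{\gamma|_{J_m}\}$ in $\mathscr{C}(J_m;B_w)$ for each $m$, and the diagonal extraction produces a subsequence $d_{0,*}$-convergent on $I$ to a limit in $\mathscr{C}(I;B_w)$. I expect the main obstacle to be the pointwise compactness step: the natural estimate controls the $\|\cdot\|_*$-seminorm of $\gamma(t)$, whereas Banach--Alaoglu is a statement about the original dual norm $\|\cdot\|$. The bridge is that the estimate measures displacement from the anchor $\gamma(t_0)=0$ together with the comparability $\|\cdot\|_* \le 2\|\cdot\|$ coming from the fundamental strongly total biorthogonal system of Definition \ref{biorthogonal_system}, which confines the orbit to a region where the two topologies agree on compactness.
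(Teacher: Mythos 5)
Your overall route is the one the paper takes: the general Arzel\`a--Ascoli criterion for the compact-open topology (equivalently, Ascoli on each $[-m,m]\cap I$ followed by a diagonal extraction), with equicontinuity extracted from the uniform bound $\int_I\theta(\Vert\dot\gamma\Vert_*)\,\frac{dt}{\omega(|t|)}\le c$ via the equi-integrability characterization of Lemma \ref{equivalence}. Your equicontinuity step is a correct variant of the paper's (the paper truncates $\Vert\dot\gamma\Vert_*$ at a level $L$ and uses \eqref{limm} rather than the $\varepsilon$--$\delta$ form of equi-integrability, but these are the same argument), and your uniform bound on $\Vert\gamma(t)\Vert_*$ from the anchor $\gamma(t_0)=0$ matches the paper's Jensen-type estimate.

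The one step whose justification does not work as written is the pointwise relative compactness of $\cA_c(t)=\{\gamma(t):\gamma\in\cA_c\}$ in $B_w$. Your estimate controls only $\Vert\gamma(t)\Vert_*$, and the bridge you propose --- the comparison $\Vert\cdot\Vert_*\lesssim\Vert\cdot\Vert$ --- goes the wrong way: it shows that $\Vert\cdot\Vert$-balls are $\Vert\cdot\Vert_*$-bounded, not conversely, so a $\Vert\cdot\Vert_*$-bound does not confine the orbit to a Banach--Alaoglu-compact ball. Concretely, in $B=\ell^2$ with the standard biorthogonal system one has $\Vert 2^k e_k^*\Vert_*=1$ while $\Vert 2^k e_k^*\Vert=2^k$, and this $\Vert\cdot\Vert_*$-bounded sequence has no $d_*$-convergent subsequence in $B$. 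For what it is worth, the paper's own proof is equally terse at exactly this point: it derives the same $\Vert\cdot\Vert_*$-bound on $\cA_c(t)$ and then asserts relative compactness from the fact that $\Vert\cdot\Vert_*$ metrizes the weak-$*$ topology on bounded sets, without supplying a $\Vert\cdot\Vert$-bound; the upgrade from $B_w$-valued limit curves to $B$-valued ones is deferred to the Dunford--Pettis argument in Lemma \ref{chap.3.tight.B}. So you have correctly located the delicate point of the lemma, but the ``comparability'' argument you offer to close it is not a proof, and you should either supply a genuine $\Vert\cdot\Vert$-bound on $\cA_c(t)$ or record, as the paper implicitly does, that the pointwise closures are taken in the $\Vert\cdot\Vert_*$-completion and dealt with later.
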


\begin{proof}
By the general Arzela-Ascoli theorem (see \cite[Theorem 6.1]{MR0464128}), $\cA_c$ is relatively compact in $(\mathscr C(I; B_w), d_{0,*})$ provided that we prove the following claims:
\begin{itemize}
\item  For all $t \in I$, the set $\cA_c(t)=\lbrace  \gamma(t) ; \gamma \in \cA_c \rbrace $ is relatively compact in $B_w$.
\vskip 3mm
\item The set $\cA_c$ is equicontinuous.
\end{itemize}

\underline{$\bullet \, \cA_c(t)$ relatively compact:} \\
 In fact, remark that $\cA_c(t)$ is  bounded. Indeed, by Jensen's inequality
\[\theta( \|\gamma(t) \|_{*})\leq \theta(\int_{t_0}^{t} \|\dot \gamma(s) \|_{*} \, ds) \leq \int_{[t_0,t]} \theta(\|\dot \gamma(s) \|_{*} )\, ds
 \leq c \,\omega(\max(|t_0|,|t|)), \]
 for all $\gamma\in \cA_c$. Since $\theta$ is superlinear, we get $\cA_c(t)$ is bounded in $B_w$. Now, since $t\in I$ is fixed and
 the norm $\Vert\cdot\Vert_*$ induces the weak-* topology in $B$ on bounded sets, it follows that $\cA_c(t)$ is relatively compact in $B_w$.

\underline{$\bullet \, \cA_c$ is equi-continuous:} \\
The case $c=0$ is trivial, so we may assume $c>0$.
Let $t_1 \in I$, we have  to prove:  $\forall \eps>0, \, \exists \delta>0$,  $\forall t \in I$, $\forall \gamma \in \cA_c$
\[|t-t_1|\leq \delta \, \Rightarrow \,  \|\gamma(t)-\gamma(t_1) \|_{*}\leq \eps. \]
Assume $\delta \leq 1$. We have for $\gamma\in \cA_c$,
\[\begin{aligned}
\|\gamma(t)-\gamma(t_1) \|_{*} & =\|\int_{t_1}^{t}\dot \gamma(s)\, ds \|_{*} \leq \int_{[t_1,t]} \|\dot \gamma(s) \|_{*} \,  ds  \\& \leq  \int_{  \lbrace s \in [t_1,t];\|\dot \gamma(s) \|_{*} \leq L \rbrace } L\,ds+\int_{  \lbrace s \in [t_1,t]; \, \|\dot \gamma(s) \|_{*} > L \rbrace } \|\dot \gamma(s) \|_{*}\,ds
\\ & \leq L \vert  t-t_1 \vert +\int_{  \lbrace s \in [t_1,t];\, \|\dot \gamma(s) \|_{*} > L \rbrace } \|\dot \gamma(s) \|_{*} \,ds\\ & \leq L \vert  t-t_1 \vert +\omega(\max|t|,|t_1|) \,\int_{  \lbrace s\in [t_1,t];\|\dot \gamma(s) \|_{*} > L \rbrace } \|\dot \gamma(s) \|_{*} \,\frac{ds}{\omega(|s|)}.
\end{aligned}
\]
Remark that the set $\mathcal K=\lbrace s\in[t_1,t]\mapsto  \|\dot \gamma(s) \|_{*}   , \, \gamma \in \cA_c \rbrace  \subset L^1([t_1,t];\frac{ds}{\omega(|s|)}) $ since the function $\theta$ is superlinear and by assumption
\begin{equation}\label{chap.3.estgthea1}
\int_{[t_1,t]}  \theta( \|\dot \gamma(s) \|_{*} ) \frac{ds}{\omega(|s|)} \leq g(\gamma)\leq c.
\end{equation}
On the other hand, this means that there exists $\tilde\theta=\theta/c$ a  non-decreasing convex superlinear function such that
\begin{equation}\label{chap.3.estgthea2}
\int_{[t_1,t]}  \tilde\theta( \|\dot \gamma(s) \|_{*} ) \frac{ds}{\omega(|s|)} \leq 1.
\end{equation}
Hence, by Lemma \ref{equivalence},  we have that
$\mathcal K$ is equi-integrable and   \[\displaystyle \lim_{L\rightarrow \infty}\sup_{\gamma\in \cA_c}
\int_{  \lbrace s\in[t_1,t];\|\dot \gamma(s) \|_{*} > L \rbrace } \|\dot \gamma(s) \|_{*} \,\frac{ds}{\omega(|s|)}=0.\]
This means for given $\eps>0$, one can choose $L>0$ such that for all $\gamma\in\cA_c$
\[\|\gamma(t)-\gamma(t_1) \|_{*} \leq L \delta + \eps/2.\]
Hence, choosing  $0<\delta\leq \frac{\eps}{2L}$, we show that  $\cA_c$ is equi-continuous at $t_1$.
\end{proof}

\begin{remark}\label{chap.3.rem.fincomp}
The above lemma applies, mutatis mutandis, to finite dimensions with any norm on $\RR^d$.
\end{remark}

 \section{Disintegration theorem}
 \label{chap3.sec.appD}
 Let $E$ and $F$ be Radon separable metric spaces. We say that a measure-valued map $x \in E \rightarrow \mu_x \in \mathscr{P}(E)$ is Borel if $x \in F \rightarrow \mu_x(B)$  is a Borel map for any Borel set $B$ of $E$. We recall below the disintegration theorem (see \cite[Theorem 5.3.1]{AmbrosioLuigi2005GFIM}).
 \begin{theorem}\label{disint}
  Let $E$ and $F$ be Radon separable metric spaces and $\mu \in \mathscr{P}(E)$. Let $\pi:E \rightarrow F$ be a Borel map and $\nu=\pi_{\sharp}\mu \in \mathscr{P}(F)$. Then, there exists a $\nu$-a.e. uniquely determined Borel family of probability measures $ \lbrace \mu_y \rbrace_{y\in F} \subset \mathscr{P}(E)$ such that $\mu_y(E\setminus \pi^{-1}(y))=0$ for $\nu$-a.e. $y \in F$ and
  \be \label{f} \int_{E} f(x) \, \mu(dx)= \int_{F} \bigg(  \int_{\pi^{-1}(y)} f(x) \, \mu_y(dx) \bigg) \,  \nu(dy) \ee
  for every Borel map $f : E \rightarrow [0,+\infty]$.
 \end{theorem}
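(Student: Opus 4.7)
The strategy is to construct $\{\mu_y\}_{y\in F}$ as a regular conditional probability kernel associated to $\pi$, first on a countable generating algebra of $\mathscr B(E)$, and then to upgrade to genuine $\sigma$-additivity using the Radon structure. Since $E$ is separable metric, I would fix a countable algebra $\mathcal A=\{A_n\}_{n\in\NN}$ generating $\mathscr B(E)$. For each $n$, the set function $C\mapsto \mu(A_n\cap \pi^{-1}(C))$ is a finite Borel measure on $F$ dominated by $\nu=\pi_\sharp\mu$, so the Radon--Nikodym theorem provides a Borel density $g_n:F\to[0,1]$. Discarding a countable union of $\nu$-null sets (one for each algebraic relation in $\mathcal A$), off a fixed $\nu$-null set $N$ the assignment $A_n\mapsto g_n(y)$ is finitely additive on $\mathcal A$, monotone, and normalized by $g_E(y)=1$.

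The central step is to extend this finitely additive set function on $\mathcal A$ to a bona fide Borel probability measure $\mu_y$ on $E$ for $\nu$-a.e.\ $y$. This is where the Radon hypothesis on $E$ is essential. I would approximate each $A_n$ by increasing sequences of compact subsets (available by inner regularity of $\mu$), transfer these approximations to the level of $g_n$, and invoke Carath\'eodory's extension theorem after verifying $\sigma$-additivity on $\mathcal A$ via the compact approximation. An equivalent route is to reduce by a Borel isomorphism of the support of $\mu$ (which is $\sigma$-compact, hence standard Borel) to a Polish setting where the classical Rokhlin disintegration applies directly, and then to transport the resulting kernel back to $E$.

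Concentration, namely $\mu_y(E\setminus\pi^{-1}(y))=0$ for $\nu$-a.e.\ $y$, then follows from a countability argument: for any closed ball $B\subset F$ with rational centre and radius, the identity $\mu(A_n\cap\pi^{-1}(B^c))=\int_{B^c} g_n\,d\nu$ combined with the definition of $\mu_y$ on $\mathcal A$ forces $\mu_y(\pi^{-1}(B^c))=0$ for $\nu$-a.e.\ $y\in B$; intersecting over a countable basis of shrinking balls around each point of $F$ yields the claim off a $\nu$-null set. Uniqueness of $\nu$-a.e.\ values of $y\mapsto \mu_y$ is immediate from the uniqueness clause in Radon--Nikodym applied on the generating algebra. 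The integral formula \eqref{f} holds by construction for $f=\mathbf 1_{A_n}$, extends to indicators of arbitrary Borel sets by a monotone class / Dynkin argument, to nonnegative simple Borel functions by linearity, and to general nonnegative Borel $f$ by monotone convergence; the Borel measurability of $y\mapsto \int_{\pi^{-1}(y)} f\,d\mu_y$ propagates through each step.

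The main obstacle is precisely the passage from finite additivity of $A_n\mapsto g_n(y)$ on $\mathcal A$ to $\sigma$-additivity of $\mu_y$ off a \emph{single} $\nu$-null exceptional set $N$: for a general measurable target this upgrade fails, and the Radon / separable-metric structure of $E$ must be used in a decisive way through inner-regular approximation by compact sets, ensuring the compatibility of the countable family $\{g_n\}_n$ with a common exceptional set. Once this regularity is secured, the remaining verifications are essentially bookkeeping.
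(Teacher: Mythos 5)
The paper does not actually prove Theorem \ref{disint}: it is quoted from \cite[Theorem 5.3.1]{AmbrosioLuigi2005GFIM}, so there is no internal argument to compare against. Your proposal is the standard construction of a regular conditional probability kernel, and as an outline it is correct: Radon--Nikodym densities $g_n$ of $C\mapsto\mu(A_n\cap\pi^{-1}(C))$ with respect to $\nu$ on a countable generating algebra, removal of countably many null sets to secure finite additivity and normalisation, the compact-class (Marczewski) upgrade to countable additivity using inner regularity of the Radon measure $\mu$, Carath\'eodory extension, concentration on fibres via a countable basis of $F$, and the monotone-class/monotone-convergence bootstrap for \eqref{f}. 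This is in substance the proof one finds in the cited literature, and your alternative route (passing to the $\sigma$-compact carrier of $\mu$, which is standard Borel, and invoking the classical disintegration there) is also legitimate.

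Two points deserve slightly more care than your sketch gives them. First, the compact sets $K_{n,m}\subset A_n$ with $\mu(A_n\setminus K_{n,m})<2^{-m}$ must be adjoined to the countable algebra $\mathcal A$ \emph{before} the Radon--Nikodym step, so that the inequalities $g_{K_{n,m}}(y)\le g_{A_n}(y)$ and $\int_F (g_{A_n}-g_{K_{n,m}})\,d\nu<2^{-m}$ can be localised to a single $\nu$-null set; only then does the countable compact class witness $\sigma$-additivity of $A\mapsto\mu_y(A)$ on the algebra for a.e.\ fixed $y$. Second, your concentration argument uses the identity $\int_C\mu_y(\pi^{-1}(B^c))\,\nu(dy)=\mu(\pi^{-1}(B^c)\cap\pi^{-1}(C))=\nu(B^c\cap C)$ for rational balls $B\subset F$, but $\pi^{-1}(B^c)$ is in general not an element of $\mathcal A$; you must either first extend the kernel identity from $\mathcal A$ to all Borel sets by the monotone class theorem (i.e.\ run your final paragraph before the concentration step), or include the countably many sets $\pi^{-1}(B)$, $B$ a rational ball of $F$, into $\mathcal A$ at the outset. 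Both fixes are routine, so I regard the proposal as correct.
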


\section{Density argument}
 Let $(X,\mu)$ be a measurable space. Define the space
\[L^1(X,\mu;\RR^n):=\lbrace f:X\rightarrow \RR^n; \ \int_{X} \|f(x)\|_{\RR^n} \, \mu(dx)<+\infty \rbrace \]
\begin{lemma}\label{density}
Let $X$ be a metric space,  $\mu$ a finite Radon measure on $X$. Then the space of bounded Lipschitz functions $Lip_b(X;\RR^n)$ is dense in $L^1(X,\mu;\RR^n)$. In particular, $\overline{\mathscr{C}_b(X;\RR^n)} = L^1(X,\mu;\RR^n)$.
\end{lemma}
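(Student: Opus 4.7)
\bigskip
\noindent\textbf{Proof proposal for Lemma \ref{density}.}
The plan is to carry out a standard three–step reduction from a general $L^1$–function to a bounded Lipschitz one. First, I would reduce to the scalar case $n=1$: any $f\in L^1(X,\mu;\RR^n)$ has components $f_j\in L^1(X,\mu)$, and a simultaneous $\eps/n$–approximation in each component by bounded Lipschitz functions $\varphi_j$ yields the vector $\varphi=(\varphi_1,\dots,\varphi_n)\in \mathrm{Lip}_b(X;\RR^n)$ with $\|\varphi-f\|_{L^1}<\eps$. Next, by the standard approximation of $L^1$–functions by simple functions, and by splitting positive and negative parts, it is enough to show that for every Borel set $A\subset X$ the indicator $\mathbf 1_A$ lies in the $L^1$–closure of $\mathrm{Lip}_b(X;\RR)$; indeed, once this holds, finite $\RR$–linear combinations of indicators approximate an arbitrary $L^1$–function, and finite linear combinations of bounded Lipschitz functions remain bounded and Lipschitz.

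The core step is therefore the approximation of $\mathbf 1_A$. Given $\eps>0$, I would invoke the inner regularity of the finite Radon measure $\mu$ on the metric space $X$ to produce a closed set $F\subset A$ with $\mu(A\setminus F)<\eps/2$, so that $\|\mathbf 1_A-\mathbf 1_F\|_{L^1(\mu)}<\eps/2$. It then remains to approximate $\mathbf 1_F$ in $L^1$ by a bounded Lipschitz function. For this I would use the standard Urysohn–type approximants built from the distance function,
\[
g_k(x):=\max\bigl(0,\; 1- k\, d(x,F)\bigr),\qquad k\in\NN,
\]
which satisfy $0\le g_k\le 1$, are $k$–Lipschitz, and equal $1$ on $F$. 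Since $F$ is closed, for every $x\notin F$ one has $d(x,F)>0$ and hence $g_k(x)\to 0$, while $g_k(x)=1$ for $x\in F$; thus $g_k\to \mathbf 1_F$ pointwise on $X$. As $|g_k|\le 1$ and $\mu$ is finite, dominated convergence gives $\|g_k-\mathbf 1_F\|_{L^1(\mu)}\to 0$, so for $k$ large the combined error is below $\eps$.

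Combining these steps proves the density of $\mathrm{Lip}_b(X;\RR^n)$ in $L^1(X,\mu;\RR^n)$, and since $\mathrm{Lip}_b(X;\RR^n)\subset \mathscr C_b(X;\RR^n)\subset L^1(X,\mu;\RR^n)$ (using again that $\mu$ is finite), the stated consequence $\overline{\mathscr C_b(X;\RR^n)}=L^1(X,\mu;\RR^n)$ is immediate. The only mildly delicate point is the inner regularity by closed sets, which is however built into the definition of a Radon measure (and is automatic for finite Borel measures on metric spaces via outer regularity on open sets and the finite measure assumption); nothing beyond this classical fact and dominated convergence is needed, so there is no genuine obstacle in the argument.
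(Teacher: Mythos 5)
Your proposal is correct, and for the part of the argument that the paper actually writes out — the reduction of the vector-valued statement to the scalar case by approximating each component separately and assembling the approximants into an element of $\mathrm{Lip}_b(X;\RR^n)$ — it coincides with the paper's proof. The difference is that the paper then simply cites the scalar density of $\mathrm{Lip}_b(X;\RR)$ in $L^1(X,\mu;\RR)$ from \cite[Corollary C.3, Appendix C]{MR3721874}, whereas you prove it from scratch: reduction to simple functions and then to indicators, inner approximation of a Borel set by a closed set $F$ (which, as you note, needs only that $\mu$ is a finite Borel measure on a metric space, not the full Radon hypothesis), and the Lipschitz cutoffs $g_k(x)=\max(0,1-k\,d(x,F))$ together with dominated convergence. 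All of these steps are sound — in particular $g_k$ is indeed $k$-Lipschitz, bounded by $1$, identically $1$ on $F$, and converges pointwise to $\mathbf{1}_F$ because $F$ is closed — so your version has the advantage of being self-contained, at the cost of reproving a standard fact the paper outsources. The final deduction $\overline{\mathscr{C}_b(X;\RR^n)}=L^1(X,\mu;\RR^n)$ from the chain of inclusions $\mathrm{Lip}_b\subset\mathscr{C}_b\subset L^1$ is the same in both treatments.
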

\begin{proof}
By the result in \cite[Corollary C.3, Appendix C]{MR3721874}, we have $ Lip_b(X;\RR)$ is dense in $L^1(X,\mu;\RR) $.
Let $f \in L^1(X,\mu;\RR^n)$, then $f=(f^i)_{i=1}^{n}$. Now for every $1\leq i\leq n$, $f^i \in L^1(X,\mu)$. This implies for all $1\leq i\leq n$ there exists  a sequence of Lipschitz bounded functions $(f^i_k)_{k\in \NN}$ such that
\[\|f^i-f^i_k\|_{L^1(X,\mu)} \underset{k\rightarrow+\infty}{\longrightarrow}0.\]
Let $f_k=(f_k^i)_{i=1}^{n}$ a bounded Lipschitz function i.e. $f_k \in Lip_b(X;\RR^n)$, we have
\[\begin{aligned}
\|f-f_k\|_{L^1(X,\mu;\RR^n)}&=\int_{X} \|f(x)-f_k(x)\|_{\RR^n} \, \mu(dx)\\ &{\lesssim}\int_{X} \sum_{i=1}^{n}|f^i(x)-f^i_k(x)| \, \mu(dx)
\\& = \sum_{i=1}^{n}\|f^i-f^i_k\|_{L^1(X,\mu)} \underset{k\rightarrow+\infty}{\longrightarrow}0.
\end{aligned}\]
And thus $ L^1(X,\mu;\RR^n)=\overline{Lip_b(X;\RR^n)}$.  This implies that
$$L^1(X,\mu;\RR^n)=\overline{Lip_b(X;\RR^n)} \subseteq
\overline{\mathscr{C}_b(X;\RR^n)} \subseteq L^1(X,\mu;\RR^n).$$
\end{proof}

\medskip
\paragraph{\textbf{Acknowledgements:}}
  This research has been funded by the ANR-DFG project  (ANR-22-CE92-0013, DFG PE 3245/3-1 and BA 1477/15-1). V.S. acknowledges support of the EPSRC New Investigator Award grant EP/T027975/1.

\bibliographystyle{plain}

\begin{thebibliography}{10}

\bibitem{MR537263}
Sergio  Albeverio, M.~Ribeiro~de Faria, and Raphael H{\o}egh-Krohn.
\newblock Stationary measures for the periodic {E}uler flow in two dimensions.
\newblock {\em J. Statist. Phys.}, 20(6):585--595, 1979.

\bibitem{MR1051499}
Sergio Albeverio and Ana~Bela Cruzeiro.
\newblock Global flows with invariant ({G}ibbs) measures for {E}uler and
  {N}avier-{S}tokes two-dimensional fluids.
\newblock {\em Comm. Math. Phys.}, 129(3):431--444, 1990.

\bibitem{MR996609}
Sergio Albeverio and Raphael H{\o}egh-Krohn.
\newblock Stochastic flows with stationary distribution for two-dimensional
  inviscid fluids.
\newblock {\em Stochastic Process. Appl.}, 31(1):1--31, 1989.

\bibitem{MR2475421}
Luigi Ambrosio and Alessio Figalli.
\newblock On flows associated to {S}obolev vector fields in {W}iener spaces: an
  approach \`a la {D}i{P}erna-{L}ions.
\newblock {\em J. Funct. Anal.}, 256(1):179--214, 2009.

\bibitem{MR1857292}
Luigi Ambrosio, Nicola Fusco, and Diego Pallara.
\newblock {\em Functions of bounded variation and free discontinuity problems}.
\newblock Oxford Mathematical Monographs. The Clarendon Press, Oxford
  University Press, New York, 2000.

\bibitem{AmbrosioLuigi2005GFIM}
Luigi Ambrosio, Nicola Gigli, and Giuseppe Savare.
\newblock Gradient flows in metric spaces and in the space of probability
  measures.
\newblock 01 2005.

\bibitem{alc2020}
Zied Ammari, Quentin Liard, and Cl\'{e}ment Rouffort.
\newblock On well-posedness and uniqueness for general hierarchy equations of
  Gross-Pitaevskii and Hartree type.
\newblock {\em Archive for Rational Mechanics and Analysis}, 238, 11 2020.

\bibitem{MR3721874}
Zied Ammari and Quentin Liard.
\newblock On uniqueness of measure-valued solutions to {L}iouville's equation
  of {H}amiltonian {PDE}s.
\newblock {\em Discrete Contin. Dyn. Syst.}, 38(2):723--748, 2018.

\bibitem{MR4571599}
Zied Ammari and Vedran Sohinger.
\newblock Gibbs measures as unique {KMS} equilibrium states of nonlinear
  hamiltonian pdes.
\newblock {\em Rev. Mat. Iberoam.}, 39(1):29--90, 2023.


\bibitem{MR0390843}
Viorel Barbu.
\newblock {\em Nonlinear semigroups and differential equations in {B}anach
  spaces}.
\newblock Editura Academiei Republicii Socialiste Rom\^{a}nia, Bucharest;
  Noordhoff International Publishing, Leiden, 1976.

\bibitem{MR2400257}
Patrick Bernard.
\newblock Young measures, superposition and transport.
\newblock {\em Indiana Univ. Math. J.}, 57(1):247--275, 2008.

\bibitem{MR2668627}
Patrick Bernard.
\newblock Some remarks on the continuity equation.
\newblock In {\em S{\'e}minaire: {\'E}quations aux {D}{\'e}riv{\'e}es
  {P}artielles. 2008--2009}, S{\'e}min. {\'E}qu. D{\'e}riv. Partielles, pages
  Exp. No. VI, 12. {\'E}cole Polytech., Palaiseau, 2010.

\bibitem{MR1309539}
Jean Bourgain.
\newblock Periodic nonlinear schr\"odinger equation and invariant measures.
\newblock {\em Comm. Math. Phys.}, 159(1):1--26, 1994.

\bibitem{MR1374420}
Jean {Bourgain}.
\newblock {Invariant measures for the 2D-defocusing nonlinear Schr{\"o}dinger
  equation}.
\newblock {\em Communications in Mathematical Physics}, 176(2):421--445, March
  1996.

\bibitem{MR1470880}
Jean Bourgain.
\newblock Invariant measures for the {G}ross-{P}itaevskii equation.
\newblock {\em Journal de Math{\'e}matiques Pures et Appliqu{\'e}es},
  76(8):649--702, 1997.

\bibitem{MR1777342}
Jean Bourgain.
\newblock Invariant measures for {NLS} in infinite volume.
\newblock {\em Comm. Math. Phys.}, 210(3):605--620, 2000.

\bibitem{MR697382}
Ha\"{\i}m Brezis.
\newblock {\em Analyse fonctionnelle}.
\newblock Masson, Paris.
\newblock Th\'{e}orie et applications.

\bibitem{Bringmann_2}
Bj\"{o}rn Bringmann.
\newblock Invariant gibbs measures for the three-dimensional wave equation with
  a {H}artree nonlinearity {II}: dynamics.
\newblock {\em arXiv e-prints}, page arXiv:2009.04616, 2020.

\bibitem{MR4385403}
Bj\"{o}rn Bringmann.
\newblock Invariant gibbs measures for the three-dimensional wave equation with
  a {H}artree nonlinearity {I}: measures.
\newblock {\em Stoch. Partial Differ. Equ. Anal. Comput.}, 10(1):1--89, 2022.

\bibitem{Bringmann_Deng_Nahmod_Yue}
Bj\"{o}rn Bringmann, Yu~Deng, Andrea~R. Nahmod, and Haitian Yue.
\newblock Invariant gibbs measures for the three dimensional cubic nonlinear
  wave equation.

\bibitem{MR3869074}
Nicolas Burq, Laurent Thomann, and Nikolay Tzvetkov.
\newblock Remarks on the {Gibbs} measures for nonlinear dispersive equations.
\newblock {\em Annales de la Facult\'e des sciences de Toulouse :
  Math\'ematiques}, Ser. 6, 27(3):527--597, 2018.

\bibitem{Burq_2008}
Nicolas Burq and Nikolay Tzvetkov.
\newblock Random data {C}auchy theory for supercritical wave equations {I}:
  local theory.
\newblock 173(3):449--475.

\bibitem{MR2425134}
Nicolas Burq and Nikolay Tzvetkov.
\newblock Random data {C}auchy theory for supercritical wave equations {II}: a
  global existence result.
\newblock {\em Inventiones mathematicae}, 173(3):477--496, apr 2008.

\bibitem{MR4077096}
Federico Cacciafesta and Anne-Sophie de~Suzzoni.
\newblock Invariance of {G}ibbs measures under the flows of {H}amiltonian
  equations on the real line.
\newblock {\em Commun. Contemp. Math.}, 22(2):1950012, 39, 2020.

\bibitem{MR3455152}
Eric Carlen, J\"{u}rg Fr\"{o}hlich, and Joel Lebowitz.
\newblock Exponential relaxation to equilibrium for a one-dimensional focusing
  non-linear {S}chr\"{o}dinger equation with noise.
\newblock {\em Comm. Math. Phys.}, 342(1):303--332, 2016.

\bibitem{MR1669421}
Fernanda Cipriano.
\newblock The two-dimensional {E}uler equation: a statistical study.
\newblock {\em Comm. Math. Phys.}, 201(1):139--154, 1999.

\bibitem{MR0069338}
Earl~A. Coddington and Norman Levinson.
\newblock {\em Theory of ordinary differential equations}.
\newblock McGraw-Hill Book Co., Inc., New York-Toronto-London, 1955.

\bibitem{MR1993844}
Hans Crauel.
\newblock {\em Random probability measures on {P}olish spaces}, volume~11 of
  {\em Stochastics Monographs}.
\newblock Taylor \& Francis, London, 2002.

\bibitem{MR724704}
Ana~Bela Cruzeiro.
\newblock \'{E}quations diff\'{e}rentielles ordinaires: non explosion et
  mesures quasi-invariantes.
\newblock {\em J. Funct. Anal.}, 54(2):193--205, 1983.

\bibitem{MR2016604}
Giuseppe Da~Prato and Arnaud Debussche.
\newblock Strong solutions to the stochastic quantization equations.
\newblock {\em Ann. Probab.}, 31(4):1900--1916, 2003.

\bibitem{MR1417491}
Giuseppe Da~Prato and Jerzy Zabczyk.
\newblock {\em Ergodicity for infinite-dimensional systems}, volume 229 of {\em
  London Mathematical Society Lecture Note Series}.
\newblock Cambridge University Press, Cambridge, 1996.

\bibitem{MR4236191}
Yu~Deng, Andrea~R. Nahmod, and Haitian Yue.
\newblock Invariant gibbs measure and global strong solutions for the hartree
  {NLS} equation in dimension three.
\newblock {\em Journal of Mathematical Physics}, 62(3):031514, mar 2021.

\bibitem{MR4259382}
Yu~Deng, Andrea~R. Nahmod, and Haitian Yue.
\newblock Optimal local well-posedness for the periodic derivative nonlinear
  {S}chr\"{o}dinger equation.
\newblock {\em Comm. Math. Phys.}, 384(2):1061--1107, 2021.

\bibitem{MR4411729}
Yu~Deng, Andrea~R. Nahmod, and Haitian Yue.
\newblock Random tensors, propagation of randomness, and nonlinear dispersive
  equations.
\newblock {\em Invent. Math.}, 228(2):539--686, 2022.

\bibitem{MR2253013}
Jean Dolbeault, Patrizio Felmer, Michael Loss, and Eric Paturel.
\newblock Lieb-{T}hirring type inequalities and {G}agliardo-{N}irenberg
  inequalities for systems.
\newblock {\em J. Funct. Anal.}, 238(1):193--220, 2006.

\bibitem{MR4218790}
Chenjie Fan, Yumeng Ou, Gigliola Staffilani, and Hong Wang.
\newblock {2D}-defocusing nonlinear {S}chr\"{o}dinger equation with random data
  on irrational tori.
\newblock {\em Stoch. Partial Differ. Equ. Anal. Comput.}, 9(1):142--206, 2021.

\bibitem{MR4554532}
Ksenia Fedosova and Medet Nursultanov.
\newblock High energy asymptotics for the perturbed anharmonic oscillator.
\newblock {\em Complex Var. Elliptic Equ.}, 68(3):385--404, 2023.

\bibitem{MR4493142}
Benedetta Ferrario.
\newblock On 2{D} {E}ulerian limits \`a la {K}uksin.
\newblock {\em J. Differential Equations}, 342:1--20, 2023.

\bibitem{MR3910197}
Franco Flandoli.
\newblock Weak vorticity formulation of 2{D} {E}uler equations with white noise
  initial condition.
\newblock {\em Comm. Partial Differential Equations}, 43(7):1102--1149, 2018.

\bibitem{MR4031773}
Franco Flandoli and Martin Saal.
\newblock m{SQG} equations in distributional spaces and point vortex
  approximation.
\newblock {\em J. Evol. Equ.}, 19(4):1071--1090, 2019.

\bibitem{MR3518561}
Giuseppe Genovese, Renato Luc\`a, and Daniele Valeri.
\newblock Gibbs measures associated to the integrals of motion of the periodic
  derivative nonlinear {S}chr\"{o}dinger equation.
\newblock {\em Selecta Math. (N.S.)}, 22(3):1663--1702, 2016.

\bibitem{MR4452511}
Trishen Gunaratnam, Tadahiro Oh, Nikolay Tzvetkov, and Hendrik Weber.
\newblock Quasi-invariant {G}aussian measures for the nonlinear wave equation
  in three dimensions.
\newblock {\em Probab. Math. Phys.}, 3(2):343--379, 2022.

\bibitem{MR0097489}
Paul R. Halmos.
\newblock {\em Lectures on ergodic theory}.
\newblock Mathematical Society of Japan, Tokyo, 1956.

\bibitem{MR4324723}
Carlos Kenig and Dana Mendelson.
\newblock The focusing energy-critical nonlinear wave equation with random
  initial data.
\newblock {\em Int. Math. Res. Not. IMRN}, (19):14508--14615, 2021.

\bibitem{MR939505}
Joel~L. Lebowitz, Harvey~A. Rose, and Eugene~R. Speer.
\newblock Statistical mechanics of the nonlinear {S}chr\"{o}dinger equation.
\newblock {\em J. Statist. Phys.}, 50(3-4):657--687, 1988.

\bibitem{MR0500056}
Joram Lindenstrauss and Lior Tzafriri.
\newblock {\em Classical {B}anach spaces. {I}}.
\newblock Ergebnisse der Mathematik und ihrer Grenzgebiete, Band 92.
  Springer-Verlag, Berlin-New York, 1977.
\newblock Sequence spaces.

\bibitem{MR1867882}
Andrew~J. Majda and Andrea~L. Bertozzi.
\newblock {\em Vorticity and incompressible flow}, volume~27 of {\em Cambridge
  Texts in Applied Mathematics}.
\newblock Cambridge University Press, Cambridge, 2002.

\bibitem{MR2335089}
Stefania Maniglia.
\newblock Probabilistic representation and uniqueness results for
  measure-valued solutions of transport equations.
\newblock {\em J. Math. Pures Appl. (9)}, 87(6):601--626, 2007.

\bibitem{MR1441912}
Henry~P. McKean and Kirill.~L. Vaninsky.
\newblock Action-angle variables for the cubic {S}chr\"{o}dinger equation.
\newblock {\em Comm. Pure Appl. Math.}, (6):489--562.

\bibitem{MR1447055}
Henry~P. McKean and Kirill.~L. Vaninsky.
\newblock Cubic {S}chr\"{o}dinger: the petit canonical ensemble in action-angle
  variables.
\newblock {\em Comm. Pure Appl. Math.}, (7):593--622.

\bibitem{MR1277197}
Henry~P. McKean and Kirill.~L. Vaninsky.
\newblock {\em {S}tatistical mechanics of nonlinear wave equations(in Trends
  and perspectives in applied mathematics)}, pages 239--264.
\newblock Springer, New York, 1994.

\bibitem{MR0464128}
James~R. Munkres.
\newblock {\em Topology: a first course}.
\newblock Prentice-Hall, Inc., Englewood Cliffs, N.J., 1975.

\bibitem{MR2928851}
Andrea~R. Nahmod, Tadahiro Oh, Luc Rey-Bellet, and Gigliola Staffilani.
\newblock Invariant weighted {W}iener measures and almost sure global
  well-posedness for the periodic derivative {NLS}.
\newblock {\em J. Eur. Math. Soc.}, 14(4):1275--1330, 2012.

\bibitem{MR3818404}
Andrea~R. Nahmod, Nata\v{s}a Pavlovi\'{c}, Gigliola Staffilani, and Nathan
  Totz.
\newblock Global flows with invariant measures for the inviscid modified {SQG}
  equations.
\newblock {\em Stoch. Partial Differ. Equ. Anal. Comput.}, 6(2):184--210, 2018.

\bibitem{MR2875861}
Andrea~R. Nahmod, Luc Rey-Bellet, Scott Sheffield, and Gigliola Staffilani.
\newblock Absolute continuity of {B}rownian bridges under certain gauge
  transformations.
\newblock {\em Math. Res. Lett.}, 18(5):875--887, 2011.

\bibitem{MR3361727}
Andrea~R. Nahmod and Gigliola Staffilani.
\newblock Almost sure well-posedness for the periodic {3D} quintic nonlinear
  {S}chr\"{o}dinger equation below the energy space.
\newblock {\em J. Eur. Math. Soc. (JEMS)}, 17(7):1687--1759, 2015.

\bibitem{MR3952697}
Andrea~R. Nahmod and Gigliola Staffilani.
\newblock Randomness and nonlinear evolution equations.
\newblock {\em Acta Mathematica Sinica, English Series}, 35, 2019.

\bibitem{MR4575389}
Andrea~R. Nahmod, Gigliola Staffilani, Hendrik Weber, and Sijue Wu.
\newblock Deterministic dynamics and randomness in PDE.
\newblock {\em Oberwolfach Rep.}, 19(2):1431--1499, 2022.

\bibitem{MR2200233}
David Nualart.
\newblock {\em The {M}alliavin calculus and related topics}.
\newblock Probability and its Applications (New York). Springer-Verlag, Berlin,
  second edition, 2006.

\bibitem{MR4384196}
Tadahiro Oh, Philippe Sosoe, and Leonardo Tolomeo.
\newblock Optimal integrability threshold for {G}ibbs measures associated with
  focusing {NLS} on the torus.
\newblock {\em Invent. Math.}, 227(3):1323--1429., 2022.

\bibitem{MR3844655}
Tadahiro Oh and Laurent Thomann.
\newblock A pedestrian approach to the invariant {G}ibbs measures for the
  2-{$d$} defocusing nonlinear {S}chr\"{o}dinger equations.
\newblock {\em Stoch. Partial Differ. Equ. Anal. Comput.}, 6(3):397--445, 2018.

\bibitem{MR4133695}
Tadahiro Oh and Laurent Thomann.
\newblock Invariant {G}ibbs measures for the {$2$}-{$d$} defocusing nonlinear
  wave equations.
\newblock {\em Ann. Fac. Sci. Toulouse Math. (6)}, 29(1):1--26, 2020.

\bibitem{MR0226684}
Kalyanapuram~R. Parthasarathy.
\newblock {\em Probability measures on metric spaces}.
\newblock Probability and Mathematical Statistics, No. 3. Academic Press, Inc.,
  New York-London, 1967.

\bibitem{rouff2018}
Cl{\'e}ment Rouffort.
\newblock {\em Th{\'e}orie de champ-moyen et dynamique des syst{\`e}mes
  quantiques sur r{\'e}seau}.
\newblock PhD thesis,
\newblock Universit\'e de Rennes, 2018.

\bibitem{Ryan2002}
Raymond~A. Ryan.
\newblock {\em The {R}adon-{N}ikod{\'y}m {P}roperty (in Introduction to Tensor
  Products of Banach Spaces)}, pages 93--126.
\newblock Springer, London, 2002.

\bibitem{MR0426084}
Laurent Schwartz.
\newblock {\em Radon measures on arbitrary topological spaces and cylindrical
  measures}.
\newblock Tata Institute of Fundamental Research Studies in Mathematics, No. 6.
  Published for the Tata Institute of Fundamental Research, Bombay by Oxford
  University Press, London, 1973.

\bibitem{MR0146625}
George~F. Simmons.
\newblock {\em Introduction to topology and modern analysis}.
\newblock McGraw-Hill Book Co., Inc., New York-San Francisco,
  Calif.-Toronto-London, 1963.

\bibitem{MR0584788}
Yakov~G. Sinai.
\newblock {\em Introduction to ergodic theory}, volume~18 of {\em Mathematical
  Notes}.
\newblock Princeton University Press, Princeton, N.J., 1976.
\newblock Translated by V. Scheffer.

\bibitem{MR4030277}
Chenmin Sun and Nikolay Tzvetkov.
\newblock New examples of probabilistic well-posedness for nonlinear wave
  equations.
\newblock {\em J. Funct. Anal.}, 278(2):108322, 47, 2020.

\bibitem{MR3444271}
Mark I. Vishik and Andrei V. Fursikov.
\newblock {\em Mathematical problems of statistical hydromechanics}, volume~9
  of {\em Mathematics and its Applications (Soviet Series)}.
\newblock Kluwer Academic Publishers Group, Dordrecht, 1988.
\newblock Translated from the 1980 Russian original by D. A. Leites.

\bibitem{MR1121337}
Peter~E. Zhidkov.
\newblock An invariant measure for the nonlinear {S}chr\"{o}dinger equation.
\newblock {\em Dokl. Akad. Nauk SSSR}, 317(3):543--546, 1991.

\bibitem{MR2952218}
Maciej Zworski.
\newblock{ \em Semiclassical analysis}, volume~138
of {\em Graduate Studies in Mathematics}.
\newblock  American Mathematical Society, Providence, RI, 2012.

\end{thebibliography}

\end{document}